\newtheorem*{Th*}{Theorem}
\newtheorem{Th}{Theorem}[section]
\newtheorem{Prop}{Proposition}[section]   
\newtheorem{Lem}{Lemma}[section]   
\newtheorem{Coro}{Corollary}[section]   
\newtheorem{Def}{Definition}[section]
\newtheorem{Rem}{Remark}[section]
\newcommand{\R}{\mathbb{R}}
\newcommand{\Z}{\mathbb{Z}}
\newcommand{\C}{\mathbb{C}}
\newcommand{\T}{\mathbb{T}}
\newcommand{\TT}{\mathcal{T}}
\newcommand{\U}{{\mathcal U}}
\newcommand{\V}{{\mathcal V}}
\newcommand{\W}{{\mathcal W}}
\newcommand{\CC}{{\mathcal C}}
\newcommand{\DD}{{\mathcal D}}
\newcommand{\PP}{{\mathcal P}}
\newcommand{\h}{\mathfrak{h}}
\newcommand{\Id}{{\rm Id}_{2 \times 2}}
\newcommand{\Mat}{\rm Mat_{2\times 2}}
\newcommand{\p}{{\partial}}
\newcommand{\spec}{\mathop{\rm Spec}\nolimits}
\newcommand{\iso}{\mathop{\rm Iso}\nolimits}
\newcommand{\re}{\mathop{\rm Re}}
\newcommand{\im}{\mathop{\rm Im}}
\newcommand{\Tor}{\mathop{\rm Tor}\nolimits}
\newcommand{\Span}{\mathop{\rm Span}\nolimits}
\begin{document}

\title{Arnold-Liouville theorem for integrable PDEs: a case study of the focusing NLS equation}   
 
\author{T. Kappeler\footnote{T.K. is partially supported by the Swiss National Science Foundation.} 
and P. Topalov\footnote{P.T. is partially supported by the Simons Foundation, Award \#526907}}
  
\maketitle

\begin{abstract}  
We prove an infinite dimensional version of the Arnold-Liouville theorem for integrable
non-linear PDEs: In a case study we consider the {\em focusing} NLS equation with periodic boundary 
conditions.
\end{abstract}    

\medskip

\noindent{\small\em Keywords}: {\small Arnold-Liouville theorem, focusing NLS equation, focusing mKdV equation, 
normal forms, Birkhoff coordinates}

\medskip

\noindent{\small\em 2010 MSC}: {\small 37K10, 37K20, 35P10, 35P15}

\section{Introduction}\label{sec:introduction}
Let us first review the classical Arnold-Liouville theorem (\cite{Liou,Min,Arn}) in the most simple setup:
assume that the phase space $M$ is an open subset of $\R^{2n}=\{(x,y)\,|\,x,y\in\R^n\}$, $n\ge 1$, 
endowed with the Poisson bracket 
\[
\{F,G\}=\sum_{j=1}^n\Big(\p_{y_j}F\cdot\p_{x_j}G-\p_{x_j}F\cdot\p_{y_j}G\Big).
\]
The Hamiltonian system with smooth Hamiltonian $H : M\to\R$ then has the form
\begin{equation*}
\dot x=\p_y H,\quad \dot y=-\partial_x H
\end{equation*}
with corresponding Hamiltonian vector field $X_H=(\p_y H,-\p_x H)$ where 
$\partial_x\equiv(\p_{x_1},...,\p_{x_n})$ and $\p_y\equiv(\p_{y_1},...,\p_{y_n})$.
Such a vector field is said to be {\em completely integrable} if there exist $n$ 
pairwise Poisson commuting smooth integrals $F_1,...,F_n :M\to\R$ (i.e. $\{H,F_k\}=0$ and
$\{F_k,F_l\}=0$ for any $1\le k,l\le n$) so that the differentials 
$d_{(x,y)} F_1$,...,$d_{(x,y)}F_n\in T^*_{(x,y)}\R^{2n}$
are linearly independent on an open dense subset of points $(x,y)$ in $M$.
In this setup the Arnold-Liouville theorem reads as follows (cf. e.g. \cite{Arn} or \cite{MZ}):

\begin{Th*}[Arnold-Liouville]
Assume that $c\in F(M)\subseteq\R^n$ is a regular value of the momentum map
$F=(F_1,...,F_n) : M\to\R^n$ so that a connected component of $F^{-1}(c)$, denoted by $N_c$, is compact. 
Then $N_c$ is an $n$-dimensional torus that is invariant with respect to the
Hamiltonian vector field $X_H$. Moreover, there exists a $X_H$-invariant open neighborhood $U$ of
$N_c$ in $M$ and a diffeomorphism 
\[
\Psi : D\times(\R/2\pi)^n\to U,\quad(I,\theta)\mapsto\Psi(I,\theta),
\]
where $D$ is an open disk in $\R^n$, so that the actions $I=(I_1,...,I_n)$ and
the angles $\theta=(\theta_1,...,\theta_n)$ are canonical coordinates (i.e., $\{\theta_j,I_j\}=1$ for all
$1\le j\le n$ whereas all other brackets between the coordinates vanish) and the pull-back ${\mathcal H}=H\circ\Psi^{-1}$
of the Hamiltonian $H$ depends only on the actions.
\end{Th*}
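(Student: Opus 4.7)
The plan is to follow the classical four-step strategy: first show that $N_c$ is a smooth compact $n$-manifold acted on transitively by commuting Hamiltonian flows, identify it with a torus, establish the Lagrangian property, and finally build action-angle coordinates in a tubular neighbourhood.

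First, since $c$ is a regular value of $F=(F_1,\dots,F_n)$, the implicit function theorem makes $F^{-1}(c)$ a smooth $n$-dimensional submanifold of $M$. The Poisson commutativity $\{F_k,F_l\}=0$ gives $X_{F_k}F_l=0$, so each Hamiltonian vector field $X_{F_k}$ is tangent to $F^{-1}(c)$; the nondegeneracy of the symplectic form together with the linear independence of $dF_1,\dots,dF_n$ then forces $X_{F_1},\dots,X_{F_n}$ to be pointwise linearly independent and therefore to span $T_pN_c$ at every $p\in N_c$. Furthermore $[X_{F_k},X_{F_l}]=-X_{\{F_k,F_l\}}=0$, so the flows $\Phi^{t_k}_{F_k}$ pairwise commute and assemble into a smooth $\R^n$-action on $N_c$. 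Compactness and connectedness, together with transitivity of this action at a chosen base point $p_0\in N_c$, identify $N_c$ with the quotient $\R^n/\Lambda$ by a full-rank discrete lattice $\Lambda\subset\R^n$, i.e.\ an $n$-torus. The same argument produces compact Lagrangian tori $N_{c'}$ on all nearby regular level sets, and the lattices $\Lambda_{c'}$ can be locally trivialised by a smooth basis $\lambda_1(c'),\dots,\lambda_n(c')$ obtained from the implicit function theorem applied to the period map.

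Second, I would construct the actions by
\[
I_j(c')=\frac{1}{2\pi}\oint_{\gamma_j(c')}\sum_{k=1}^n y_k\,dx_k,
\]
where $\gamma_j(c')\subset N_{c'}$ is the closed loop obtained by following the commuting Hamiltonian flows along $\lambda_j(c')$. Because each $N_{c'}$ is Lagrangian (the span of the $X_{F_k}$ is isotropic since $\omega(X_{F_k},X_{F_l})=\{F_k,F_l\}=0$, and dimension count gives maximality), the primitive $\sum_k y_k\,dx_k$ is closed on $N_{c'}$, so $I_j(c')$ depends only on the homology class of $\gamma_j(c')$ and defines a smooth function of $c'$. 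The angles $\theta_j$ are then obtained as the normalised time-parameters of the commuting flows generated by $I_1,\dots,I_n$: write the $X_{F_k}$ in the basis $X_{I_j}$ via the Jacobian of the map $c'\mapsto I(c')$, and use the fundamental cycles to set $\theta_j$ to increase by $2\pi$ along $\gamma_j$. By construction $\{\theta_j,I_k\}=\delta_{jk}$ and all other Poisson brackets among the coordinates vanish, so $\Psi:(I,\theta)\mapsto\Psi(I,\theta)$ is a symplectic diffeomorphism onto an $X_H$-invariant neighbourhood $U$ of $N_c$. Finally, $H$ is constant on every $N_{c'}$ (since $\{H,F_k\}=0$ implies the $X_H$-orbits stay inside level sets of $F$), so $\mathcal{H}=H\circ\Psi^{-1}$ depends only on $I$.

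The main obstacle is the canonical conjugacy step: verifying that the period integrals $I_j$ are in fact symplectically dual to the angles defined by the $\R^n$-action, equivalently that $c'\mapsto I(c')$ is a local diffeomorphism whose Jacobian intertwines the frequency vectors of the flows $\Phi^t_{F_k}$ with the constant unit flows in the $\theta_j$-directions. This is the classical Liouville identity, which I would prove by differentiating the period integral in $c'$ and using the Lagrangian property to turn the variation into a boundary term, thereby expressing $\partial I_j/\partial c_k$ in terms of the dual basis to $\lambda_1,\dots,\lambda_n$. Once this identity is in place, the commutation of the flows and the definition of the lattice $\Lambda_{c'}$ make $\{\theta_j,I_k\}=\delta_{jk}$ automatic, and all remaining statements of the theorem follow.
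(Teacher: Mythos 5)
The paper states this theorem as classical background and does not prove it, citing \cite{Arn} and \cite{MZ}; your outline follows the standard Liouville--Arnold--Mineur route those references take, so there is no alternative path to compare. The steps up to and including the Liouville identity are sound, but the final paragraph misplaces the remaining difficulty, and a genuine gap is hidden inside the phrase ``all remaining statements of the theorem follow.''

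Once the Liouville identity makes $c'\mapsto I(c')$ a local diffeomorphism whose Jacobian is proportional to the period matrix, the Hamiltonian flows of $X_{I_1},\dots,X_{I_n}$ become $2\pi$-periodic along the chosen cycles, and then $\{\theta_j,I_k\}=\pm\delta_{jk}$ is indeed automatic from defining $\theta_j$ as the flow time of $X_{I_j}$ measured from a reference point $p_0(c')$ on each torus, together with $[X_{I_j},X_{I_k}]=0$; so that part of your plan is not actually the obstacle. What does \emph{not} follow from this, and what you never address, is $\{\theta_j,\theta_k\}=0$. The angle functions depend on the choice of a smooth section $\Sigma=\{p_0(c')\}$ of the fibration by invariant tori, and for an arbitrary section a Jacobi-identity computation (using $\{\theta_j,I_k\}=\delta_{jk}$) yields only $\{\theta_j,\theta_k\}=f_{jk}(I)$, an antisymmetric closed family of functions of the actions alone, which has no reason to vanish. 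To kill $f_{jk}$ one must either (a) take $\Sigma$ to be a Lagrangian submanifold, in which case $X_{\theta_j}|_\Sigma$ is $\omega$-orthogonal to $T\Sigma$ and therefore lies in $T\Sigma$, giving $\{\theta_j,\theta_k\}|_\Sigma=0$ and hence $f_{jk}\equiv 0$; or (b) correct $\theta_j\mapsto\theta_j+g_j(I)$ by solving $\partial_{I_k}g_j-\partial_{I_j}g_k=f_{jk}$ on the disk $D$, which is possible because $(f_{jk})$ is a closed two-form on $D$. One of these devices is essential; without it you have a Lagrangian torus fibration with commuting action coordinates, but not yet the claimed symplectomorphism onto $D\times(\R/2\pi\Z)^n$. (A smaller omission: that the nearby level sets $N_{c'}$ are again compact connected tori is not immediate from the implicit function theorem alone and needs a properness/Ehresmann argument in a tubular neighbourhood of $N_c$.)
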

An immediate implication of this theorem is that the equation of motion, when expressed in 
action-angle coordinates, becomes
\[
\dot\theta=\p_I\mathcal{H}(I),\quad\dot I=-\partial_\theta\mathcal{H}(I)=0.
\]
Hence, the actions are conserved and so is the frequency vector $\omega(I):=\p_I\mathcal{H}(I)$.
Therefore, the equation can be solved explicitly, 
\[
\theta(t)=\theta(0)+\omega(I)t\;\big(\mathop{\rm mod} (2\pi\Z)^n\big),\quad I(t)=I(0).
\]
In particular, this shows that every solution is quasi-periodic.
Furthermore, the invariant tori in the neighborhood $U$ are of maximal dimension $n$ and their
tangent spaces at any given point $(x,y)$ are spanned by the Hamiltonian vectors 
$X_{F_1}(x,y)$,...,$X_{F_n}(x,y)$. Informally, one can say that the Arnold-Liouville
theorem assures that generically there is no room for other types of dynamics besides quasi-periodic motion.
In fact, by Sard's theorem one sees that if $F$ is e.g. proper then, under the conditions above,
the union of invariant tori of maximal dimension $n$ is open and dense in $M$. 
In addition, the set of regular values of the momentum map $F =(F_1,...,F_n) : M\to\R^n$ is open and 
dense in $F(M)$. Typically, action-angle coordinates cannot be extended globally.
One reason for this is the existence of singular values of the momentum map
$F : M\to\R^n$. Such a situation appears e.g. in the case of an elliptic fixed point 
$\xi\in M$. In that case, it might be possible to extend the coordinates $X_j=\sqrt{2I_j}\cos\theta_j$, 
$Y_j=\sqrt{2I_j}\sin\theta_j$, to an open neighborhood of $\xi$ in $M$.
We refer to such coordinates as Birkhoff coordinates and the Hamiltonian $H$, when expressed
in these coordinates, is said to be in Birkhoff normal form.
For results in this direction we refer to \cite{Vey,Eliasson,Zung1} and references therein.
In special cases, such as systems of coupled oscillators, Birkhoff coordinates can be defined on the entire
phase space and are referred to as global Birkhoff coordinates. In what follows we will keep this terminology
and will call $(X_j,Y_j)$ defined in terms of the action-angle coordinates as above again {\em Birkhoff coordinates}, 
even if they are non necessarily related to an elliptic fixed point.  Other obstructions to globally extend action-angle 
coordinates are the presence of hyperbolic fixed points as well as focus-focus fixed points 
(cf. \cite{Duistermaat,Eliasson,Zung2}) where one observes topological obstructions in case of a non-trivial 
monodromy of the action variables. Finally note that action-angle coordinates are used for obtaining KAM type results 
for small perturbations of integrable Hamiltonian systems.

Our aim is to present an extension of the above described version of the Arnold-Liouville theorem to 
integrable PDEs in form of a case study of the focusing nonlinear Schr\"odinger (fNLS) equation 
\begin{equation}\label{eq:nls}
i \p_t u=-\p^2_x u-2|u|^2u,\quad u|_{t=0}=u_0
\end{equation}
with periodic boundary conditions. 
Our motivation for choosing the fNLS equation stems from the fact that it is known to be an integrable PDE
which does {\em not} admit local action-angle coordinates in open neighborhoods of specific potentials (see \cite{KT3}), 
whereas in contrast, integrable PDEs such as the Korteweg-de Vries equation (KdV) or the defocusing nonlinear 
Schr\"odinger equation (dNLS) admit global Birkhoff coordinates (\cite{GK1,KP1}). 

To state our results we first need to introduce some notation and review some facts about the fNLS equation.
It is well known that \eqref{eq:nls} can be written as a Hamiltonian PDE. 
To describe it, let $L^2\equiv L^2(\T,\C)$ denote the Hilbert space of square-integrable 
complex valued functions on the unit torus $\T:=\R/\Z$ and let $L^2_c:=L^2\times L^2$.
On $L^2_c$ introduce the Poisson bracket defined for $C^1$-functions $F$ and $G$ on $L^2_c$ by
\begin{equation}\label{eq:poisson_bracket}
\{F,G\}(\varphi):=-i \int_0^1\big(\p_1 F\cdot\p_2 G-\p_2 F\cdot\p_1 G\big)\,dx
\end{equation}
where $\varphi=(\varphi_1,\varphi_2)$ and $\p_j F\equiv\p_{\varphi_j}F$ for $j=1,2$ are
the two components of the $L^2$-gradient of $F$ in $L^2_c$. More generally, we will consider $C^1$-functions
$F$ and $G$ defined on a dense subspace of $L^2_c$ having sufficiently regular $L^2$-gradients  so that
the integral in \eqref{eq:poisson_bracket} is well defined when viewed as a dual pairing.
The NLS-Hamiltonian, defined on the Sobolev space $H^1_c=H^1\times H^1$, $H^1\equiv H^1(\T,\C)$,
is given by
\[
\mathcal{H}_{NLS}(\varphi):=
\int_0^1\big(\p_x\varphi_1\cdot\p_x\varphi_2+\varphi_1^2\varphi_2^2\big)\,dx.
\]
The corresponding Hamiltonian equation then reads
\begin{equation}\label{eq:nls'}
\p_t(\varphi_1,\varphi_2)=-i \big(\p_2\mathcal{H}_{NLS},-\p_1\mathcal{H}_{NLS}\big).
\end{equation}
Equation \eqref{eq:nls} is obtained by restricting \eqref{eq:nls'} to the real
subspace $i L^2_r:=\{\varphi\in L^2_c\,|\,\varphi_2=-\overline{\varphi_1}\}$ of the complex 
vector space $L^2_c$. More precisely, for $\varphi=i (u,\overline{u})$, one gets the
fNLS equation $i \p_t u=-\p_x^2 u-2|u|^2u$.
We also remark that when restricting \eqref{eq:nls'} to the real subspace 
$L^2_r:=\{\varphi\in L^2_c\,|\,\varphi_2=\overline{\varphi_1}\}$ one obtains the dNLS equation
mentioned above.
According to \cite{ZS} equation \eqref{eq:nls'} admits a Lax pair representation (cf. \cite{Lax})
\[
\p_t L(\varphi)=[P(\varphi),L(\varphi)]
\]
where $L(\varphi)$ is the Zakharov-Shabat operator (ZS operator)
\[
L(\varphi):=i \begin{pmatrix} 1&0\\0&-1\end{pmatrix}\p_x+
\begin{pmatrix} 0&\varphi_1\\\varphi_2&0\end{pmatrix}
\]
and $P(\varphi)$ is a certain differential operator of second order.
As a consequence, the periodic spectrum of $L(\varphi)$ is invariant with respect to the NLS flow.
Actually, we need to consider the periodic and the anti-periodic spectrum of $L(\varphi)$ or, 
by a slight abuse of notation, of $\varphi$.
In order to treat the two spectra at the same time we consider $L(\varphi)$ on the interval $[0,2]$
and impose periodic boundary conditions. Denote the spectrum of $L(\varphi)$ defined this way by 
$\spec_p L(\varphi)$. In what follows we refer to it as the {\em periodic spectrum} of $L(\varphi)$ or, by a slight 
abuse of terminology, of $\varphi$.
Note that $\spec_p L(\varphi)$ is discrete and invariant with respect to the NLS flow on $L^2_c$. 
We say that the periodic spectrum $\spec_p L(\varphi)$ is {\em simple} if every eigenvalue has 
algebraic multiplicity one. Furthermore, for any $\psi\in i L^2_r$ introduce the isospectral set,
\[
\iso(\psi):=\big\{\varphi\in i L^2_r\,\big|\,\spec_p L(\varphi)=\spec_p L(\psi)\big\}  
\]
where the equality of the two spectra means that they coincide together with the corresponding algebraic 
multiplicities of the eigenvalues.  Denote by $\iso_o(\psi)$ the connected component of 
$\iso(\psi)$ that contains $\psi$. For any integer $N\ge 0$ introduce the Sobolev spaces 
$H^N_c:=H^N\times H^N$ and the real subspace
\[
i H^N_r:=\{\varphi\in H^N_c\,|\,\varphi_2=-\overline{\varphi_1}\}
\]
where $H^N\equiv H^N(\T,\C)$ denotes the Sobolev space of functions $f : \T\to\C$ with
distributional derivatives up to order $N$ in $L^2$. In a similar way introduce the following spaces of 
complex valued sequences
\[
\h^N_c:=\h^N\times\h^N,
\]
\[
\h^N_r:=\big\{(z,w)\in\h^N_c\,\big|\,w_n=\overline{z_{(-n)}}\,\,\forall n\in\Z\big\},
\]
\[
i \h^N_r:=\big\{(z,w)\in\h^N_c\,\big|\,w_n=-\overline{z_{(-n)}}\,\,\forall n\in\Z\big\},
\]
where
\[
\h^N:=\big\{z=(z_n)_{n\in\Z}\,\big|\,\|z\|_N<\infty\big\},\quad
\|z\|_N:=\Big(\sum_{j\in\Z}(1+j^2)^N|z_j|^2\Big)^{1/2}.
\]
Note that $\h^N_r$ and $i\h^N_r$ are real subspaces of $\h^N_c$.
In the case when $N=0$ we set $\ell^2_c\equiv\h^0_c$, $i \ell^2_r\equiv i \h^0_r$, and $\ell^2\equiv\h^0$.
For simplicity, we will also use the same symbols for the spaces of sequence with indices $|n|>R$ with $R\ge 0$.


Finally, we say that the subset $\W\subseteq i L^2_r$ is {\em saturated} if 
$\iso_o(\psi)\subseteq\W$ for any $\psi\in\W$. The main result of this paper is the following Theorem.

\begin{Th}\label{th:main}
Assume that $\psi\in i L^2_r$ has simple periodic spectrum $\spec_p L(\psi)$.
Then there exist a saturated open neighborhood $\W$ of $\iso_o(\psi)$ in $i L^2_r$ and
a real analytic diffeomorphism 
\begin{equation}\label{eq:Psi}
\Psi : \W\to\Psi(\W)\subseteq i\ell^2_r,\quad\varphi\mapsto
\Big(\big(z_n(\varphi)\big)_{n\in\Z},\big(w_n(\varphi)\big)_{n\in\Z}\Big),
\end{equation}
onto the open subset $\Psi(\W)$ of $i\ell^2_r$ so that the following holds:
\begin{itemize} 
\item[(NF1)] $\Psi$ is canonical, i.e., $\{z_n,w_{(-n)}\}=-i$ for any $n\in\Z$ whereas all other brackets between
coordinate functions vanish.
\item[(NF2)] For any integer $N\ge 0$, $\Psi(\W\cap i H^N_r)\subseteq i \h^N_r$ and
\[
\Psi : \W\cap i H^N_r\to\Psi(\W\cap i H^N_r)\subseteq i \h^N_r
\]
is a real analytic diffeomorphism onto its image.
\item[(NF3)] The pull-back $\mathcal{H}_{NLS}\circ\Psi^{-1} : \Psi\big(\W\cap i H^1_r\big)\to\R$ of the fNLS 
Hamiltonian is a real analytic function that depends only on the actions $I_n:=z_n w_{(-n)}$, $n\in\Z$. 
\end{itemize}
\end{Th}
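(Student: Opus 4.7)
The plan is to construct $\Psi$ from the spectral data of the Zakharov--Shabat operator $L(\varphi)$, following the blueprint used in the defocusing setting (as in \cite{GK1,KP1}) but now localized around a potential whose periodic spectrum happens to be simple. First, using the Lax pair representation recalled above, I would observe that $\spec_p L(\varphi)$ is conserved by the Hamiltonian flow of every spectral invariant and that, since $\spec_p L(\psi)$ is simple and simplicity is an open condition on $iL^2_r$, there is an open neighborhood $\U\subseteq iL^2_r$ of $\iso_o(\psi)$ on which the spectrum stays simple and admits a canonical $\Z$-labelling $(\lambda_n^\pm(\varphi))_{n\in\Z}$. Standard arguments from the theory of integrable PDEs then identify $\iso_o(\psi)$ as a compact smooth torus, obtained as the joint orbit of the commuting Hamiltonian flows of the spectral invariants of $L(\varphi)$.

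Next I would build action and angle variables from the Floquet discriminant $\Delta(\lambda,\varphi)$ of $L(\varphi)$ on $[0,2]$ with periodic boundary conditions. For each $n\in\Z$, define
\[
I_n(\varphi):=\frac{1}{\pi}\oint_{\Gamma_n}\lambda\,\frac{\p_\lambda\Delta(\lambda,\varphi)}{\sqrt{\Delta(\lambda,\varphi)^2-4}}\,d\lambda,
\]
where $\Gamma_n$ is a small loop around the $n$-th spectral arc (the points $\lambda_n^\pm$). Since $\Delta$ is entire in $(\lambda,\varphi)$ and depends only on $\spec_p L(\varphi)$, the functions $I_n$ are real analytic on $\U$; they Poisson-commute by a residue calculation using the trace formula for $\p_{\varphi_j}\Delta$, and they cut out $\iso_o(\psi)$ locally as a joint level set. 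For conjugate angles $\theta_n$ I would use a Jacobi-type inversion with the Dirichlet eigenvalues of $L(\varphi)$ on $[0,1]$ playing the role of coordinates on the spectral Jacobian; with compatible base-point and sheet choices these provide canonical variables $\theta_n\bmod 2\pi$, first on $\iso_o(\psi)$ and then on all of $\U$. The Birkhoff coordinates are obtained by the square-root prescription $z_n:=\sqrt{2I_n}\,e^{i\theta_n}$ and $w_n:=\sqrt{2I_n}\,e^{-i\theta_n}$, with branches chosen so that the reality constraint $w_n=-\overline{z_{-n}}$ on $iL^2_r$ is preserved, which is why the image lies in $i\ell^2_r$ rather than in $\ell^2_c$.

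The three conclusions of the theorem then split naturally. Property (NF1) reduces to the bracket identities $\{I_m,I_n\}=0$ and $\{\theta_m,I_n\}=\delta_{mn}$, both of which are residue calculations on the spectral curve using the Poisson structure \eqref{eq:poisson_bracket}. Property (NF2) requires sharp $|n|\to\infty$ asymptotics of $I_n(\varphi)$ and $\theta_n(\varphi)$ in terms of the Fourier data of $\varphi$; the standard ZS asymptotics yield that $\Psi$ differs from the Fourier transform by a bounded nonlinear correction at every Sobolev level, from which one reads off that $\Psi$ restricts to a real analytic diffeomorphism $\W\cap iH^N_r\to\Psi(\W\cap iH^N_r)\subseteq i\h^N_r$. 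Property (NF3) is then essentially tautological: $\mathcal{H}_{NLS}$ is a trace-type spectral invariant of $L(\varphi)$, hence Poisson-commutes with every $I_n$, so by (NF1) its pull-back depends only on the actions, and its real analyticity is inherited from the analytic dependence of $\mathcal{H}_{NLS}$ on the potential.

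The principal difficulty is the choice and control of the neighborhood $\W$. In the focusing case, pairs of periodic eigenvalues can collide and move off into the complex plane as $\varphi$ varies, which is exactly the obstruction to a global Birkhoff map alluded to in \cite{KT3}; so $\W$ must be small enough that no such collision occurs on it, that the $\Z$-labelling of $\spec_p L(\varphi)$ is preserved, and that the branches used in $\sqrt{2I_n}\,e^{\pm i\theta_n}$ remain single-valued. Reconciling this shrinking requirement with the saturation condition $\iso_o(\varphi)\subseteq\W$ for all $\varphi\in\W$ is where the main work lies: the natural approach is to take $\W$ to be a tubular neighborhood of $\iso_o(\psi)$ along the commuting Hamiltonian flows of the actions, and then to use compactness of $\iso_o(\psi)$ together with continuous dependence of the spectral data on $\varphi$ to extract a uniform tubular radius on which the entire spectral construction is well defined and real analytic.
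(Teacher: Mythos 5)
Your outline captures the general architecture of a spectral Birkhoff construction, but it omits the mechanism by which the paper handles the specifically focusing difficulties, and the gaps are genuine. The central issue is reality: the operator $L(\varphi)$ is non-selfadjoint, the $\lambda_n^\pm$ are complex conjugate pairs off the real axis, and your claim that ``compatible base-point and sheet choices'' force the Abel-type angle integrals to be real valued on $i L^2_r$ is precisely the longstanding open problem singled out in the introduction. The paper does \emph{not} establish reality --- nor the canonical brackets --- by a direct residue computation. Instead it proves both on a ball around $\varphi=0$ by invoking \cite{KLTZ} (Theorem \ref{th:main_near_zero}) and then transports them along a carefully chosen path $\gamma$ in $\mathcal{T}^R\cap i L^2_r$ to $\iso_o(\psi)$: the cuts $G_n$ are deformed (Lemma \ref{lem:deformation_G_n}), the normalized differentials $\zeta_n$ are shown to patch across the charts $U_{s_j}$ (Proposition \ref{lem:differentials_on_U_tn}, via Remark \ref{rem:uniqueness_differentials}), and both reality and the Poisson relations are then obtained by analytic continuation over the connected tube (Lemma \ref{lem:real-analyticity}, Corollary \ref{coro:actions_poisson_relations}, Proposition \ref{prop:beta^n_tn}, Theorem \ref{th:commutation_relations_tn}). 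Your direct definition of $I_n,\theta_n$ on a neighborhood of $\iso_o(\psi)$, with no reference back to the zero potential, gives you no handle on either. Note also that your assertion that the Dirichlet spectrum can serve as the source of contours is exactly what the paper says fails in the focusing case: the $\mu_k$ do appear as endpoints of the angle integrals, but the $a$-cycles $\Gamma_n$ must be produced by deformation of the cuts, not from $\spec_D L(\varphi)$.

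Two further obstructions are not addressed. The prescription $z_n=\sqrt{2 I_n}\,e^{i\theta_n}$ is singular on the codimension-two subvarieties $\mathcal{Z}_n=\{\gamma_n^2=0\}$, $|n|>R$, which are nonempty in any saturated neighborhood of $\iso_o(\psi)$; $\theta_n$ is not even defined there, and the analytic extension is obtained only through the auxiliary functions $z_n^\pm=\gamma_n e^{\pm i\beta^n_n}$ and the explicit limits of Lemma \ref{Lemma 5.2} and Proposition \ref{Theorem 5.5}, none of which your sketch reproduces. Finally, a local diffeomorphism $\Phi$ (itself requiring the Fredholm analysis of Section \ref{sec:birkhoff_map}, Lemma \ref{prop:jacobian}) is not automatically injective on a saturated set. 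The paper needs the Lyapunov-type stability of isospectral sets (Lemma \ref{lem:lyapunov_stability}), a foliation of a flow-invariant neighborhood by orbit tori (Lemmas \ref{lem:compact_orbit}, \ref{lem:W}), and the observation that the stabilizer lattice of the finite block of commuting flows is contained in, but not necessarily equal to, $(2\pi\Z)^{2R'+1}$ (Corollary \ref{coro:stabilizer}); the pre-Birkhoff map must then be corrected by the matrix $P^*$ in \eqref{eq:modified_coordinates} to produce the injective $\Psi$ (Proposition \ref{prop:almost_done}). Your tubular-neighborhood remark is pointing in the right direction for well-definedness and saturation, but it supplies neither injectivity nor the lattice correction.
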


The open neighborhood $\W\subseteq i L^2_r$ in Theorem \ref{th:main} is chosen in such a way that 
for any $\varphi\in\W$ the spectrum $\spec_p L(\varphi)$ has the property that all multiple eigenvalues are 
real with algebraic and geometric multiplicity two whereas all simple eigenvalues are non-real and appear in
complex conjugate pairs. Hence the periodic eigenvalues of $L(\varphi)$ have algebraic multiplicity at most two.
Recall also that a potential $\varphi\in i L^2_r$ is called a {\em finite gap potential} if the number of simple
periodic eigenvalues of $L(\varphi)$ is finite (cf. e.g. \cite{KLT1,KLT2}).
As an immediate application of Theorem \ref{th:main} one obtains the following

\begin{Coro}\label{coro:main}
Assume that $\psi\in i H^N_r$ with $N\in\Z_{\ge 0}$ has simple periodic spectrum $\spec_p L(\psi)$. 
Then for any $\varphi\in\W$ where $\W$ is the open neighborhood of Theorem \ref{th:main},
the following holds:
\begin{itemize}
\item[(i)] The set $\Psi(\iso_o(\varphi))$ is compact in $i \h^N_r$ and can be represented as a direct product of 
countably many circles, one for each pair of complex conjugated simple periodic eigenvalues.
\item[(ii)] For $N\ge 1$, the fNLS equation on $\Psi(\W\cap i H^N_r)\subseteq i \h^N_r$ takes the form
\[
\dot z_n=-i\omega_n z_n,\quad \dot w_n=i \omega_n w_n,\quad n\in\Z
\]
where $\omega_n\equiv\omega_n(I):=\p_{I_n}(\mathcal{H}_{NLS}\circ\Psi^{-1})$ are the NLS frequencies
and $I_n=z_n w_{(-n)}$, $n\in\Z$, are the actions. Hence, the solutions of the fNLS equation with initial data in 
$\W\cap i H^N_r$ are globally defined and almost periodic in time.
\item[(iii)] The finite gap potentials in $\W$ lie on finite dimensional fNLS invariant tori 
contained in $\W\cap i H^N_r$ for any $N\ge 0$. The set of these potentials is dense in $\W\cap i H^N_r$.
\end{itemize}
\end{Coro}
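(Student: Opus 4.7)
The corollary is extracted almost mechanically from Theorem~\ref{th:main} once one knows the relationship between the actions $I_n=z_n w_{(-n)}$ and the periodic spectrum of $L(\varphi)$. Throughout, write $\Psi(\varphi)=(z(\varphi),w(\varphi))$ and recall that on $i\h^N_r$ the reality condition $w_n=-\overline{z_{(-n)}}$ forces $I_n=-|z_n|^2\le0$, so each level $\{I_n=c_n\}$ is either a circle (when $c_n<0$) or a single point (when $c_n=0$).

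For part (i), the key input is that the actions $I_n$ are spectral invariants; this follows either from (NF3) applied to the hierarchy of Hamiltonians generated by the ZS trace formulas or, more directly, from the standard expression of $I_n$ as (a function of) the $n$th gap length of $\spec_p L(\varphi)$. Hence $\Psi(\iso_o(\psi))$ is contained in the product over $n\in\Z$ of the circles $\{|z_n|^2=|z_n(\psi)|^2\}$, one circle appearing per pair of complex conjugate simple eigenvalues (the remaining $n$ contribute a point since the corresponding eigenvalues are double). The reverse inclusion is obtained by showing that the Hamiltonian flows $\phi^{t}_{I_n}$ act as rotations $\theta_n\mapsto\theta_n+t$ on the Birkhoff image and stay in $\iso_o(\psi)$ (their generators $I_n$ Poisson-commute with every spectral invariant, so they preserve the spectrum). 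Since $\Psi(\iso_o(\psi))$ is connected, contains $\Psi(\psi)$, and is invariant under all these rotations, equality with the product of circles follows. Compactness in $i\h^N_r$ is then immediate: the weighted sum $\sum_n(1+n^2)^N|z_n|^2=\sum_n(1+n^2)^N|z_n(\psi)|^2<\infty$ is fixed on the set, giving the required uniform tail control on a closed bounded subset of $i\h^N_r$.

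For part (ii), I simply compute the Hamiltonian vector field of $\mathcal{H}:=\mathcal{H}_{NLS}\circ\Psi^{-1}$ in Birkhoff coordinates using (NF1) and (NF3). From $\mathcal{H}=\mathcal{H}(I)$ with $I_n=z_n w_{(-n)}$ and $\{z_n,w_{(-n)}\}=-i$, a direct calculation yields $\dot z_n=\{z_n,\mathcal{H}\}=-i\omega_n z_n$ and $\dot w_n=\{w_n,\mathcal{H}\}=i\omega_n w_n$, where $\omega_n=\p_{I_n}\mathcal{H}$. In particular $|z_n|$ and hence all actions are constants of motion, so the solution $z_n(t)=e^{-i\omega_n t}z_n(0)$ is a superposition of independent harmonic oscillations. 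Global existence in $i H^N_r$ comes from saturation of $\W$: the orbit starting in $\W\cap iH^N_r$ is contained in $\iso_o(\varphi)\subseteq\W$ by (i), and by (NF2) is confined to a compact subset of $i H^N_r$. Almost periodicity in the Bohr sense is then the statement that the $\ell^2$-convergent Fourier-type series $\sum_n(1+n^2)^{N/2}e^{-i\omega_n t}z_n(0)\,e_n$ is a uniform $H^N$-limit of its trigonometric partial sums.

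For part (iii), the characterization in Birkhoff coordinates is that $\varphi\in\W$ is finite gap iff $z_n(\varphi)=0$ (equivalently $I_n=0$) for all but finitely many $n$, since these are exactly the $n$ associated with a double real periodic eigenvalue. Such a $\Psi(\varphi)$ lies in $i\h^N_r$ for every $N$, so by (NF2) the potential lies in every $H^N$; together with part (ii) this confines the orbit to a finite dimensional fNLS-invariant torus. Density is proved in the image: given $\varphi\in\W\cap iH^N_r$, put $(z,w):=\Psi(\varphi)\in i\h^N_r$ and truncate $(z,w)$ to $(z^{(R)},w^{(R)})$ by setting the coordinates with $|n|>R$ to zero; these truncations converge to $(z,w)$ in $i\h^N_r$, hence in $i\ell^2_r$, and since $\Psi(\W)\subseteq i\ell^2_r$ is open they eventually fall into $\Psi(\W\cap iH^N_r)$. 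Pulling back via the real analytic inverse from (NF2) gives a sequence of finite gap potentials in $\W\cap iH^N_r$ converging to $\varphi$ in $H^N$.

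The only genuinely non-trivial step is the one used in part (i): identifying the joint level sets of the actions with isospectral sets. This rests on the spectral interpretation of $I_n$ through the gap lengths of $L(\varphi)$ and on the fact that the Hamiltonian flows of the $I_n$ generate all of $\iso_o(\psi)$; both ingredients are expected to come from the underlying construction of $\Psi$ in Theorem~\ref{th:main}, and invoking them is the main obstacle I anticipate. The remaining content of the corollary is bookkeeping with the canonical relations (NF1), the regularity statement (NF2), and the normal-form statement (NF3).
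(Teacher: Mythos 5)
The paper states the corollary without proof, describing it as an immediate application of Theorem~\ref{th:main}, so there is no explicit argument to compare against; your reconstruction is essentially the route the paper has in mind, and you correctly single out the only genuinely non-trivial step, namely identifying $\Psi(\iso_o(\varphi))$ with the torus $\Tor(\Psi(\varphi))$. That identification is in fact supplied by Section~\ref{sec:proofs} (Lemmas~\ref{lem:iso_components}, \ref{lem:lyapunov_stability}, \ref{lem:W} and Proposition~\ref{prop:almost_done}): the actions are constant on isospectral sets because they are defined by fixed-contour integrals of $\lambda\dot\Delta/\sqrt{\chi_p}$ and $\chi_p$ depends only on $\spec_p L(\varphi)$, the $X_{I_n}$ are isospectral and complete on $\W$, and $\W$ is saturated. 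Your two proposed justifications for the spectral invariance of the $I_n$ are both shakier than this direct one: invoking (NF3) is circular, and the gap-length characterization only applies for $|n|>R$ — after the finite-rank additive shift performed in Section~\ref{sec:proofs}, the low-mode actions are no longer pure gap data. Also rephrase the compactness sentence: a \emph{fixed} weighted $\ell^2$ sum does not give compactness; what does is that each $|z_n|$ is individually pinned to $|z_n(\varphi)|$, so the set is the closed product $\prod_n\{|z_n|=|z_n(\varphi)|\}$, whose tails are controlled by the fixed convergent series $\sum(1+n^2)^N|z_n(\varphi)|^2$ — you already have this from your level-set remark, it just needs to be what carries the argument.

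One further point worth double-checking in (ii): with $I_m=z_m w_{(-m)}$ and $\{z_n,w_{(-n)}\}=-i$, the chain rule gives $\{w_n,I_m\}=i\,\delta_{m,(-n)}\,w_n$, hence $\dot w_n=\{w_n,\mathcal{H}\}=i\,\omega_{(-n)}\,w_n$, not $i\,\omega_n w_n$. You reproduce the printed formula without flagging this; unless there is a separate symmetry forcing $\omega_n=\omega_{(-n)}$ (not evident from (NF1)--(NF3)), the index on $\omega$ in the $\dot w_n$ equation should read $-n$. This is harmless for your conclusions (conservation of actions, global existence, and almost periodicity all survive), but a blind derivation should have caught it. Parts (ii) and (iii) are otherwise correct; in particular the truncation argument for density is fine because $\Psi(\W)$ is, by construction, a tail neighborhood whose high-frequency part is a ball around $0$ in $i\ell^2_r$, so the truncations stay in $\Psi(\W)\cap i\h^N_r=\Psi(\W\cap iH^N_r)$.
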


\begin{Rem}
\begin{itemize}
\item[(i)] For any $\varphi\in\W\cap i H^N_r$, $N\ge 1$, and $t\in\R$ denote by $S_t(\varphi)$ the solution of the fNLS equation
obtained in Corollary \ref{coro:main} (ii) with initial data $\varphi$. Then for any given $N\in\Z_{\ge 1}$ and
$t\in\R$, one can prove that the flow map $S_t : \W\cap i H^N_r\to\W\cap i H^N_r$ is a homeomorphism 
(cf. \cite{KT1}). In addition, the map $S :\R\times\big(\W\cap i H^N_r\big)\to\W\cap i H^N_r$, 
$(t,\varphi)\mapsto S_t(\varphi)$, is continuous.
\item[(ii)] It can be shown that the frequencies $\omega_n$, $n\in\Z$, extend real analytically to the larger set
$\W\cap i L^2_r$ (cf. \cite{KT1,KM}).
\end{itemize}
\end{Rem}

The next result addresses the question of how restrictive the assumption of $\spec_p L(\psi)$ being simple is.
Let
\begin{equation}\label{eq:T}
\mathcal{T}:=\big\{\psi\in i L^2_r\,\big|\,\spec_p L(\psi)\,\,\text{is simple}\big\}.
\end{equation}
Recall that a subset $A$ of a complete metric space $X$ is said to be {\em residual} if it is the intersection of 
countably many open dense subsets. By Baire's theorem, such a set is dense in $X$. We prove in \cite{KTPreviato}
the following

\begin{Th}\label{prop:general_position}
For any integer $N\ge 0$, the set $\mathcal{T}\cap i H^N_r$ is residual in $i H^N_r$.
\end{Th}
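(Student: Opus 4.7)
The strategy is to realize $\mathcal{T} \cap i H^N_r$ as a countable intersection of open dense subsets of $i H^N_r$ and invoke Baire's theorem. The key object is the periodic characteristic function $\chi(\lambda, \varphi) := \det\bigl(M(2;\lambda,\varphi) - I\bigr)$, where $M(x;\lambda,\varphi)$ is the fundamental solution matrix of $L(\varphi) y = \lambda y$ on $[0,2]$; its zeros are the periodic eigenvalues of $L(\varphi)$, with order of vanishing equal to algebraic multiplicity. Standard estimates for the Zakharov--Shabat operator show that $\chi$ is entire in $\lambda \in \C$ and real analytic in $\varphi \in i H^N_r$, and by the standard counting lemma, for each bounded $K \subseteq i L^2_r$ there exists $n_0$ such that for all $\varphi \in K$ and $|n| \ge n_0$ the function $\chi(\cdot,\varphi)$ has exactly two zeros in $D_n := \{|\lambda - n\pi| < \pi/4\}$ and no others outside $\bigcup_n D_n$ beyond finitely many.

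For a radius $R > 0$, set
\[
\mathcal{T}_R := \bigl\{\varphi \in i H^N_r \bigm| \chi(\cdot,\varphi) \text{ has no zero on } |\lambda| = R \text{ and only simple zeros in } |\lambda| < R\bigr\}.
\]
By Rouché's theorem together with analytic perturbation theory for isolated simple zeros, $\mathcal{T}_R$ is open in $i H^N_r$: the finitely many zeros of $\chi(\cdot,\varphi_0)$ in $|\lambda| < R$ at a point $\varphi_0 \in \mathcal{T}_R$ remain simple and interior to $|\lambda| < R$ under small $i H^N_r$-perturbations. Choosing a sequence of radii $R_k \to \infty$ that eventually encloses every periodic eigenvalue of every potential (possible by the uniform control on eigenvalue location from the counting lemma), one verifies $\mathcal{T} \cap i H^N_r = \bigcap_{k} \mathcal{T}_{R_k}$.

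For density, fix $\varphi_0 \in i H^N_r$ and choose $R$ so that $\chi(\cdot,\varphi_0)$ has no zero on $|\lambda| = R$ (true for all but countably many $R$); by continuity this persists on a connected $i H^N_r$-neighborhood $V$ of $\varphi_0$, on which the number $m$ of zeros of $\chi(\cdot,\varphi)$ in $|\lambda| < R$ is constant. The Newton power sums
\[
p_k(\varphi) := \frac{1}{2\pi i}\oint_{|\lambda| = R} \lambda^k\,\frac{\partial_\lambda \chi(\lambda,\varphi)}{\chi(\lambda,\varphi)}\,d\lambda,
\]
are real analytic on $V$, and hence so is the discriminant $\Delta_R(\varphi) := \prod_{1\le i<j\le m}\bigl(\lambda_i(\varphi) - \lambda_j(\varphi)\bigr)^2$, which vanishes exactly at those $\varphi$ for which $\chi(\cdot,\varphi)$ has a multiple zero in $|\lambda| < R$. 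Density of $\mathcal{T}_R$ near $\varphi_0$ follows once $\Delta_R \not\equiv 0$ on $V$. To produce this, first-order perturbation theory at each double eigenvalue of $L(\varphi_0)$ expresses the splitting in terms of a $2 \times 2$ symmetric matrix whose entries are integrals of products of eigenfunctions against the perturbation; a generic $i H^N_r$-perturbation of $\varphi_0$ by a sufficiently rich finite combination of Fourier modes then splits every multiple eigenvalue in $|\lambda| < R$ simultaneously, giving some $\varphi \in V$ with only simple zeros of $\chi(\cdot,\varphi)$ in $|\lambda| < R$.

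The main obstacle is the joint-genericity in the last step: producing a single perturbation in $i H^N_r$ that simultaneously separates every multiple zero of $\chi(\cdot,\varphi_0)$ inside $|\lambda| < R$. One must verify that the coupled system of ``distinct-eigenvalue'' conditions at all multiple eigenvalues can be met by a single perturbation, which amounts to showing that a certain finite-dimensional algebraic variety in the space of Fourier coefficients of the perturbation is proper. A cleaner alternative, presumably adopted in~\cite{KTPreviato}, is to invoke density of finite-gap potentials in $i H^N_r$ combined with the known genericity of simple periodic spectrum within the finite-gap stratum of the focusing Zakharov--Shabat hierarchy, reducing the theorem to an approximation statement.
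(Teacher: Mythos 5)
The paper itself does not contain a proof of this statement --- Theorem \ref{prop:general_position} is quoted from the companion work \cite{KTPreviato} --- so there is no internal argument to compare against. Two points in your sketch deserve attention, one of them fatal. The fatal one is the ``cleaner alternative'' proposed at the end: a finite-gap potential $\varphi\in i L^2_r$ is, by definition, one for which only \emph{finitely many} periodic eigenvalues of $L(\varphi)$ are simple. Since $\spec_p L(\varphi)$ is an infinite discrete set, a finite-gap potential necessarily has infinitely many multiple eigenvalues and is therefore \emph{never} in $\mathcal{T}$. Density of the finite-gap stratum is thus of no help in approximating by elements of $\mathcal{T}$ (indeed $\mathcal{T}$ and the finite-gap potentials form two disjoint dense subsets), and this part of the plan has to be discarded entirely.

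The serious point is the density of $\mathcal{T}_R$, which you yourself flag as incomplete; in the focusing setting it is structurally harder than in the selfadjoint one. By Lemma~\ref{lem:spectrum_symmetries}, for $\varphi\in i L^2_r$ every \emph{real} periodic eigenvalue of $L(\varphi)$ has even algebraic multiplicity, so real eigenvalues are never simple. The first-order $2\times 2$ computation you invoke therefore cannot merely ``split'' a real double eigenvalue into two nearby real simple ones; it must move it off the axis into a complex-conjugate pair, and must do this simultaneously for every multiple eigenvalue in $B_R$ while keeping the already-simple non-real pairs from collapsing back onto $\R$. Establishing this --- for instance by exhibiting a resultant-type expression built from $\chi_p(\cdot,\varphi)$ and $\partial_\lambda\chi_p(\cdot,\varphi)$ over $\overline{B}_R$ as a real-analytic function that does not vanish identically, so that its zero set has empty interior --- is the substance of the theorem and must be proved, not asserted. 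A smaller defect: the identity $\mathcal{T}\cap i H^N_r=\bigcap_k\mathcal{T}_{R_k}$ fails as stated, because there is no fixed sequence $R_k\to\infty$ avoiding the boundary circles $|\lambda|=R_k$ for \emph{all} potentials; a $\varphi\in\mathcal{T}$ with an eigenvalue lying exactly on some $|\lambda|=R_k$ is excluded from $\mathcal{T}_{R_k}$. Defining $\mathcal{T}_R$ with the closed disk $\overline{B}_R$ instead (openness survives, by a sequential compactness argument showing the complement is closed) repairs this and makes the intersection genuinely equal to $\mathcal{T}\cap i H^N_r$.
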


\begin{Rem}
It is well known that the fNLS equation is wellposed on $i H^N_r$ for any integer $N\ge 0$ (\cite{Bo}).
It then follows from Theorem \ref{prop:general_position} and Corollary \ref{coro:main} (iii) that any solution 
in $i H^N_r$ with $N\ge 0$ can be approximated in $C\big([-T,T],i H^N_r\big)$ by
finite gap solutions for any $T>0$.
\end{Rem}

Finally we mention that the results of Theorem \ref{th:main} apply to any Hamiltonian in the fNLS hierarchy.
In particular, these results hold for the focusing modified KdV equation
\[
\p_t v=-\p_x^3 v-6 v^2 \p_xv,\quad v|_{t=0}=v_0
\]
with periodic boundary conditions which can be obtained as the restriction of the Hamiltonian PDE 
on the Poisson manifold $L^2_c$ with Hamiltonian
\[
\mathcal{H}_{mKdV}(\varphi):= 
\int_0^1\big(-(\p_x^3\varphi_1)\varphi_2+3(\varphi_1\p_x\varphi_1)\varphi_2^2\  \big)\,dx
\]
to the real subspace of $i L^2_r$,
\[
\big\{\varphi=i (v,v)\in i L^2_r\,\big|\,v\,\,\text{real valued}\big\}\cong L^2(\T,\R).
\]
We remark that Theorem \ref{prop:general_position} also holds in this setup meaning that
the subset $\{v\in L^2(\T,\R)\,|\,i(v,v)\in\mathcal{T}\cap i H^N_r\}$ is residual in $H^N(\T,\R)$ for any
integer $N\ge 0$ -- see \cite{KTPreviato} for more details.

\medskip

\noindent{\em Method of proof:} The proof of Theorem \ref{th:main} is based on the following key
ingredients: 

\medskip

(1) Setup allowing to construct analytic coordinates:
One of the principal merits of our analytic setup is that it allows to prove the canonical relations between the
action-angle coordinates by a deformation argument using the canonical relations between these coordinates
in a neighborhood of the zero potential established in \cite{KLTZ} (cf. also \cite{GK1} for the construction of
such coordinates in the defocusing case).

\medskip

(2)  Choice of contours: For an integrable system on a $2n$-dimensional symplectic space $M$ (as
the one discussed at the beginning of the introduction), action coordinates $I_j$, $1\le j\le n$, on
the invariant tori $N_c$ of dimension $n$, smoothly parametrized by regular values $c\in\R^n$
in the image of the momentum map $F : M\to \R^n$, can be defined by Arnold's formula
\[
I_j:=\frac{1}{2\pi}\int_{\gamma_j(c)}\alpha,\quad 1\le j\le n,
\]
in terms of the canonical $1$-form $\alpha$, stemming from the Poisson structure on $M$, and a set 
of cycles $\gamma_j(c)$, $1\le j\le n$, on $N_c$ that form a basis in the first homology group of
$N_c$ and depend  smoothly on the parameter $c$.
For integrable PDEs such as the KdV or the dNLS equations, Arnold's procedure for constructing the action 
coordinates has been successfully implemented in \cite{FMcL} (cf. also \cite{VN} and \cite{McKV}).
More precisely, in the case of the dNLS equation, the Dirichlet eigenvalues of $L(\varphi)$ can be used to define
cycles $\gamma_j$, $j\in\Z$.  Surprisingly, the integrals $\frac{1}{2\pi}\int_{\gamma_j}\alpha$ can be
interpreted as contour integrals on the complex plane (cf. e.g. \cite{FMcL,McKV,GK1}).
We emphasize that in the case of the fNLS equation, treated in the present paper, these contours can {\em not}
be obtained from the Dirichlet spectrum of $L(\varphi)$. It turns out that the contours in the complex plane which 
work in the case of the dNLS equation for potentials $\varphi$ near the origin in $H^N_r$ also work in the case of 
the fNLS for potentials near the origin in $i H^N_r$ (\cite{KLTZ}). We then use a deformation argument along 
an appropriately chosen path, that connects $\psi$ with a small open neighborhood of the zero potential in 
$i H^N_r$, to obtain contours for potentials in an open neighborhood of the isospectral set $\iso_o(\psi)\cap i H^N_r$.

\medskip

(3) Normalized differentials: The angle coordinates are defined in terms of a set of normalized differentials
on an open Riemann surface (of possibly infinite genus), associated to the periodic spectrum $\spec_p L(\varphi)$, 
and the Dirichlet spectrum of $L(\varphi)$ (cf. e.g. \cite{BBEIM,DK} for the case of finite 
gap potentials as well as \cite{GK1,McKT,McKV,FKT} for the case of more general potentials in $H^N_r$).
Such normalized differentials (with properties needed for our purposes) for generic potentials
in $i H^N_r$ have been constructed in \cite{KT2} (cf. also \cite{KLT3}). Note that the case of potentials in 
$i H^N_r$ is more complicated since the operator $L(\varphi)$ is not selfadjoint.
An important ingredient for estimates, needed to construct the angle coordinates, is the rather precise localization
of the zeros of these differentials provided in \cite{KT2}.
We emphasize that no assumptions are made on the Dirichlet eigenvalues of $L(\varphi)$. In particular,
they might have algebraic multiplicities greater or equal to two.
As in the case of the actions we construct the angles using a deformation argument.

\medskip

(4) Generic spectral properties of non-selfadjoint ZS operators:
The deformation argument, briefly discussed in item (1), requires that the path of deformation
stays within the part of phase space $i H^N_r$ which admits action-angle coordinates.
This part of the phase space contains the set of potentials $\varphi\in i H^N_r$ which have the property
that all multiple eigenvalues are real with geometric multiplicity two whereas all simple eigenvalues are 
non-real and appear in complex conjugated pairs.
In \cite{KLT2}, it is shown that this set is open and path connected. We remark that in order to
prove that the actions and the angles, first defined in an open neighborhood of the potential $\psi\in i H^N_r$,
analytically extend to an open neighborhood of $\iso_o(\psi)$, we make use of the assumption that {\em all}
periodic eigenvalues of $L(\psi)$ are simple.

\medskip

(5) Lyapunov type stability of isospectral sets: 
To ensure that the Birkhoff map \eqref{eq:Psi} is injective in an open neighborhood $\W$ of $\iso_o(\psi)$ in $i L^2_r$
with $\psi\in\mathcal{T}$, we show that for any open neighborhood $\U\subseteq\W$ of $\iso_o(\psi)$ in $i L^2_r$ there exists 
an open neighborhood $\V\subseteq\U$ of $\iso_o(\psi)$ in $i L^2_r$ with the property that $\iso_o(\varphi)\subseteq\U$
for any $\varphi\in\V$.

\medskip

\noindent{\em Additional comments:}
Informally, Theorem \ref{th:main} means that $(z_n,w_n)$, $n\in\Z$, can be thought of as nonlinear
Fourier coefficients of $\varphi=(\varphi_1,\varphi_2)$. They are referred to as {\em Birkhoff coordinates}.
The Birkhoff coordinates are constructed in terms of action and angle variables.
In the case of a finite gap potential, the angle variables are defined by {\em real} valued expressions involving 
the Abel map of a special curve of finite genus associated to the finite gap potential.
The question if these expressions are real valued has been a longstanding issue, raised by experts in the field
in connection with special solutions of the fNLS equation, given in terms of theta functions.

In \cite{KLTZ}, coordinates of the type provided by Theorem \ref{th:main} have been constructed 
in a open neighborhood of the origin in $i H^N_r$. Note however that $\spec_p L(0)$ is {\em not} simple
since it consists of real eigenvalues of algebraic and geometric multiplicity two.

\medskip

\noindent{\em Related work:} In the seventies and the eighties, several groups of scientists made pioneering
contributions to the development of the theory of integrable PDEs.
In the periodic or quasi-periodic setup, deep connections between such equations and complex geometry
as well as spectral theory were discovered and much of the efforts were aimed at representing classes of
solutions (referred to as finite band solutions) by the means of theta functions, leading to the discovery of 
finite dimensional invariant tori. See e.g. \cite{Lax,DN} as well as the books \cite{NMPZ,BBEIM,GH} and the 
references therein. Further developments of these connections allowed to treat more general classes of solutions. 
In particular, it was established that many integrable PDEs admit invariant tori with infinitely many degrees of freedom.
See e.g. \cite{McKT,FMcL,KP1,GK1,KT1} and the references therein.

\medskip

\noindent{\em Organization of the paper:} 
In Section \ref{sec:setup} we review various results on ZS operators and related topics
which are used in the paper. 
In Section \ref{sec:actions_in_U_tn} we construct a tubular neighborhood $U_{\rm tn}$ of an appropriate path, 
connecting a potential $\psi^{(0)}\in\mathcal{T}$ (cf. \eqref{eq:T}) near zero with a given potential $\psi\in\mathcal{T}$ and
define the actions in $U_{\rm tn}$. In Section \ref{sec:angles_in_U_tn} we define the angles in $U_{\rm tn}$.
In Section \ref{sec:actions_and_angles_in_U_iso} we introduce actions and angles in a tubular neighborhood in $i L^2_r$
of the isospectral set $\iso_o(\psi)$ of $\psi$. 
In Section \ref{sec:birkhoff_map} we define the pre-Birkhoff map and study its local properties
whereas in Section \ref{sec:proofs} we prove Theorem \ref{th:main}.

\medskip 

\noindent{\em Acknowledgment:} The authors gratefully acknowledge the support and hospitality of
the FIM at ETH Zurich and the Mathematics Departments of the Northeastern University and the University of Zurich.

\section{Setup}\label{sec:setup}
In this Section we review results needed throughout the paper. In particular
we recall the spectral properties of ZS operators and the results on Birkhoff
coordinates in a neighborhood of the zero potential (\cite{KLTZ}).

\medskip

For $\varphi = (\varphi _1,\varphi _2) \in L^2_c$ and $\lambda \in \C$,
let $M = M(x,\lambda ,\varphi )$, $x \in {\mathbb R}$, be the fundamental solution of 
$L(\varphi )M = \lambda M$ satisfying the initial condition 
$M(0,\lambda,\varphi )=\Id$, where $\Id$ is the identity $2\times 2$ matrix. 
It is convenient to write
\begin{equation}\label{eq:M}
 M := \begin{pmatrix} m_1 & m_2 \\ m_3 & m_4 \end{pmatrix}, \quad 
 M_1 :=\begin{pmatrix} m_1 \\ m_3 \end{pmatrix}, \quad 
 M_2 := \begin{pmatrix}m_2 \\ m_4 \end{pmatrix} .
\end{equation}
The fundamental solution $M(x,\lambda ,\varphi )$ is a continuous function on
${\mathbb R} \times \C \times L^2_c$, for any given $x \in {\mathbb R}$
it is analytic in $(\lambda,\varphi)\in\C \times L^2_c$, and for any
given $(\lambda , \varphi )$ in $\C \times L^2_c$, 
$M(\cdot,\lambda,\varphi)\in H^1([0,1],{\rm Mat}_{2\times 2}(\C))$ -- see \cite{GK1}. 
For $\varphi = 0$, $M$ is given by the diagonal $2\times 2$ matrix 
$E_\lambda (x) = \mbox{diag}(e^{- i\lambda x},e^{ i\lambda x}$).

\medskip

\noindent {\em Periodic spectrum:} Recall that a complex number $\lambda$ is said to be
a {\em periodic eigenvalue} of $L(\varphi )$ iff there exists a nonzero solution of 
$L(\varphi )f = \lambda f$ with $f(1) = \pm f(0)$. As $f(1) = M(1, \lambda )f(0)$ it means that $1$ or
$-1$ is an eigenvalue of the Floquet matrix $M(1,\lambda )$. Denote by
$\Delta (\lambda , \varphi )$ the discriminant of $L(\varphi )$
\[ 
\Delta (\lambda , \varphi ) := m_1(1, \lambda , \varphi ) + m_4(1,\lambda , \varphi ) 
\]
and note that by the discussion above $\Delta : \C\times L^2_c\to \C$ is analytic. 
It follows easily from the Wronskian identity that $\lambda$ is a periodic eigenvalue of 
$L(\varphi )$ iff $\Delta(\lambda )\in\{2,-2\}$. 
Hence the periodic spectrum of $L(\varphi )$ coincides with the zero set of the entire function 
\begin{equation}\label{eq:chi_p}
\chi_p(\lambda,\varphi ):=\Delta^2(\lambda,\varphi) - 4. 
\end{equation}
In fact, by \cite[Lemma 2.3]{KLT2}, the algebraic multiplicity of a periodic eigenvalue coincides with 
its multiplicity as a root of $\chi _p(\cdot,\varphi )$. We say that two complex numbers $a$ and $b$ are
{\em lexicographically ordered}, $a \preccurlyeq b$, if $[\re(a) < \re(b)]$ or
$[\re(a) = \re(b) \mbox { and } \im(a) \leq \im(b)]$. Furthermore, for any $n\in\Z$ and $R\in\Z_{\ge 0}$ 
introduce the disks 
\[
D_n := \{ \lambda \in \C,|\, |\lambda - n\pi | < \pi /6\}\, \text{and}\, 
B_R := \{ \lambda \in \C\,|\, |\lambda | < R \pi + \pi/6 \}.
\]
For a proof of the following well known Lemma see e.g. \cite{GK1}.

\begin{Lem}\label{lem:counting_lemma}
For any $\psi\in L^2_c$ there exist an open neighborhood $V_\psi$ of $\psi$ in $L^2_c$ and  $R_p\in\Z_{\ge 0}$ 
so that for any $\varphi\in V_\psi$ the following properties hold:
\begin{itemize}
\item[(i)]  The periodic spectrum $\spec_p L(\psi)$ is discrete. The set of eigenvalues counted with their
algebraic multiplicities consists of two sequences of complex numbers $(\lambda_n^+)_{|n|>R_p}$ and 
$(\lambda_n^-)_{|n|>R_p}$ with $\lambda_n^+,\lambda_n^-\in D_n$, 
$\lambda^-_n\preccurlyeq\lambda^+_n$, and a set $\Lambda_{R_p}\equiv\Lambda_{R_p}(\varphi)$ of 
$4 R_p+2$ additional eigenvalues that lie in the disk $B_{R_p}$. For any $|n|>R_p$, 
$\Delta(\lambda_n^\pm,\varphi)=(-1)^n 2$ and
\[
\lambda_n^\pm=n\pi+\ell_n^2
\]
where the remainder $(\lambda_n^\pm-n\pi)_{|n|>R_p}$ is bounded in $\ell^2$ locally uniformly in 
$\varphi\in V_\psi$. 
(Later we will list the eigenvalues in $B_{R_p}$ in a way convenient for our purposes.)

\item[(ii)] The set of roots of the entire function 
$\lambda\mapsto\dot\Delta(\lambda)\equiv\p_\lambda\Delta(\lambda,\varphi)$,
when counted with multiplicities, consists of a sequence
$(\dot\lambda_n)_{|n|>R_p}$, so that $\dot\lambda_n\in D_n$, and a set 
$\dot\Lambda_{R_p}\equiv\dot\Lambda_{R_p}(\varphi)$ of $2 R_p+1$ additional roots 
that lie in the disk $B_{R_p}$. For $|n|>R_p$, one has
\[
\dot\lambda_n=\frac{\lambda_n^++\lambda_n^-}{2}+(\lambda_n^+-\lambda_n^-)^2\ell^2_n
\]
where the remainder $\ell^2_n$ is bounded in $\ell^2$ locally uniformly in $\varphi\in V_\psi$.
The roots in $\dot\Lambda_{R_p}$ are listed in lexicographic order and with their multiplicities 
\[
\dot\lambda_{-R_p}\preccurlyeq\cdots\preccurlyeq
\dot\lambda_{k}\preccurlyeq\dot\lambda_{k+1}
\preccurlyeq\cdots\preccurlyeq\dot\lambda_{R_p},
\quad -R_p\le k\le R_p-1.
\]
\end{itemize}
\end{Lem}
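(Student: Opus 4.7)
The plan is to compare the entire functions $\chi_p(\cdot,\varphi)$ and $\dot\Delta(\cdot,\varphi)$ with their explicit values at the zero potential, where $\Delta(\lambda,0)=2\cos\lambda$ and $\dot\Delta(\lambda,0)=-2\sin\lambda$, and therefore $\chi_p(\lambda,0)=-4\sin^2\lambda$ has double zeros at $\lambda=n\pi$ for every $n\in\Z$. The comparison will be carried out by Rouché's theorem on two families of contours: the large circles $\p B_R$ and the small circles $\p D_n$ for $|n|>R_p$. The input needed is the standard asymptotic estimate for $\Delta$ and $\dot\Delta$, which one obtains from the Volterra integral equation for $M-E_\lambda$ and Gronwall's inequality: for any $\varphi\in L^2_c$,
\[
|\Delta(\lambda,\varphi)-2\cos\lambda|=e^{|\im\lambda|}\cdot o(1),\quad
|\dot\Delta(\lambda,\varphi)+2\sin\lambda|=e^{|\im\lambda|}\cdot o(1),
\]
as $|\lambda|\to\infty$, locally uniformly in $\varphi$.

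First I would choose $R_p\in\Z_{\ge 0}$, depending on $\psi$, large enough that for $\varphi$ in a sufficiently small neighborhood $V_\psi$ of $\psi$ the above remainder estimates imply, on $\p B_R$ for every $R>R_p$ and on $\p D_n$ for every $|n|>R_p$, the strict inequalities $|\chi_p(\lambda,\varphi)+4\sin^2\lambda|<4|\sin\lambda|^2$ and $|\dot\Delta(\lambda,\varphi)+2\sin\lambda|<2|\sin\lambda|$. Rouché's theorem applied to $\chi_p$ on $\p D_n$ yields exactly two zeros (with multiplicities) in each $D_n$, which I label $\lambda_n^-\preccurlyeq\lambda_n^+$; applied to $\dot\Delta$ it yields exactly one zero in each $D_n$, which I label $\dot\lambda_n$. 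Applying Rouché on $\p B_R$ and letting $R$ range over $R_p+1,R_p+2,\dots$ gives a total of $2(2R+1)$ zeros of $\chi_p$ and $2R+1$ zeros of $\dot\Delta$ in $B_R$; subtracting those already accounted for in the annuli leaves precisely $4R_p+2$ additional periodic eigenvalues and $2R_p+1$ additional zeros of $\dot\Delta$ in $B_{R_p}$, which can be listed lexicographically as stated. The identity $\Delta(\lambda_n^\pm,\varphi)=(-1)^n 2$ follows because, by continuity and the already-established closeness to $\lambda=n\pi$ where $\Delta(n\pi,0)=(-1)^n 2$, the value $\Delta(\lambda_n^\pm,\varphi)$ cannot equal $-(-1)^n 2$ for $|n|>R_p$.

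The main technical step is the $\ell^2$ estimate for the remainders $\lambda_n^\pm-n\pi$. Here I would go back to the Volterra representation and write $\Delta(\lambda,\varphi)-2\cos\lambda$ as an explicit series in $\varphi$ whose leading term is essentially a Fourier coefficient of a quadratic expression in $\varphi_1,\varphi_2$ evaluated at frequency $2\lambda$; Parseval then guarantees that, when $\lambda=n\pi+o(1)$, the sequence of these leading contributions lies in $\ell^2_n$, locally uniformly in $\varphi\in V_\psi$. Plugging in $\lambda=\lambda_n^\pm$ and using $\Delta(\lambda_n^\pm,\varphi)=(-1)^n 2$ together with the nonvanishing of $\p_\lambda^2\Delta$ at the unperturbed double roots then inverts to $\lambda_n^\pm-n\pi=\ell_n^2$. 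This step — extracting the Fourier-coefficient structure and controlling it uniformly on $V_\psi$ — is the principal obstacle; everything else is essentially winding-number bookkeeping.

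Finally, for the refined location of $\dot\lambda_n$ I would Taylor-expand $\Delta(\cdot,\varphi)$ around the midpoint $\mu_n:=(\lambda_n^++\lambda_n^-)/2$. Because $\Delta$ takes the same value $(-1)^n 2$ at the two symmetric points $\mu_n\pm\tfrac{1}{2}(\lambda_n^+-\lambda_n^-)$, the odd part of the Taylor series forces $\dot\Delta(\mu_n,\varphi)=O\bigl((\lambda_n^+-\lambda_n^-)^2\bigr)$, with the implicit constant in $\ell^2_n$. Dividing by $\p_\lambda\dot\Delta(\dot\lambda_n,\varphi)$, which is uniformly bounded below on $D_n$ for $|n|>R_p$ by the earlier Rouché estimate, gives $\dot\lambda_n-\mu_n=(\lambda_n^+-\lambda_n^-)^2\ell_n^2$ by a one-step Newton argument. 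This closes the proof.
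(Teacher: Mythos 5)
The paper does not give a proof of this lemma; it cites~\cite{GK1}. Your sketch follows the same standard counting-lemma strategy used there: Rouché against the zero potential on the contours $\partial D_n$ and $\partial B_R$ to locate and count the zeros of $\chi_p$ and $\dot\Delta$, the identification $\Delta(\lambda_n^\pm)=(-1)^n 2$ by continuity from $D_n$, and a Fourier/Parseval argument applied to the leading term of the Volterra series for the $\ell^2$ bound on $\lambda_n^\pm-n\pi$. So the overall architecture is sound and is essentially the reference's approach.

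Two points deserve sharpening. First, to compare $\chi_p$ with $-4\sin^2\lambda$ you should factor $\chi_p(\lambda,\varphi)+4\sin^2\lambda=(\Delta-2\cos\lambda)(\Delta+2\cos\lambda)$ and use that the second factor is $O(e^{|\im\lambda|})$; the resulting bound $e^{2|\im\lambda|}\cdot o(1)$ is then strictly below $4|\sin\lambda|^2$ on $\partial D_n$ because $|\sin\lambda|\gtrsim e^{|\im\lambda|}$ there — your inequalities are right but this quadratic bookkeeping is what makes them work. Second, and this is the genuine gap: in the last step you assert that the constant in $\dot\Delta(\tau_n)=O(\gamma_n^2)$ is itself an $\ell^2$ sequence, but the Taylor argument as stated only delivers a \emph{bounded} constant. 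Writing out the odd part of the Taylor series gives $\dot\Delta(\tau_n)=-\tfrac{\gamma_n^2}{24}\dddot\Delta(\tau_n)+O(\gamma_n^4)$, so the claim reduces to $\dddot\Delta(\tau_n,\varphi)\in\ell^2$. This does hold, but it requires (a) the quantitative version of your asymptotic estimate, namely $\sup_{\lambda\in D_n}|\Delta(\lambda,\varphi)-2\cos\lambda|e^{-|\im\lambda|}\in\ell^2_n$ uniformly on $V_\psi$ (pointwise $\ell^2$ at $\lambda=n\pi$ is not enough), so that Cauchy's estimate propagates the $\ell^2$ bound to $\dddot\Delta-\dddot{(2\cos)}$; and (b) the observation that $\dddot\Delta(\tau_n,0)=2\sin\tau_n=(-1)^n2\sin(\tau_n-n\pi)$ is already $\ell^2$ because $\tau_n-n\pi$ is. Also, the lower bound on $\ddot\Delta$ in $D_n$ that you invoke for the Newton step comes from the asymptotics $\ddot\Delta(\lambda,\varphi)=-2\cos\lambda+o(e^{|\im\lambda|})\approx 2(-1)^{n+1}$, not from Rouché; that attribution should be corrected. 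With these repairs your proof is complete and in line with the cited reference.
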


For potentials $\varphi $ in $ i L^2_r$, the periodic spectrum of $L(\varphi )$ has additional properties.
By \cite[Proposition 2.6]{KLT2} the following holds.

\begin{Lem}\label{lem:spectrum_symmetries} 
For any given $\varphi\in i L^2_r$ any real periodic eigenvalue of $L(\varphi )$ has geometric multiplicity 
two and even algebraic multiplicity. For any periodic eigenvalue in $\C \backslash {\mathbb R}$, 
its complex conjugate $\overline \lambda$ is also a periodic eigenvalue and has the same
algebraic and geometric multiplicity as $\lambda $.
The periodic eigenvalues $\lambda^+_n$ and $\lambda^-_n$, $|n| > R_p$, given by 
Lemma \ref{lem:counting_lemma}, satisfy $\im(\lambda_n^+)\ge 0$ and 
\[ 
\lambda^-_n=\overline{\lambda ^+_n}\quad \forall |n| > R_p\,.
\]
\end{Lem}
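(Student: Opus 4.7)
The plan is to exploit the antilinear symmetry of $L(\varphi)$ induced by the constraint $\varphi_2=-\overline{\varphi_1}$. A direct calculation---complex-conjugating the two scalar equations in $L(\varphi)f=\lambda f$ for $f=(f_1,f_2)^T$ and using $\overline{\varphi_1}=-\varphi_2$, $\overline{\varphi_2}=-\varphi_1$---shows that $g:=(-\overline{f_2},\overline{f_1})^T$ satisfies $L(\varphi)g=\overline{\lambda}\,g$. The map $f\mapsto g$ is antilinear and preserves both periodic and antiperiodic boundary conditions on $[0,1]$, so it restricts to an antilinear bijection between the periodic (resp.\ antiperiodic) $\lambda$-eigenspace of $L(\varphi)$ and the periodic (resp.\ antiperiodic) $\overline{\lambda}$-eigenspace. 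In particular, the geometric multiplicities of $\lambda$ and $\overline{\lambda}$ as periodic eigenvalues agree.

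Applying the symmetry to the two columns of the fundamental matrix $M(x,\lambda,\varphi)$ and tracking the initial conditions at $x=0$ yields
\[
m_1(x,\overline{\lambda},\varphi)=\overline{m_4(x,\lambda,\varphi)},\quad m_4(x,\overline{\lambda},\varphi)=\overline{m_1(x,\lambda,\varphi)},
\]
\[
m_2(x,\overline{\lambda},\varphi)=-\overline{m_3(x,\lambda,\varphi)},\quad m_3(x,\overline{\lambda},\varphi)=-\overline{m_2(x,\lambda,\varphi)}.
\]
In particular $\Delta(\overline{\lambda},\varphi)=\overline{\Delta(\lambda,\varphi)}$ and hence $\chi_p(\overline{\lambda},\varphi)=\overline{\chi_p(\lambda,\varphi)}$, so the algebraic multiplicities of $\lambda$ and $\overline{\lambda}$ as roots of $\chi_p(\cdot,\varphi)$ coincide, establishing the second assertion.

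For a real periodic eigenvalue $\lambda\in\R$, the map $f\mapsto g$ sends the $\lambda$-eigenspace to itself. The ansatz $g=c\,f$ for $c\in\C$ leads to $(1+|c|^2)\overline{f_2}=0$, hence $f\equiv 0$; so $f$ and $g$ are linearly independent whenever $f\neq 0$. Combined with the fact that $L(\varphi)$ is a first-order $2\times 2$ system, this forces the geometric multiplicity to equal two, i.e.\ $M(1,\lambda,\varphi)=\pm\Id$. For the evenness of the algebraic multiplicity, I would combine the Wronskian identity $m_1 m_4-m_2 m_3=1$ with $(m_1+m_4)^2=(m_1-m_4)^2+4 m_1 m_4$ to derive the factorization
\[
\chi_p(\lambda,\varphi)=\big(m_1-m_4\big)^2+4\,m_2 m_3
\]
evaluated at $x=1$. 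For $\lambda\in\R$ the symmetries above give $m_4=\overline{m_1}$ and $m_3=-\overline{m_2}$, so $(m_1-m_4)^2=-4\big(\im(m_1)\big)^2\le 0$ and $4 m_2 m_3=-4|m_2|^2\le 0$. Thus $\chi_p(\cdot,\varphi)$ is non-positive on $\R$, so every real zero is a local maximum of a real analytic function attaining the value zero, and hence has even order.

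Finally, for $|n|>R_p$, Lemma \ref{lem:counting_lemma} asserts that $D_n$ contains exactly two periodic eigenvalues $\lambda_n^\pm$ counted with algebraic multiplicity. Since $D_n$ is symmetric about $\R$ and the multiset $\{\lambda_n^+,\lambda_n^-\}$ is invariant under complex conjugation, either both eigenvalues lie in $\R$ or they form a non-real complex conjugate pair. In the real case the evenness of the algebraic multiplicity of a real eigenvalue forces the two to coincide, so $\lambda_n^+=\lambda_n^-\in\R$ and trivially $\lambda_n^-=\overline{\lambda_n^+}$. In the non-real case $\lambda_n^-=\overline{\lambda_n^+}$ directly; since then $\re(\lambda_n^-)=\re(\lambda_n^+)$, the lexicographic ordering $\lambda_n^-\preccurlyeq\lambda_n^+$ forces $\im(\lambda_n^-)\le\im(\lambda_n^+)$, i.e.\ $\im(\lambda_n^+)\ge 0$. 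The step I expect to be the main obstacle is the factorization $\chi_p=(m_1-m_4)^2+4 m_2 m_3$ together with the sign analysis on the real axis, as this is what converts the mere pairing symmetry into the parity statement for real eigenvalues.
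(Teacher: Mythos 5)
Your proof is correct. Note that the paper itself does not supply a proof of this lemma---it simply cites \cite[Proposition~2.6]{KLT2}---so there is nothing in the present text to compare against. Your argument via the antilinear symmetry $f=(f_1,f_2)\mapsto g=(-\overline{f_2},\overline{f_1})$ is the standard one for Zakharov--Shabat operators with $\varphi_2=-\overline{\varphi_1}$, and all the pieces check out: the induced relations among the $m_j$'s and hence $\Delta(\overline\lambda,\varphi)=\overline{\Delta(\lambda,\varphi)}$ (consistent with Lemma~\ref{lem:product1}~(iii)); the linear-independence argument $(1+|c|^2)\overline{f_2}=0\Rightarrow f=0$ for geometric multiplicity two at real $\lambda$; the Wronskian identity giving $\chi_p=(m_1-m_4)^2+4m_2m_3\le 0$ on $\R$ and hence even order at real zeros (this is in effect a self-contained proof of \eqref{eq:real_line}); and the conjugation-invariance of the two-point multiset in $D_n$ combined with the even-multiplicity constraint to conclude $\lambda_n^-=\overline{\lambda_n^+}$ and, via the lexicographic ordering, $\im(\lambda_n^+)\ge 0$. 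This is almost certainly the same argument as in the cited reference; I see no gaps.
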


\medskip

\noindent{\em Discriminant:} The following properties of $\Delta $ and $\dot\Delta $ are 
well known -- see e.g. \cite[Section 5] {GK1}. To state them introduce
$\pi _n:=n\pi \,\,\, \forall n \in \Z \backslash \{ 0 \}$ and $\pi _0 := 1$.

\begin{Lem}\label{lem:product1} 
For $\varphi\in L^2_c$ arbitrary, let $R_p \in \Z_{\ge 0}$ be as in 
Lemma \ref{lem:counting_lemma}.
\begin{itemize}

\item[(i)] The function 
$\lambda \mapsto \chi _p(\lambda ) = \Delta (\lambda ,\varphi )^2 - 4$ 
is entire and admits the product representation
\[ 
\chi _p(\lambda ) =- 4\Big(\prod _{|n|\leq R_p}\frac{1}{\pi _n^2}\Big)
\cdot\chi _{R_p}(\lambda)
\cdot\prod_{|n|>R_p} \frac{(\lambda ^+_n-\lambda )(\lambda ^-_n-\lambda )}{\pi ^2_n} 
\]
where $\chi _{R_p}(\lambda )\equiv\chi _{R_p}(\lambda,\varphi)$ denotes the polynomial of degree 
$4R_p + 2$ given by
\[ 
\chi_{R_p}(\lambda):=\prod _{\eta\in\Lambda_{R_p}} (\eta-\lambda) .
\]

\item[(ii)] The function $\lambda\mapsto\dot\Delta(\lambda )\equiv\dot\Delta(\lambda,\varphi)$ is 
entire and admits the product representation
\[ 
\dot\Delta(\lambda)=2\Big(\prod _{|n|\leq R_p}\frac{1}{\pi _n}\Big)
\cdot\prod_{\eta\in\dot\Lambda_{R_p}} (\eta-\lambda)
\cdot\prod _{|n|>R_p}\frac{\dot\lambda _n-\lambda }{\pi _n} .
\]

\item[(iii)] For any $\varphi\in i L^2_r$ and $\lambda\in\C$
\[ 
\Delta(\overline\lambda,\varphi)=\overline{\Delta (\lambda,\varphi )}, \quad 
\dot\Delta(\overline\lambda,\varphi )=\overline{\dot\Delta(\lambda,\varphi)} .
\]
In particular, the zero set of $\dot\Delta(\cdot,\varphi )$ is invariant under complex conjugation and 
thus $\dot\lambda_n$ is a simple real root for any $|n|>R_p$.
\end{itemize}
\end{Lem}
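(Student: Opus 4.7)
My plan is to treat (i) and (ii) together as applications of Hadamard factorization for entire functions of exponential type with explicitly identified zeros, and to deduce (iii) from two straightforward conjugation symmetries of the ZS operator.

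For (i), the starting input is the standard asymptotic expansion $\Delta(\lambda,\varphi) = 2\cos\lambda + O(e^{|\im\lambda|}/\lambda)$ as $|\lambda|\to\infty$, locally uniformly in $\varphi$; this makes $\chi_p$ entire of exponential type $2$ with $\chi_p(\lambda)\sim -4\sin^2\lambda$ along the imaginary axis. I would then define the candidate product
\[
f(\lambda) := -4\Big(\prod_{|n|\le R_p}\pi_n^{-2}\Big)\,\chi_{R_p}(\lambda)\prod_{|n|>R_p}\frac{(\lambda_n^+-\lambda)(\lambda_n^--\lambda)}{\pi_n^2}
\]
and observe that, since $\lambda_n^\pm = n\pi+\ell_n^2$ by Lemma \ref{lem:counting_lemma}(i), each factor of the infinite product equals $1+O(1/n^2)$ on compact $\lambda$-sets, so the product converges absolutely and uniformly on compacta. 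This makes $f$ entire with exactly the zero set of $\chi_p$, matching multiplicities thanks to the identification of the algebraic multiplicity of a periodic eigenvalue with its multiplicity as a root of $\chi_p$ (\cite[Lemma 2.3]{KLT2}). The quotient $\chi_p/f$ is then entire and zero-free, hence equal to $e^g$ for some entire $g$; exponential-type bounds on numerator and denominator force $g$ to be an affine polynomial, and comparing leading asymptotics of $\chi_p(it)$ and $f(it)$ (both behaving like $e^{2|t|}$ with the same leading coefficient) pins down $g\equiv 0$.

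Part (ii) is entirely parallel: use $\dot\Delta(\lambda) = -2\sin\lambda + O(e^{|\im\lambda|}/\lambda)$, the zero data from Lemma \ref{lem:counting_lemma}(ii), and the candidate
\[
h(\lambda) := 2\Big(\prod_{|n|\le R_p}\pi_n^{-1}\Big)\prod_{\eta\in\dot\Lambda_{R_p}}(\eta-\lambda)\prod_{|n|>R_p}\frac{\dot\lambda_n-\lambda}{\pi_n},
\]
whose convergence relies on $\dot\lambda_n = n\pi + O(1/n)$; the ratio/Liouville step is identical. The main quantitative input requiring care, and really the one place where one cannot short-cut the work, is obtaining the asymptotic expansions of $\Delta$ and $\dot\Delta$ to sufficient precision on the imaginary axis together with exponential-type bounds on $\chi_p$ and $f$; this is a standard perturbative estimate via the Volterra form of the equation for $M(x,\lambda,\varphi)$, which I would either derive or cite from Section \ref{sec:setup}.

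For (iii), I would use two elementary conjugation identities. First, a direct computation gives $\sigma_3 V(\varphi)\sigma_3 = -V(\varphi)$, hence $\sigma_3 L(\varphi)\sigma_3 = L(-\varphi)$, which yields $M(x,\lambda,-\varphi) = \sigma_3 M(x,\lambda,\varphi)\sigma_3$ and therefore $\Delta(\lambda,-\varphi) = \Delta(\lambda,\varphi)$ by taking traces. Second, for $\varphi\in iL^2_r$ the relations $\overline{\varphi_1}=-\varphi_2$, $\overline{\varphi_2}=-\varphi_1$ give $\sigma_1\overline{L(\varphi)}\sigma_1 = L(-\varphi)$ with $\sigma_1=\bigl(\begin{smallmatrix}0&1\\1&0\end{smallmatrix}\bigr)$; applied to the fundamental solution this reads $\sigma_1\overline{M(x,\overline\lambda,\varphi)}\sigma_1 = M(x,\lambda,-\varphi)$, and taking traces at $x=1$ yields $\overline{\Delta(\overline\lambda,\varphi)} = \Delta(\lambda,-\varphi)$. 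Chaining the two identities produces the symmetry $\Delta(\overline\lambda,\varphi) = \overline{\Delta(\lambda,\varphi)}$, and differentiating in $\lambda$ gives the same identity for $\dot\Delta$. The disks $D_n$ are centered on the real axis, and Lemma \ref{lem:counting_lemma}(ii) places exactly one root (with multiplicity) of $\dot\Delta$ inside each $D_n$ for $|n|>R_p$; the conjugation symmetry of the zero set then forces $\dot\lambda_n = \overline{\dot\lambda_n}$ and simplicity.
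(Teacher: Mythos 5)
The paper does not actually prove this lemma; it cites it as ``well known'' with a reference to Section 5 of \cite{GK1}, so yours is a genuine reconstruction. Your overall strategy (Hadamard/Liouville for (i) and (ii), two explicit conjugation symmetries of the fundamental solution for (iii)) is the standard one and matches the cited source in spirit. The argument for (iii) is correct and cleanly presented; the deduction that $\dot\lambda_n$ is real and simple from the conjugation-invariance of the zero set, the conjugation-invariance of the disks $D_n$, and the uniqueness of the root in $D_n$ for $|n|>R_p$ is exactly right.

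There are two slips in (i) and (ii) worth flagging, both minor but both actual errors as written. First, you claim each factor of the infinite product is $1+O(1/n^2)$ on compact $\lambda$-sets. It is not: using $\lambda_n^\pm = n\pi + \ell_n^2$ one finds
\[
\frac{(\lambda_n^+-\lambda)(\lambda_n^--\lambda)}{\pi_n^2} = 1 - \frac{2\lambda}{n\pi} + O\!\Big(\frac{1}{n^2}\Big) + O\!\Big(\frac{\ell_n^2}{n}\Big),
\]
so each factor is only $1+O(1/n)$, and the product does not converge absolutely term by term. One must take symmetric partial products and pair the $n$ and $-n$ factors, whereupon the $\mp 2\lambda/(n\pi)$ contributions cancel and the paired factor is $1 + a_n/n$ with $(a_n)\in\ell^2$; this is exactly the device in the Remark after \eqref{eq:canonical_root_analytic}, and it gives absolute convergence of the paired product. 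The same issue affects the product in (ii). Second, you quote $\dot\lambda_n = n\pi + O(1/n)$. Lemma \ref{lem:counting_lemma} (ii) combined with (i) only gives $\dot\lambda_n = n\pi + \ell^2_n$, which is weaker than $O(1/n)$; again this does not damage the conclusion once the pairing is in place, but the stated asymptotics should be corrected. Finally, the sentence ``exponential-type bounds on numerator and denominator force $g$ to be an affine polynomial'' is a little quick: dividing one exponential-type function by another does not immediately give exponential-type control on the quotient. The standard way to make this precise is either to write both $\chi_p$ and the candidate product $f$ as Hadamard canonical products of genus $\le 1$ and observe they share zeros, so the ratio is $e^{a\lambda+b}$; or to bound the ratio and its reciprocal directly on the circles $|\lambda|=(N+\tfrac12)\pi$ (avoiding zeros) and invoke the maximum principle. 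Either route then reduces to the asymptotic comparison on the imaginary axis you already indicate.
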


\medskip

\noindent The spectrum of $L(\varphi)$, $\varphi \in L^2_c$, when considered as an unbounded operator on 
$L^2({\mathbb R},\C)\times L^2 ({\mathbb R}, \C)$ is given by
\[ 
\spec_{\R} L(\varphi )=\big\{\lambda \in \C\,|\,\Delta (\lambda)\in[-2, 2]\big\}
\]
-- see e.g. \cite{KLT1}. Now consider the case $\varphi\in i L^2_r$.
By Lemma \ref{lem:product1} (iii), $\Delta (\lambda )$ is real for $\lambda \in{\mathbb R}$ and 
by Lemma \ref{lem:spectrum_symmetries} and the Wronskian identity one concludes (see e.g. \cite{KLT1}) that 
\begin{equation}\label{eq:real_line}
\Delta(\lambda,\varphi)\in [-2,2]\quad\forall\lambda\in\R\,\,\,\forall\varphi\in i L^2_r. 
\end{equation}

\medskip

\noindent{\em Dirichlet spectrum:} Denote by $\spec _D L(\varphi )$ the Dirichlet
spectrum of the operator $L(\varphi )$, i.e. the spectrum of the operator
$L(\varphi )$ considered with domain
\[ 
\big\{ f = (f_1, f_2) \in H^1([0,1], \C)^2\,\big|\,f_1(0) = f_2(0),\
f_1(1) = f_2(1)\big\} .
\]
(When the operator $L(\varphi)$ is written as an AKNS operator, the above boundary conditions 
become the standard Dirichlet boundary conditions -- see e.g. \cite{GK1}.)   
The Dirichlet spectrum is discrete and the eigenvalues satisfy the following
Counting Lemma -- see e.g. \cite {GK1}.
\begin{Lem}\label{lem:dirichlet_spectrum}
For any $\psi \in L^2_c$ the Dirichlet spectrum $\spec_D L(\psi)$ of $L(\psi)$ is discrete. Moreover, 
there exist $R_D\in\Z_{\ge 0}$ and an open neighborhood $V_\psi$ of $\psi$ in $L^2_c$ so that for any 
$\varphi\in V_\psi$, the set of Dirichlet eigenvalues, when counted with their multiplicities, 
consists of a sequence $(\mu_n)_{|n|>R_D}$ with 
$\mu_n\in D_n$  and a set of $2 R_D+1$ additional Dirichlet eigenvalues that lie in the disk $B_{R_D}$. 
These additional Dirichlet eigenvalues are listed in lexicographic order and with multiplicities
$\mu_{-R_D}\preccurlyeq\cdots\preccurlyeq\mu_{k}\preccurlyeq\mu_{k+1}\preccurlyeq\cdots\preccurlyeq\mu_{R_D}$, 
$-R_D\le k\le R_D$. For $|n|>R_D$, one has
\[
\mu_n=n\pi+\ell^2_n
\]
where the remainder $\ell^2_n$ is bounded in $\ell^2$ locally uniformly in $\varphi\in V_\psi$.
If $\lambda $ is a periodic eigenvalue of $L(\varphi )$ of geometric multiplicity two then $\lambda 
$ is also a Dirichlet eigenvalue.
\end{Lem}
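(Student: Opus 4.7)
The plan is to imitate the proof of the periodic Counting Lemma (Lemma \ref{lem:counting_lemma}) by producing an entire characteristic function whose zeros are the Dirichlet eigenvalues and then applying Rouch\'e's theorem with the zero potential as reference.

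First I would construct the Dirichlet characteristic function. A value $\lambda\in\C$ is a Dirichlet eigenvalue of $L(\varphi)$ iff there is a nonzero solution $f=(f_1,f_2)$ of $L(\varphi)f=\lambda f$ with $f_1(0)=f_2(0)$ and $f_1(1)=f_2(1)$. Writing $f(0)=(a,a)^\top$ and $f(x)=M(x,\lambda,\varphi)f(0)$, the boundary condition at $x=1$ reduces to
\[
\chi_D(\lambda,\varphi):=(m_1+m_2-m_3-m_4)(1,\lambda,\varphi)=0.
\]
From the regularity properties of $M$ recalled after \eqref{eq:M}, $\chi_D$ is entire in $\lambda$ and analytic in $\varphi\in L^2_c$. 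A standard argument (as for $\chi_p$) shows that the algebraic multiplicity of a Dirichlet eigenvalue equals its multiplicity as a zero of $\chi_D(\cdot,\varphi)$. At the zero potential one has $M(1,\lambda,0)=\mathrm{diag}(e^{-i\lambda},e^{i\lambda})$, hence $\chi_D(\lambda,0)=-2i\sin\lambda$, whose zero set is $\{n\pi\,|\,n\in\Z\}$, all of them simple.

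Next I would carry out the Rouch\'e argument. Using the integral (Volterra) representation of $M(x,\lambda,\varphi)$ coming from variation of constants against the free fundamental solution $E_\lambda$, together with the Riemann-Lebesgue lemma in the sharp form appropriate for $L^2_c$-potentials, one gets
\[
\chi_D(\lambda,\varphi)=-2i\sin\lambda+o\bigl(e^{|\mathrm{Im}\lambda|}\bigr)
\]
as $|\lambda|\to\infty$, uniformly for $\varphi$ in bounded subsets of $L^2_c$. Choose $R_D\in\Z_{\ge 0}$ so large (depending on a bounded neighborhood $V_\psi$ of $\psi$) that on $\partial B_{R_D}$ and on each $\partial D_n$ with $|n|>R_D$ the estimate $|\chi_D(\lambda,\varphi)+2i\sin\lambda|<|2i\sin\lambda|$ holds uniformly in $\varphi\in V_\psi$; this is possible since $|\sin\lambda|$ is bounded below on these contours by a constant multiple of $e^{|\mathrm{Im}\lambda|}$. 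Rouch\'e's theorem then yields that in each $D_n$, $|n|>R_D$, the function $\chi_D(\cdot,\varphi)$ has exactly one zero $\mu_n$ counted with multiplicity, and that the remaining $2R_D+1$ zeros lie in $B_{R_D}$, which gives the claimed enumeration once lexicographic ordering is used.

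The last remaining items are the $\ell^2$ remainder estimate and the geometric-multiplicity statement. For the asymptotics, I would expand $M(1,\lambda,\varphi)$ one step further in the Volterra series, integrate by parts once, and exploit the Parseval identity for $\varphi\in L^2$: on each $D_n$ with $|n|>R_D$ the correction to $-2i\sin\lambda$ can be expressed in terms of a Fourier coefficient of $\varphi$ at frequency $2n$ plus an error that is summable in $\ell^2$, leading to $\mu_n-n\pi\in\ell^2_n$ with a bound locally uniform in $\varphi\in V_\psi$. This is the main technical obstacle, since one must carry the $L^2$-dependence all the way through the expansion and handle the non-selfadjointness of $L(\varphi)$; however, the corresponding argument for the periodic spectrum in Lemma \ref{lem:counting_lemma} adapts directly. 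Finally, if $\lambda$ is a periodic eigenvalue of $L(\varphi)$ with geometric multiplicity two, then $M(1,\lambda,\varphi)=\pm\mathrm{Id}$, so for any $a\ne 0$ the solution with $f(0)=(a,a)^\top$ satisfies $f(1)=\pm(a,a)^\top$ and hence the Dirichlet boundary conditions; therefore $\lambda\in\spec_D L(\varphi)$.
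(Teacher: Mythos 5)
The paper does not prove this lemma but defers to \cite{GK1}, and your Rouch\'e argument with the Dirichlet characteristic function $\chi_D$ (coinciding, up to a nonzero constant factor, with the $\chi_D$ the paper introduces in \eqref{eq:chi_D}) is precisely the standard approach carried out there. Your proof is correct; the only cosmetic difference is a different normalization of $\chi_D$, which does not affect its zero set.
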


Note that for any given $\varphi\in L^2_c$ the Dirichlet spectrum of $L(\varphi)$ coincides (with multiplicities) with the 
zeroes of the entire function $\chi_D(\cdot,\varphi) :\C\to\C$ where
\[
\chi_D : \C\times L^2_c\to\C,\quad(\lambda,\varphi)\mapsto\chi_D(\lambda,\varphi),
\]
is analytic and
\begin{equation}\label{eq:chi_D}
2 i\chi_D(\lambda,\varphi):=\big(m_4+m_3-m_2-m_1\big)\big|_{(1,\lambda,\varphi)}
\end{equation}
(see \cite[Theorem 5.1]{GK1} and the deformation 
argument in \cite[Appendix C]{KLT1}).

\medskip

\noindent{\em Birkhoff normal form:} We review results from \cite{KLTZ}
where Birkhoff coordinates near $\psi = 0$ were constructed.
It follows from Lemma \ref{lem:counting_lemma} and Lemma \ref{lem:dirichlet_spectrum} 
that there exists an open ball ${\mathcal U}_0$ centered at zero in $L^2_c$ so that for any $\varphi$ in 
${\mathcal U}_0$, the periodic eigenvalues of $L(\varphi )$ are given by two sequences $\lambda ^\pm _n$, 
$n \in\Z$, so that for any $n\in\Z$, $\lambda^\pm_n\in D_n$ and $\Delta (\lambda ^\pm _n) = 2(-1)^n$. 
In addition, the Dirichlet eigenvalues and the roots of $\dot \Delta$ are given by two sequences 
$\mu_n$, $n\in\Z$, and $\dot\lambda_n$, $n\in\Z$, which are all simple and satisfy 
$\mu_n,\dot \lambda _n\in D_n$ for any $n\in\Z$. 
By shrinking the ball $\U_0$ in $L^2_c$ if necessary so that it is contained in the domain of 
definition of the Birkhoff map constructed in \cite[Theorem 1.1]{KLTZ} we obtain the following

\begin{Th}\label{th:main_near_zero} 
The claims of Theorem \ref{th:main} hold on the open ball $\W_0\equiv\U_0\cap i L^2_r$.
The diffeomorphism $\Phi : \W_0\to\Phi(\W_0)\subseteq i\h^0_r$ is the restriction to $\U_0$ of the Birkhoff map
constructed in \cite[Theorem 1.1]{KLTZ}.
\end{Th}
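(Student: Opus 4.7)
The plan is to invoke \cite[Theorem 1.1]{KLTZ} as a black box providing a complex-analytic Birkhoff map $\Phi : \U_0 \to \Phi(\U_0) \subseteq \ell^2_c$ on the complex ball $\U_0 \subseteq L^2_c$, together with the canonical relations, the preservation of the Sobolev scale, and the normal-form property for the Hamiltonians in the NLS hierarchy. The content of Theorem \ref{th:main_near_zero} is then to verify that the restriction of $\Phi$ to $\W_0 = \U_0 \cap i L^2_r$ lands in the real subspace $i\ell^2_r$ and inherits all three properties (NF1)--(NF3). Accordingly the proof splits into two tasks: first, identifying the image of the real form $\W_0$; second, pushing (NF1)--(NF3) through the restriction.

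For the first task, I would exploit the complex-conjugation symmetry provided by Lemma \ref{lem:spectrum_symmetries} and Lemma \ref{lem:product1}(iii): for $\varphi \in i L^2_r$, the entire functions $\chi_p(\cdot,\varphi)$ and $\dot\Delta(\cdot,\varphi)$ are real on the real axis, and the discriminant satisfies $\Delta(\overline\lambda,\varphi) = \overline{\Delta(\lambda,\varphi)}$. Since on $\U_0$ the periodic eigenvalues $\lambda_n^\pm$, the roots $\dot\lambda_n$ of $\dot\Delta$, and the Dirichlet eigenvalues $\mu_n$ are all simple and localized in the isolating disks $D_n$, the involution $\lambda \mapsto \overline{\lambda}$ sends the spectral data indexed by $n$ to that indexed by $-n$. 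The coordinates $(z_n,w_n)$ of \cite{KLTZ} are built from action variables (defined through contour integrals of $\lambda\,\dot\Delta/\sqrt{\Delta^2-4}$ around the $n$-th gap) and angle variables (defined through integrals of a normalized differential up to the Dirichlet divisor), so transferring the conjugation symmetry across these formulas yields the identity $w_n(\varphi) = -\overline{z_{-n}(\varphi)}$ for $\varphi \in \W_0$. Hence $\Phi(\W_0) \subseteq i\ell^2_r$. The real-analyticity and the diffeomorphism property of $\Phi : \W_0 \to \Phi(\W_0)$ are inherited by restriction of the complex-analytic diffeomorphism furnished by \cite{KLTZ}, combined with the transversality of $iL^2_r$ as a real form of $L^2_c$.

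For the second task, (NF1) is a direct specialization of the canonical Poisson relations established in the complex setting in \cite{KLTZ}; the bracket \eqref{eq:poisson_bracket} restricts compatibly to $iL^2_r$ and pulls back to $\{z_n, w_{-n}\} = -i$ with all other brackets vanishing. Property (NF2) is likewise inherited: \cite{KLTZ} shows $\Phi(\U_0 \cap H^N_c) \subseteq \h^N_c$ as a bi-analytic bijection, and intersecting with $iL^2_r$ gives the required restriction $\W_0 \cap iH^N_r \to \Phi(\W_0) \cap i\h^N_r$. Property (NF3) requires that $\mathcal{H}_{NLS} \circ \Phi^{-1}$ depend only on the actions $I_n = z_n w_{-n}$; this follows because in \cite{KLTZ} the map $\Phi$ already puts the whole NLS hierarchy into Birkhoff normal form on $\U_0 \cap H^1_c$, so the restriction of $\mathcal{H}_{NLS} \circ \Phi^{-1}$ to $\Phi(\W_0 \cap iH^1_r)$ is a real-analytic function of $(I_n)_{n\in\Z}$ alone.

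The main obstacle I expect is the bookkeeping needed to confirm the symmetry $w_n = -\overline{z_{-n}}$ on $\W_0$ from the construction of \cite{KLTZ}: one must verify that the specific branch choices in the angle construction (the square root of $\Delta^2-4$ and the paths of integration from a base point to the Dirichlet divisor) are compatible with the involution $\varphi \mapsto i\overline{\varphi_2}, i\overline{\varphi_1}$ on the potential side, which sends the spectral data via $n \leftrightarrow -n$. Once this compatibility is tracked through the formulas, the rest of the theorem is a direct transfer of structure from the ambient complex Birkhoff chart to its real form.
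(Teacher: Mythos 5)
The paper offers no proof of this statement at all: Theorem~2.1 (\texttt{th:main\_near\_zero}) is a verbatim restatement of \cite[Theorem~1.1]{KLTZ}, and the only content preceding it is the choice of the ball $\U_0$ small enough to sit inside the domain of definition of the Birkhoff map constructed there. In other words, the properties (NF1)--(NF3), the reality $\Phi(\W_0)\subseteq i\ell^2_r$, and the diffeomorphism claims are all taken over wholesale from \cite{KLTZ}, where they are already proved for the focusing NLS, i.e.\ directly on $iL^2_r$.

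Your proposal is not wrong, but it re-derives material that the paper (and \cite{KLTZ}) already supplies. You treat \cite[Theorem~1.1]{KLTZ} as if it only furnishes a complex-analytic Birkhoff chart $\U_0\to\ell^2_c$ and then reconstruct the reality of the image (the identity $w_n=-\overline{z_{-n}}$ on $\W_0$) by tracking the conjugation symmetry $\Delta(\overline\lambda,\varphi)=\overline{\Delta(\lambda,\varphi)}$, $n\leftrightarrow -n$, through the action and angle formulas. That argument is sound and is genuinely the kind of check that \emph{would} be needed if one only had the complex chart in hand; it is essentially the argument carried out in \cite{KLTZ} itself. But since \cite{KLTZ} is a paper on the \emph{focusing} NLS equation and its Theorem~1.1 is stated on $iL^2_r$ (hence already lands in $i\h^0_r$, already satisfies the reality condition, and already gives the canonical relations and the normal form on the Sobolev scale), the intended ``proof'' here is a bare citation, not a fresh verification. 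What your version buys is a self-contained sanity check of how the real form sits inside the complex picture; what it costs is redundancy relative to the cited source. Either reading is consistent with the statement, so there is no gap --- just more work than the paper actually does.
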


\section{Actions on the neighborhood $U_{\rm tn}$}\label{sec:actions_in_U_tn}
By Theorem \ref{th:main_near_zero}, actions have been constructed for potentials in the open 
ball $\U_0$ centered at zero in $L^2_c$. Our goal is this Section is to show that they
analytically extend along a suitably chosen path to an open neighborhood of any given
potential $\psi^{(1)}$ with simple periodic spectrum. First we need to make some preliminary considerations.
For a given  $R\ge\Z_{\ge 0}$ introduce the set $\mathcal{T}^R\subseteq L^2_c$, defined as follows.

\begin{Def}\label{def:T^R}
An element $\varphi\in L^2_c$ lies in $\mathcal{T}^R$ if the following conditions on the periodic spectrum
$\spec_p L(\varphi)$ and the Dirichlet spectrum $\spec_D L(\varphi)$ (both counted with multiplicities) hold:
\begin{itemize}
\item[(R1)] For any $|n|> R$, the disk $D_n$ contains precisely two periodic eigenvalues. 
The set  $\Lambda_R(\varphi)$ of the remaining periodic eigenvalues consists of $4R+2$ eigenvalues 
which are simple and contained in the disk $B_R$, $\Lambda_R(\varphi)\subseteq B_R$.
\item[(R2)] For any $|n|> R$, the disk $D_n$ contains precisely one Dirichlet eigenvalue denoted by 
$\mu_n\equiv\mu_n(\varphi)$. There are $2R+1$ remaining Dirichlet eigenvalues which are contained in 
the disk $B_R$. These remaining eigenvalues are listed in lexicographic order with multiplicities
$\mu_{-R}\preccurlyeq\cdots\preccurlyeq
\mu_{k}\preccurlyeq\mu_{k+1}
\preccurlyeq\cdots\preccurlyeq\mu_R$, $-R\le k\le R-1$.
\end{itemize}
\end{Def}

Note that by Lemma \ref{lem:counting_lemma} and Lemma \ref{lem:dirichlet_spectrum}, the set 
$\mathcal{T}^R$ is open in $L^2_c$. 
For the given potential $\psi^{(1)}\in i L^2_r$ with $\spec_p L(\psi^{(1)})$ simple we choose $R\in\Z_{\ge 0}$ as follows:
Denote by $\ell$ the line segment in $i L^2_r$ connecting $\psi^{(0)}$ with $\psi^{(1)}$.
By replacing $\psi^{(1)}$, if necessary, by the first intersection point of $\ell$ with $\iso_o(\psi^{(1)})$ we can assume without 
loss of generality that $\ell$ intersects $\iso_o(\psi)$ only at $\psi^{(1)}$. 
For any $\varphi\in i L^2_r$ we choose an open ball $U_\varphi$ of $\varphi$ in $L^2_c$ and 
$R_\varphi\in\Z_{\ge 0}$ so that the statements of Lemma \ref{lem:counting_lemma}
and Lemma \ref{lem:dirichlet_spectrum} hold with $R_p$ and $R_D$ replaced by $R_\varphi$. 
In view of the compactness of $\ell$, we can find $\varphi_j$, $1< j\le K$, all in $\ell$,
so that $\ell\subseteq\bigcup_{1< j\le K}U_{\varphi_j}$. Now, define 
\begin{equation}\label{eq:R}
R:=\max_{1\le j\le K} R_{\varphi_j}.
\end{equation}

\medskip

Further, by using Theorem \ref{prop:general_position} and by arguing as in the proof of \cite[Corollary 3.3]{KLT2}, one 
can construct a simple (i.e. without self intersections) continuous path 
\begin{equation*}\label{eq:gamma}
\gamma : [0,1]\mapsto\Big(\bigcup_{1< j\le K}U_{\varphi_j}\Big)\cap i L^2_r,\quad s\mapsto\psi^{(s)},
\end{equation*}
connecting $\psi^{(0)}$ with $\psi^{(1)}$, so that $\gamma\subseteq\mathcal{T}^R\cap i L^2_r$. 
In particular, we see that the statements of Lemma \ref{lem:counting_lemma} and Lemma \ref{lem:dirichlet_spectrum} 
still hold uniformly on $\gamma$ with $R_p$ and $R_D$ replaced by $R$. 
In addition, as $\gamma\subseteq\mathcal{T}^R\cap i L^2_r$, for any $\varphi\in\gamma$ the periodic eigenvalues of
$L(\varphi)$ inside $B_R$ are simple and non-real (see Lemma \ref{lem:spectrum_symmetries}).
This together with the compactness of $\gamma$ and the openness of $\mathcal{T}^R$ in $L^2_c$ implies that
there exists a connected open tubular neighborhood $U_{\rm tn}$ of $\gamma$ in $L^2_c$ so that the following holds:

\medskip

\begin{itemize}

\item[(T1)] The statements of Lemma \ref{lem:counting_lemma} and Lemma \ref{lem:dirichlet_spectrum} hold  
uniformly in $\varphi\in U_{\rm tn}$ with $R_p$ and $R_D$ replaced by $R$.

\item[(T2)] The set $U_{\rm tn}\cap i L^2_r$ is connected and for any $\varphi\in U_{\rm tn}$ the periodic eigenvalues of 
$L(\varphi)$ in the disk $B_R$ are simple, non-real, and have the symmetries of Lemma \ref{lem:spectrum_symmetries}.

\end{itemize}

\medskip

It follows from the construction of the neighborhood $\U_0$ of zero in $L^2_c$
(see the discussion ahead of Theorem \ref{th:main_near_zero}) and the property (T2) that for any 
$\varphi\in U_{\rm tn}\cap\U_0$ and for any $n\in\Z$, the disk $D_n$ contains precisely 
two periodic eigenvalues of $L(\varphi)$. For any $\varphi\in U_{\rm tn}\cap\U_0$ we list the periodic
eigenvalues as follows: for $|n|\le R$,
\[
\lambda_n^+,\lambda_n^-\in D_n\quad\text{with}
\quad \pm\im(\lambda_n^\pm)>0,
\]
and for $|n|> R$,
\[
\lambda_n^+,\lambda_n^-\in D_n\quad\text{with}
\quad\lambda_n^-\preccurlyeq\lambda_n^+.
\]
Then for any $|n|\le R$, in view of their simplicity, the periodic eigenvalues $\lambda_n^+$ and $\lambda_n^-$ of 
$L(\varphi)$ considered as functions of the potential, $\lambda_n^+,\lambda_n^- : U_{\rm tn}\cap\U_0\to\C$, are 
analytic. Since by (T2) the simplicity of these eigenvalues holds on the entire tubular neighborhood
$U_{\rm tn}$, the analytic maps above 
extend to analytic maps
\[
\lambda_n^+,\lambda_n^- : U_{\rm tn}\to\C\quad\forall |n|\le R.
\]
(This is in sharp contrast to the eigenvalues outside the disk $B_R$ which,
at least for $|n|$ sufficiently large, are not even continuous in view of the lexicographic ordering.)
We note that on $U_{\rm tn}\cap i L^2_r$,
\[
\lambda_n^-=\overline{\lambda_n^+}\quad\text{where}\quad\im(\lambda_n^+)>0\quad\forall |n|\le R.
\]
An important ingredient for the construction of the actions on $U_{\rm tn}$ is the choice
of pairwise disjoint simple continuous paths $G_n\subseteq\C$, $n\in\Z$, also referred to as {\em cuts}, that connect
$\lambda_n^-$ with $\lambda_n^+$, and continuous contours $\Gamma_n$ around $G_n$.
In a first step we define the cuts $G_n$ along the path $\gamma : [0,1]\to U_{\rm tn}\cap i L^2_r$,
$s\mapsto\psi^{(s)}$.
To simplify notation, introduce $\lambda_n^\pm(s):=\lambda_n^\pm(\psi^{(s)})$.
We note that $\forall s\in[0,1]$,
\[
\lambda_n^-(s)=\overline{\lambda_n^+(s)}\quad\text{and}\quad\im(\lambda_n^+(s))>0\quad\forall |n|\le R
\]
while
\[
\lambda_n^-(s)=\overline{\lambda_n^+(s)}\quad\text{and}\quad\im(\lambda_n^+(s))\ge 0\quad\forall |n|> R.
\]
For any $s\in[0,1]$ and $|n|>R$, we define the cuts $G_n(s):=G_n(\psi_n^{(s)})$ to be the vertical 
line segments $[\lambda_n^-(s),\lambda_n^+(s)]\subseteq D_n$, parametrized by
\[
G_n : [0,1]\times[-1,1]\to\C,\quad
(s,t)\mapsto\tau_n(s)+t\big(\lambda_n^+(s)-\lambda_n^-(s)\big)/2,
\]
where for any $n\in\Z$, $\tau_n(s):=(\lambda_n^-(s)+\lambda_n^+(s))/2$.
Note that $G_n(s,-t)=\overline{G_n(s,t)}$ for any $t\in[-1,1]$.
For $|n|\le R$ and $s$ sufficiently small so that $\psi^{(s)}\in\W_0\equiv\U_0\cap i L^2_r$, we define
$G_n(s,t)$ in a similar fashion as in the case $|n|>R$ and then show by a somewhat
lengthly but straightforward argument that the cuts $G_n(s)$ can be chosen so that they
depend continuously on $s\in[0,1]$. More precisely, the following holds:

\begin{Lem}\label{lem:deformation_G_n}
There exist continuous functions $G_n$,  $|n|\le R$, with values in the disk $B_R$,
\[
G_n : [0,1]\times[-1,1]\to B_R,\quad (s,t)\mapsto G_n(s,t),
\]
so that for any $s\in[0,1]$ the following properties hold:
\begin{itemize}
\item[(i)] $G_n(s)\equiv G_n(s,\cdot)$ is a simple $C^1$-smooth path such that
for any $t\in[-1,1]$,
\[
G_n(s,-1)=\lambda_n^-(s),\quad G_n(s,1)=\lambda_n^+(s),
\quad G_n(s,-t)=\overline{G_n(s,t)}.
\]

\item[(ii)] The paths $G_n(s)$ and $G_k(s)$ do not intersect for any integer numbers $n$ and $k$, $n\ne k$, 
such that $|n|,|k|\le R$.

\item[(iii)] There exist $0<\rho_0<1$ and $\delta_0>0$ such that 
\[
G_n(s,t)=\tau_n(s)+t\big(\lambda_n^+(s)-\lambda_n^-(s)\big)/2\quad\forall t\in[-\rho_0,\rho_0]
\]
where $\tau_n(s):=\big(\lambda_n^+(s)+\lambda_n^-(s)\big)/2$ and
\[
\im\big(G_n(s,t)\big)\ge\delta_0\quad\forall t\in[\rho_0,1].
\]
In particular, the path $G_n(s)$ intersects the real line at the unique point 
$\tau_n(s)=G_n(s,0)$.
\end{itemize}
\end{Lem}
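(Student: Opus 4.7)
\textbf{Proof plan for Lemma \ref{lem:deformation_G_n}.} The idea is to take each $G_n(s)$ to be simply the straight vertical segment from $\lambda_n^-(\psi^{(s)})$ to $\lambda_n^+(\psi^{(s)})$, parametrized affinely in $t$. Two preliminaries are needed. First, by (T2) together with Lemma \ref{lem:spectrum_symmetries}, for each $\varphi\in U_{\rm tn}\cap i L^2_r$ and $|n|\le R$ the eigenvalues $\lambda_n^\pm(\varphi)$ are simple and non-real with $\lambda_n^-=\overline{\lambda_n^+}$, so $s\mapsto\lambda_n^+(\psi^{(s)})$ is an analytic curve into the open upper half-plane; compactness of $[0,1]$ then yields a uniform lower bound
\[
\delta^* := \min_{|n|\le R,\, s\in[0,1]} \im\big(\lambda_n^+(\psi^{(s)})\big) > 0.
\]
Second, one needs the midpoints $\tau_n(\psi^{(s)})=\re(\lambda_n^+(\psi^{(s)}))$ to be pairwise distinct for all $s\in[0,1]$ and all $n\ne m$ with $|n|,|m|\le R$, since otherwise the middle vertical segments demanded by (iii) would both contain the common real point $(\tau_n(\psi^{(s)}),0)$, violating (ii). For each such pair, the set $S_{n,m}:=\{\varphi\in U_{\rm tn}\,|\,\re(\lambda_n^+(\varphi))=\re(\lambda_m^+(\varphi))\}$ is a proper real-analytic subset of codimension at least one in the infinite-dimensional open set $U_{\rm tn}$. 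Arguing exactly as in the construction of $\gamma$ (cf. \cite[Corollary 3.3]{KLT2} and Theorem \ref{prop:general_position}), one perturbs $\gamma$ within $\mathcal{T}^R\cap i L^2_r$ so as to avoid the finite union $\bigcup_{n\ne m}S_{n,m}$; compactness then produces a uniform separation
\[
d^* := \min_{n\ne m,\, s\in[0,1]} \big|\tau_n(\psi^{(s)})-\tau_m(\psi^{(s)})\big| > 0.
\]

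With these two preliminaries in hand, define
\[
G_n(s,t) := \tau_n(\psi^{(s)}) + t\cdot i\,\im\big(\lambda_n^+(\psi^{(s)})\big), \quad (s,t)\in[0,1]\times[-1,1].
\]
The claimed properties are then immediate. For (i): the map is an affine (hence simple $C^1$) parametrization with $G_n(s,\pm 1)=\lambda_n^\pm(\psi^{(s)})$, and $G_n(s,-t)=\overline{G_n(s,t)}$ since $\tau_n(\psi^{(s)})$ and $\im(\lambda_n^+(\psi^{(s)}))$ are real. For (ii): each $G_n(s)$ lies on the vertical line $\{\re z=\tau_n(\psi^{(s)})\}$, and by the genericity ensured above these lines are pairwise disjoint. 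For (iii): set $\rho_0:=1/2$ and $\delta_0:=\rho_0\delta^*$; the middle portion is vertical by construction, $\im(G_n(s,t))=t\,\im(\lambda_n^+(\psi^{(s)}))\ge\delta_0$ for $t\in[\rho_0,1]$, and $G_n(s,t)\in\R$ iff $t=0$, giving the unique real crossing $G_n(s,0)=\tau_n(\psi^{(s)})$. Finally, $G_n(s)\subseteq B_R$ follows from the convexity of the disk $B_R$ and the fact that $\lambda_n^\pm(\psi^{(s)})\in B_R$.

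The main obstacle is the infinite-dimensional genericity step. In finite dimensions a one-parameter curve cannot in general avoid a codimension-one subvariety, but the infinite-dimensional space $i L^2_r$ affords abundant transverse directions that allow us to perturb $\gamma$ globally off each $S_{n,m}$ while preserving properties (T1)--(T2) and the endpoints $\psi^{(0)},\psi^{(1)}$. Carrying this perturbation out rigorously is precisely what the text alludes to as the ``somewhat lengthy but straightforward'' argument.
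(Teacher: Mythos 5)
Your vertical-segment ansatz is natural, and the verifications of (i), (iii), and $G_n(s)\subseteq B_R$ from that ansatz are fine; you also correctly observe that property (iii) forces every admissible cut $G_n(s)$ through the real point $\tau_n(s)$, so pairwise distinctness of the midpoints $\tau_n(s)$ along $\gamma$ is not merely convenient but a \emph{necessary} condition for (ii). The entire content of the lemma is therefore exactly this distinctness claim, and it is there that your argument has a genuine gap.

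You assert that $S_{n,m}$ is a proper real-analytic subset of codimension at least one and that the infinite-dimensionality of $i L^2_r$ provides enough transverse directions to perturb $\gamma$, with endpoints fixed, off $\bigcup_{n\ne m} S_{n,m}$. This reasoning is not correct. The set $S_{n,m}$ is the zero level set of the real-valued real-analytic function $f_{n,m}:=\tau_n-\tau_m$ on $U_{\rm tn}\cap i L^2_r$, hence generically of real codimension \emph{exactly} one, and whether a path with prescribed endpoints can avoid a real codimension-one level set is governed by the intermediate value theorem, not by transversality: if $f_{n,m}$ takes opposite signs at $\psi^{(0)}$ and $\psi^{(1)}$, then \emph{every} path in $U_{\rm tn}\cap i L^2_r$ meets $S_{n,m}$, irrespective of the ambient dimension. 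The genericity results you invoke (Theorem~\ref{prop:general_position} and \cite[Corollary 3.3]{KLT2}) concern the multiple-eigenvalue locus, which is the zero set of a $\C$-valued quantity and therefore of real codimension $\ge 2$; paths can always be pushed off codimension-$\ge 2$ sets, but that mechanism simply does not apply to the codimension-one sets $S_{n,m}$. Because the labeling of the eigenvalues $\lambda_n^+$ at $\psi^{(1)}$ is itself produced by analytic continuation along $\gamma$, one must actually argue that $\gamma$ (and with it the labeling, the integer $R$, and the tubular neighborhood) can be chosen so that each $f_{n,m}$ keeps one sign throughout --- a monodromy/relabeling argument, not a transversality one. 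That step is precisely what the paper calls ``somewhat lengthy,'' and your sketch does not supply it; the appeal to infinite-dimensional genericity does not close the gap.
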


In the next step, for any $s\in[0,1]$ and $n\in\Z$ we will choose a counterclockwise oriented simple 
$C^1$-smooth contour $\Gamma_n(s)$ in $\C$ around the cut $G_n(s)$ so that 
$\Gamma_n(s)$ is invariant under complex conjugation, $\overline{\Gamma_n(s)}=\Gamma_n(s)$, 
$\Gamma_n(s)\cap\R$ consists of two points, and $\Gamma_n(s)\cap\Gamma_k(s)=\emptyset$ for $n\ne k$.
In the case $|n|>R$ we choose $\Gamma_n(s)$ to be the boundary $\Gamma_n$ of the disk 
$\{\lambda\in\C\,|\,|\lambda-n\pi|<\pi/4\}$ whereas for $|n|\le R$, the contours $\Gamma_n(s)$ are
chosen in the disk $B_R$.
In a similar way, for any $s\in[0,1]$ and $n\in\Z$ we choose a counterclockwise oriented simple 
$C^1$-smooth contour $\Gamma_n'(s)$ around $G_n(s)$ so that $\Gamma_n'(s)$ lies in the interior 
domain of the contour $\Gamma_n(s)$, $\overline{\Gamma_n'(s)}=\Gamma_n'(s)$, $\Gamma_n'(s)\cap\R$ consists of
two points, and $\Gamma_n'(s)\cap\Gamma_k'(s)=\emptyset$ for $n\ne k$. In the case $|n|>R$ we choose
$\Gamma_n'(s)$ to be the boundary of the disk $D_n$ whereas for $|n|\le R$, the contours $\Gamma_n'(s)$
are chosen so that $\inf\limits_{s\in[0,1], |n|\le R}\mathop{\rm dist}\big(\Gamma_n'(s),\Gamma_n(s)\big)>0$.
For any $s\in[0,1]$  and $n\in\Z$ denote by $D_n'(s)$ the interior domain of the contour $\Gamma_n'(s)$ and 
by $D_n(s)$ the interior domain of $\Gamma_n(s)$. The domains $D_n'(s)$ and $D_n(s)$ are topologically
open disks with the property that $G_n(s)\subseteq D_n'(s)\subseteq D_n(s)$.
Note that by definition $D_n(s)=D_n$ for any $s\in[0,1]$ and $|n|>R$ whereas for $|n|\le R$ this 
does not necessarily hold.

The contours $\Gamma_n'(s)$, constructed above for any $s\in[0,1]$ and $n\in\Z$, are now used to 
choose open neighborhoods $U_s\subseteq U_{\rm tn}$ of $\psi^{(s)}$ in $L^2_c$ and cuts 
$G_n(s,\varphi)\subseteq D_n'(s)$ for any $\varphi\in U_s$ (and not just for $\varphi$ in $\gamma$). 
In the case $|n|\le R$ we proceed as follows: for any given $s\in[0,1]$ consider the potential $\psi^{(s)}$.
Then we choose an open ball $U_s\subseteq U_{\rm tn}$ in $L^2_c$ centered at $\psi^{(s)}$
so that for any $\varphi\in U_s$ and $k\in\Z$, the periodic eigenvalues $\lambda_k^+(\varphi)$ and
$\lambda_k^-(\varphi)$ are in the interior domain $D_k'(s)$ of $\Gamma_k'(s)$.
Then for any $|n|\le R$ we choose a $C^1$-smooth simple path $G_n(s,\varphi)$ in the interior domain $D_n'(s)$ 
of $\Gamma_n'(s)$, connecting $\lambda_n^-(\varphi)$ with $\lambda_n^+(\varphi)$ so that $G_n(s,\varphi)$ 
intersects the real axis at a unique point $\varkappa_n(s,\varphi)$. 
In the case when $\varphi\in U_s\cap i L^2_r$, in addition to the properties described above, $G_n(s,\varphi)$ is
chosen so that $\overline{G_n(s,\varphi)}=G_n(s,\varphi)$. 
It is convenient to define  $G_n(s,\varphi)$ for $\varphi=\psi^{(s)}$ by $G_n(s,\psi^{(s)}):=G_n(s)$. 
Note that then $\varkappa_n(s,\psi^{(s)})=\tau_n(s)$.
By construction, the cuts $G_n(s,\varphi)$, $|n|\le R$, do not necessarily depend continuously on $\varphi\in U_s$. 
In the case when $|n|>R$ we choose $G_n(s,\varphi)$ to be the line segment 
$G_n(\varphi):=[\lambda_n^-(\varphi),\lambda_n^+(\varphi)]$ for any $\varphi\in U_{\rm tn}$. 

\medskip

The contours $\Gamma_n(s)$, $0\le s\le 1$, and the cuts $G_n(s,\varphi)$, $\varphi\in U_s$, $n\in\Z$,
are a key ingredient not only for the construction of the actions but also for the construction of a family
of one forms used in the subsequent section to define the angles. Let us begin with the one forms.
To obtain such a family of one forms, we apply Theorem 1.3 in \cite{KT2}: shrinking the ball $U_s$ if necessary,
it follows that there exist analytic functions
\[
\zeta_n^{(s)} : \C\times U_s\to\C,\quad n\in\Z,
\]
and an integer $R_s\ge R$, used to describe the location of the zeros of these functions,
so that for any $\varphi\in U_s$ and $n\in\Z$, 
\begin{equation}\label{eq:normalization_s}
\frac{1}{2\pi}\int_{\Gamma_m(s)}\frac{\zeta_n^{(s)}(\lambda,\varphi)}{\sqrt[c]{\Delta^2(\lambda,\varphi)-4}}
\,d\lambda=\delta_{nm},\quad m\in\Z.
\end{equation}
The {\em canonical root} appearing in the denominator of the integrand in \eqref{eq:normalization_s} is defined by the infinite product
\begin{equation}\label{eq:canonical_root}
\sqrt[c]{\Delta^2(\lambda,\varphi)-4}:=2 i \prod_{k\in\Z}
\frac{\sqrt[\rm st]{(\lambda_k^+(\varphi)-\lambda)(\lambda_k^-(\varphi)-\lambda)}}{\pi_k}
\end{equation}
and the {\em standard root} $\sqrt[\rm st]{(\lambda_k^+(\varphi)-\lambda)(\lambda_k^-(\varphi)-\lambda)}$ 
is defined as the unique holomorphic function on $\C\setminus G_k(s,\varphi)$ satisfying the asymptotic relation
\begin{equation}\label{eq:standard_root}
\sqrt[\rm st]{(\lambda_k^+(\varphi)-\lambda)(\lambda_k^-(\varphi)-\lambda)}\sim -\lambda
\quad\text{as}\quad |\lambda|\to\infty.
\end{equation}
Note that by the asymptotic estimate in Lemma \ref{lem:counting_lemma} (i), for any $\varphi\in U_s$ the canonical root 
$\sqrt[c]{\Delta^2(\lambda,\varphi)-4}$ is a holomorphic function of $\lambda$ in the domain
$\C\setminus\big(\bigsqcup_{k\in\Z}G_k(s,\varphi)\big)$.
In addition, the map
\begin{equation}\label{eq:canonical_root_analytic}
\Big(\C\setminus\big(\bigsqcup_{k\in\Z}\overline{D_k'(s)}\big)\Big)\times U_s\to\C, 
\quad(\lambda,\varphi)\mapsto\sqrt[c]{\Delta^2(\lambda,\varphi)-4},
\end{equation}
is analytic and its image does not contain zero.

\begin{Rem}
The infinite product in \eqref{eq:canonical_root} is understood as the limit
\[
\lim_{N\to\infty} 2 i\prod_{|k|\le N}\frac{\sqrt[\rm st]{(\lambda^+_k(\varphi)-\lambda)(\lambda^-_k(\varphi)-\lambda)}}{\pi_k}.
\]
In order to see that this limit exists locally uniformly in $\varphi\in U_s$ and 
$\lambda\in\C\setminus\big(\bigsqcup_{k\in\Z}\overline{D_k'(s)}\big)$ one combines
the terms corresponding to $k$ and $-k$ for $1\le k\le N$ and notices that in view of Lemma \ref{lem:counting_lemma} (i),
\[
\frac{(\lambda^+_k(\varphi)-\lambda)(\lambda^+_{-k}(\varphi)-\lambda)}{(-\pi_k^2)}\cdot
\frac{(\lambda^-_k(\varphi)-\lambda)(\lambda^-_{-k}(\varphi)-\lambda)}{(-\pi_k^2)}=1+\frac{a_k}{k},
\]
where the remainder $(a_k)_{k\in\Z}$ is bounded in $\ell^2$ locally uniformly in $\varphi\in U_s$ and
$\lambda\in\C\setminus\big(\bigsqcup_{k\in\Z}\overline{D_k'(s)}\big)$ (cf. \cite{GK1}).
\end{Rem}

According to Theorem 1.3 in \cite{KT2}, there exists $R_s>R$ so that for any $\varphi\in U_s$ and $n\in\Z$ the zeros 
of the entire function $\zeta_n^{(s)}(\cdot,\varphi)$, counted with their multiplicities and listed {\em lexicographically},
$\big\{\sigma^n_k(\varphi)\,\big|\,k\in\Z\setminus\{n\}\big\}$ have the following properties:

\begin{itemize}

\item[(D1)] For any $|k|>R_s$, $k\ne n$, $\sigma^n_k(\varphi)$ is the only zero of $\zeta_n^{(s)}(\cdot,\varphi)$ in
the disk $D_k$ and the map $\sigma^n_k : U_s\to D_k$ is analytic. Furthermore, for any $|k|\le R_s$,
$k\ne n$, $\sigma^n_k(\varphi)\in B_{R_s}$.

\item[(D2)] For any $|k|>R_s$, $k\ne n$, we have that 
\[
\sigma^n_k(\varphi)=\tau_k(\varphi)+\gamma_k^2(\varphi)\ell_k^2,\quad\gamma_k(\varphi):=\lambda_k^+(\varphi)-\lambda_k^-(\varphi),
\]
uniformly in $n\in\Z$ and locally uniformly in $\varphi\in U_s$.  

\item[(D3)] The entire function $\zeta_n^{(s)}(\cdot,\varphi)$ admits the product representation
\[
\zeta_n^{(s)}(\lambda,\varphi)=-\frac{2}{\pi_n}\prod_{k\ne n}\frac{\sigma^n_k(\varphi)-\lambda}{\pi_k}.
\]
Moreover, if $\lambda_k^+(\varphi)=\lambda_k^-(\varphi)=\tau_k(\varphi)$ for some $k\ne n$ then $\tau_k(\varphi)$ is a zero of 
$\zeta_n^{(s)}(\cdot,\varphi)$.
\end{itemize}

Finally, we use the compactness of the path $\gamma$ to find finitely many numbers
$s_1<...<s_N$ in the interval $[0,1]$ so that $\big\{U_{s_1},...,U_{s_N}\big\}$ is an open cover 
of $\gamma$ in $L^2_c$. We can assume without loss of generality that $s_1=0$ and $s_N=1$.
We now shrink $U_{\rm tn}$ and set 
\begin{equation}\label{eq:U_tn}
U_{\rm tn}:=\bigcup\limits_{1\le j\le N}U_{s_j},\quad R':=\max\limits_{1\le j\le N}R_{s_j}.
\end{equation}
In the sequel we will always assume that $U_{s_1}\subseteq\U_0$ where $\U_0$ is the open ball
in $L^2_c$ centered at zero introduced at the end of Section \ref{sec:setup}.
By the construction above, $U_{\rm tn}$ is a connected tubular neighborhood of $\gamma$ so that 
the properties (T1) and (T2) hold. 

\begin{Lem}\label{lem:important}
For any $1\le k<l\le N$, $\varphi\in U_{s_k}\cap U_{s_l}$ and $m\in\Z$ the contours $\Gamma_m(s_k)$ and 
$\Gamma_m(s_l)$ are homologous within the resolvent set of $L(\varphi)$. 
\end{Lem}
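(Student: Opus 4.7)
The plan is to reduce the homology statement to a winding number count against the (discrete) periodic spectrum. Recall that $L(\varphi)$, considered on $[0,2]$ with periodic boundary conditions, has purely discrete spectrum equal to $\spec_p L(\varphi)$, so its resolvent set is $\C\setminus\spec_p L(\varphi)$. Two cycles in this open set are homologous iff they assign the same winding number to every point of $\spec_p L(\varphi)$. The case $|m|>R$ is immediate: by the construction just after Lemma \ref{lem:deformation_G_n}, $\Gamma_m(s_k)=\Gamma_m(s_l)$ is the boundary of the fixed disk $\{\lambda\in\C\,|\,|\lambda-m\pi|<\pi/4\}$, so the two cycles are literally identical. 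Henceforth assume $|m|\le R$.

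For $j\in\{k,l\}$, the neighborhood $U_{s_j}$ was chosen (in the paragraph introducing the cuts $G_n(s,\varphi)$) so that for every $n\in\Z$ and every $\varphi\in U_{s_j}$, the eigenvalues $\lambda_n^\pm(\varphi)$ lie in the interior domain $D_n'(s_j)$ of $\Gamma_n'(s_j)$, which in turn lies in the interior domain $D_n(s_j)$ of $\Gamma_n(s_j)$. In particular, $\lambda_m^\pm(\varphi)$ sits inside $\Gamma_m(s_j)$ and, since the contour is counterclockwise, contributes winding number $+1$. For $n\ne m$ with $|n|\le R$, the contours $\Gamma_m(s_j)$ and $\Gamma_n(s_j)$ were chosen as pairwise disjoint simple closed curves in $B_R$ around the pairwise disjoint cuts $G_m(s_j),G_n(s_j)\subseteq B_R$; this forces the interior domains $D_m(s_j)$ and $D_n(s_j)$ to be disjoint, so $\lambda_n^\pm(\varphi)\in D_n'(s_j)\subseteq D_n(s_j)$ lies outside $\Gamma_m(s_j)$, contributing winding number $0$. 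For $|n|>R$ we have $\lambda_n^\pm(\varphi)\in D_n$, which is disjoint from $B_R\supseteq\Gamma_m(s_j)$, so again the winding number is $0$. Combining, for each $j\in\{k,l\}$ the cycle $\Gamma_m(s_j)$ has winding number $+1$ around $\lambda_m^+(\varphi)$ and $\lambda_m^-(\varphi)$ and $0$ around every other element of $\spec_p L(\varphi)$; the two cycles therefore induce the same winding function on the complement of the resolvent set and are homologous.

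The main obstacle is the disjointness of the interior domains $D_n(s_j)$ for $|n|\le R$, which is what actually licenses the claim that $\Gamma_m(s_j)$ winds around only the pair $\lambda_m^\pm(\varphi)$. This is not literally asserted in the construction (only the pairwise disjointness of the contours and of the cuts is), but it follows from a standard application of the Jordan curve theorem to small simple contours chosen tightly around disjoint paths inside $B_R$; everything else is a routine winding number bookkeeping.
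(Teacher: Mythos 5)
Your proof is correct and takes a genuinely different route from the paper's. The paper's own proof of this lemma is a one-sentence appeal to Lemma \ref{lem:deformation_G_n}: the cuts $G_m(s)$ deform continuously in $s$, hence the contours $\Gamma_m(s)$ form a continuous family interpolating between $\Gamma_m(s_k)$ and $\Gamma_m(s_l)$, and this homotopy is taken to stay in the resolvent set. Your argument instead dispenses with the deformation entirely and verifies directly that the two cycles have identical winding numbers around every point of the discrete set $\spec_p L(\varphi)$, using only that the membership $\varphi\in U_{s_k}\cap U_{s_l}$ places $\lambda_n^\pm(\varphi)$ in $D_n'(s_j)$ for both $j=k,l$ and all $n$. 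What your approach buys is transparency about exactly where the fixed $\varphi$ enters: the paper's terse deformation argument leaves implicit why the interpolating contours $\Gamma_m(s)$ for $s$ strictly between $s_k$ and $s_l$ avoid $\spec_p L(\varphi)$ (since $\varphi$ need not lie in $U_s$ for those intermediate $s$), whereas your winding-number bookkeeping needs no such control. What the paper's approach buys is brevity and a direct link to the deformation structure it has just set up, which it re-uses later. As for the gap you flag yourself --- that pairwise disjointness of the interior domains $D_n(s_j)$, $|n|\le R$, is not literally stated --- you are right that it is not asserted verbatim, but it is built into the intent of the construction: the normalization identities \eqref{eq:normalization_s} and \eqref{eq:a-normalization}, and indeed the definition \eqref{eq:actions} of the actions, only make sense if $\Gamma_m(s_j)$ encircles the cut $G_m(s_j,\varphi)$ and no other, which is precisely the disjoint-interiors property. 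So that assumption is safely available, and your proof is sound.
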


The statement of this Lemma holds since by Lemma \ref{lem:deformation_G_n}, for any $m\in\Z$ the cut $G_m(s_l)$
is obtained from $G_m(s_k)$ by a continuous deformation $\big\{G_m(s)\,\big|s\in[s_k,s_l]\big\}$ satisfying the 
properties listed in Lemma \ref{lem:deformation_G_n}.

Lemma \ref{lem:important} implies the following. If $\varphi\in U_{s_k}\cap U_{s_l}$ for some $1\le k<l\le N$, then 
in view of the normalization condition \eqref{eq:normalization_s}, we conclude that for any $n\in\Z$,
\[
\int_{\Gamma_m(s_k)}\frac{\zeta_n^{(s_k)}(\lambda,\varphi)}{\sqrt[c]{\Delta^2(\lambda,\varphi)-4}}
\,d\lambda
=
\int_{\Gamma_m(s_l)}\frac{\zeta_n^{(s_l)}(\lambda,\varphi)}{\sqrt[c]{\Delta^2(\lambda,\varphi)-4}}
\,d\lambda,\quad m\in\Z.
\]
This together with Lemma \ref{lem:important} and the definition of the canonical root \eqref{eq:canonical_root}
shows that
\[
\int_{\Gamma_m(s_l)}
\frac{\zeta_n^{(s_k)}(\lambda,\varphi)-\zeta_n^{(s_l)}(\lambda,\varphi)}{\sqrt[c]{\Delta^2(\lambda,\varphi)-4}}
\,d\lambda=0,\quad m\in\Z.
\]
Now we can apply \cite[Proposition 5.2]{KT2} to conclude that for any $n\in\Z$,
\[
\zeta_n^{(s_k)}(\cdot,\varphi)=\zeta_n^{(s_l)}(\cdot,\varphi).
\]

\begin{Rem}\label{rem:uniqueness_differentials}
An important point in the above argument is that we apply \cite[Proposition 5.2]{KT2} to the difference of the
forms $\omega_n^{(s_k)}:=\frac{\zeta_n^{(s_k)}(\lambda,\varphi)\,d\lambda}{\sqrt{\Delta^2(\lambda,\varphi)-4}}$ 
and $\omega_n^{(s_l)}:=\frac{\zeta_n^{(s_l)}(\lambda,\varphi)\,d\lambda}{\sqrt{\Delta^2(\lambda,\varphi)-4}}$.
More specifically, by construction (see the ansatz (16) in \cite{KT2}), the two forms have the same ``leading'' term 
$\Omega_n$ which cancels when we take their difference. This allows us to apply \cite[Proposition 5.2]{KT2} to the difference 
of the forms and then conclude that they coincide.
\end{Rem}

The above allows us to define $\zeta_n(\lambda,\varphi)$ for $(\lambda,\varphi)\in\C\times U_{\rm tn}$ by
setting
\[
\zeta_n\big|_{\C\times U_{s_k}}:=\zeta_n^{(s_k)}.
\]
In this way we proved

\begin{Prop}\label{lem:differentials_on_U_tn}
For any $n\in\Z$, the analytic function 
\[
\zeta_n :\C\times U_{\rm tn}\to\C
\]
satisfies the above properties {\rm (D1)--(D3)} with $\zeta_n^{(s)}$ replaced by $\zeta_n$ and $R_s$ replaced by $R'$ 
uniformly on $U_{\rm tn}$. In addition, for any $\varphi\in U_{\rm tn}$ so that $\varphi\in U_{s_k}$ for some
$1\le k\le N$, the analytic function $\zeta_n$ satisfies the normalization 
conditions
\[
\frac{1}{2\pi}\int_{\Gamma_m(s_k)}\frac{\zeta_n(\lambda,\varphi)}{\sqrt[c]{\Delta^2(\lambda,\varphi)-4}}
\,d\lambda=\delta_{nm},\quad m\in\Z.
\]
\end{Prop}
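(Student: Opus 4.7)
The plan is to verify that the piecewise definition $\zeta_n|_{\C\times U_{s_k}} := \zeta_n^{(s_k)}$ produces a globally well-defined analytic function on $\C\times U_{\rm tn}$, and then to transfer the properties (D1)--(D3) and the normalization from each patch to the union, with the enlarged constant $R' = \max_{1\le j\le N} R_{s_j}$.

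First I would handle the compatibility of the $\zeta_n^{(s_k)}$ on overlaps, which is the heart of the matter. For $\varphi\in U_{s_k}\cap U_{s_l}$, I want to conclude that $\zeta_n^{(s_k)}(\,\cdot\,,\varphi) = \zeta_n^{(s_l)}(\,\cdot\,,\varphi)$ for every $n\in\Z$. By Lemma \ref{lem:important}, the contours $\Gamma_m(s_k)$ and $\Gamma_m(s_l)$ are homologous in the resolvent set of $L(\varphi)$; since the canonical root \eqref{eq:canonical_root} is holomorphic in $\C\setminus\bigsqcup_k G_k(s,\varphi)$ and both $\zeta_n^{(s_k)}$ and $\zeta_n^{(s_l)}$ are entire in $\lambda$, the normalization \eqref{eq:normalization_s} gives the same Kronecker $\delta_{nm}$ against either contour. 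Subtracting, the period of the one-form $\big(\zeta_n^{(s_k)} - \zeta_n^{(s_l)}\big)/\sqrt[c]{\Delta^2-4}$ along every $\Gamma_m(s_l)$ vanishes. Here is where I invoke the uniqueness result \cite[Proposition 5.2]{KT2}: as emphasized in Remark \ref{rem:uniqueness_differentials}, the singular/leading parts of the two differentials (the ansatz term $\Omega_n$) agree, so the difference is exactly of the type to which that proposition applies, yielding $\zeta_n^{(s_k)} = \zeta_n^{(s_l)}$ on $\C\times(U_{s_k}\cap U_{s_l})$.

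Once compatibility on overlaps is established, the function $\zeta_n : \C\times U_{\rm tn}\to\C$ is well-defined, and analyticity is inherited patch by patch from the analyticity of each $\zeta_n^{(s_k)}$. The normalization assertion is then just a restatement: for $\varphi\in U_{s_k}$ one has $\zeta_n(\,\cdot\,,\varphi) = \zeta_n^{(s_k)}(\,\cdot\,,\varphi)$, and so \eqref{eq:normalization_s} transfers verbatim.

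Finally I would check (D1)--(D3) with the uniform constant $R' = \max_j R_{s_j}$. For any $\varphi\in U_{\rm tn}$ pick $k$ with $\varphi\in U_{s_k}$; then $\zeta_n(\,\cdot\,,\varphi)$ enjoys (D1)--(D3) with $R_{s_k}\le R'$. Enlarging the parameter from $R_{s_k}$ to $R'$ is harmless: in (D1) one merely relabels some of the zeros in $B_{R_{s_k}}$ (which sit in $B_{R'}$) as belonging to the ``large index'' regime if needed, while the zeros in the disks $D_k$ for $|k|>R'$ are unaffected; in (D2) the asymptotic formula for $\sigma^n_k(\varphi)$ with $|k|>R'$ is a subset of the asymptotic formula valid for $|k|>R_{s_k}$, and the local uniformity in $\varphi$ on the finitely many patches $U_{s_1},\ldots,U_{s_N}$ combines to local uniformity on the union; and the product representation in (D3) is intrinsic to $\zeta_n$ and therefore unchanged. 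The main obstacle of the argument is the overlap compatibility in the first step, since it is the one step where global information (homology between the contours) is needed and where the specific normalization/uniqueness machinery from \cite{KT2} must be invoked correctly; the remaining steps are essentially bookkeeping.
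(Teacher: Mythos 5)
Your proposal is correct and follows essentially the same route as the paper: overlap compatibility of the $\zeta_n^{(s_k)}$ is obtained by combining the homology of the contours (Lemma \ref{lem:important}), the normalization \eqref{eq:normalization_s}, and the uniqueness result \cite[Proposition 5.2]{KT2} applied to the \emph{difference} of the two differentials exactly as flagged in Remark \ref{rem:uniqueness_differentials}, after which the piecewise definition, (D1)--(D3) with $R'$, and the normalization transfer by patching. One small caution in your (D1) discussion: when passing from $R_{s_k}$ to $R'\ge R_{s_k}$, the zeros $\sigma^n_k$ with $R_{s_k}<|k|\le R'$ move from the ``large-index'' regime (localized in $D_k$) into the ``small-index'' regime (only required to lie in $B_{R'}$), not the other way around as your phrasing suggests; this is still harmless because $D_k\subseteq B_{R'}$ for $|k|\le R'$, so the weaker conclusion holds.
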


\medskip

We now turn to the construction of the actions on the connected tubular neighborhood 
$U_{\rm tn}=\bigcup_{1\le j\le N}U_{s_j}$ of the path $\gamma$ defined in \eqref{eq:U_tn}.
Recall that $s_1=0$ and $U_{s_1}\subseteq\U_0$ where $\U_0$ is the open ball in $L^2_c$ centered at zero, 
introduced at the end of Section \ref{sec:setup}. For any $1\le j\le N$ and $\varphi\in U_{s_j}$ define 
the (prospective) actions
\begin{equation}\label{eq:actions}
I_n^{(j)}(\varphi):=\frac{1}{\pi}\int_{\Gamma_n(s_j)}\frac{\lambda\dot\Delta(\lambda,\varphi)}
{\sqrt[c]{\Delta^2(\lambda,\varphi)-4}}\,d\lambda,\quad n\in\Z.
\end{equation}
This definition is motivated by \cite{KLTZ} where actions were defined by formula \eqref{eq:actions}
for potentials $\varphi$ in the ball $\U_0$ (cf. Theorem \ref{th:main_near_zero}). 
Note that the contours of integration $\Gamma_n(s_j)$, $n\in\Z$, appearing in \eqref{eq:actions} 
are independent of $\varphi\in U_{s_j}$ and are contained in the set 
$\C\setminus\big(\bigsqcup_{k\in\Z}\overline{D_k'(s_j)}\big)$.
In addition, the mapping $\dot\Delta : \C\times L^2_c\to\C$ is analytic and the mapping 
\eqref{eq:canonical_root_analytic} is analytic and does not have zeros (cf. Section \ref{sec:setup}). 
This shows that for any $n\in\Z$ and $1\le j\le N$,
\[
I_n^{(j)} : U_{s_j}\to\C
\] 
is analytic. If $\varphi\in U_{s_k}\cap U_{s_l}$ for some $1\le k<l\le N$, then for any $n\in\Z$ 
the contours $\Gamma_n(s_k)$ and $\Gamma_n(s_l)$ are homologous in the resolvent set
of $L(\varphi)$ -- see Lemma \ref{lem:important}. This together with the definition of the canonical root
\eqref{eq:canonical_root} shows that for any $n\in\Z$,
\[
I_n^{(k)}(\varphi)=I_n^{(l)}(\varphi).
\]
Hence, 
\begin{equation}\label{eq:I_n}
I_n : U_{\rm tn}\to\C,\quad I_n\big|_{U_{s_j}}:=I_n^{(j)},\quad 1\le j\le N,
\end{equation}
is well defined. In this way we proved

\begin{Prop}\label{lem:actions}
For any $n\in\Z$, the function $I_n : U_{\rm tn}\to\C$, defined by \eqref{eq:I_n}, is analytic.
On $U_{s_1}\cap\U_0$ the function $I_n$ coincides with the $n$-th action
variable constructed in \cite{KLTZ}.
\end{Prop}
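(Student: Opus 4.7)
The plan is to consolidate the observations already laid out in the paragraphs immediately preceding the statement. There are three things to verify: (i) each local function $I_n^{(j)}$ defined on $U_{s_j}$ by \eqref{eq:actions} is analytic; (ii) on each overlap $U_{s_k}\cap U_{s_l}$ one has $I_n^{(k)}=I_n^{(l)}$, so \eqref{eq:I_n} is unambiguous; and (iii) on $U_{s_1}\cap\U_0$ the resulting function agrees with the action constructed in \cite{KLTZ}.

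For (i), I would note that the contour $\Gamma_n(s_j)$ is a fixed compact $C^1$-curve, independent of $\varphi\in U_{s_j}$, and that by construction it lies entirely in $\C\setminus\bigsqcup_{k\in\Z}\overline{D_k'(s_j)}$. The discriminant derivative $\dot\Delta:\C\times L^2_c\to\C$ is analytic, and by \eqref{eq:canonical_root_analytic} the canonical root is analytic and nowhere vanishing on $\bigl(\C\setminus\bigsqcup_{k}\overline{D_k'(s_j)}\bigr)\times U_{s_j}$. Hence the integrand of \eqref{eq:actions} is jointly continuous in $(\lambda,\varphi)$ and, for each fixed $\lambda\in\Gamma_n(s_j)$, analytic in $\varphi\in U_{s_j}$. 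A standard Morera/Fubini argument then shows that the contour integral $I_n^{(j)}$ is analytic on $U_{s_j}$.

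For (ii), take $\varphi\in U_{s_k}\cap U_{s_l}$ with $k<l$. Lemma \ref{lem:important} asserts that $\Gamma_n(s_k)$ and $\Gamma_n(s_l)$ are homologous within the resolvent set of $L(\varphi)$; since the continuous deformation of the cuts $G_m(s)$, $s\in[s_k,s_l]$, provided by Lemma \ref{lem:deformation_G_n} keeps them inside the disks $D_m'(s)$, one can moreover realize this homology in a domain $\Omega\subseteq\C$ disjoint from every cut $G_m(\cdot,\varphi)$. On $\Omega$ the canonical root $\sqrt[c]{\Delta^2(\cdot,\varphi)-4}$ is holomorphic and nonzero, and $\dot\Delta(\cdot,\varphi)$ is entire, so the one-form $\lambda\,\dot\Delta(\lambda,\varphi)\,d\lambda/\sqrt[c]{\Delta^2(\lambda,\varphi)-4}$ is holomorphic on $\Omega$. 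Cauchy's theorem then yields $I_n^{(k)}(\varphi)=I_n^{(l)}(\varphi)$, giving the well-definedness of \eqref{eq:I_n}; since analyticity is a local property, $I_n$ is analytic on $U_{\rm tn}$.

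Claim (iii) is immediate once one observes that $U_{s_1}\subseteq\U_0$ and that on $\U_0$ the contour $\Gamma_n(s_1)$ together with the canonical root \eqref{eq:canonical_root} are precisely the objects used in \cite{KLTZ} to define the $n$-th action; the formula \eqref{eq:actions} is thus literally identical to the one used there. There is no serious obstacle in this proposition by itself: all the substantive content sits in the earlier construction of the cuts $G_n(s)$ and contours $\Gamma_n(s)$ (Lemma \ref{lem:deformation_G_n}), in the homology statement (Lemma \ref{lem:important}), and in the analyticity and non-vanishing of the canonical root \eqref{eq:canonical_root_analytic}. Once these are in place, Proposition \ref{lem:actions} follows by a direct application of Cauchy's theorem combined with the standard fact that contour integrals of jointly continuous, fibrewise holomorphic integrands are analytic.
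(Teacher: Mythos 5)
Your proposal is correct and follows the same route as the paper: analyticity of each local piece $I_n^{(j)}$ comes from the analyticity of $\dot\Delta$, the analyticity and non-vanishing of the canonical root on $\bigl(\C\setminus\bigsqcup_k\overline{D_k'(s_j)}\bigr)\times U_{s_j}$, and the fact that the contour is fixed; consistency on overlaps comes from Lemma \ref{lem:important} together with Cauchy's theorem applied to the canonical root; and the identification with \cite{KLTZ} is by construction since the formula is the same on $U_{s_1}\subseteq\U_0$. Your added remark that the homology can be realized in a region disjoint from all cuts, where the canonical root is single-valued, is a slightly more explicit rendering of what the paper compresses into ``This together with the definition of the canonical root.''
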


\begin{Rem}
Alternatively, one can analytically extend the actions by arguments similar to the ones
used in the proof of Proposition \ref{prop:beta^n_tn} to analytically extend the angles. Here we
use instead a deformation of the cuts (cf. Lemma \ref{lem:deformation_G_n}) and 
a subsequent deformation of the contours $\Gamma_n(s)$ which provide a geometrically simple 
approach.
\end{Rem}

In view of Theorem \ref{th:main_near_zero}, for any $n\in\Z$, the action
$I_n|_{U_{s_1}\cap\W_0}$ is real valued and for any $m,n\in\Z$,
\[
\big\{I_m|_{U_{s_1}\cap\U_0},I_n|_{U_{s_1}\cap\U_0}\big\}=0.
\]
We have the following

\begin{Coro}\label{coro:actions_poisson_relations}
For any $m,n\in\Z$, $\{I_m,I_n\}=0$ on $U_{\rm tn}$. Moreover, the action 
$I_n : U_{\rm tn}\to\C$ is real-valued when restricted to $U_{\rm tn}\cap i L^2_r$.
\end{Coro}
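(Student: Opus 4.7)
The strategy is analytic continuation from the open subset $U_{s_1}\cap\U_0\subseteq U_{\rm tn}$, on which Theorem \ref{th:main_near_zero} already supplies the full Birkhoff normal form. The essential structural input is Proposition \ref{lem:actions}, guaranteeing that each $I_n$ is complex analytic on the connected open set $U_{\rm tn}\subseteq L^2_c$.

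For the Poisson commutation relations, I first note that $\{I_m,I_n\}$ is itself a complex analytic function on $U_{\rm tn}$. Indeed, Fr\'echet analyticity of $I_n$ produces an $L^2$-gradient $(\p_1 I_n,\p_2 I_n)\in L^2_c$ depending analytically on $\varphi$ in the $L^2$-norm (via the $L^2$-pairing representation of the differential), so the integral in \eqref{eq:poisson_bracket} is an analytic scalar function on $U_{\rm tn}$. On $\W_0=\U_0\cap iL^2_r$, Theorem \ref{th:main_near_zero} together with (NF1) and a one-line Leibniz computation applied to $I_n=z_nw_{(-n)}$ yield $\{I_m,I_n\}\equiv 0$; thus $\{I_m,I_n\}$ vanishes on the nonempty subset $U_{s_1}\cap iL^2_r$, which is open in the totally real subspace $iL^2_r\subseteq L^2_c$ and contains $\psi^{(0)}$. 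Because $iL^2_r$ is totally real of maximal real dimension, that is, $iL^2_r+i\cdot iL^2_r=L^2_c$ with trivial intersection, the Taylor series of $\{I_m,I_n\}$ at any such point consists of continuous complex homogeneous polynomials vanishing on a real open neighborhood of $0$ in $iL^2_r$, and totally real maximality forces each polynomial to vanish identically on $L^2_c$. Hence $\{I_m,I_n\}$ vanishes on a full complex neighborhood of that point, and the holomorphic identity principle on the connected open set $U_{\rm tn}$ extends vanishing to all of $U_{\rm tn}$.

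For the real-valuedness claim I work on the connected (by property (T2)) real analytic submanifold $U_{\rm tn}\cap iL^2_r$. There $I_n$ restricts to a real analytic map into $\C$. By Theorem \ref{th:main_near_zero} the Birkhoff map sends $\W_0$ into $i\h^0_r$, where $w_{(-n)}=-\overline{z_n}$, so $I_n=z_nw_{(-n)}=-|z_n|^2\in\R$ on $\W_0\supseteq U_{s_1}\cap iL^2_r$. Hence $\im I_n$ is a real analytic real-valued function vanishing on the nonempty open subset $U_{s_1}\cap iL^2_r$ of the connected manifold $U_{\rm tn}\cap iL^2_r$, and the real analytic identity principle gives $\im I_n\equiv 0$.

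The only genuinely delicate point is the analyticity of the $L^2$-gradient of $I_n$ as a map $U_{\rm tn}\to L^2_c$, so that \eqref{eq:poisson_bracket} indeed defines a complex analytic scalar function on $U_{\rm tn}$; this is a standard Cauchy-integral argument in the Banach space setting. With this in place, both claims reduce to a single application of the identity theorem -- complex in the first case, real in the second.
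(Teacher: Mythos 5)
Your proof is correct and follows essentially the same blueprint as the paper: establish that $\{I_m,I_n\}$ and $I_n$ are analytic on the connected domain $U_{\rm tn}$, observe that the desired identities hold near the zero potential by Theorem~\ref{th:main_near_zero}, and propagate them by the identity theorem. The only variations are minor: the paper uses the fact (available from \cite{KLTZ}) that $\{I_m,I_n\}$ vanishes already on the \emph{complex} open set $U_{s_1}\subseteq\U_0$, and so applies the holomorphic identity theorem directly, whereas you establish vanishing only on the real slice $U_{s_1}\cap i L^2_r$ and then run a totally-real extension argument to reach the same complex neighborhood; similarly, for real-valuedness the paper invokes its Lemma~\ref{lem:real-analyticity} while you use a direct real-analytic identity principle on $U_{\rm tn}\cap i L^2_r$ (which is valid thanks to (T2) connectedness). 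Both routes are sound, and yours simply makes explicit the totally-real step that the paper packages into Lemma~\ref{lem:real-analyticity} and the \cite{KLTZ} normal form on $\U_0$.
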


\begin{proof}[Proof of Corollary \ref{coro:actions_poisson_relations}]
Note that the analyticity of the action $I_n : U_{\rm tn}\to\C$ implies that for any $j=1,2$ the $L^2$-gradient
$\partial_j  I_n :  U_{\rm tn}\to L^2_c$ is analytic. By the definition \eqref{eq:poisson_bracket} of the Poisson bracket
we conclude that $\{I_n,I_m\} : U_{\rm tn}\to\C$ is analytic for any $m,n\in\Z$.
Since $U_{\rm tn}$ is connected and $\{I_n,I_m\}|_{U_{s_1}}=0$ by the considerations above the first statement
of the Corollary follows. The second statement follows from Proposition \ref{lem:actions}, the fact that
$I_n : U_{\rm tn}\to\C$ is real-valued when restricted to $U_{s_1}\cap\W_0$, and 
Lemma \ref{lem:real-analyticity} below.
\end{proof}

In the proof of Corollary \ref{coro:actions_poisson_relations} we used the following result about real analytic functions.

\begin{Lem}\label{lem:real-analyticity}
Let $X_r$ be a real subspace inside the complex Banach space $X_c=X_r\otimes\C$, 
$U$ a connected open set in $X_c$ such that $U\cap X_r$ is connected, and $f : U\to\C$ an analytic function. 
Assume that there exists an open ball $B(x_0)$ in $X_c$ centered at $x_0\in U\cap X_r$ such that 
$B(x_0)\subseteq U$ and $f|_{B(x_0)\cap X_r}$ is real-valued. Then $f : U\to\C$ is real-valued on $U\cap X_r$.
\end{Lem}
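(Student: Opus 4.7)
The plan is to reduce the claim to the identity principle for real-analytic maps on a connected open subset of a real Banach space, applied to $g:=\im f$ restricted to $U\cap X_r$.

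First I would verify that $f|_{U\cap X_r}\colon U\cap X_r\to\C$ is real-analytic in the Banach-space sense on $X_r$. At any $x\in U\cap X_r$ the holomorphic function $f$ admits a Taylor expansion $f(x+v)=\sum_{n\ge 0}P_n(v)$ in continuous $n$-homogeneous polynomials $P_n$ on $X_c$, convergent absolutely and uniformly on some complex ball $B(x,r)\subseteq U$. Restricting to $v\in X_r$ yields an absolutely convergent power series whose $n$-th term is the continuous $n$-homogeneous polynomial $P_n|_{X_r}$ on $X_r$ with values in $\C$. Hence $f|_{U\cap X_r}$, and in particular its imaginary part $g$, is real-analytic on $U\cap X_r$. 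By hypothesis $g$ vanishes on the nonempty open set $B(x_0)\cap X_r$.

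Next I would invoke the identity principle for real-analytic Banach-space valued functions. Let $Z$ be the set of $x\in U\cap X_r$ for which $g$ vanishes on some $X_r$-neighborhood of $x$. By construction $Z$ is open and, containing $B(x_0)\cap X_r$, nonempty. To show that $Z$ is relatively closed in $U\cap X_r$, take $x_k\in Z$ with $x_k\to x\in U\cap X_r$. All Fr\'echet derivatives $D^n g(x_k)$ vanish for every $k$ and every $n\ge 0$, and since real-analyticity makes each $D^n g$ continuous in the base point, $D^n g(x)=0$ for every $n\ge 0$. The local Taylor expansion of $g$ at $x$ then forces $g\equiv 0$ on a neighborhood of $x$, so $x\in Z$. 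Connectedness of $U\cap X_r$ gives $Z=U\cap X_r$, and therefore $f$ is real-valued on $U\cap X_r$.

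The only non-routine point is the infinite-dimensional identity principle invoked above. It hinges on two standard facts about a real-analytic map between Banach spaces: continuity in the base point of all Fr\'echet derivatives, and equality of the function with its Taylor series on a full neighborhood. Both follow from Cauchy-type estimates applied to the holomorphic function $f$ on the complex ball around each $x\in U\cap X_r$ contained in $U$; I expect this verification to be the only genuinely technical part of the argument, with everything else being direct.
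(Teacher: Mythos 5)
Your proposal is correct and implements exactly the argument the paper gestures at (``standard arguments involving Taylor's series expansions of $f$''): restrict $f$ to $X_r$, observe it is real-analytic, and run the connectedness/identity-principle argument on $\im f$. You have simply supplied the details the paper omits.
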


The Lemma follows easily from standard arguments involving Taylor's series expansions of $f$.

\section{Angles on the neighborhood $U_{\rm tn}$}\label{sec:angles_in_U_tn}
In this Section we analytically extend the angles, constructed in \cite{KLTZ} inside $U_{s_1}$, along the tubular 
neighborhood $U_{\rm tn}$ defined by \eqref{eq:U_tn}. First we need some preparation.
As by construction $R'\ge R$ where $R'$ and $R$ are defined by \eqref{eq:U_tn} and, respectively, \eqref{eq:R}, 
there is a (possibly empty) set of indices $R<|k|\le R'$ so that the statements of Lemma \ref{lem:counting_lemma} and 
Lemma \ref{lem:dirichlet_spectrum}, hold uniformly in $\varphi\in U_{\rm tn}$ with $R_p$ and $R_D$ replaced by $R'$ 
but in contrast to (T2), there could be double periodic eigenvalues of $L(\varphi)$ inside the disk $B_{R'}$. 
More specifically, double periodic eigenvalues in $B_{R'}$ can only appear in the union of disks $\bigcup_{R<|k|\le R'}D_k$, 
since by construction, $\lambda_k^+(\varphi),\lambda_k^-(\varphi)\in D_k\subseteq B_{R'}$ for any 
$\varphi\in U_{\rm tn}$ and $R<|k|\le R'$. Next we argue as in the proof of Corollary 3.3 in \cite{KLT2} to construct
a continuous path $:{\tilde\gamma} : [0,1]\to U_{\rm tn}\cap i L^2_r$ so  that $\tilde\gamma(0)=\psi^{(0)}$,
$\tilde\gamma(1)=\psi^{(1)}$, and for any potential $\varphi\in{\tilde\gamma}\big([0,1]\big)$ the operator $L(\varphi)$ 
has only simple (and hence non-real) periodic eigenvalues in the disk $B_{R'}$. In view of the compactness of 
$\tilde\gamma$ we can find a connected open neighborhood $V_{\rm tn}$ of $\tilde\gamma$ in $L^2_c$ so 
that $V_{\rm tn}\subseteq U_{\rm tn}$, $V_{\rm tn}\cap i L^2_r$ is connected, and for any $\varphi\in V_{\rm tn}$,
the operator $L(\varphi)$ has only simple, non-real periodic eigenvalues in the disk $B_{R'}$.
To simplify notation, in the sequel we denote $V_{\rm tn}$ by $U_{\rm tn}$ and $R'$ by $R$. In this way, we obtain

\begin{Lem}\label{lem:U_tn}
The neighborhood $U_{\rm tn}$ is connected, contains the potentials $\psi^{(0)}$ and $\psi^{(1)}$, 
and satisfies the properties (T1) and (T2), and Proposition \ref{lem:differentials_on_U_tn} with $R'$ replaced by $R$.
\end{Lem}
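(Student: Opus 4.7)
The lemma is essentially a bookkeeping statement: the proof amounts to verifying that after replacing the old $U_{\rm tn}$ by a suitable sub-neighborhood $V_{\rm tn}$ (and relabeling $R'$ as $R$), the desired properties carry over. The only genuinely new ingredient is the construction of the continuous path $\tilde\gamma$ connecting $\psi^{(0)}$ and $\psi^{(1)}$ inside $i L^2_r$ along which no double periodic eigenvalues occur in $B_{R'}$.

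The plan is as follows. First I would produce $\tilde\gamma$. The set $\mathcal{S}:=\{\varphi\in U_{\rm tn}\cap i L^2_r\,|\,L(\varphi)\text{ has only simple periodic eigenvalues in }B_{R'}\}$ is open in $U_{\rm tn}\cap i L^2_r$ by continuous dependence of the periodic spectrum (as a multi-set) on $\varphi$, and it is dense: by Theorem \ref{prop:general_position} the set $\mathcal{T}\cap i L^2_r$ is residual, hence dense, in $i L^2_r$, and clearly $\mathcal{T}\cap i L^2_r\subseteq\mathcal{S}$. Following the strategy of the proof of Corollary 3.3 in \cite{KLT2}, I would exploit the connectedness of $U_{\rm tn}\cap i L^2_r$ (property (T2) of the old $U_{\rm tn}$) together with the density of $\mathcal{S}$ in order to perturb the segment $\ell$ into a continuous path $\tilde\gamma : [0,1]\to\mathcal{S}$ with $\tilde\gamma(0)=\psi^{(0)}$ and $\tilde\gamma(1)=\psi^{(1)}$. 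Once $\tilde\gamma$ lies in $\mathcal{S}$, Lemma \ref{lem:spectrum_symmetries} forces the eigenvalues in $B_{R'}$ along $\tilde\gamma$ to be non-real, since real periodic eigenvalues of $L(\varphi)$ for $\varphi\in i L^2_r$ must have even algebraic multiplicity.

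Second, I would take a connected open neighborhood $V_{\rm tn}$ of $\tilde\gamma$ in $L^2_c$ with $V_{\rm tn}\subseteq U_{\rm tn}$. By compactness of $\tilde\gamma([0,1])$ and the openness of the condition ``all periodic eigenvalues in $B_{R'}$ are simple and non-real'', one can cover $\tilde\gamma$ by finitely many balls in $L^2_c$ on which this condition persists; their union (together with a small tubular neighborhood inside $i L^2_r$ around $\tilde\gamma$) yields a $V_{\rm tn}$ for which $V_{\rm tn}\cap i L^2_r$ is connected. I then rename $V_{\rm tn}$ as $U_{\rm tn}$ and $R'$ as $R$.

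Finally, verification of the claimed properties: (T1) is preserved because $V_{\rm tn}\subseteq U_{\rm tn}$ and the counting lemmas already held uniformly on the old $U_{\rm tn}$ with $R_p=R_D=R'$. (T2) holds by the very construction of $V_{\rm tn}$, combined with Lemma \ref{lem:spectrum_symmetries} for the conjugation symmetries on $i L^2_r$. Proposition \ref{lem:differentials_on_U_tn} transfers by restriction: the analytic functions $\zeta_n :\C\times U_{\rm tn}\to\C$ constructed on the old (larger) $U_{\rm tn}$ restrict to the new $U_{\rm tn}=V_{\rm tn}$, and properties (D1)--(D3) and the normalization conditions are clearly preserved under restriction to a sub-neighborhood. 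The main obstacle I anticipate is the careful path construction in the first step: avoiding eigenvalue collisions in $B_{R'}$ while staying inside $U_{\rm tn}\cap i L^2_r$ and preserving continuity requires the specific perturbation argument of \cite{KLT2}, which effectively reduces the problem to analyzing the locus where two eigenvalues in $B_{R'}$ collide --- a real-codimension-at-least-one subset of $i L^2_r$ that can be avoided by a suitable deformation of $\ell$.
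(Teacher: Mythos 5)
Your proposal reproduces the paper's argument: construct, by the perturbation argument of \cite[Corollary 3.3]{KLT2}, a continuous path $\tilde\gamma$ in $U_{\rm tn}\cap i L^2_r$ from $\psi^{(0)}$ to $\psi^{(1)}$ along which all periodic eigenvalues in $B_{R'}$ are simple (hence, by Lemma \ref{lem:spectrum_symmetries}, non-real), then shrink $U_{\rm tn}$ to a connected tubular neighborhood $V_{\rm tn}$ of $\tilde\gamma$ and rename $V_{\rm tn}$ and $R'$ as $U_{\rm tn}$ and $R$, with (T1), (T2) and Proposition \ref{lem:differentials_on_U_tn} surviving restriction. Two imprecisions worth fixing: the base path for the deformation is the path $\gamma$ of Section \ref{sec:actions_in_U_tn} (which already lies in $U_{\rm tn}\cap i L^2_r$), not the segment $\ell$ (which need not), and the collision locus to be avoided is of real codimension \emph{two} (as established in \cite{KLT2}), not merely ``at least one'' --- a codimension-one locus could disconnect the domain and obstruct the deformation.
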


Now, we proceed with the construction of the angles.
For any given $\varphi\in U_{\rm tn}$ consider the affine curve
\[
\CC_\varphi:=\big\{(\lambda,w)\in\C^2\,\big|\,w^2=\Delta^2(\lambda,\varphi)-4\big\}
\]
and the projection $\pi_1 : \CC_\varphi\to\C$, $(\lambda,w)\mapsto\lambda$.
In fact, in what follows we work only with the following subsets of $\CC_\varphi$:
\begin{equation}\label{eq:riemann_surface}
\CC_{\varphi,R}:=\pi_1^{-1}\big(B_R\big)
\end{equation}
and
\begin{equation}\label{eq:k-th_handle}
\DD_{\varphi,k}:=\pi_1^{-1}\big(D_k\big),\quad|k|>R.
\end{equation}
By Lemma \ref{lem:U_tn}, for any $\varphi\in U_{\rm tn}$ there are precisely $4R+2$ periodic eigenvalues of 
$L(\varphi)$ inside the disk $B_R$ and they are all simple. This implies that $\CC_{\varphi,R}$ is an open 
Riemann surface with $2 R+1$ handles whose boundary in $\CC_\varphi$ is a disjoint union of two circles.
The same Lemma also implies that for any $\varphi\in U_{\rm tn}$ and $|k|>R$,
\[
\lambda_k^+(\varphi),\,\,\lambda_k^-(\varphi),\,\, \mu_k(\varphi)\in D_k,
\]
and there are no other periodic or Dirichlet eigenvalues of $L(\varphi)$ inside $D_k$.
While for any $|k|>R$, the Dirichlet eigenvalue $\mu_k$ is simple and hence depends analytically on 
$\varphi\in U_{\rm tn}$, the periodic eigenvalues $\lambda_k^+$ and $\lambda_k^-$ are not necessarily simple.
In particular, we see that $\DD_{\varphi,k}$ is either a Riemann surface diffeomorphic to $(0,1)\times\T$ or, 
when $\lambda_k^+=\lambda_k^-$, a transversal intersection in $\C^2$ of two complex disks at their centers.
Now, let $\varphi\in U_{\rm tn}$ and assume that $\varphi\in U_{s_l}$ for some $1\le l\le N$ (cf. \eqref{eq:U_tn}).
For any $k\in\Z$ consider the contour $\Gamma_k(s_l)$. By Lemma \ref{lem:important}, 
the homology class of the cycle $\Gamma_k(s_l)$ within the resolvent set of $L(\varphi)$
is independent of the choice of $1\le l\le N$ with the property that $\varphi\in U_{s_l}$.
Denote by $a_k$ the homology class in $\CC_\varphi$ of the component of $\pi_1^{-1}\big(\Gamma_k(s_l)\big)$ 
that lies on the canonical sheet  
\[
\CC_\varphi^c:=\Big\{(\lambda,w)\,\Big|\,\lambda\in\C\setminus\Big(\bigsqcup_{k\in\Z}G_k(s_l,\varphi)\Big),
w=\sqrt[c]{\Delta^2(\lambda,\varphi)-4}\Big\}
\] 
of the curve $\CC_\varphi$ where the canonical root $\sqrt[c]{\Delta^2(\lambda,\varphi)-4}$ is defined by
\eqref{eq:canonical_root}. By the discussion above, for any $\varphi\in U_{\rm tn}$ and $k\in\Z$ the class $a_k$ is 
independent of the choice of $1\le l\le N$ with the property that $\varphi\in U_{s_l}$. 
In what follows we will not distinguish between the class $a_k$ and a given 
$C^1$-smooth representative of $a_k$, which we call an {\em $a_k$-cycle}. 
In a similar way for any $1\le |k|\le R$ we define the {\em $b_k$-cycle}.
More specifically, given any $1\le |k|\le R$ and $\varphi\in U_{\rm tn}$ so that $\varphi\in U_{s_l}$
for some $1\le l\le N$, consider the intersection point $\varkappa_k(s_l,\varphi)$ of the cut $G_k(s_l,\varphi)$ 
with the real axis. Denote by $b_k$ the homology class in $\CC_\varphi$ of the cycle 
$\pi_1^{-1}\big([\varkappa_{k-1}(s_l,\varphi),\varkappa_k(s_l,\varphi)]\big)$ if $1\le k\le R$ and
the cycle $\pi_1^{-1}\big([\varkappa_k(s_l,\varphi),\varkappa_{k+1}(s_l,\varphi)]\big)$ if $-R\le k\le-1$
oriented so that the intersection index $a_k\circ b_k $ of $a_k$ with $b_k$ is equal to one. 
It is not hard to see that for any $1\le k\le R$ the class $b_k$ is independent of the choice of $1\le l\le N$ with the
property that $\varphi\in U_{s_l}$. Moreover, the first homology group
\begin{equation}\label{eq:homology_group}
H_1(\CC_{\varphi,R},\Z)=\mathop{\rm span}\big\langle a_0, a_k, b_k, 1\le |k|\le R\big\rangle_\Z\cong\Z^{4R+1}.
\end{equation}
In view of Lemma \ref{lem:U_tn} and Proposition \ref{lem:differentials_on_U_tn}, for any $n\in\Z$
the analytic functions $\zeta_n : \C\times U_{\rm tn}\to\C$ are well defined and satisfy the normalization conditions
\begin{equation}\label{eq:a-normalization}
\frac{1}{2\pi}\int_{a_k}\frac{\zeta_n(\lambda,\varphi)}{\sqrt{\chi_p(\lambda,\varphi)}}\,d\lambda=
\delta_{nk},\quad k\in\Z,
\end{equation}
where by \eqref{eq:chi_p}, $\chi_p(\lambda,\varphi)=\Delta^2(\lambda,\varphi)-4$.
For any $\varphi\in U_{\rm tn}$, $n\in\Z$, and $1\le |k|\le R$, denote by $p_{nk}$ the $b_k$-period
\[
p_{nk}\equiv p_{nk}(\varphi):=\int_{b_k}\frac{\zeta_n(\lambda,\varphi)}{\sqrt{\chi_p(\lambda,\varphi)}}\,d\lambda.
\]
(Note that $p_{nk}(\varphi)$ is well defined since $b_k$ is independent of the choice of $1\le l\le N$ 
with the property that $\varphi\in U_{s_l}$.)
Recall from Lemma \ref{lem:dirichlet_spectrum} that for any $\varphi\in U_{\rm tn}$ we list the Dirichlet eigenvalues
in lexicographic order and with multiplicities $\mu_k=\mu_k(\varphi)$, $k\in\Z$. For any $k\in\Z$ we define
the {\em Dirichlet divisor}
\begin{equation}\label{eq:dirichlet_divisor}
\mu_k^*(\varphi):=\Big(\mu_k(\varphi),\grave{m}_2\big(\mu_k(\varphi),\varphi\big)+
\grave{m}_3\big(\mu_k(\varphi),\varphi\big)\Big)
\end{equation}
where for any $\lambda\in\C$
\[
\begin{pmatrix}
\grave{m}_1(\lambda,\varphi)&\grave{m}_2(\lambda,\varphi)\\
\grave{m}_3(\lambda,\varphi)&\grave{m}_4(\lambda,\varphi)
\end{pmatrix}
:=M(1,\lambda,\varphi)
\]
and $M(x,\lambda,\varphi)$ is the fundamental solution \eqref{eq:M}.
Note that $\mu_k^*(\varphi)$ lies on the curve $\CC_\varphi$ since
by \cite[Lemma 6.6]{GK1},
\[
\big(\grave{m}_2(\mu_k)+\grave{m}_3(\mu_k)\big)^2=
\big(\grave{m}_1(\mu_k)+\grave{m}_4(\mu_k)\big)^2-4=
\Delta(\mu_k)^2-4.
\]
As the Floquet matrix $M(1,\lambda,\varphi)\in\Mat$ is analytic in 
$(\lambda,\varphi)\in\C\times L^2_c$, we conclude from \eqref{eq:dirichlet_divisor} and the discussion above 
that for any $|k|>R$, the mapping $U_{\rm tn}\to \C^2$, $\varphi\mapsto\mu_k^*(\varphi)$,
is analytic. 

\medskip

With these preparations we are ready to define for any $\varphi\in U_{\rm tn}$ and $n\in\Z$ the following 
{\em multivalued} functions,
\begin{equation}\label{4.1} 
\beta^n(\varphi):=\sum_{|k| \leq R}\int^{\mu^*_k(\varphi)}_{\lambda^-_k(\varphi)}
\frac{\zeta_n(\lambda,\varphi)}{\sqrt{\chi_p(\lambda,\varphi)}}\,d\lambda
\end{equation}
and, for any $|k| > R$
\begin{equation}\label{4.2} 
\beta^n_k(\varphi):=\int^{\mu^*_k(\varphi)}_{\lambda^-_k(\varphi)} 
\frac{\zeta_n(\lambda,\varphi)}{\sqrt{\chi_p(\lambda,\varphi)}}\,d\lambda.
\end{equation}
Let us discuss the definition of these path integrals in more detail. In \eqref{4.1} the paths of integration are chosen in 
$\CC_{\varphi,R}$. The integrals in \eqref{4.1} depend on the choice of the path but only up to integer 
linear combinations of the periods of the one form $\frac{\zeta _n(\lambda)}{\sqrt{\chi _p(\lambda )}}\,d\lambda$ 
with respect to the basis of cycles $(a_k)_{|k|\leq R}$ and $(b_k)_{1\le |k|\le R}$ on $\CC_{\varphi,R}$.
More specifically, if $|n|>R$, then since $\int_{a_k}\frac{\zeta _n(\lambda)}{\sqrt{\chi _p(\lambda )}}\,d\lambda=0$
for any $|k|\le R$, the quantity $\beta^n(\varphi)$ is defined modulo the lattice
\begin{equation}\label{4.3} 
\mathcal{L}_n\equiv \mathcal{L}_n(\varphi ):= 
\Big\{ \sum_{1\le |k| \leq R} m_k\,p_{nk}(\varphi)\,\Big|\, m_k\in\Z,\,
1\le |k|\leq R\Big\}
\end{equation}
whereas, if $|n|\le R$, it is defined modulo $\mathop{\rm span}\langle 2\pi,\mathcal{L}_n\rangle_\Z$
since $\frac{1}{2\pi}\int_{a_k}\frac{\zeta _n(\lambda)}{\sqrt{\chi _p(\lambda )}}\,d\lambda=\delta_{nk}$. 
In \eqref{4.2}, for $|k| > R$, the path of integration is chosen to be in 
${\mathcal D}_{\varphi,k}$. If $k\ne n$, the integral $\beta^n_k(\varphi)$ is independent of the path whereas
for $k=n$ with $|n|>R$, the integral $\beta^n_n(\varphi)$ is defined modulo $2\pi$ on $U_{\rm tn}\backslash\mathcal{Z}_n$ 
where
\begin{equation}\label{eq:Z_n}
\mathcal{Z}_n :=\big\{\varphi\in U_{\rm tn}\,\big|\,\gamma ^2_n=0\big\}.
\end{equation}
Since for any $|n|>R$, $\gamma_n^2$ is analytic on $U_{\rm tn}$ (cf. \cite[Lemma 12.4]{GK1}),
$\mathcal{Z}_n$ is an analytic subvariety of $U_{\rm tn}$. Furthermore, by the proof of Lemma 7.9 and
Proposition 7.10 in \cite{GK1}, $\mathcal{Z}_n\cap i L^2_r$ is a real analytic submanifold of $U_{\rm tn}\cap i L^2_r$
of real codimension two. Defining $\gamma_n:=\lambda^+_n-\lambda^-_n$ for $|n|\le R$ it is clear that
$\mathcal{Z}_n :=\big\{\varphi\in U_{\rm tn}\,\big|\,\gamma ^2_n=0\big\}=\emptyset$
for $|n|\le R$. Note that the integrals in \eqref{4.1} and \eqref{4.2} exist since whenever
$\lambda ^+_k \not=\lambda^-_k$, the integrands have a singularity of the form $(\lambda-\lambda^\pm _k)^{-1/2}$
for $\lambda$ near $\lambda ^\pm _k$, and hence are integrable. 
If $\lambda ^+_k = \lambda ^-_k$ (and hence by the construction of $U_{\rm tn}$ necessarily $|k|>R$),
the singularity of the integrand in \eqref{4.2} is removable since, by Lemma \ref{lem:U_tn} and 
Lemma \ref{lem:differentials_on_U_tn} (see property (D3)), the root $\sigma^n_k$ of $\zeta_n(\lambda)$ in $D_k$ 
then coincides with $\tau_k$ which, in view of \eqref{eq:canonical_root}, is a zero of the denominator since
$\sqrt[\rm st]{(\lambda_k^+-\lambda)(\lambda_k^--\lambda)}=\tau_k-\lambda$. For convenience, we introduce
\begin{equation}\label{eq:w_k}
w_k(\lambda):=\sqrt[\rm st]{(\lambda_k^+-\lambda)(\lambda_k^--\lambda)}.
\end{equation}

\medskip

The aim of this Section is to show that 
\begin{equation}\label{eq:angles_U_tn}
\theta_n(\varphi):={\tilde\beta}^n(\varphi)+\sum_{|k|>R}\beta^n_k(\varphi),\quad
\varphi\in U_{\rm tn}\setminus\mathcal{Z}_n,\quad n\in\Z,
\end{equation}
are bona fide angle variables, conjugate to the action variables introduced in Section \ref{sec:actions_in_U_tn}.
In particular the series in \eqref{eq:angles_U_tn} converges.
The definition \eqref{eq:angles_U_tn} is motivated by \cite{KLTZ} where the angle $\theta_n$
was defined by a formula of the type as in \eqref{eq:angles_U_tn} for potentials in $U_{s_1}\setminus\mathcal{Z}_n$
(cf. Theorem \ref{th:main_near_zero}). 
Here ${\tilde\beta}^n(\varphi)$ denotes an analytic branch of the multivalued function $\beta^n$,
defined by \eqref{4.1}, which is well defined modulo $2\pi$ and obtained by analytic extension
of the corresponding function defined on $U_{s_1}\setminus\mathcal{Z}_n$ in \cite{KLTZ}.
We begin by studying the integrals $\beta^n_k(\varphi)$, $|k|>R$.
First we establish the following estimates.

\begin{Lem}\label{lem:beta^n_k-asymptotics} 
For any $n\in\Z$ and $|k|>R$, $k\ne n$,
\[ 
\beta^n_k=O\left(\frac{|\gamma_k|+|\mu _k-\tau _k|}{|k-n|}\right)
\]
locally uniformly in $\varphi\in U_{\rm tn}$ and uniformly in $n\in\Z$.
\end{Lem}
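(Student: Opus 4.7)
The plan is to make the dependence on $k$, $n$, $\gamma_k$, and $\mu_k-\tau_k$ explicit by combining the infinite product \eqref{eq:canonical_root} with property (D3) of Proposition \ref{lem:differentials_on_U_tn}. After the factors $\pi_j$ cancel, the integrand takes the form
\[
\frac{\zeta_n(\lambda,\varphi)}{\sqrt[c]{\Delta^2(\lambda,\varphi)-4}} = \frac{-1}{i\,w_n(\lambda)}\cdot\frac{\sigma^n_k(\varphi)-\lambda}{w_k(\lambda)}\cdot P_{n,k}(\lambda,\varphi),
\]
where $P_{n,k}(\lambda,\varphi):=\prod_{j\ne n,k}\bigl(\sigma^n_j(\varphi)-\lambda\bigr)/w_j(\lambda)$ and $w_j$ is as in \eqref{eq:w_k}. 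The estimate then splits into three pieces: uniform bounds on $1/w_n(\lambda)$ and on $P_{n,k}(\lambda,\varphi)$ valid on all of $D_k$, and an explicit one-dimensional evaluation of the remaining integral against $(\sigma^n_k-\lambda)/w_k$ along the chosen path in $\DD_{\varphi,k}$.

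For the first piece, since $\lambda^\pm_n$ lies in $D_n$ when $|n|>R$ and in $B_R$ when $|n|\le R$, and $D_k$ is disjoint from both of these sets (using $|k|>R$), one checks easily that $|w_n(\lambda)|\gtrsim|k-n|$ for all $\lambda\in D_k$, locally uniformly in $\varphi\in U_{\rm tn}$ and uniformly in $n\in\Z$; this produces the desired $1/|k-n|$ factor. For the second piece, I would write $\sigma^n_j-\lambda=(\tau_j-\lambda)+(\sigma^n_j-\tau_j)$ and factor $w_j(\lambda)^2=(\tau_j-\lambda)^2-(\gamma_j/2)^2$, and then pair the indices $j$ and $-j$ exactly as in the convergence argument for the canonical root \eqref{eq:canonical_root}. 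Using (D2) to bound $|\sigma^n_j-\tau_j|\lesssim|\gamma_j|^2\ell_j^2$ together with $|\tau_j-\lambda|\gtrsim|j-k|$ for $\lambda\in D_k$, $j\ne k$, one should obtain uniform boundedness of $P_{n,k}(\cdot,\varphi)$ on $D_k$, uniformly in $n\in\Z$ and $|k|>R$. For the third piece, using the primitive $\tfrac{d}{d\lambda}w_k(\lambda)=(\lambda-\tau_k)/w_k(\lambda)$ together with the decomposition $\sigma^n_k-\lambda=(\sigma^n_k-\tau_k)-(\lambda-\tau_k)$, one obtains
\[
\int_{\lambda_k^-}^{\mu^*_k}\frac{\sigma^n_k-\lambda}{w_k(\lambda)}\,d\lambda = -w_k(\mu^*_k)+(\sigma^n_k-\tau_k)\int_{\lambda_k^-}^{\mu^*_k}\frac{d\lambda}{w_k(\lambda)}.
\]
The boundary term satisfies $|w_k(\mu^*_k)|=\sqrt{|\mu_k-\lambda_k^+|\,|\mu_k-\lambda_k^-|}\le|\mu_k-\tau_k|+|\gamma_k|/2$, while the residual integral admits the primitive $\log\bigl((\lambda-\tau_k)+w_k(\lambda)\bigr)$ (with an appropriate branch) and therefore contributes at most a factor of order $1+\bigl|\log|\gamma_k|\bigr|+\bigl|\log|\mu_k-\tau_k|\bigr|$. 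Combined with $|\sigma^n_k-\tau_k|=O(|\gamma_k|^2)$ from (D2), this logarithmic contribution is absorbed into an $O(|\gamma_k|)$ term since $t\mapsto t|\log t|$ is bounded on bounded subsets of $(0,\infty)$.

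The hardest step will be the uniform boundedness of $P_{n,k}$ in the second piece: one must control both the $n$-dependence coming from the zeros $\sigma^n_j$ and the near-cancellations in the pairing of the indices $j$ and $-j$, uniformly with respect to $\lambda\in D_k$ and $|k|>R$. This estimate relies crucially on the $\ell^2$-uniformity in $n$ asserted in (D2) and on the product representation of the canonical root. Once all three pieces are in place, multiplying the bounds and taking absolute values yields $|\beta^n_k|\le C(|\gamma_k|+|\mu_k-\tau_k|)/|k-n|$ with $C$ locally uniform in $\varphi\in U_{\rm tn}$ and uniform in $n\in\Z$, which is the stated estimate.
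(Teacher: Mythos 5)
Your decomposition of the integrand is exactly the one the paper uses (their $\zeta^n_k$ in \eqref{4.6} equals $\tfrac{i}{w_n}P_{n,k}$ in your notation, since $-1/i=i$), and the first two pieces of your estimate --- $1/|w_n|\lesssim1/|k-n|$ on $D_k$, and $P_{n,k}=O(1)$ uniformly --- correspond to what the paper cites from \cite[Corollary 12.7]{GK1} as the bound $\zeta^n_k(\lambda)=O(1/|n-k|)$ in \eqref{eq:zeta^n_k}. Where you genuinely diverge is the third piece. The paper parametrizes $[\lambda^-_k,\mu_k]$ linearly, normalizes by assuming $|\mu_k-\lambda^-_k|\le|\mu_k-\lambda^+_k|$ so that $|\lambda^+_k-\lambda(t)|\ge|\gamma_k|/2$, bounds the integrand pointwise, and integrates the resulting $t^{-1/2}$ singularity; you instead integrate by parts using $w_k'=(\lambda-\tau_k)/w_k$, which produces the clean boundary term $-w_k(\mu^*_k)$ with $|w_k(\mu^*_k)|\le|\mu_k-\tau_k|+|\gamma_k|/2$, plus the residual $(\sigma^n_k-\tau_k)\int d\lambda/w_k$. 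Your route avoids the WLOG normalization entirely, which is a nice simplification.

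However, your bound on the residual integral contains a genuine error. You claim that $\int_{\lambda^-_k}^{\mu^*_k}\tfrac{d\lambda}{w_k}$ is $O\big(1+|\log|\gamma_k||+|\log|\mu_k-\tau_k||\big)$, and then absorb this, multiplied by $|\sigma^n_k-\tau_k|=O(\gamma_k^2)$, into $O(|\gamma_k|)$ via boundedness of $t|\log t|$. This is false as stated: the $t|\log t|$ argument only controls $\gamma_k^2|\log|\gamma_k||$; it says nothing about $\gamma_k^2|\log|\mu_k-\tau_k||$, which blows up as $\mu_k\to\tau_k$ with $\gamma_k$ fixed. Moreover, your claimed bound is actually infinite at $\mu_k=\tau_k$, even though the residual integral there is perfectly finite: it equals $\log\big((\mu_k-\tau_k)+w_k(\mu^*_k)\big)-\log(-\gamma_k/2)$, and at $\mu_k=\tau_k$ the first logarithm has argument $\pm i\gamma_k/2\ne0$. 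The correct observation is the factorization
\[
\big[(\mu_k-\tau_k)+w_k(\mu^*_k)\big]\big[(\mu_k-\tau_k)-w_k(\mu^*_k)\big]=\gamma_k^2/4,
\]
together with the fact that each factor is $O(|\gamma_k|+|\mu_k-\tau_k|)$. From this, $\big|\log|(\mu_k-\tau_k)+w_k(\mu^*_k)|\big|\lesssim|\log|\gamma_k||$ (using $|\gamma_k|\le|\gamma_k|+|\mu_k-\tau_k|<1$), so the residual integral is $O(1+|\log|\gamma_k||)$ --- with no $\log|\mu_k-\tau_k|$ term --- and then $\gamma_k^2\cdot O(1+|\log|\gamma_k||)=O(|\gamma_k|)$ as you wanted. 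You should also note that, since $\beta^n_k$ is path-independent for $k\ne n$, you may choose a path for which the winding of $(\lambda-\tau_k)+w_k(\lambda)$ around the origin is bounded, so the ambiguity of the logarithmic primitive by multiples of $2\pi i$ does not spoil the estimate. Finally, the uniform $O(1)$ bound on $P_{n,k}$ is, as you acknowledge, the hard part; the paper does not reprove it but relies on \cite[Corollary 12.7]{GK1}, and your sketch would need to be fleshed out to replace that citation.
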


\begin{proof}[Proof of Lemma \ref{lem:beta^n_k-asymptotics}] 
We follow the arguments of the proof of Lemma 5.1 in \cite{GK1}.
It follows from \eqref{4.2}, the normalization 
condition \eqref{eq:a-normalization}, and the discussion above that $\beta^n_k = 0$ for 
any $\varphi\in U_{\rm tn}$, $n\in\Z$, and $|k|>R$ with $k\ne n$, such that $\mu_k\in\{\lambda^+_k,\lambda^-_k\}$. 
Moreover, in view of the normalization condition \eqref{eq:a-normalization}, 
the value of $\beta^n_k$ with $|k|>R$, $k\ne n$, will not change if we replace in formula \eqref{4.2} the eigenvalue
$\lambda^-_k$ by $\lambda ^+_k$. 
Hence it is sufficient to prove the claimed estimate only for those $\varphi\in U_{\rm tn}$, $n\in\Z$, and $|k|>R$ with 
$k\ne n$, for which
\begin{equation*}
\mu_k\ne\{\lambda^+_k,\lambda^-_k\}\quad\text{and}
\quad |\mu_k-\lambda^-_k|\le|\mu_k-\lambda^+_k|.
\end{equation*}
Using the definition \eqref{eq:canonical_root} of the canonical root and (D3)
we write for $\lambda\in D_k$,
\begin{equation}\label{4.6} 
\frac{\zeta_n(\lambda )}{\sqrt[c]{\chi _p(\lambda )}} =
\frac{\sigma ^n_k - \lambda}{w_k(\lambda )}\,\zeta ^n_k(\lambda )
\quad \mbox { with } \quad
\zeta ^n_k(\lambda ) := \frac{i}{w_n(\lambda )}
\prod _{r \not= k,n} \frac{\sigma ^n_r - \lambda }{w_r(\lambda )}.
\end{equation}
Arguing as in \cite[Corollary 12.7]{GK1} and in view of the asymptotics for $(\sigma^n_m)_{m\ne n}$ 
given by Lemma \ref{lem:U_tn} and Proposition \ref{lem:differentials_on_U_tn} (property (D2)) we get
\begin{equation}\label{eq:zeta^n_k}
\zeta^n_k(\lambda)=O\Big(\frac{1}{|n - k|}\Big),\quad k\ne n,\,\,\lambda\in D_k,
\end{equation}
locally uniformly on $U_{\rm tn}$.
To estimate $\beta^n_k$, we parametrize the interval $[\lambda_k^-,\mu_k]\subseteq\C$ in formula \eqref{4.2},  
$t\mapsto\lambda(t):=\lambda^-_k+t d_k$, $t\in[0,1]$, where $d_k:=\mu_k-\lambda^-_k$ and $d_k\ne 0$,  
and then use \eqref{4.2}, \eqref{4.6}, and \eqref{eq:zeta^n_k}, to get
\begin{eqnarray}
      |\beta^n_k|&=&\Big| \int ^{\mu _k}_{\lambda ^-_k}\frac{\sigma ^n_k -\lambda }{w_k(\lambda )}\,
      \zeta ^n_k(\lambda )\,d\lambda\Big|\nonumber\\
                        &=&O\Big( \frac{1}{|n-k|}\Big)\!\int ^1_0\!\Big|\frac{\sigma^n_k-\lambda (t)}{t d_k}\Big|^{1/2} 
                         \Big|\frac{\sigma^n_k-\lambda (t)}{\lambda ^+_k-\lambda (t)}\Big|^{1/2}\,|d_k|\,dt.\label{eq:beta^n_k}
\end{eqnarray}
Further, we have for any $0\le t\le 1$,
\[ 
\frac{\sigma ^n_k - \lambda (t)}{\lambda ^+_k - \lambda (t)} = 1 +
\frac{\sigma ^n_k - \lambda ^+_k}{\lambda ^+_k - \lambda (t)}.
\]
Using that $|\mu _k - \lambda ^-_k| \leq |\mu _k - \lambda ^+_k|$ one easily sees that
$|\lambda ^+_k - \lambda (t)| \geq |\gamma _k| / 2$ for any $0 \leq t \leq 1$.
Then, by Lemma \ref{lem:U_tn} and Proposition \ref{lem:differentials_on_U_tn} (property (D2))
and the triangle inequality,
$|\sigma^n_k-\lambda ^+_k|\le|\sigma^n_k-\tau _k|+|\gamma _k|/2=O(\gamma _k)$.
This implies that
\begin{equation}\label{4.7} 
\Big|\frac{\sigma^n_k-\lambda (t)}{\lambda ^+_k-\lambda (t)}\Big| = O(1), \quad t\in[0,1],
\end{equation}
locally uniformly on $U_{\rm tn}$. 
On the other hand,
   \begin{equation}\label{4.8} 
         \Big|\frac{\sigma^n_k-\lambda(t)}{t d_k}\Big|^{1/2}= 
                   \frac{\big(|\sigma^n_k-\lambda^-_k|+|d_k |\big)^{1/2}}{\sqrt{t}\,|d_k|^{1/2}}= 
          O\left(\frac{\big(|\gamma_k|+|d_k|\big)^{1/2}}{\sqrt{t}\,|d_k|^{1/2}}\right).
   \end{equation}
Combining,  \eqref{eq:beta^n_k} with \eqref{4.7} and \eqref{4.8} we finally obtain for $|k|>R$, $k\ne n$,
\begin{equation}\label{eq:|beta|}
|\beta ^n_k|=\Big\arrowvert\int ^{\mu _k}_{\lambda ^-_k} 
\frac{\sigma^n_k-\lambda }{w_k(\lambda)}\,\zeta^n_k(\lambda )\,d\lambda
\Big\arrowvert=O\left(\frac{\big(|\gamma_k|+|d_k|\big)^{1/2}
|d_k|^{1/2}}{|n-k|}\right).
\end{equation}
The claimed estimate then follows by the Cauchy-Schwarz inequality.
Going through the arguments of the proof one sees that the estimates for
$\beta ^n_k$ hold uniformly in $n\in\Z$ and locally uniformly
on $U_{\rm tn}$.
\end{proof}

\medskip

The next result claims that $\beta^n_k$, $|k|>R$, are analytic. More precisely, the following holds.

\begin{Lem}\label{lem:beta^n_k-analytic} 
Let $n \in \Z$ be arbitrary.
\begin{itemize}
\item[(i)] For any $|k| > R$ with $k\ne n$, $\beta ^n_k$ is analytic on
$U_{\rm tn}$.
\item[(ii)] For $|n| > R$, $\beta^n_n$ is defined modulo $2\pi$. It is analytic on $U_{\rm tn} \backslash{\mathcal Z}_n$ when 
considered modulo $\pi$.
\end{itemize}
\end{Lem}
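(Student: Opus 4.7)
The plan is to reduce both integrals to manifestly analytic expressions by combining the factorization \eqref{4.6} with a change of variables that depends on $\varphi$ only through analytic quantities. Substituting \eqref{4.6} into \eqref{4.2} gives
\[
\beta^n_k(\varphi) = \int_{\lambda_k^-(\varphi)}^{\mu^*_k(\varphi)} \frac{(\sigma^n_k - \lambda)\,\zeta^n_k(\lambda)}{w_k(\lambda)}\,d\lambda ,
\]
and by Lemma \ref{lem:U_tn}, Proposition \ref{lem:differentials_on_U_tn}, and Rouch\'e's theorem, the following quantities are all analytic in $\varphi\in U_{\rm tn}$: the function $\zeta^n_k(\lambda,\varphi)$ on a neighborhood of $\overline{D_k}\times U_{\rm tn}$, the zero $\sigma^n_k$, the symmetric combinations $\tau_k = (\lambda^+_k+\lambda^-_k)/2$ and $\gamma_k^2 = (\lambda^+_k - \lambda^-_k)^2$, and the Dirichlet divisor $\mu^*_k$. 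All non-analytic features of the problem are confined to the dependence on $\gamma_k$ itself (defined only up to sign) and to the branch-point endpoint $\lambda_k^-$.

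The key step is the Joukowski-type substitution
\[
\lambda - \tau_k = \tfrac{1}{2}\bigl(z + \gamma_k^2/(4z)\bigr),
\]
which uses only the analytic data $\tau_k$ and $\gamma_k^2$; under it one computes $w_k(\lambda)^2 = \tfrac{1}{4}\bigl(z - \gamma_k^2/(4z)\bigr)^2$ and $d\lambda/w_k(\lambda) = dz/z$ for a suitable choice of branch. Taylor expanding the analytic numerator,
\[
(\sigma^n_k - \lambda)\,\zeta^n_k(\lambda) = \sum_{j\ge 0} b^n_{k,j}(\varphi)\,(\lambda-\tau_k)^j,
\]
each coefficient $b^n_{k,j}$ is analytic on $U_{\rm tn}$, and the tail decays fast enough (by the rate in Proposition \ref{lem:differentials_on_U_tn}) to justify term-by-term integration. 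The integrals $I_{k,j}(\varphi) = \int_{\lambda_k^-}^{\mu^*_k}(\lambda-\tau_k)^j\,d\lambda/w_k(\lambda)$ become integrals of $z$-Laurent polynomials against $dz/z$: for $j\ge 1$ they evaluate to analytic functions of $\varphi$ on $U_{\rm tn}$, while $I_{k,0}$ contains a contribution of the form $\tfrac{1}{2}\log\gamma_k^2 + (\textrm{analytic})$.

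For part (i), property (D3) forces $\sigma^n_k = \tau_k$ on $\mathcal{Z}_k = \{\gamma_k^2 = 0\}$, so $b^n_{k,0} = (\sigma^n_k - \tau_k)\,\zeta^n_k(\tau_k,\varphi)$ vanishes on $\mathcal{Z}_k$ and, being analytic, is divisible by $\gamma_k^2$; hence $b^n_{k,0}\cdot I_{k,0}$ is analytic on all of $U_{\rm tn}$, and the choice between $\lambda_k^\pm$ as starting endpoint is immaterial because $\int_{a_k}\zeta_n\,d\lambda/\sqrt{\chi_p} = 0$ for $k\ne n$. Summing the series yields the analyticity of $\beta^n_k$ on $U_{\rm tn}$. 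For part (ii) the factor $\sigma^n_n - \lambda$ is absent from the integrand, so $b^n_{n,0} = \zeta_n(\tau_n,\varphi)$ does not vanish on $\mathcal{Z}_n$; the resulting $\tfrac{1}{2}\,\zeta_n(\tau_n)\,\log\gamma_n^2$ term has monodromy $\pi i\,\zeta_n(\tau_n)$ around $\mathcal{Z}_n$, and combining the normalization $\int_{a_n} = 2\pi$ (which fixes $\zeta_n(\tau_n)$ via a residue computation, so the monodromy is exactly $\pi$) with the $2\pi$-ambiguity coming from the nontrivial $a_n$-period shows that $\beta^n_n$ descends to a single-valued analytic function modulo $\pi$ on $U_{\rm tn}\setminus\mathcal{Z}_n$.

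The main obstacle will be the careful handling of the non-analytic endpoint $\lambda_k^-$ in the Joukowski coordinates (where it corresponds to $z = -\gamma_k/2$, a non-analytic value of $\varphi$) and, in part (ii), verifying that the monodromy of the logarithmic term is exactly $\pi$ and not $2\pi$; both reductions are ultimately consequences of the $a$-period normalization \eqref{eq:a-normalization}.
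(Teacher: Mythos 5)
Your plan takes a genuinely different route from the paper: the paper's proof works directly with the integral, constructs local analytic branches $\varrho^\pm_k$ of the eigenvalues off $\mathcal{Z}_k\cup\mathcal{E}_k$, and then invokes continuity plus weak analyticity on the exceptional sets together with the analyticity criterion \cite[Theorem A.6]{GK1}, whereas you pass to a uniformizing Joukowski coordinate and expand the analytic numerator in a Taylor series. The Joukowski idea is attractive because it replaces the branch-point analysis by an explicit change of variables, but as written the argument contains two errors that break the proof.

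First, the claim that $I_{k,j}$ is analytic for $j\ge 1$ is false. Under the substitution $u=\lambda-\tau_k=\tfrac12(z+a^2/z)$ with $a=\gamma_k/2$, the $j$-th integrand $u^j\,dz/z$ is the Laurent form $2^{-j}\sum_{m=0}^j\binom{j}{m}a^{2m}z^{j-2m-1}\,dz$, and for every even $j$ the term $m=j/2$ contributes $2^{-j}\binom{j}{j/2}a^j\log z$. So $I_{k,2}$ already carries a $\tfrac{\gamma_k^2}{8}\log(-\gamma_k/2)$ piece; the logarithm is not confined to $I_{k,0}$. Second, even for the $j=0$ contribution, the inference ``$b^n_{k,0}$ is $O(\gamma_k^2)$ by (D2), hence $b^n_{k,0}\cdot I_{k,0}$ is analytic'' is incorrect: writing $G:=\gamma_k^2$ (the analytic quantity), the product has the form $G\cdot\tfrac12\log G$, which is continuous but not analytic at $G=0$, since $\log G$ has a branch point. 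Divisibility of the coefficient by $\gamma_k^2$ does not cure a genuine logarithm.

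The correct mechanism -- which you actually name in your closing sentence but use in the wrong place -- is that the \emph{total} coefficient of $\log z$, namely $\sum_{j\,\mathrm{even}}b^n_{k,j}\binom{j}{j/2}(\gamma_k^2/16)^{j/2}$, is the constant Laurent coefficient of $(\sigma^n_k-\lambda(z))\zeta^n_k(\lambda(z))$ in $z$, and hence by the residue theorem and the substitution equals $\pm\tfrac{1}{2\pi i}\int_{\Gamma_k}\frac{\zeta_n(\lambda)}{\sqrt[c]{\chi_p(\lambda)}}\,d\lambda=\mp i\,\delta_{nk}$. For $k\ne n$ this vanishes, killing \emph{all} the logarithms simultaneously, not just the $j=0$ one; for $k=n$ it gives the coefficient whose monodromy under a loop around $\mathcal{Z}_n$ (a full turn of $\gamma_n^2$) is $(\pm i)(\pi i)=\mp\pi$, yielding the ``analytic modulo $\pi$'' statement. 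If you replace the two faulty steps with this residue/normalization argument, and then supply the term-by-term integration justification you defer to (it does require a locally uniform bound coming from (D2)), the proof goes through. You should also note that the endpoint $z_{\mathrm{end}}$ corresponding to $\mu^*_k$ is analytic precisely because $\mu^*_k$ is, which is where the assumption $|k|>R$ enters, and you would need to discuss what happens on the set $\mathcal{E}_k=\{\mu_k\in\{\lambda^\pm_k\}\}$ where $z_{\mathrm{end}}$ collides with $z_{\mathrm{start}}$, a case the paper treats explicitly.
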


\begin{proof}[Proof of Lemma \ref{lem:beta^n_k-analytic}] 
We follow the arguments of the proof of Lemma 15.2 in \cite{GK1}.
(i) Fix $k \not= n$ with $|k| > R$. In addition to the analytic
subvariety ${\mathcal Z}_k$ introduced in \eqref{eq:Z_n} we also consider
   \[ {\mathcal E}_k := \big\{ \varphi \in U_{\rm tn}\,\big|\,\mu_k\in 
   \{\lambda ^\pm_k\}\big\} = 
   \big\{ \varphi \in  U_{\rm tn}\,\big|\, \Delta (\mu _k) = 2(-1)^k \big\}
   \]
which clearly is also an analytic subvariety of $U_{\rm tn}$. We prove that
$\beta ^n_k$ is analytic on $U_{\rm tn} \backslash ({\mathcal Z}_k \cup{\mathcal E}_k)$
when taken modulo $\pi$, extends continuously to $U_{\rm tn}$ and has weakly
analytic restrictions to ${\mathcal Z}_k$ and ${\mathcal E}_k$. It then
follows by \cite[Theorem A.6]{GK1} that $\beta ^n_k$ is analytic on
$U_{\rm tn}$. To prove that $\beta ^n_k$ is analytic on 
$U_{\rm tn}\backslash ({\mathcal Z}_k \cup {\mathcal E}_k)$ it suffices to prove its
differentiability. Note that $\lambda ^\pm _k$ are simple eigenvalues on
$U_{\rm tn}\backslash{\mathcal Z}_k$, but as they are listed in
lexicographic order, they are not necessarily continuous. For any given
$\varphi\in U_{\tt tn}\backslash ({\mathcal Z}_k \cup {\mathcal E}_k)$, according to
\cite[Proposition 7.5]{GK1}, in a neighborhood of $\varphi$ there exist two analytic
functions $\varrho ^\pm _k$ such that $\{ \lambda ^+_k, \lambda ^-_k\} =
\{ \varrho ^+_k, \varrho ^-_k \} $. Choose $\varrho ^\pm _k$ so that
$\mathop{\rm dist}\big([ \varrho ^-_k, \mu _k], \varrho ^+_k\big)\ge\frac{1}{3} |\gamma_k|$. 
In view of the normalization condition in Proposition \ref{lem:differentials_on_U_tn} we 
can write
   \[ \beta ^n_k = \int ^{\mu _k}_{\varrho ^-_k}\frac{\zeta _n(\lambda )}
      {\sqrt[\ast ]{\chi _p(\lambda )}}\,d\lambda
   \]
where the integral is taken along any path from $\varrho ^-_k$ to $\mu _k$
inside $D_k$ that besides its end point(s) does not contain any point
of $G_k$. The sign of the $\ast $-root along such a path is the one
determined by
   \[ \sqrt[\ast ]{\chi _p(\mu _k)} = \grave{m} _2(\mu _k) + \grave
      {m} _3(\mu _k) .
   \]
As in \eqref{4.6} write
   \[ \frac{\zeta_n(\lambda)}{\sqrt[c]{\chi _p(\lambda )}} = 
      \frac{\sigma ^n_k-\lambda }{w_k(\lambda )}\,\zeta ^n_k(\lambda )
   \]
and let $d_k := \mu _k - \varrho^-_k$. With the substitution
$\lambda(t)=\varrho^-_k+t d_k$ one has $w_k(\lambda )^2 = t d_k(\lambda(t) - \varrho^+_k)$ and 
as by assumption, $|\lambda(t) - \varrho ^+_k|\ge|\gamma _k / 3|$ for $0 \le t\le 1$ and $\psi$
in a neighborhood of $\varphi$ the argument of $\lambda(t)-\varrho_k^+$ is contained in an interval 
of length strictly smaller than $\pi$.
Hence the square root $\sqrt{\lambda (t) - \varrho ^+_k}$ can be
chosen to be continuous in $t$ and analytic near $\varphi $. With the
appropriate choice of the root $\sqrt{d_k}$ it then follows that
   \[ \beta ^n_k = \int ^1_0 \frac{1}{\sqrt{t}} \frac{\sigma ^n_k - \lambda }
      {\sqrt{\lambda - \varrho ^+_k}}\,\zeta ^n_k(\lambda ) \sqrt{d_k}\,dt
   \]
is differentiable at $\varphi $. Next let us show that $\beta ^n_k$ is continuous on 
$U_{\rm tn}$. By the previous considerations, $\beta ^n_k$ is continuous in all points of
$U_{\rm tn}\backslash ({\mathcal Z}_k \cup {\mathcal E}_k)$. By \eqref{eq:|beta|} and 
$\beta^n_k\big\arrowvert _{{\mathcal E}_k} = 0$, it follows that $\beta ^n_k$ is
continuous at points of ${\mathcal E}_k$. It thus remains to prove that
$\beta ^n_k$ is continuous in the points of ${\mathcal Z}_k \backslash
{\mathcal E}_k$. First we show that $\beta ^n_k \big\arrowvert _{{\mathcal
Z}_k \backslash {\mathcal E}_k}$ is continuous. Indeed, on ${\mathcal Z}_k$, 
$\lambda ^-_k = \tau _k$ and $(\sigma ^n_k - \lambda ) / w_k(\lambda )=1$ hence
   \[ \beta ^n_k \big\arrowvert _{{\mathcal Z}_k \backslash {\mathcal E}
      _k} = \int ^{\mu _k}_{\tau _k} \zeta ^n_k (\lambda )\,d\lambda
      \big\arrowvert _{{\mathcal Z}_k \backslash {\mathcal E}_k}
   \]
and it follows that $\beta ^n_k \big\arrowvert _{{\mathcal Z}_k \backslash
{\mathcal E}_k}$ is continuous. As ${\mathcal E}_k$ is closed in
$U_{\rm tn}$, it then remains to show that for any sequence 
$(\varphi^{(j)})_{j \geq 1} \subseteq U_{\rm tn}\backslash ({\mathcal Z}_k\cup {\mathcal E}_k)$ 
converging to an element $\varphi \in {\mathcal Z}_k
\backslash {\mathcal E}_k$ one has
   \[ \beta ^n_k(\varphi ^{(j)}) \underset {j \to \infty }
      {\longrightarrow } \beta ^n_k(\varphi ) .
   \]
Without loss of generality we may assume that $\inf\limits_j\big|(\mu _k - \tau _k)(\varphi ^{(j)})\big| > 0$,
   \[ \big|\lambda ^+_k(\varphi ^{(j)}) - \mu _k(\varphi ^{(j)})\big| \ge
      \big|\lambda ^-_k(\varphi ^{(j)}) - \mu _k(\varphi ^{(j)})\big|
   \]
(otherwise go to a subsequence of $\varphi ^{(j)}$ and/or, if necessary,
switch the roles of $\lambda ^+_k$ and $\lambda ^-_k$), and
   \[ \sqrt[\ast]{\chi _p\big(\mu _k(\varphi ^{(j)})\big)} =
      \sqrt[c]{\chi _p\big(\mu _k(\varphi ^{(j)})\big)} .
   \]
Let $0 < \varepsilon\ll 1$. As 
$\lim\limits_{j \to \infty }\gamma _k(\varphi^{(j)}) = 0$ 
as well as
$\lim\limits_{j \to \infty} d_k(\varphi ^{(j)}) = \mu _k
      (\varphi ) - \tau _k(\varphi ) \not= 0$
there exists $j_0 \geq 1$ so that
   \begin{equation}
   \label{4.9} \Big\arrowvert \frac{\gamma _k(\varphi ^{(j)})}{d_k
               (\varphi ^{(j)})}\Big\arrowvert \leq \varepsilon / 2 \quad
               \forall j \geq j_0 .
   \end{equation}
With the substitution $\lambda(t)=\lambda_k^-+t d_k$, $d_k=\mu_-\lambda_k^-$, one gets
\[ 
\beta ^n_k(\varphi^{(j)}) = \left( \int ^\varepsilon _0 + \int ^1_\varepsilon \right)
      \frac{\sigma ^n_k - \lambda }{w_k(\lambda )}\,\zeta ^n_k(\lambda )\,d_k\,dt.
\]
By using the estimates \eqref{4.8}--\eqref{4.9} one sees
that for any $j \geq j_0$
   \[ \Big| \int ^\varepsilon _0 \frac{\sigma ^n_k - \lambda }
      {w_k(\lambda )}\,\zeta ^n_k(\lambda )\,d_k\,dt \Big|
      \le C \sqrt{\varepsilon }
   \]
where $C > 0$ is a constant independent of $j$. To estimate the integral
   \[ J_\varepsilon (\varphi ^{(j)}) := \int ^1_\varepsilon \frac{\sigma ^n_k
      - \lambda }{w_k(\lambda )}\,\zeta ^n_k(\lambda )\,d_k\,dt
   \]
note that for any $\varepsilon \le t \leq 1$ and $j \geq j_0$
   \[ \Big\arrowvert \frac{\gamma ^2_k / 4}{(\tau _k-\lambda )^2}\Big\arrowvert
      = \Big \arrowvert t\,\frac{2 d_k}{\gamma _k} - 1 \Big\arrowvert ^{-2}
      \leq 3^{-2}
   \]
and thus according to the definition \eqref{eq:standard_root} of the standard root
   \[ w_k(\lambda ) = (\tau _k - \lambda ) \sqrt[+]{1 - \frac{\gamma ^2_k / 4}
      {(\tau _k - \lambda ) ^2}}
   \]
for $\varepsilon \leq t \leq 1$ and $\varphi ^{(j)}$ with $j \geq j_0$. Thus
   \[ J_\varepsilon (\varphi ^{(j)}) = \int ^1_\varepsilon \left( 1 +
      \frac{\sigma ^n_k - \tau _k}{\tau _k - \lambda } \right) \left( 1 -
      \frac{\gamma ^2_k / 4}{(\tau _k - \lambda )^2} \right)^{-1/2}\!\!\!\!\zeta^n_k(\lambda )\,d_k\,dt .
   \]
As $\sigma ^n_k - \tau _k = \gamma ^2_k \ell
^2_k$ (property (D2)) one has for $\varepsilon \leq t \leq 1$
   \[ \frac{\sigma ^n_k - \tau _k}{\tau _k - \lambda } = \frac{\gamma ^2_k
      \ell ^2_k}{\tau _k - \lambda ^-_k - t d_k} \to 0 \quad
      \mbox{as } j \to \infty
   \]
as well as
   \[ \frac{\gamma _k}{\tau _k - \lambda } = \frac{\gamma _k}{\tau _k -
      \lambda ^-_k - t d_k} \to 0 \quad \mbox{ as } j
      \to \infty .
   \]
By dominated convergence it then follows that
   \[ J_\varepsilon (\varphi ^{(j)}) \to \int ^1_\varepsilon \zeta
      ^n_k(\lambda , \varphi )\,d_k(\varphi )\,dt \quad \mbox{ as } j
      \to \infty .
   \]
But $\beta^n_k(\varphi )-\int^1_\varepsilon\zeta^n_k(\lambda,\varphi )\,d_k(\varphi )\,dt = O(\varepsilon )$ 
by the continuity of $\zeta ^n_k$ in $\lambda $. Altogether we showed that there exits $j_1 \geq j_0$,
depending on $\varepsilon $, so that for any $j \geq j_1$,
   \[ \big|\beta ^n_k(\varphi^{(j)}) - \beta ^n_k(\varphi )\big| \le C
      \sqrt{\varepsilon }
   \]
where $C$ can be chosen independently of $\varepsilon $. As $\varepsilon >
0$ is arbitrarily small this establishes the claimed convergence.
It remains to check the weak analyticity on ${\mathcal E}_k$ and ${\mathcal
Z}_k$. On ${\mathcal E}_k$ this is trivial since $\beta ^n_k \big\arrowvert
_{{\mathcal E}_k} \equiv 0$. On ${\mathcal Z}_k$ we can write
$ \beta ^n_k = \int ^{\mu _k}_{\tau _k} \varepsilon _k \zeta ^n_k (\lambda )\,d\lambda$
where $\varepsilon _k \in \{ 0, \pm 1\} $ is defined on ${\mathcal Z}_k
\backslash {\mathcal E}_k$ by
$ \sqrt[\ast ]{\chi _p(\mu _k)} = \varepsilon _k \sqrt[c]{\chi _p(\mu_k)}$
and is zero on ${\mathcal Z}_k \cap {\mathcal E}_k$. Now consider a disk
$D \subseteq {\mathcal Z}_k$. As ${\mathcal E} _k$ is an analytic subvariety
one has either $D \subseteq {\mathcal Z}_k \cap {\mathcal E}_k$ -- in which
case $\beta ^n_k \big\arrowvert _D \equiv 0$ -- or $D \cap {\mathcal E}_k$
is finite. As $\int ^{\mu _k}_{\tau _k} \zeta ^n_k(\lambda )\,d\lambda \big
\arrowvert _D$ is analytic and $\beta ^n_k$ is continuous on $D$ it then
follows that $\int ^{\mu _k}_{\tau _n} \zeta ^n_k (\lambda )\,d\lambda \big
\arrowvert _D \equiv 0$ or $\varepsilon _k \big\arrowvert _{D \backslash
{\mathcal E} _k}$ is constant. In both cases it follows that $\beta ^n_k
\big\arrowvert _D$ is analytic. This establishes the claimed analyticity.

Item (ii) is proved in an analogous way except for the fact that
switching from $\lambda ^-_n$ to $\varrho ^-_n$ may change the value of
$\beta ^n_n$ by $\pi $ in view of the normalization condition in Proposition \ref{lem:differentials_on_U_tn}. 
Hence we have
$\beta ^n_n = \int ^{\mu _n}_{\varrho ^-_n} \frac{\zeta _n(\lambda )}
{\sqrt[\ast ]{\chi _p(\lambda )}}\,d\lambda$ modulo $\pi$.
\end{proof}

\medskip

As a next step we show that by choosing appropriate integration paths in \eqref{4.1} the quantity
$\beta^n(\varphi)$ is well defined modulo $2\pi$ on $U_{\rm tn}$ and real valued,
when restricted to $U_{\rm tn}\cap i L^2_r$. In \cite{KLTZ} it is proved  that such a choice
is possible in a small neighborhood of zero in $L^2_c$. More specifically, we have the following

\begin{Lem}\label{lem:near_zero} 
For any $\varphi\in U_{s_1}$, where $U_{s_1}\subseteq U_{\rm tn}\cap\U_0$ and $\U_0$ is the open ball in 
$L^2_c$ centered at zero, given by Theorem \ref{th:main_near_zero}, and for any $n\in\Z$ and $|k| \leq R$, 
the path of integration in 
$\beta^n_k=\int ^{\mu ^\ast _k}_{\lambda^-_k} \frac{\zeta _n(\lambda )}{\sqrt{\chi _p(\lambda )}}\,d\lambda$ 
can be chosen to lie in the handle ${\mathcal D}_{\varphi,k}\equiv\pi_1^{-1}(D_k)$ which is a Riemann surface
biholomorphic to $\big\{z\in\C\,\big|\,1<|z|<2\big\}$ since $\lambda ^-_k \ne\lambda ^+_k$.
For $|k|\le R$ with $k\ne n$  the quantity $\beta^n_k$ is well defined and analytic on $U_{s_1}$ 
whereas for $k=n$ it is defined modulo $2\pi$ and as such analytic.
Furthermore, for any $n\in\Z$ and $|k|\le R$, $\beta^n_k$ is {\em real-valued} when restricted to 
$U_{s_1}\cap i L^2_r$. As a consequence,  for any $n\in\Z$, the sum $\beta^n=\sum _{|k|\le R}\beta^n_k$ is 
real valued when restricted to $U_{s_1}\cap i L^2_r$. 
\end{Lem}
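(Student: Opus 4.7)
The plan is to derive the lemma from the local Birkhoff construction of \cite{KLTZ} recorded in Theorem \ref{th:main_near_zero}, using that $U_{s_1} \subseteq U_{\rm tn} \cap \U_0$ inherits both the spectral control coming from (T2) and the counting information from Lemma \ref{lem:dirichlet_spectrum}.

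First I would verify the geometric claim on the handle. Since $U_{s_1} \subseteq U_{\rm tn}$, property (T2) ensures that for every $\varphi \in U_{s_1}$ and every $|k|\le R$ the periodic eigenvalues $\lambda_k^+(\varphi),\lambda_k^-(\varphi)\in D_k$ are simple and hence distinct. Therefore the two-sheeted cover $\DD_{\varphi,k}=\pi_1^{-1}(D_k)$, branched exactly at these two points, is an open Riemann surface biholomorphic to $\big\{z\in\C\,\big|\,1<|z|<2\big\}$. Since $U_{s_1}\subseteq\U_0$, Lemma \ref{lem:dirichlet_spectrum} places $\mu_k(\varphi)$ in $D_k$ as well, so both endpoints $\lambda_k^-(\varphi)$ and $\mu_k^*(\varphi)$ lie in $\DD_{\varphi,k}$, and a $C^1$-smooth path joining them inside this annulus can be chosen.

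For the second and third claims, I would observe that within the annulus $\DD_{\varphi,k}$ the only nontrivial period of the one-form $\zeta_n(\lambda,\varphi)\,d\lambda\big/\!\sqrt{\chi_p(\lambda,\varphi)}$ is along $a_k$, and by the normalization in Proposition \ref{lem:differentials_on_U_tn} this period equals $2\pi\delta_{nk}$. Hence $\beta^n_k$ is path-independent and single-valued for $k\ne n$, and defined modulo $2\pi$ for $k=n$. Analyticity in $\varphi$ follows from the analytic dependence of $\lambda_k^\pm(\varphi)$, $\mu_k(\varphi)$, and $\mu_k^*(\varphi)$ on $\varphi\in U_{s_1}$ (guaranteed by simplicity and the analyticity of the Floquet matrix), together with the substitution $\lambda=\lambda_k^-(\varphi)+t\big(\mu_k(\varphi)-\lambda_k^-(\varphi)\big)$, $t\in[0,1]$, which converts the integrable $(\lambda-\lambda_k^-)^{-1/2}$ singularity into a $1/\sqrt{t}$ factor multiplied by a jointly analytic integrand; differentiation under the integral sign then yields analyticity of $\beta^n_k$ on $U_{s_1}$.

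Real-valuedness on $U_{s_1}\cap i L^2_r$ is the only delicate point, and the cleanest route is to invoke \cite[Theorem 1.1]{KLTZ}: the angle-type quantities constructed there in an open ball about zero in $i L^2_r$ are by construction real-valued, and by Theorem \ref{th:main_near_zero} the ball $U_{s_1}\subseteq\U_0$ lies in the domain of that construction. Since the formulas for $\beta^n_k$ used here are the same contour integrals, with the canonical and standard roots normalized as in \eqref{eq:canonical_root}--\eqref{eq:standard_root}, the quantities $\beta^n_k$ coincide up to an integer multiple of $2\pi$ with the ones in \cite{KLTZ}, and are therefore real. As an independent check one can argue by symmetry: for $\varphi\in i L^2_r$ one has $\lambda_k^-=\overline{\lambda_k^+}$ by Lemma \ref{lem:spectrum_symmetries}, $\Delta(\bar\lambda,\varphi)=\overline{\Delta(\lambda,\varphi)}$ by Lemma \ref{lem:product1} (iii), and the zeros $\sigma^n_r(\varphi)$ of $\zeta_n(\cdot,\varphi)$ can be arranged in conjugate pairs; selecting a path symmetric under $\lambda\mapsto\bar\lambda$ then yields $\overline{\beta^n_k}=\beta^n_k$. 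The assertion for $\beta^n=\sum_{|k|\le R}\beta^n_k$ follows because the sum is finite. The principal obstacle is the careful bookkeeping of branch choices across the canonical root, the standard root, and the zeros $\sigma^n_r$ in the real-valuedness step, but this bookkeeping is already carried out in \cite{KLTZ,KT2} and can be imported directly.
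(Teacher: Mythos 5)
Your main route — deferring the real-valuedness of $\beta^n_k$ to \cite[Theorem 1.1]{KLTZ} via Theorem \ref{th:main_near_zero} — is exactly the paper's approach; the paper gives no independent proof of this Lemma beyond that citation, and your elaboration of the handle geometry (via (T2) and Lemma \ref{lem:dirichlet_spectrum}) and of analyticity (via the normalization in Proposition \ref{lem:differentials_on_U_tn} and the $\lambda=\lambda_k^-+t(\mu_k-\lambda_k^-)$ substitution) is a correct filling-in of what the paper leaves implicit. One caution on your secondary ``independent check'': a path from $\lambda_k^-$ to $\mu_k^*$ cannot itself be invariant under $\lambda\mapsto\bar\lambda$, because for $\varphi\in iL^2_r$ the conjugate of the initial point $\lambda_k^-$ is $\lambda_k^+\ne\lambda_k^-$; conjugation sends the path to one running from $\lambda_k^+$ to $\overline{\mu_k^*}$, and deducing $\overline{\beta^n_k}=\beta^n_k$ from this requires precisely the branch and endpoint bookkeeping done in \cite{KLTZ}, together with the identity $\overline{\zeta_n(\bar\lambda,\varphi)}=\zeta_n(\lambda,\varphi)$, which is only established later (in the proof of Lemma \ref{lem:lattice_imaginary}) — so that check cannot stand on its own here, though your primary argument is sound and matches the paper.
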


Let us introduce for any $\varphi\in U_{\rm tn}$ the set
\[
M_R\equiv M_R(\varphi):=\big\{\mu_k\,\big|\,|k|\le R\big\}.
\]
By the definition \eqref{eq:dirichlet_divisor} for any $|k|\le R$, the Dirichlet divisor $\mu_k^*(\varphi)$ on 
the Riemann surface $\mathcal{C}_{\varphi,R}$ is uniquely determined 
by $\mu_k(\varphi)$. Similarly we introduce
\[
\Lambda^-_R\equiv\Lambda^-_R(\varphi):=\big\{\lambda_k^-\,\big|\,|k|\le R\big\}.
\]
and recall that $\lambda^-_k(\varphi)$, $|k|\le R$, are simple periodic eigenvalues of $L(\varphi)$
that satisfy $\im(\lambda^-_k)<0$.
Then, we have the following important

\begin{Prop}\label{prop:beta^n_tn}
After shrinking the tubular neighborhood $U_{\rm tn}$ of the path $\gamma : [0,1]\to U_{\rm tn}\cap i L^2_r$,
if necessary, so that $U_{\rm tn}$ and $U_{\rm tn}\cap i L^2_r$ are still connected, there exist for any 
$\varphi\in U_{\rm tn}$ a bijective correspondence 
\[
\Lambda^-_R(\varphi)\to M_R(\varphi),\quad z\mapsto\mu_\varphi(z),
\]
and for any $z\in\Lambda^-_R(\varphi)$ a continuous, piecewise $C^1$-smooth path
$\PP^*\big[z,\mu_\varphi(z)\big]$ on $\mathcal{C}_{\varphi,R}$ from $z^*=(z,0)$ to $\mu_\varphi^*(z)$ 
so that for any $n\in\Z$,
\begin{equation}\label{eq:betatilde^n}
{\tilde\beta}^n(\varphi):=\sum_{z\in\Lambda_R^-(\varphi)}\int_{\PP^*[z,\mu_\varphi(z)]}
\frac{\zeta_n(\lambda,\varphi)}{\sqrt{\Delta^2(\lambda,\varphi)-4}}\,d\lambda,
\end{equation}
defined modulo $2\pi$ if $|n|\le R$, is analytic on $U_{\rm tn}$, and real valued when restricted to
$U_{\rm tn}\cap i L^2_r$.
\end{Prop}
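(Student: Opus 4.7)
The plan is to analytically extend the function $\beta^n=\sum_{|k|\le R}\beta^n_k$ provided by Lemma \ref{lem:near_zero} on $U_{s_1}$ to a function $\tilde\beta^n$ on $U_{\rm tn}$ by propagation along the finite cover $\{U_{s_j}\}_{1\le j\le N}$ of the path $\gamma$, and to obtain real-valuedness via Lemma \ref{lem:real-analyticity}.

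First I would establish local analyticity. Fix $j$ and a base point $\varphi_0\in U_{s_j}$. By property (T2), the periodic eigenvalues $\lambda^\pm_k$, $|k|\le R$, are simple on $U_{s_j}$, so $\lambda^-_k:U_{s_j}\to\C$ depends analytically on $\varphi$, and $\Lambda^-_R(\varphi)$ is the image of $2R+1$ analytic functions with disjoint graphs. Choose any bijection $\sigma_0:\Lambda^-_R(\varphi_0)\to M_R(\varphi_0)$ and any continuous, piecewise $C^1$-smooth paths $\PP^*_0[z,\sigma_0(z)]$ on $\CC_{\varphi_0,R}$ from $z^*$ to $\sigma_0(z)^*$, avoiding the branch points in their interiors. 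On a small neighborhood $V_0\subseteq U_{s_j}$ of $\varphi_0$ I would extend $\sigma_0$ by local continuity (tracking each $\mu^*_k(\varphi_0)$ as $\varphi$ moves) and deform the paths continuously. Although the Dirichlet eigenvalues in $B_R$ are only lexicographically ordered and may collide, the Dirichlet divisor $\{\mu^*_k\}_{|k|\le R}$ varies analytically as an unordered multiset on the varying surface $\CC_{\varphi,R}$. Hence, by a continuity-plus-weak-analyticity argument in the spirit of the proof of Lemma \ref{lem:beta^n_k-analytic}, the sum defining $\tilde\beta^n|_{V_0}$ is analytic in $\varphi\in V_0$.

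Two different admissible choices of bijection and paths yield values of $\tilde\beta^n$ differing by an integer linear combination of $a$- and $b$-periods of $\omega_n=\zeta_n\,d\lambda/\sqrt{\chi_p}$; by the normalization \eqref{eq:a-normalization} the $a$-periods contribute $2\pi\Z$ only from $a_n$ and only when $|n|\le R$, while the $b$-periods contribute integer multiples of the $p_{nk}$. Starting from the canonical branch supplied by Lemma \ref{lem:near_zero} on $U_{s_1}$, I would propagate along the finite chain $U_{s_1},\ldots,U_{s_N}$, absorbing on each overlap $U_{s_j}\cap U_{s_{j+1}}$ any period discrepancy into a modification of the local bijection and paths. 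After shrinking $U_{\rm tn}$ to a sufficiently thin tubular neighborhood of $\gamma$—preserving the connectedness of both $U_{\rm tn}$ and $U_{\rm tn}\cap iL^2_r$—the neighborhood deformation-retracts onto $\gamma\simeq[0,1]$ and is therefore simply connected, so no monodromy obstruction arises. The propagated $\tilde\beta^n$ is thus globally single-valued on $U_{\rm tn}$ when $|n|>R$ and well-defined modulo $2\pi$ when $|n|\le R$. For real-valuedness, $\tilde\beta^n$ agrees with the real-valued $\beta^n$ of Lemma \ref{lem:near_zero} on the open subset $U_{s_1}\cap iL^2_r$ (which contains an open ball of $iL^2_r$); by Lemma \ref{lem:real-analyticity} applied to the connected domain $U_{\rm tn}$ and its connected real slice $U_{\rm tn}\cap iL^2_r$, we conclude that $\tilde\beta^n$ is real-valued on all of $U_{\rm tn}\cap iL^2_r$.

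The principal technical obstacle I expect is the local analyticity step. Individual integrals $\int^{\mu^*_k}_{z^*}\omega_n$ are in general not analytic because of possible coincidences or higher algebraic multiplicities among the $\mu_k$ inside $B_R$; only the symmetric sum, which depends intrinsically on the degree-zero divisor $\sum_k\mu^*_k-\sum_{z\in\Lambda^-_R}z^*$ modulo periods of $\omega_n$, is analytic. Making this rigorous will require an Abel-map-style argument on the varying Riemann surface $\CC_{\varphi,R}$, parallel to the continuity-plus-weak-analyticity treatment (together with Theorem A.6 of \cite{GK1}) used in the proof of Lemma \ref{lem:beta^n_k-analytic}.
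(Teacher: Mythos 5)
Your architecture matches the paper's: extend the real-valued branch from Lemma \ref{lem:near_zero} along a finite cover of $\gamma$, reconcile period discrepancies on overlaps, and propagate real-valuedness via Lemma \ref{lem:real-analyticity}. But the step you yourself flag as the ``principal technical obstacle'' --- local analyticity of $\beta^n_\psi$ when Dirichlet eigenvalues in $B_R$ collide --- is left to a vague ``Abel-map-style'' gesture, and this is precisely where the proposal falls short of a proof. The paper resolves it concretely: fixing $\psi$ on $\gamma$, a disk $D^\varepsilon(\mu)$ around each $\mu\in M_R(\psi)$ of multiplicity $m_D$, and concatenated paths through fixed base points, Lemma \ref{lem:argument_principle} shows by the argument principle that
\[
\sum_{j=1}^{m_D}\int_{[Q_{\mu},\mu_\varphi(z_j)]^*}\frac{\zeta_n(\lambda,\varphi)}{\sqrt{\Delta^2(\lambda,\varphi)-4}}\,d\lambda
=\frac{1}{2\pi i}\int_{\partial\overline{D}^\varepsilon(\mu)}F(\lambda,\varphi)\,\frac{\dot\chi_D(\lambda,\varphi)}{\chi_D(\lambda,\varphi)}\,d\lambda,
\]
with $F$ an antiderivative analytic on $D^\varepsilon(\mu)\times U_\psi$; the right-hand side is manifestly analytic in $\varphi$. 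Your idea of ``tracking each $\mu_k^*(\varphi_0)$'' and ``deforming the paths continuously'' cannot be made to work across a Dirichlet collision --- individual paths do not deform continuously there --- and stating that the unordered divisor varies analytically does not by itself give analyticity of the sum of path integrals unless it is recast as a symmetric (contour-integral) expression, which is what the argument principle furnishes. No appeal to Theorem A.6 of \cite{GK1} or to an Abel map appears in the paper's proof of this step.

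A second, smaller divergence: you invoke simple connectedness of a thin tubular neighborhood of $\gamma$ to kill monodromy. This does work here, and the paper's own Remark after the proposition concedes as much, but it deliberately avoids that shortcut because the identical construction is reused in Section \ref{sec:actions_and_angles_in_U_iso} for a neighborhood of $\iso_o(\psi)$, which is \emph{not} simply connected. The paper instead combines Lemma \ref{lem:lattice_imaginary} (the $b$-period lattice $\mathcal{L}_n$ is purely imaginary on $iL^2_r$) with the real-valuedness obtained inductively from $U_{s_1}$: on an overlap the branches differ by an element of $2\pi\Z+\mathcal{L}_n$, both are real, and $\mathcal{L}_n\subseteq i\R$ forces the difference into $2\pi\Z$. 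Your route is valid for this proposition but less portable; more importantly, even with it you still need the argument-principle step to close the analyticity gap.
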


\begin{Rem}
Since the curve $\gamma$ is simple and can be assumed to be piecewise $C^1$-smooth, by shrinking $U_{\rm tn}$ once 
more if necessary, we can ensure that \eqref{eq:betatilde^n} is single valued on $U_{\rm tn}$ and hence there is no 
need to define in the case $|n|\le R$, ${\tilde\beta}^n$ modulo $2\pi$ as stated in Proposition \ref{prop:beta^n_tn}. 
We chose to state Proposition \ref{prop:beta^n_tn} as is because we want to use its proof without modification in the subsequent Section
when constructing angle coordinates in a tubular neighborhood of the isospectral set $\iso_o(\psi)$ which is {\em not} simply 
connected. 
\end{Rem}

\begin{proof}[Proof of Proposition \ref{prop:beta^n_tn}]
For a given $0\le\tau\le 1$, let $\psi:=\gamma(\tau)$ and for any $z\in\C$ and $\varepsilon>0$ denote by 
$D^\varepsilon(z)$ the open disk of radius $\varepsilon$ in $\C$ centered at $z$.
Let $\overline{D}^\varepsilon(z)$ be the corresponding closed disk.
We refer to the point $Q_z$ on the boundary of $D^\varepsilon(z)$ with the smallest real part among 
the points of boundary of the disk as the {\em base point} of $D^\varepsilon(z)$. 
Choose $\varepsilon\equiv\varepsilon_\psi>0$ so that the following holds: For any 
$z,z'\in\Lambda_R(\psi)\cup M_R(\psi)$ where $\Lambda_R(\psi)\equiv\big\{\lambda_k^\pm\,\big|\,|k|\le R\big\}$ 
we have 
\begin{itemize} 
\item[(1)] $\overline{D}^\varepsilon(z)\cap \overline{D}^\varepsilon(z')=\emptyset$ if $z\ne z'$.
\item[(2)] $\overline{D}^\varepsilon(z)\setminus\{z\}$ does not contain any periodic eigenvalue of $L(\psi)$.
\item[(3)] $\overline{D}^\varepsilon(z)\subseteq B_R$ and if $z\in\Lambda^-_R(\psi)$,
$\overline{D}^\varepsilon(z)\subseteq\big\{\lambda\in\C \,\big|\,\im(\lambda)<0\big\}$.
\end{itemize}
Now we choose an open ball $U_\psi\subseteq U_{\rm tn}$ in $L^2_c$ centered at $\psi$ so that for any 
$\varphi\in U_{\rm tn}$ the following holds: For any Dirichlet eigenvalue $\mu\in M_R(\psi)$ with (algebraic)
multiplicity $m_D\ge 1$ there exist exactly $m_D$ Dirichlet eigenvalues of $L(\varphi)$ in the disk $D^\varepsilon(\mu)$
and for any periodic eigenvalue $\nu\in\Lambda^-_R(\psi)$ there exists a unique periodic eigenvalue $\nu_\varphi$ of 
$L(\varphi)$ in $D^\varepsilon(\nu)$. Denote by $B^\bullet_{R,\psi}$ the complement in $B_R$ of the union of disks
$D^\varepsilon(z)$, $z\in\Lambda_R(\psi)\cup M_R(\psi)$,
\[
B^\bullet_{R,\psi}:=B_R\setminus\Big(\bigcup_{z\in\Lambda_R(\psi)\cup M_R(\psi)}D^\varepsilon(z)\Big).
\]
Furthermore, in what follows, $\pi_1$ will denote the projection $\pi_1 : \C^2\to\C$, $(\lambda,w)\mapsto\lambda$,  
onto the first component of $\C^2$. Then the set $\big(\pi_1|_{\mathcal{C}_{\psi,R}}\big)^{-1}\big(B^\bullet_{R,\psi}\big)$ is 
a connected Riemann surface obtained from the Riemann surface $\mathcal{C}_{\psi,R}$ by removing a certain number 
of open disks. Choose an arbitrary bijection 
\begin{equation*}\label{eq:initial_correspondence}
\Lambda^-_R(\psi)\to M_R(\psi),\quad\nu\mapsto\mu_\psi(\nu).
\end{equation*}
Then, for any $\nu\in\Lambda^-_R(\psi)$, take a $C^1$-smooth curve in $B^\bullet_{R,\psi}$ that connects
the base point of $D^\varepsilon(\nu)$ with the base point of $D^\varepsilon(\mu_\psi(\nu))$.
We denote this curve by $Y_\psi[\nu,\mu_\psi(\nu)]$. In this way we obtain a set of $2R+1$ curves 
\[
\big\{Y_\psi[\nu,\mu_\psi(\nu)]\,\big|\,\nu\in\Lambda^-_R(\psi)\big\}
\]
in $B^\bullet_{R,\psi}$. By construction, these curves depend on the choice of $\psi$ and the bijective correspondence.
Now take $\varphi\in U_\psi$. For any $z\in\Lambda^-_R(\varphi)$ with the condition that $z\in D^\varepsilon(\nu)$ where 
$\nu\in\Lambda^-_R(\psi)$, denote by $\mathcal{P}_{\varphi,\psi}[z,\mu_\varphi(z)]$ the concatenated path
\begin{equation}\label{eq:concatenated_path}
[z,Q_\nu]\cup Y_\psi[\nu,\mu_\psi(\nu)]\cup[Q_{\mu_\psi(\nu)},\mu_\varphi(z)]
\end{equation}
where by $\mu_\varphi(z)$ we denote one of the Dirichlet eigenvalues in $M_R(\varphi)$ that lies in the disk 
$D^\varepsilon(\mu_\psi(\nu))$ and $[a,b]$ denotes the line segment connecting two complex numbers $a,b\in\C$.
Requiring that every Dirichlet eigenvalue in $M_R(\varphi)$ appears as the endpoint of such a  
concatenated curve precisely once, we construct for the considered potential $\varphi\in U_\psi$ a bijective correspondence 
\begin{equation}\label{eq:correspondence}
\Lambda^-_R(\varphi)\to M_R(\varphi),\quad z\mapsto\mu_\varphi(z)\equiv\mu_{\varphi,\psi}(z).
\end{equation}
\begin{Rem}\label{rem:multiple_choice}
In the case when $\mu_\psi(\nu)$ has algebraic multiplicity $m_D\ge 2$ there are multiple options for the choice
of the third part $[Q_{\mu_\psi(\nu)},\mu_\varphi(z)]$ of the path $\mathcal{P}_{\varphi,\psi}[z,\mu_\varphi(z)]$.
This leads to different bijective correspondences \eqref{eq:correspondence}. The final result however will {\em not} depend on
the choice of the correspondence in \eqref{eq:correspondence}.
\end{Rem}
We now want to lift the path $\mathcal{P}_{\varphi,\psi}[z,\mu_\varphi(z)]$ constructed above to the Riemann surface
$\mathcal{C}_{\varphi,R}$. Let us first treat the case where $\mu_\psi(\nu)$ is not a ramification point of 
$\mathcal{C}_{\psi,R}$, i.e. $\mu_\psi(\nu)\notin\Lambda_R(\psi)$. Then the preimage 
$\big(\pi_1|_{\mathcal{C}_{\psi,R}}\big)^{-1}\big(\overline{D}^\varepsilon(\mu_\psi(\nu))\big)$
consists of two disjoint closed disks. Denote by $Q_{\mu_\psi(\nu),\psi}^*$ the lift of $Q_{\mu_\psi(\nu)}$ which 
is in the disk containing the Dirichlet divisor $\mu_\psi^*(\nu)$. Similarly, for $\varphi\in U_\psi$ the preimage
$\big(\pi_1|_{\mathcal{C}_{\varphi,R}}\big)^{-1}\big(\overline{D}^\varepsilon(\mu_\psi(\nu))\big)$
consists of two disjoint disks. We denote by $Q_{\mu_\psi(\nu),\varphi}^*$ the lift of $Q_{\mu_\psi(\nu)}$ which 
is in the disk containing the Dirichlet divisor $\mu_\varphi^*(z)$. Denote by $Q_{\nu,\varphi}^*$ the starting point 
of the lift $Y^*_{\varphi,\psi}[\nu,\mu_\psi(\nu)]$ of $Y_\psi[\nu,\mu_\psi(\nu)]$ by 
$\big(\pi_1|_{\mathcal{C}_{\varphi,R}}\big)^{-1}$ that is ending at $Q_{\mu_\psi(\nu),\varphi}^*$. By construction, 
\[
\pi_1(Q_{\nu,\varphi}^*)=Q_\nu\quad\text{and}\quad\pi_1(Q_{\mu_\psi(\nu),\varphi}^*)=Q_{\mu_\psi(\nu)}.
\]
This yields a uniquely determined lift $\mathcal{P}^*_{\varphi,\psi}[z,\mu_\varphi(z)]$ of
$\mathcal{P}_{\varphi,\psi}[z,\mu_\varphi(z)]$ to $\mathcal{C}_{\varphi,R}$, starting at
$(z,0)$ and ending at $\mu_\varphi^*(z)$.
Now, let us turn to the case where $\mu_\psi(\nu)\in\Lambda_R(\psi)$ and hence $\mu_\psi(\nu)$ is 
a ramification point of $\pi_1|_{\mathcal{C}_{\psi,R}} : \mathcal{C}_{\psi,R}\to\C$.
Then the preimage 
$\big(\pi_1|_{\mathcal{C}_{\psi,R}}\big)^{-1}\big(\overline{D}^\varepsilon(\mu_\psi(\nu))\big)$
is a closed disk in $\mathcal{C}_{\psi,R}$ and the restriction of the map $\pi_1$ to this disk is two-to-one except 
at the branching point $\mu^*_\psi(\nu)$ where it is one-to-one.
Denote by $Q^*_{\mu_\psi(\nu),\psi}$ one of the preimages of $Q_{\mu_\psi(\nu)}$.
For $\varphi\in U_\psi$ the preimage
$\big(\pi_1|_{\mathcal{C}_{\varphi,R}}\big)^{-1}\big(\overline{D}^\varepsilon(\mu_\psi(\nu))\big)$
is also a closed disk in $\mathcal{C}_{\varphi,R}$ and contains a unique branched point in its interior. 
We denote by $Q_{\mu_\psi(\nu),\varphi}^*$ the preimage of $Q_{\mu_\psi(\nu)}$ defined uniquely by the condition
that the map
\[
U_\psi\to\C^2,\quad\varphi\mapsto Q_{\mu_\psi(\nu),\varphi}^*,
\]
is analytic and $Q_{\mu_\psi(\nu),\varphi}^*\big|_{\varphi=\psi}=Q^*_{\mu_\psi(\nu),\psi}$. 
Then, by proceeding in the same way as in the previous case, we construct the point $Q_{\nu,\varphi}^*$, the lift 
$Y^*_{\varphi,\psi}[\nu,\mu_\psi(\nu)]$ of $Y_{\varphi,\psi}[\nu,\mu_\psi(\nu)]$, and the uniquely determined lift 
$\mathcal{P}^*_{\varphi,\psi}[z,\mu_\varphi(z)]$ of $\mathcal{P}_{\varphi,\psi}[z,\mu_\varphi(z)]$ to 
$\mathcal{C}_{\varphi,R}$ that starts at $(z,0)$, passes consecutively through $Q_{\nu,\varphi}^*$ and 
$Q_{\mu_\psi(\nu),\varphi}^*$, and then ends at the Dirichlet divisor $\mu_\varphi^*(z)$.
It then follows that for any $n\in\Z$,
\begin{equation}\label{eq:beta_tilde}
\beta^n_{\psi}(\varphi):=\sum_{z\in\Lambda_R^-(\varphi)}
\int_{\PP^*_{\varphi,\psi}[z,\mu_\varphi(z)]}
\frac{\zeta_n(\lambda,\varphi)}{\sqrt{\Delta^2(\lambda,\varphi)-4}}\,d\lambda,
\end{equation}
is well defined and analytic on $U_\psi$.  
Indeed, if $\nu\in\Lambda^-_R(\psi)$ so that $\mu_\psi(\nu)$ is a simple Dirichlet eigenvalue of $L(\psi)$ the integral
$\int_{\PP^*_{\varphi,\psi}[z,\mu_\varphi(z)]}
\frac{\zeta_n(\lambda,\varphi)}{\sqrt{\Delta^2(\lambda,\varphi)-4}}\,d\lambda$
is clearly analytic on $U_\psi$ where $z\in\Lambda^-_R(\varphi)$ is the periodic eigenvalue of $L(\varphi)$ in the
disk $D^\varepsilon(\nu)$.
In case $\mu_\psi(\nu)$ is a Dirichlet eigenvalue of $L(\psi)$ of multiplicity $m_D\ge 2$, denote by 
$\nu_j$, $1\le j\le m_D$, the periodic eigenvalues of $L(\psi)$ such that $\mu_\psi(\nu_j)=\mu_\psi(\nu)$ and
for any $\varphi\in U_\psi$ by $z_j$, $1\le j\le m_D$, the periodic eigenvalues of $L(\varphi)$ with 
$z_j\in D^\epsilon(\nu_j)$. Then the argument principle implies that
\begin{equation}\label{eq:argument_principle}
\sum_{1\le j\le m_D}
\int_{\PP^*_{\varphi,\psi}[z_j,\mu_\varphi(z_j)]}
\frac{\zeta_n(\lambda,\varphi)}{\sqrt{\Delta^2(\lambda,\varphi)-4}}\,d\lambda,
\end{equation}
is analytic in $U_\psi$ -- see Lemma \ref{lem:argument_principle} in the Appendix for more details.
One can easily check that the value \eqref{eq:beta_tilde} does {\em not} depend on the choice described 
in Remark \ref{rem:multiple_choice}.
Note that the restriction of $\beta^n_{\psi}$ to $U_\psi\cap i L^2_r$ is {\em not}
necessarily real valued. For latter reference we record that by construction, for any $\varphi\in U_\psi$, the path 
$\PP^*_{\varphi,\psi}[z,\mu_\varphi(z)]$ is a concatenation of three paths which we write as
\begin{equation}\label{eq:concatenation*}
\PP^*_{\varphi,\psi}[z,\mu_\varphi(z)]=
[z,Q_\nu]^*\cup Y^*_{\varphi,\psi}[\nu,\mu_\psi(\nu)]\cup[Q_{\mu_\psi(\nu)},\mu_\varphi(z)]^*.
\end{equation}
Now consider $0\le\tau_0\le 1$ with $\tau_0\ne\tau$ and let $\psi_0:=\gamma(\tau_0)$.
Assume that following the same construction as for $\psi=\gamma(\tau)$, one finds an open ball
$U_{\psi_0}\subseteq U_{\rm tn}$ of $\psi_0$ in $L^2_c$ with $U_\psi\cap U_{\psi_0}\ne\emptyset$
and for any $\varphi\in U_{\psi_0}$ a system of paths 
$\PP^*_{\varphi,\psi_0}[z,\mu_{\varphi,\psi_0}(z)]$, $z\in\Lambda^-(\varphi)$,
so that the restriction of $\beta^n_{\psi_0}$ to $U_{\psi_0}\cap i L^2_r$ is {\em real valued}.
We now want to show that one can modify the system of paths 
$\PP^*_{\varphi,\psi}[z,\mu_\varphi(z)]$, $z\in\Lambda^-(\varphi)$, so that for any $n\in\Z$,
\begin{equation}\label{eq:beta1=beta2}
\beta^n_\psi(\varphi)=\beta^n_{\psi_0}(\varphi),\quad
\forall\varphi\in U_\psi\cap U_{\psi_0},
\end{equation}
where $\beta^n_\psi(\varphi)$ denotes the value \eqref{eq:beta_tilde} with the system of paths
modified. Then by Lemma \ref{lem:real-analyticity} for any $n\in\Z$, the quantity $\beta^n_\psi$ will be real 
valued when restricted to $U_\psi\cap i L^2_r$.
Indeed, take $\varphi\in U_\psi\cap U_{\psi_0}$. The first problem arises if the bijection
\[
\mu_{\varphi,\psi_0} : \Lambda^-_R\to M_R(\varphi)
\] 
corresponding to the neighborhood $U_{\psi_0}$ is different from the bijection
\[
\mu_\varphi \equiv\mu_{\varphi,\psi} : \Lambda^-_R\to M_R(\varphi)
\]
corresponding to the neighborhood $U_\psi$.
In such a case, denote by $\sigma_\varphi : \Lambda^-_R(\varphi)\to\Lambda^-_R(\varphi)$ the
permutation with the property that $\mu_{\varphi,\psi_0}=\mu_{\varphi,\psi}\circ\sigma_\varphi$.
Since $\sigma_\varphi$ can be written as a product of transpositions we can assume without loss
of generality that $\sigma_\varphi$ is a transposition interchanging two periodic eigenvalues 
$z$ and $z'$ in $\Lambda^-_R(\varphi)$. Then $z\in D^\varepsilon(\nu)$ and $z'\in D^\varepsilon(\nu')$
for some $\nu,\nu'\in\Lambda^-_R(\psi)$. In view of \eqref{eq:concatenation*},
\[
\PP^*_{\varphi,\psi}[z,\mu_\varphi(z)]=
[z,Q_\nu]^*\cup Y^*_{\varphi,\psi}[\nu,\mu_\psi(\nu)]\cup[Q_{\mu_\psi(\nu)},\mu_\varphi(z)]^*
\]
and
\[
\PP^*_{\varphi,\psi}[z',\mu_\varphi(z')]=
[z',Q_{\nu'}]^*\cup Y^*_{\varphi,\psi}[\nu',\mu_\psi(\nu')]\cup[Q_{\mu_\psi(\nu')},\mu_\varphi(z')]^*.
\]
Choose a path $Y^*_{\nu,\nu'}$ on the Riemann surface
\[
\mathcal{C}_{\psi,R}^\bullet:=\big(\pi_1|_{\mathcal{C}_{\psi,R}}\big)^{-1}(B_{R,\psi}^\bullet)
\]
that connects the point $Q^*_\nu$ with $Q^*_{\nu'}$.  Let $Y_{\nu,\nu'}$ be its projection 
into $B_{R,\psi}^\bullet$ by the map $\pi_1$ and denote by $Y^*_{\nu,\nu';\varphi}$ the unique lift of
$Y_{\nu,\nu'}$ by $\pi_1|_{\mathcal{C}_{\varphi,R}} : \mathcal{C}_{\varphi,R}\to\C$ which connects
$Q^*_{\nu,\varphi}$ with $Q^*_{\nu',\varphi}$. We then replace $\PP^*_{\varphi,\psi}[z,\mu_\varphi(z)]$ by
the concatenated curve
\[
[z,Q_\nu]^*\cup Y^*_{\nu,\nu';\varphi}\cup Y^*_{\varphi,\psi}[\nu',\mu_\psi(\nu')]\cup
[Q_{\mu_\psi(\nu')},\mu_\varphi(z')]^*
\]
and $\PP^*_{\varphi,\psi}[z',\mu_\varphi(z')]$ by
\[
[z',Q_{\nu'}]^*\cup (Y^*_{\nu,\nu';\varphi})^{-1}\cup Y^*_{\varphi,\psi}[\nu,\mu_\psi(\nu)]\cup
[Q_{\mu_\psi(\nu)},\mu_\varphi(z)]^*
\]
where $(Y^*_{\nu,\nu';\varphi})^{-1}$ denotes the path obtained from $Y^*_{\nu,\nu';\varphi}$ by traversing it
in opposite direction. Since this change of paths does not affect the value of 
$\beta^n_\psi(\varphi)$ we can assume without loss of generality that 
$\mu_{\varphi,\psi}=\mu_{\varphi,\psi_0}$.
Therefore, for any $z\in\Lambda^-_R(\varphi)$, the paths
$\PP^*_{\varphi,\psi}[z,\mu_\varphi(z)]$ and 
$\PP^*_{\varphi,\psi_0}[z,\mu_{\varphi,\psi_0}(z)]$ in $\mathcal{C}_{\varphi,R}$ have the same initial points
and the same end points. 
Now, take $\varphi_0\in U_\psi\cap U_{\psi_0}$. Then, for any $z\in\Lambda^-_R(\varphi_0)$ choose an arbitrary 
point on $Y^*_{\varphi_0,\psi}[\nu,\mu_\psi(\nu)]$ of the path 
$\PP^*_{\varphi_0,\psi}[z,\mu_{\varphi_0}(z)]$ and modify it by adding a bouquet of $a$-cycles and $b$-cycles of 
the Riemann surface $\mathcal{C}_{\varphi_0,R}$, contained in 
$\big(\pi_1|_{\mathcal{C}_{\varphi_0,R}}\big)^{-1}(B_{R,\psi}^\bullet)$ so that when modified 
in this way the path $\PP^*_{\varphi_0,\psi}[z,\mu_{\varphi_0}(z)]$ is homologous to
$\PP^*_{\varphi_0,\psi_0}[z,\mu_{\varphi_0,\psi_0}(z)]$ on $\mathcal{C}_{\varphi_0,R}$.
Hence, for any $n\in\Z$, $\beta^n_\psi(\varphi_0)$ defined by \eqref{eq:beta_tilde} with these modified paths equals
$\beta^n_{\psi_0}(\varphi_0)$. For any $\nu\in\Lambda^-_R(\psi)$, let 
\[
{\widetilde Y}_\psi[\nu,\mu_\psi(\nu)]:=\pi_1\big({\widetilde Y}^*_{\varphi_0,\psi}[\nu,\mu_\psi(\nu)]\big)
\]
where ${\widetilde Y}^*_{\varphi_0,\psi}[\nu,\mu_\psi(\nu)]$ denotes the middle part of the path 
$\PP^*_{\varphi_0,\psi}[z,\mu_{\varphi_0}(z)]$ modified as described above. 
Note that ${\widetilde Y}_\psi[\nu,\mu_\psi(\nu]\subseteq B^\bullet_R$.
Now, use ${\widetilde Y}_\psi[\nu,\mu_\psi(\nu)]$ instead of $Y_\psi[\nu,\mu_\psi(\nu)]$ to construct the paths
$\PP_{\varphi,\psi}[z,\mu_\varphi(z)]$ and their lifts $\PP^*_{\varphi,\psi}[z,\mu_\varphi(z)]$ for any
$\varphi\in U_\psi$ and $z\in\Lambda^-(\varphi)$.
By this construction and since \eqref{eq:beta_tilde} is independent on the choice described in 
Remark \ref{rem:multiple_choice}, the equality \eqref{eq:beta1=beta2} follows.
As mentioned above, Lemma \ref{lem:real-analyticity} then implies that ${\tilde\beta}_\psi^n : U_\psi\to\C$
is real valued when restricted to $U_\psi\cap i L^2_r$ for any $n\in\Z$. 

Since the set $\gamma\big([0,1]\big):=\big\{\gamma(t)\,\big|\,t\in[0,1]\big\}\subseteq U_{\rm tn}\cap i L^2_r$ is 
compact in $L^2_c$ there are finitely many potentials $\varphi_1,...,\varphi_M\in\gamma([0,1])$, open balls
$U(\varphi_1),...,U(\varphi_M)$ in $U_{\rm tn}$, and for any $1\le j\le M$ and any $\varphi\in U(\varphi_j)$, a system of paths 
\[
\big\{\PP^*_{\varphi,\varphi_j}[z,\mu_{\varphi,\varphi_j}(z)]\,\big|\,z\in\Lambda_R^-(\varphi)\big\},
\]
constructed as described above so that
\[
\gamma\big([0,1]\big)\subseteq\bigcup_{j=1}^M U(\varphi_j)\subseteq U_{\rm tn}.
\]
Without loss of generality we can assume that $\varphi_1=\psi^{(0)}\in\mathcal{U}_0\cap U_{\rm tn}$ where 
$\mathcal{U}_0$ is the open ball in $L^2_c$  centered at zero given by Theorem \ref{th:main_near_zero}.
By choosing for any $\varphi\in U(\varphi_1)$ the system of paths as above and, at the same time, as described in 
Lemma \ref{lem:near_zero}, it follows that $\beta^n_{\varphi_1} : U(\varphi_1)\to\C$ is real-valued on 
$U(\varphi_1)\cap i L^2_r$. By modifying the systems of paths as described above we conclude that for any $1\le j<k\le M$ 
and $\varphi\in U(\varphi_j)\cap U(\varphi_k)$ the quantities $\beta^n_{\varphi_j}(\varphi)$ and $\beta^n_{\varphi_k}(\varphi)$
are real valued and coincide up to a linear combination of $a$ and $b$-periods with integer coefficients.
Taking into account Lemma \ref{lem:lattice_imaginary} it then follows that modulo $2\pi$,
\[
\beta^n_{\varphi_j}(\varphi)\equiv\beta^n_{\varphi_k}(\varphi),\quad
\forall\varphi\in U(\varphi_j)\cap U(\varphi_k).
\]
By setting ${\tilde\beta}^n|_{U(\varphi_j)}:=\beta^n_{\varphi_j}$ for $1\le j\le M$ we then obtain
a well defined function modulo $2\pi$,
\begin{equation}\label{eq:beta_global}
{\tilde\beta}^n : U_{\rm tn}'\to\C,\quad U_{\rm tn}':=\bigcup_{j=1}^M U(\varphi_j),
\end{equation}
so that ${\tilde\beta}^n|_{U_{\rm tn}'\cap i L^2_r}$ is real-valued.
This completes the proof of Proposition \ref{prop:beta^n_tn}.
\end{proof}

In the proof of Proposition \ref{prop:beta^n_tn} we used the following

\begin{Lem}\label{lem:lattice_imaginary}
For any $\varphi\in U_{\rm tn}\cap i L^2_r$ and for any $n\in\Z$, the lattice of periods $\mathcal{L}_n(\varphi)$,
introduced in \eqref{4.3}, consists of imaginary numbers, $\mathcal{L}_n(\varphi)\subseteq i\R$.
\end{Lem}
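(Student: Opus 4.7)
The plan is to establish that each of the generators $p_{nk}(\varphi)$, $1\le|k|\le R$, of the period lattice $\mathcal{L}_n(\varphi)$ takes purely imaginary values, from which $\mathcal{L}_n(\varphi)\subseteq i\R$ follows immediately. First I would realize $p_{nk}(\varphi)$ as twice a real integral. By construction, the $b_k$-cycle is the lift $\pi_1^{-1}([\varkappa_{k-1},\varkappa_k])$ on $\CC_{\varphi,R}$ of a real segment whose endpoints are branch points of $\pi_1$; this preimage is a closed loop that covers the segment once on each sheet. Since the canonical root changes sign between the two sheets while the direction of traversal is reversed, the two contributions add and
\[
p_{nk}(\varphi)=\pm 2\int_{\varkappa_{k-1}}^{\varkappa_k}\frac{\zeta_n(\lambda,\varphi)}{\sqrt[c]{\chi_p(\lambda,\varphi)}}\,d\lambda,
\]
where the integration is over the real interval on the canonical sheet (note that the open interval avoids every cut since distinct cuts $G_k(s,\varphi)$ meet $\R$ at distinct isolated points).

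Next I would show the integrand is purely imaginary on $(\varkappa_{k-1},\varkappa_k)$. For the denominator, Lemma \ref{lem:spectrum_symmetries} gives $\lambda_k^-=\overline{\lambda_k^+}$ for $\varphi\in i L^2_r$, so for real $\lambda$ the product $(\lambda_k^+-\lambda)(\lambda_k^--\lambda)=|\lambda_k^+-\lambda|^2\ge 0$ is nonnegative; the standard root \eqref{eq:standard_root}, being holomorphic off the (conjugation-symmetric) cut $G_k$ with asymptotic $\sim -\lambda$, is then real-valued on $\R\setminus\{\tau_k\}$ by continuity together with the sign fixed at infinity. The canonical root \eqref{eq:canonical_root} is therefore $2i$ times a real number, hence lies in $i\R$, on the real line off the branch points. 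For the numerator, I would establish the symmetry $\zeta_n(\bar\lambda,\varphi)=\overline{\zeta_n(\lambda,\varphi)}$ so that $\zeta_n(\lambda,\varphi)\in\R$ for $\lambda\in\R$. Setting $\tilde\zeta_n(\lambda):=\overline{\zeta_n(\bar\lambda,\varphi)}$, an entire function of $\lambda$, and using the identity $\sqrt[c]{\chi_p(\bar\lambda,\varphi)}=-\overline{\sqrt[c]{\chi_p(\lambda,\varphi)}}$ (a consequence of the explicit factor $2i$ and the conjugation-invariance of the standard roots) together with the invariance $\overline{\Gamma_m}=\Gamma_m$ with reversed orientation, conjugating the normalization \eqref{eq:a-normalization} for $\zeta_n$ shows that $\tilde\zeta_n$ satisfies the very same normalization: the two sign reversals, one from the canonical root and one from the contour orientation, cancel. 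The uniqueness argument in Remark \ref{rem:uniqueness_differentials} (Proposition 5.2 of \cite{KT2} applied to the difference $\zeta_n-\tilde\zeta_n$, whose leading terms agree by construction) then yields $\tilde\zeta_n=\zeta_n$.

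Combining the two steps, $\zeta_n/\sqrt[c]{\chi_p}$ is the quotient of a real by a purely imaginary quantity, hence purely imaginary, on the real interval of integration, so $p_{nk}(\varphi)\in i\R$. Since $\mathcal{L}_n(\varphi)$ is the $\Z$-linear span of these periods, the lemma follows. The step requiring the most care, and the one I regard as the main obstacle, is the verification that $\tilde\zeta_n$ satisfies the same normalization as $\zeta_n$: this demands careful bookkeeping of the sign from the $2i$ factor in the canonical root against the sign produced by the orientation reversal of the contours $\Gamma_m$ under complex conjugation, and the confirmation that the uniqueness hypothesis of \cite[Proposition 5.2]{KT2} (matching leading term of the ansatz) genuinely applies to $\zeta_n-\tilde\zeta_n$.
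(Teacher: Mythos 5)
Your proof is correct and essentially the same as the paper's: both conjugate the normalization \eqref{eq:a-normalization}, exploit the reflection-invariance of the contours $\Gamma_m$ and the conjugation law $\overline{\sqrt[c]{\Delta^2(\lambda,\varphi)-4}}=-\sqrt[c]{\Delta^2(\overline\lambda,\varphi)-4}$, invoke the uniqueness of the normalized differentials (\cite[Proposition 5.2]{KT2} via Remark~\ref{rem:uniqueness_differentials}) to deduce the symmetry $\overline{\zeta_n(\overline\lambda,\varphi)}=\zeta_n(\lambda,\varphi)$, and then reduce each $b_k$-period to $\pm 2$ times a real-interval integral whose integrand is purely imaginary. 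The only cosmetic difference is in the final step, where the paper concludes that the canonical root is imaginary on $B_R\cap\R$ from $\Delta(\lambda,\varphi)\in(-2,2)$ (so $\Delta^2-4<0$, using \eqref{eq:real_line}), whereas you read the same fact directly off the infinite-product formula \eqref{eq:canonical_root}; both are valid and amount to the same observation.
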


\begin{proof}[Proof of Lemma \ref{lem:lattice_imaginary}]
Take $\varphi\in U_{\rm tn}\cap i L^2_r$ and assume that $\varphi\in U_{s_k}$ for some $1\le k\le N$ (see \eqref{eq:U_tn}).
Then, by Proposition \ref{lem:differentials_on_U_tn} for any $n\in\Z$,
\begin{equation}\label{eq:normalization_condition1}
\int_{\Gamma_m(s_k)}\frac{\zeta_n(\lambda,\varphi)}{\sqrt[c]{\Delta^2(\lambda,\varphi)-4}}\,d\lambda=2\pi\delta_{mn},
\quad m\in\Z,
\end{equation}
where by construction,
\begin{equation}\label{eq:conjugation1}
\overline{\Gamma_m(s_k)}=\Gamma_m(s_k).
\end{equation}
It follows from Lemma \ref{lem:spectrum_symmetries} and the definition of the canonical root 
\eqref{eq:canonical_root} that
\begin{equation}\label{eq:conjugation2}
\overline{\big(\sqrt[c]{\Delta^2(\lambda,\varphi)-4}\big)}=-\sqrt[c]{\Delta^2(\overline{\lambda},\varphi)-4}.
\end{equation}
By taking the complex conjugate of both sides of \eqref{eq:normalization_condition1}, then using
\eqref{eq:conjugation1} and \eqref{eq:conjugation2}, and finally by passing to the complex conjugate variable in the integral,
we obtain that for any $n\in\Z$,
\begin{equation}\label{eq:normalization_condition2}
\int_{\Gamma_m(s_k)}\frac{\overline{\zeta_n(\overline{\lambda},\varphi)}}
{\sqrt[c]{\Delta^2(\lambda,\varphi)-4}}\,d\lambda=2\pi\delta_{mn},
\quad m\in\Z.
\end{equation}
Now, by comparing \eqref{eq:normalization_condition1} and \eqref{eq:normalization_condition2} we conclude
from \cite[Proposition 5.2]{KT2} (cf. Remark \ref{rem:uniqueness_differentials}) that
\begin{equation}\label{eq:zeta_symmetry}
\overline{\zeta_n(\overline{\lambda},\varphi)}=\zeta_n(\lambda,\varphi)
\end{equation}
for any $\varphi\in U_{\rm tn}\cap i L^2_r$, $\lambda\in\C$, and $n\in\Z$.
In particular, we see that in this case $\zeta_n(\lambda,\varphi)$ is real-valued for $\lambda\in\R$.
Now consider the $b_l$-period of the one form 
$\frac{\zeta_n(\lambda,\varphi)}{\sqrt{\Delta^2(\lambda,\varphi)-4}}\,d\lambda$ for some $1\le |l|\le R$.
For ease of notation we assume that $l\ge 1$. Then, by construction 
$\pi_1(b_l)=[\varkappa_{l-1}(s_k,\varphi),\varkappa_l(s_k,\varphi)]$, and therefore
\begin{equation}\label{eq:p_nl}
p_{nl}=\int_{b_l}\frac{\zeta_n(\lambda,\varphi)}
{\sqrt{\Delta^2(\lambda,\varphi)-4}}\,d\lambda\in\left\{
\pm2\int_{\varkappa_{l-1}(s_k,\varphi)}^{\varkappa_l(s_k,\varphi)}
\frac{\zeta_n(\lambda,\varphi)}
{\sqrt[c]{\Delta^2(\lambda,\varphi)-4}}\,d\lambda\right\}
\end{equation}
where the integration is performed over the real interval $[\varkappa_{l-1}(s_k,\varphi),\varkappa_l(s_k,\varphi)]$.
By construction $B_R\cap\R$ does not contain periodic eigenvalues of $L(\varphi)$.
In view of \eqref{eq:zeta_symmetry} and the property that $\Delta(\lambda,\varphi)\in(-2,2)$ for $\lambda\in B_R\cap\R$ 
(see \eqref{eq:real_line}) we then conclude from \eqref{eq:p_nl} that the $b_l$-period $p_{nl}$ is an imaginary number. 
\end{proof}

\medskip

Arguing as in the proof of Lemma \ref{lem:beta^n_k-asymptotics} one obtains the following
estimates for ${\tilde\beta}^n(\varphi)$.

\begin{Lem}\label{lem:beta_tilde-asymptotics}
For any $n\in\Z$,
\[ 
{\tilde\beta}^n=O(1/n)\quad\text{as}\quad|n|\to\infty,
\]
locally uniformly in $\varphi\in U_{\rm tn}$.  
\end{Lem}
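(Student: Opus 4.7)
The plan is to adapt the estimation scheme of Lemma \ref{lem:beta^n_k-asymptotics} to the paths $\PP^*[z,\mu_\varphi(z)]$ living on the finite-genus Riemann surface $\mathcal{C}_{\varphi,R}$, rather than on a single handle $\mathcal{D}_{\varphi,k}$. By Proposition \ref{prop:beta^n_tn},
\[
{\tilde\beta}^n(\varphi) = \sum_{z \in \Lambda_R^-(\varphi)} \int_{\PP^*[z,\mu_\varphi(z)]} \frac{\zeta_n(\lambda,\varphi)}{\sqrt{\chi_p(\lambda,\varphi)}}\,d\lambda,
\]
a sum with $|\Lambda_R^-(\varphi)| = 2R+1$ terms, a number independent of $n$. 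Hence it suffices to show that each summand is $O(1/|n|)$, locally uniformly in $\varphi \in U_{\rm tn}$.

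Fix $z = \lambda_k^-(\varphi)$ with $|k|\le R$. As in equation \eqref{4.6}, I would factor the integrand so as to isolate the branch-point singularity at the starting point:
\[
\frac{\zeta_n(\lambda)}{\sqrt[c]{\chi_p(\lambda)}} \;=\; \frac{\sigma^n_k-\lambda}{w_k(\lambda)}\,\zeta^n_k(\lambda),\qquad
\zeta^n_k(\lambda) \;=\; \frac{i}{w_n(\lambda)}\prod_{r\ne k,n}\frac{\sigma^n_r-\lambda}{w_r(\lambda)}.
\]
The crux is the estimate $|\zeta^n_k(\lambda)| = O(1/|n|)$, uniformly for $\lambda$ on the path. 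This comes from three ingredients: (i) since the path projects into the compact disk $B_R$ and $\lambda_n^\pm = n\pi + \ell_n^2$ by Lemma \ref{lem:counting_lemma}(i), one has $|w_n(\lambda)| \ge c|n|$ for large $|n|$; (ii) the finite sub-product over $|r|\le R'$ with $r\ne k,n$ is bounded, because by the construction of the paths in Proposition \ref{prop:beta^n_tn} the curve stays uniformly (at distance $\ge \varepsilon$) away from every periodic eigenvalue other than $\lambda_k^-$ and possibly the endpoint, so $|w_r(\lambda)|$ is bounded below while $\sigma^n_r \in B_{R'}$ is bounded; (iii) the tail product over $|r|>R'$ converges uniformly in $n$ and in $\lambda \in B_R$ using the asymptotics $\sigma^n_r = \tau_r + \gamma_r^2 \ell_r^2$ from property (D2) and $\tau_r, w_r(\lambda) \sim r\pi$.

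It then remains to bound $\int_{\PP^*[z,\mu_\varphi(z)]}\frac{|\sigma^n_k-\lambda|}{|w_k(\lambda)|}\,|d\lambda|$ uniformly in $n$ and locally uniformly in $\varphi$. Since $|\sigma^n_k-\lambda|$ is uniformly bounded and $1/w_k(\lambda)$ has, at worst, an integrable square-root singularity at $\lambda_k^-$ (and at $\mu_\varphi(z)$, should it accidentally coincide with a periodic eigenvalue), this follows from the same parametrization $\lambda(t) = \lambda_k^- + td_k$ and splitting $\int_0^1 = \int_0^\varepsilon + \int_\varepsilon^1$ used in Lemma \ref{lem:beta^n_k-asymptotics}. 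Multiplying the two bounds gives $O(1/|n|)$ for each summand, hence for ${\tilde\beta}^n$.

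The main obstacle I anticipate is bookkeeping around the fact that, unlike in Lemma \ref{lem:beta^n_k-asymptotics}, the path $\PP^*[z,\mu_\varphi(z)]$ is the concatenation \eqref{eq:concatenation*} of three pieces and the endpoint $\mu_\varphi(z)$ may happen to coincide with a periodic eigenvalue of $L(\varphi)$. On the middle segment $Y^*_{\varphi,\psi}[\nu,\mu_\psi(\nu)]$ the integrand is smooth and the bound $|\zeta^n_k(\lambda)/w_k(\lambda)| = O(1/|n|)$ applies pointwise, so that part contributes $O(1/|n|)$ trivially; only on the two end-disks $D^\varepsilon(\lambda_k^-)$ and $D^\varepsilon(\mu_\psi(\nu))$ must the parametrization argument be invoked, separately for each, and the resulting bounds combined.
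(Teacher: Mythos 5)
Your proposal is correct and follows exactly the route the paper intends: it reads, ``Arguing as in the proof of Lemma \ref{lem:beta^n_k-asymptotics} one obtains the following estimates for ${\tilde\beta}^n(\varphi)$,'' with no further details supplied, and you have filled in precisely the missing details. The key observations — a fixed number $2R+1$ of summands, the factorization \eqref{4.6} to isolate the branch-point singularity at the starting point $\lambda_k^-$, the pointwise bound $|\zeta^n_k(\lambda)|=O(1/|n-k|)=O(1/|n|)$ on $B_R$ for $|n|>R$ via $|w_n(\lambda)|\gtrsim|n|$ and the uniform convergence of the tail product, and the integrability of the square-root singularities at the endpoints of the concatenated path \eqref{eq:concatenation*} — are exactly what is needed.

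One small bookkeeping imprecision worth noting: when $\mu_\varphi(z)$ happens to be a branch point, say $\mu_\varphi(z)=\lambda_l^\pm$ with $l\ne k$ and $|l|\le R$, the resulting singularity of the integrand sits in the factor $w_l$ hidden inside $\zeta^n_k$, not in $w_k$ as your phrasing suggests. The fix is the obvious one: factor out both $w_k$ and $w_l$ simultaneously, writing $\frac{\zeta_n(\lambda)}{\sqrt[c]{\chi_p(\lambda)}}=\frac{(\sigma^n_k-\lambda)(\sigma^n_l-\lambda)}{w_k(\lambda)w_l(\lambda)}\zeta^n_{k,l}(\lambda)$, where now $\zeta^n_{k,l}(\lambda)=O(1/|n|)$ is genuinely bounded on the whole path, and the two integrable square-root singularities (at $\lambda_k^-$ and $\lambda_l^\pm$, which are distinct since the periodic eigenvalues in $B_R$ are simple) are handled by the same $\int_0^1 dt/\sqrt{t}$ parametrization at the two end-disks. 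You already flagged this as the expected bookkeeping issue, and the resolution does not change the shape of the argument, so the proof stands.
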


In what follows we assume that the tubular neighborhood $U_{\rm tn}$ is chosen as in
Proposition \ref{prop:beta^n_tn}. We summarize the results obtained in this section so far as follows.

\begin{Prop}\label{Theorem 4.4}
For any $n \in \Z$, the following statements hold:
\begin{itemize}
\item[(i)] $\sum _{\underset{\scriptstyle{k \not= n}}{|k| > R}} \beta ^n_k$ converges locally
uniformly on $U_{\rm tn}$ to an analytic function on $U_{\rm tn}$ which is
of the order $o(1)$ as $|n|\to\infty$. 
\item[(ii)] If $|n| > R$, then $\beta ^n_n$ is defined modulo $2\pi$ on
$U_{\rm tn}\backslash{\mathcal Z}_n$. It is analytic, when taken modulo $\pi$.
\item[(iii)] For any $\varphi \in U_{\rm tn}$ and $n\in\Z$, the quantity ${\tilde\beta}^n(\varphi)$ is defined 
modulo $2\pi$ and is analytic on $U_{\rm tn}$. Furthermore, 
${\tilde\beta}^n=O(1/n)$ locally uniformly on $U_{\rm tn}$.
\end{itemize}
\end{Prop}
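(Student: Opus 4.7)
The plan is to observe that items (ii) and (iii) are already in hand: assertion (ii) is literally the content of Lemma \ref{lem:beta^n_k-analytic}(ii), while the analyticity modulo $2\pi$ of $\tilde\beta^n$ on $U_{\rm tn}$ in (iii) is the conclusion of Proposition \ref{prop:beta^n_tn}, and the decay $\tilde\beta^n = O(1/n)$ is Lemma \ref{lem:beta_tilde-asymptotics}. Thus the essential work is to prove (i), i.e.\ that
\[
\Sigma_n(\varphi):=\sum_{\substack{|k|>R\\ k\ne n}}\beta^n_k(\varphi)
\]
converges locally uniformly on $U_{\rm tn}$ to an analytic function that is $o(1)$ as $|n|\to\infty$.

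For the convergence I would invoke Lemma \ref{lem:beta^n_k-asymptotics}, which gives
\[
|\beta^n_k(\varphi)|\le C(\varphi)\,\frac{|\gamma_k(\varphi)|+|\mu_k(\varphi)-\tau_k(\varphi)|}{|k-n|},\qquad |k|>R,\ k\ne n,
\]
with $C(\varphi)$ locally bounded on $U_{\rm tn}$ and independent of $n$. Setting $a_k(\varphi):=|\gamma_k(\varphi)|+|\mu_k(\varphi)-\tau_k(\varphi)|$, Lemma \ref{lem:counting_lemma}(i) together with Lemma \ref{lem:dirichlet_spectrum} (both applicable uniformly on $U_{\rm tn}$ by property (T1) and Lemma \ref{lem:U_tn}) guarantee that $(a_k)_{|k|>R}\in\ell^2$ locally uniformly in $\varphi$. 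By Cauchy--Schwarz
\[
\sum_{\substack{|k|>R\\ k\ne n}}\frac{a_k}{|k-n|}\le \|a\|_{\ell^2}\Big(\sum_{k\ne n}\frac{1}{(k-n)^2}\Big)^{1/2},
\]
so the series defining $\Sigma_n$ converges absolutely and locally uniformly on $U_{\rm tn}$. Since each $\beta^n_k$ with $k\ne n$ is analytic on $U_{\rm tn}$ by Lemma \ref{lem:beta^n_k-analytic}(i), the limit $\Sigma_n$ is analytic as well by the standard theorem on locally uniform limits of analytic maps between complex Banach spaces.

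To obtain the $o(1)$ decay I would split the sum at $M=M(n):=\lfloor |n|/2\rfloor$. For the short-range part, $|k-n|\ge |n|/2$ for $|k|\le M$, so
\[
\sum_{\substack{|k|\le M\\ k\ne n}}\frac{a_k}{|k-n|}\le \frac{2}{|n|}\sum_{|k|\le M}a_k\le \frac{2\sqrt{2M+1}}{|n|}\,\|a\|_{\ell^2}=O\!\left(|n|^{-1/2}\right).
\]
For the long-range part, Cauchy--Schwarz yields
\[
\sum_{\substack{|k|>M\\ k\ne n}}\frac{a_k}{|k-n|}\le \Big(\sum_{|k|>M}a_k^2\Big)^{1/2}\Big(\sum_{k\ne n}\frac{1}{(k-n)^2}\Big)^{1/2},
\]
and the first factor tends to $0$ as $|n|\to\infty$ because $a\in\ell^2$, while the second is bounded uniformly in $n$. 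Adding the two bounds gives $\Sigma_n=o(1)$ as $|n|\to\infty$, locally uniformly on $U_{\rm tn}$, which completes (i).

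The only mildly delicate point is to verify that the constants and $\ell^2$-norms above are genuinely uniform in $n$ and locally uniform in $\varphi\in U_{\rm tn}$; this is really the hardest step of the argument. It follows by inspection of the proof of Lemma \ref{lem:beta^n_k-asymptotics}, whose constants only involve the $\ell^2$-remainders appearing in Lemma \ref{lem:counting_lemma}, Lemma \ref{lem:dirichlet_spectrum}, and property (D2) of Proposition \ref{lem:differentials_on_U_tn}, all of which are uniform in $n$ and locally uniform on $U_{\rm tn}$ by construction.
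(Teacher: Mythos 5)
Your proposal is correct and follows essentially the same route as the paper: items (ii) and (iii) are dispatched by exactly the same citations (Lemma \ref{lem:beta^n_k-analytic}(ii), Proposition \ref{prop:beta^n_tn}, Lemma \ref{lem:beta_tilde-asymptotics}), while for (i) both arguments combine the bound of Lemma \ref{lem:beta^n_k-asymptotics} with Cauchy--Schwarz and a two-range split at scale $|n|/2$ to squeeze out the $o(1)$ decay, then obtain analyticity of the locally uniform limit from Lemma \ref{lem:beta^n_k-analytic}(i) and the standard Banach-space result (the paper cites \cite[Theorem A.4]{GK1}). The only cosmetic difference is that you split the index set by $|k|\lessgtr |n|/2$ whereas the paper splits by $|k-n|\lessgtr |n|/2$; both partitions reduce to one piece controlled by a vanishing $\ell^2$-tail of $(|\gamma_k|+|\mu_k-\tau_k|)_k$ and one piece controlled by a vanishing tail of $\sum 1/m^2$.
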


\begin{proof}[Proof of Proposition \ref{Theorem 4.4}]
(i) By Lemma \ref{lem:beta^n_k-asymptotics} and the Cauchy-Schwarz inequality,
\begin{align*} 
&\sum _{\underset{\scriptstyle{k \not= n}}{|k| > R}} |\beta ^n_k|
=\sum _{0 < |k - n| \le \frac{|n|}{2},|k|>R} |\beta ^n_k| +
\sum _{|k - n| >\frac{|n|}{2}, |k|>R} |\beta ^n_k| \\
&\le C \Big( \sum _{|k| \geq |n|/2} |\gamma _k|^2 +
|\mu _k - \tau _k|^2 \Big) ^{1/2}
+ C \big( \| \gamma \|_0 + \| \mu - \tau \|_0 \big)
\Big( \sum _{m \geq \frac{|n|}{2}} \frac{1}{m^2}
\Big)^{1/2}
\end{align*}
where here $\gamma = (\gamma _m)_{m \in \Z}$ and 
$\mu - \tau = (\mu_m - \tau _m)_{m \in \Z}$. Both latter displayed terms converge to zero locally
uniformly on $U_{\rm tn}$ as $|n|$ tends to infinity whence we have 
$\sum_{\underset{\scriptstyle {k \not= n}}{|k| > R}} \beta ^n_k = o(1)$. 
By Lemma \ref{lem:beta^n_k-analytic} (i) and \cite[Theorem A.4]{GK1}, it then follows that
$\sum _{\underset{\scriptstyle{k \not= n}}{|k| > R}}\beta ^n_k$ is
analytic on $U_{\rm tn}$. Item (ii) is proved in Lemma \ref{lem:beta^n_k-analytic} (ii) and
item (iii) follows from Proposition \ref{prop:beta^n_tn} and Lemma \ref{lem:beta_tilde-asymptotics}.
\end{proof}

As a consequence the angle variables \eqref{eq:angles_U_tn},
\begin{equation}\label{eq:angles_U_tn'}
\theta_n(\varphi)={\tilde\beta}^n(\varphi)+\sum_{|k|>R}\beta^n_k(\varphi),\quad
\varphi\in U_{\rm tn}\setminus\mathcal{Z}_n,\quad n\in\Z,
\end{equation}
are well defined.

\begin{Prop}\label{prop:angles_tn}
For any $n\in\Z$, the angle variable $\theta_n$ are defined on $U_{\rm tn}\setminus\mathcal{Z}_n$ modulo $2\pi$ by 
\eqref{eq:angles_U_tn'}. For $|n|\le R$, $\theta_n$ is analytic and for $|n|>R$, it is analytic on 
$U_{\rm tn}\setminus\mathcal{Z}_n$ when taken modulo $\pi$. Moreover, \eqref{eq:angles_U_tn'} is real valued when 
restricted to $\big(U_{\rm tn}\setminus\mathcal{Z}_n\big)\cap i L^2_r$. 
\end{Prop}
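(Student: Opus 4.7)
The plan is to read the proposition as a packaging of Proposition \ref{Theorem 4.4} with a real-analytic continuation argument anchored at the neighborhood of zero. I would treat the three claims—well-definedness modulo $2\pi$, the stated analyticity with its branch ambiguity, and real-valuedness on $iL^2_r$—in turn.

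For well-definedness and analyticity I would split by the value of $n$. When $|n|\le R$, the index $n$ does not appear in the sum $\sum_{|k|>R}\beta^n_k$, which is therefore single-valued and analytic on $U_{\rm tn}$ by Proposition \ref{Theorem 4.4}(i); combined with $\tilde\beta^n$, analytic on $U_{\rm tn}$ modulo $2\pi$ by Proposition \ref{Theorem 4.4}(iii), this gives $\theta_n$ analytic modulo $2\pi$ on $U_{\rm tn}$, and since $\mathcal{Z}_n=\emptyset$ for $|n|\le R$ it matches the statement. When $|n|>R$, I would isolate the potentially singular term by writing $\sum_{|k|>R}\beta^n_k=\beta^n_n+\sum_{k\ne n,\,|k|>R}\beta^n_k$. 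By Proposition \ref{Theorem 4.4}(ii), $\beta^n_n$ is defined modulo $2\pi$ on $U_{\rm tn}\setminus\mathcal{Z}_n$ and analytic there modulo $\pi$, while the other two contributions are analytic modulo $2\pi$ on $U_{\rm tn}$. Adding them yields $\theta_n$ well defined modulo $2\pi$ on $U_{\rm tn}\setminus\mathcal{Z}_n$ and analytic there modulo $\pi$. Convergence of the infinite series is ensured by the $o(1)$ estimates of Proposition \ref{Theorem 4.4}(i) and Lemma \ref{lem:beta_tilde-asymptotics}.

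For the real-valuedness my strategy is to anchor at $U_{s_1}$ and propagate. On $U_{s_1}\subseteq\mathcal{U}_0$ the formula \eqref{eq:angles_U_tn'} coincides modulo $2\pi$ with the angle variable $\theta_n^{KLTZ}$ of Theorem \ref{th:main_near_zero}: by the path-system of Proposition \ref{prop:beta^n_tn} (which starts at $\varphi_1=\psi^{(0)}\in\mathcal{U}_0$ and may be chosen inside each $\mathcal{D}_{\varphi,k}$ as in Lemma \ref{lem:near_zero}), one has $\tilde\beta^n|_{U_{s_1}}\equiv\sum_{|k|\le R}\beta^n_k$ in the sense of \eqref{4.1}, so $\theta_n|_{U_{s_1}}\equiv\sum_{k\in\Z}\beta^n_k=\theta_n^{KLTZ}|_{U_{s_1}}$. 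By Theorem \ref{th:main_near_zero} this is real on $U_{s_1}\cap iL^2_r$. Since $\tilde\beta^n$ is already real on all of $U_{\rm tn}\cap iL^2_r$ by Proposition \ref{prop:beta^n_tn}, subtraction shows that $\sum_{|k|>R}\beta^n_k$ (taken modulo $\pi$ when $|n|>R$) is real on $U_{s_1}\cap iL^2_r$.

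It remains to propagate this from $U_{s_1}$ to $U_{\rm tn}$. For $|n|\le R$ the sum is single-valued analytic $U_{\rm tn}\to\C$, real on the ball $U_{s_1}\cap iL^2_r$, and $U_{\rm tn}\cap iL^2_r$ is connected by construction, so Lemma \ref{lem:real-analyticity} immediately yields real-valuedness on $U_{\rm tn}\cap iL^2_r$. For $|n|>R$ the sum is only defined modulo $\pi$ on $U_{\rm tn}\setminus\mathcal{Z}_n$; here I would cover a piecewise $C^1$-smooth path in $(U_{\rm tn}\setminus\mathcal{Z}_n)\cap iL^2_r$ from $U_{s_1}$ to any target point by finitely many balls in $L^2_c$ on each of which a $\C$-valued analytic lift exists, and chain Lemma \ref{lem:real-analyticity} across consecutive balls. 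The step I expect to be the main obstacle is connectedness of $(U_{\rm tn}\setminus\mathcal{Z}_n)\cap iL^2_r$, needed for this chaining; it should follow from the fact (noted just after \eqref{eq:Z_n}) that $\mathcal{Z}_n\cap iL^2_r$ is a real-analytic submanifold of real codimension two in the connected set $U_{\rm tn}\cap iL^2_r$, so its complement remains connected.
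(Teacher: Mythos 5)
Your proposal is correct and follows essentially the same route as the paper: well-definedness and analyticity come from Propositions \ref{Theorem 4.4} and \ref{prop:beta^n_tn}, and real-valuedness on $(U_{\rm tn}\setminus\mathcal{Z}_n)\cap iL^2_r$ is propagated from the neighborhood of zero via Lemma \ref{lem:real-analyticity} together with Theorem \ref{th:main_near_zero}. You are more careful than the published proof about the case $|n|>R$, where $\theta_n$ is only analytic modulo $\pi$: the chaining through local single-valued lifts along paths in the connected set $(U_{\rm tn}\setminus\mathcal{Z}_n)\cap iL^2_r$ (connected since $\mathcal{Z}_n\cap iL^2_r$ has real codimension two) is precisely the detail the paper leaves implicit when it invokes ``the analyticity of $\theta_n$'' and Lemma \ref{lem:real-analyticity} directly.
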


\begin{proof}[Proof of Proposition \ref{prop:angles_tn}]
In view of  Proposition \ref{prop:beta^n_tn} and Proposition \ref{Theorem 4.4} it only remains to prove that for any $n\in\Z$
the angle variable $\theta_n$ is real valued when restricted to $\big(U_{\rm tn}\setminus\mathcal{Z}_n\big)\cap i L^2_r$. 
This follows from  the analyticity of $\theta_n$, Lemma \ref{lem:real-analyticity}, and the fact that, by Theorem \ref{th:main_near_zero}, 
$\theta_n$ is real valued when restricted to the neighborhood of zero 
$\big(U_{s_1}\setminus\mathcal{Z}_n\big)\cap i L^2_r\subseteq\W_0\setminus\mathcal{Z}_n$ (cf. \eqref{eq:U_tn}).
\end{proof}

\medskip

By combining Lemma \ref{lem:near_zero}, Theorem \ref{th:main_near_zero} 
and Proposition \ref{prop:angles_tn}, we obtain by arguing as in the proof of Corollary \ref{coro:actions_poisson_relations},
the following theorem, which summarizes the results obtained in Section \ref{sec:actions_in_U_tn} and 
Section \ref{sec:angles_in_U_tn}.

\begin{Th}\label{th:commutation_relations_tn} 
For any $k,n\in\Z$ we have $\{I_n,I_k\}=0$ on $U_{\rm tn}$,
$\{\theta _n,I_k\}=\delta _{nk}$ on $U_{\rm tn}\backslash {\mathcal Z}_n$, and
$\{\theta_n,\theta _k\}=0$ on $U_{\rm tn}\setminus\big({\mathcal Z} _n\cup{\mathcal Z}_k\big)$.
Furthermore, the action $I_n$ and the angle $\theta_n$ are real valued when restricted to $U_{\rm tn}\cap i L^2_r$ and,
respectively,  $\big(U_{\rm tn}\setminus\mathcal{Z}_n\big)\cap i L^2_r$.
\end{Th}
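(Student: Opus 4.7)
The plan is to follow verbatim the analytic continuation argument used in the proof of Corollary \ref{coro:actions_poisson_relations}, with the angles treated in the same spirit once the (purely cosmetic) multi-valuedness is handled.

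First, I would verify that each of the three Poisson brackets is a well-defined analytic function on the relevant open subset of $U_{\rm tn}$. For $\{I_n,I_k\}$ this is immediate from Proposition \ref{lem:actions} exactly as in the proof of Corollary \ref{coro:actions_poisson_relations}: analyticity of $I_n,I_k : U_{\rm tn}\to\C$ implies analyticity of their $L^2$-gradients $\p_j I_n,\p_j I_k$ ($j=1,2$), hence analyticity of the bracket \eqref{eq:poisson_bracket}. For the brackets involving angles, note that while $\theta_n$ itself is only defined modulo $2\pi$ (resp.\ $\pi$ for $|n|>R$), its $L^2$-gradient $\p_j\theta_n$ is single-valued and analytic on $U_{\rm tn}\setminus\mathcal{Z}_n$ by Proposition \ref{prop:angles_tn}. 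Therefore $\{\theta_n,I_k\}$ is analytic on $U_{\rm tn}\setminus\mathcal{Z}_n$ and $\{\theta_n,\theta_k\}$ is analytic on $U_{\rm tn}\setminus(\mathcal{Z}_n\cup\mathcal{Z}_k)$.

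Next I would use connectedness. The set $U_{\rm tn}$ is connected by construction, and since $\mathcal{Z}_n$ is an analytic subvariety of $U_{\rm tn}$ (in fact of complex codimension one, cf.\ the discussion after \eqref{eq:Z_n}), the complements $U_{\rm tn}\setminus\mathcal{Z}_n$ and $U_{\rm tn}\setminus(\mathcal{Z}_n\cup\mathcal{Z}_k)$ are connected as well. On the open subset $U_{s_1}\cap\U_0\subseteq\W_0$, Theorem \ref{th:main_near_zero} asserts that the map $\Phi$ is a canonical Birkhoff map; since by Proposition \ref{lem:actions} and the discussion preceding \eqref{eq:angles_U_tn} the functions $I_n$ and $\theta_n$ defined on $U_{\rm tn}$ agree on $U_{s_1}$ with the action-angle coordinates constructed in \cite{KLTZ}, we read off that on $U_{s_1}\cap\U_0$ (resp.\ its intersection with the complement of $\mathcal{Z}_n\cup\mathcal{Z}_k$),
\[
\{I_n,I_k\}=0,\qquad\{\theta_n,I_k\}=\delta_{nk},\qquad\{\theta_n,\theta_k\}=0.
\]
By the identity principle for analytic functions on connected complex Banach manifolds, each of these equalities then propagates from the open subset $U_{s_1}\cap\U_0$ to all of $U_{\rm tn}$, $U_{\rm tn}\setminus\mathcal{Z}_n$, and $U_{\rm tn}\setminus(\mathcal{Z}_n\cup\mathcal{Z}_k)$ respectively.

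Finally, for the real-valuedness statements I would appeal to Lemma \ref{lem:real-analyticity} exactly as in the proof of Corollary \ref{coro:actions_poisson_relations}: by construction the sets $U_{\rm tn}\cap i L^2_r$ and $(U_{\rm tn}\setminus\mathcal{Z}_n)\cap i L^2_r$ are connected (the latter because $\mathcal{Z}_n\cap iL^2_r$ has real codimension two in $iL^2_r$, as noted after \eqref{eq:Z_n}), the small ball $U_{s_1}\cap iL^2_r\subseteq\W_0$ is contained in them, and there $I_n$ and $\theta_n$ are real valued by Theorem \ref{th:main_near_zero} (and Lemma \ref{lem:near_zero} for the pieces of $\theta_n$). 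Lemma \ref{lem:real-analyticity} then transports real-valuedness across the whole connected real slice.

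The only nontrivial point I expect to have to guard against is the modular ambiguity of $\theta_n$: the equalities like $\{\theta_n,I_k\}=\delta_{nk}$ must be interpreted in terms of the (single-valued) Hamiltonian vector field of $I_k$ acting on $\theta_n$, equivalently as a bracket of $\p_j\theta_n$ with $\p_j I_k$; and likewise $\{\theta_n,\theta_k\}$ must be realized via single-valued gradients. Once this is made precise --- it is automatic from the fact that the differentials of the $\theta_n$ are single valued on $U_{\rm tn}\setminus\mathcal{Z}_n$ --- the analytic continuation and real-analyticity arguments above go through without further issues.
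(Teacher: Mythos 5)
Your proposal is correct and follows essentially the same approach as the paper's: the paper's proof is just the one-line remark ``by combining Lemma \ref{lem:near_zero}, Theorem \ref{th:main_near_zero} and Proposition \ref{prop:angles_tn}, we obtain by arguing as in the proof of Corollary \ref{coro:actions_poisson_relations},'' and your write-up is a faithful unpacking of exactly that argument --- analyticity of gradients (hence of the brackets), identity principle from the neighborhood of zero where the KLTZ Birkhoff coordinates are canonical, and Lemma \ref{lem:real-analyticity} for real-valuedness on the connected real slice. Your remark that the modular ambiguity of $\theta_n$ is harmless because its $L^2$-gradient is single-valued is also the right way to make the bracket statements precise.
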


\section{Actions and angles in $U_{\rm iso}$}\label{sec:actions_and_angles_in_U_iso}
Let $\psi^{(1)}\in i L^2_r$ and assume that $\psi^{(1)}$ has simple periodic spectrum.
Following the construction of action-angle coordinates along the path $\gamma : [0,1]\to U_{\rm tn}\cap i L^2_r$, 
$s\mapsto\psi^{(s)}$, in Section \ref{sec:actions_in_U_tn} and Section \ref{sec:angles_in_U_tn} we construct in 
the present Section action-angle coordinates in an open neighborhood of the isospectral set $\iso_o(\psi^{(1)})$. 
We prove additional properties of these coordinates that will be used in the subsequent sections.

For any $n\in\Z$, let $\Gamma_n:=\Gamma_n(s_N)$, $\Gamma_n':=\Gamma_n'(s_N)$ where
$\Gamma_n(s_N)$ and $\Gamma_n'(s_N)$ are the contours corresponding to the cut
$G_n(s_N)$ constructed in Section \ref{sec:actions_in_U_tn}. Here $s=s_N=1$ is the endpoint of 
the deformation $\{G_n(s)\,|\,n\in\Z\}_{s\in[0,1 ]}$ of cuts given by Lemma \ref{lem:deformation_G_n}.
For any $\psi\in\iso_o(\psi^{(1)})$ we choose an open ball $U_\psi$ of $\psi$ in $L^2_c$,
an integer $R_\psi\in\Z_{\ge 0}$, and positive constants $\varepsilon_0>0$ and $\delta_0>0$ such that 
the following holds:

\begin{itemize}
\item[(I1)] The statements of Lemma \ref{lem:counting_lemma}, Lemma \ref{lem:dirichlet_spectrum},
and Lemma \ref{lem:spectral_bands} in the Appendix with $\varepsilon=\varepsilon_0$ and
$\delta=\delta_0$, hold uniformly in $\varphi\in U_\psi$ with $R_p$, $R_D$, and $\tilde R$
replaced by $R_\psi$. Moreover, for any $\varphi\in U_\psi$ the $4 R_\psi+2$ periodic eigenvalues of $L(\varphi)$
inside the disk $B_{R_\psi}$ are {\em simple}.
\item[(I2)] For any $\varphi\in U_\psi$ and for any $n\in\Z$ the pair of periodic eigenvalues $\lambda^\pm_n$ is 
contained in the interior domain $D_n'$ encircled by the contour $\Gamma_n'$.
\item[(I3)] For any $n\in\Z$ there exists an analytic function $\zeta_n^{(\psi)} : \C\times U_\psi\to\C$ such that for any 
$m\in\Z$ one has
\begin{equation}\label{eq:normalization_condition}
\frac{1}{2\pi}\int_{\Gamma_m}\frac{\zeta_n^{(\psi)}(\lambda,\varphi)}{\sqrt[c]{\Delta^2(\lambda,\varphi)-4}}\,d\lambda=
\delta_{nm}.
\end{equation}
Moreover, for any $\varphi\in U_\psi$ and for any $n\in\Z$ the zeros $\big\{\sigma^n_k \,\big|\,k\in\Z\setminus\{n\}\big\}$
of the entire function $\zeta_n^{(\psi)}(\cdot,\varphi)$, when listed with their multiplicities, satisfy the conditions
(D1)-(D3) with $R_s$ replaced by $R_\psi$ and $\zeta_n^{(s)}$ replaced by $\zeta_n^{(\psi)}$.
The canonical root in \eqref{eq:normalization_condition} is defined by \eqref{eq:canonical_root} and the map
\[
\Big(\C\setminus\big(\bigsqcup_{k\in\Z}\overline{D_k'}\big)\Big)\times U_\psi\to\C, 
\quad(\lambda,\varphi)\mapsto\sqrt[c]{\Delta^2(\lambda,\varphi)-4},
\]
is analytic.
\end{itemize}

\medskip

Since the isospectral set $\iso_o(\psi^{(1)})$ is compact (see Proposition 2.2 in \cite{KTPreviato}), there exist finitely many 
elements $\eta^{(j)}\in\iso_o(\psi^{(1)})$, $1\le j\le J$, such that $\iso_o(\psi^{(1)})$ is contained in the connected open set
\begin{equation}\label{eq:U_iso}
U_{\rm iso}:=\bigcup_{1\le j\le J} U_{\eta^{(j)}}\subseteq L^2_c.
\end{equation}
Take $R:=\max_{1\le j\le J} R_{\eta^{(j)}}$.  Using the compactness of $\iso_o(\psi^{(1)})$ and the fact that
$\psi^{(1)}$ has simple periodic spectrum one sees that if necessary,  the neighborhood $U_{\rm iso}$ can be shrunk
so that $U_{\rm iso}$ and $U_{\rm iso}\cap i L^2_r$ are connected and for any $\varphi\in U_{\rm iso}$ the periodic 
eigenvalues of $L(\varphi)$ inside the disk $B_R$ are simple. Note that if 
$\varphi\in U_{\eta^{(k)}}\cap U_{\eta^{(l)}}$ for some $1\le k<l\le J$ then in view of the normalization condition 
\eqref{eq:normalization_condition} and \cite[Proposition 5.2]{KT2} one concludes (cf. Remark \ref{rem:uniqueness_differentials}) 
that for any $n\in\Z$ and for any $\lambda\in\C$ we have that 
$\zeta_n^{(\eta_l)}(\lambda,\varphi)=\zeta_n^{(\eta_k)}(\lambda,\varphi)$. 
This allows us to define for any $n\in\Z$ the analytic map 
\[
\zeta_n : \C\times U_{\rm iso}\to\C
\]
such that for any $\varphi\in U_{\rm iso}$ and for any $m\in\Z$,
\begin{equation}\label{eq:normalization_condition_U_iso}
\frac{1}{2\pi}\int_{\Gamma_m}\frac{\zeta_n(\lambda,\varphi)}{\sqrt[c]{\Delta^2(\lambda,\varphi)-4}}\,d\lambda=
\delta_{nm}.
\end{equation}
Hence the following holds.

\begin{Prop}\label{prop:preparation_U_iso}
Let $\psi^{(1)}\in i L^2_r$ and assume that $\spec_p L(\psi^{(1)})$ is simple.
Then there exist a connected open neighborhood $U_{\rm iso}$ of $\iso_o(\psi^{(1)})$ in $L^2_c$ with 
$U_{\rm iso}\cap i L^2_r$ connected, analytic maps $\zeta_n : \C\times U_{\rm iso}\to\C$, $n\in\Z$, and an integer 
$R\in\Z_{\ge 0}$ such that the conditions (I1)-(I3) above hold with $U_\psi$ replaced by $U_{\rm iso}$, $\zeta_n^{(\psi)}$ 
replaced by $\zeta_n$, and $R_\psi$ replaced by $R$. In particular, for any $\varphi\in U_{\rm iso}$ and $m,n\in\Z$ we 
have the normalization condition \eqref{eq:normalization_condition_U_iso}.
\end{Prop}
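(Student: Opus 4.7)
The plan is to construct local data at each point of $\iso_o(\psi^{(1)})$ and then patch it via compactness and the uniqueness result for normalized differentials from \cite[Proposition 5.2]{KT2}. The argument closely parallels the construction of $\zeta_n$ on $U_{\rm tn}$ carried out in Proposition \ref{lem:differentials_on_U_tn}, but now the parameter space is the isospectral set itself rather than a path.

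First I would fix an arbitrary $\psi \in \iso_o(\psi^{(1)})$. Since $\iso_o(\psi^{(1)})$ is isospectral and $\spec_p L(\psi^{(1)})$ is simple by hypothesis, $\spec_p L(\psi)$ is simple as well. Applying Lemma \ref{lem:counting_lemma}, Lemma \ref{lem:dirichlet_spectrum}, and Lemma \ref{lem:spectral_bands} at $\psi$ yields an open ball $U_\psi$ in $L^2_c$ and an integer $R_\psi \ge 0$ providing uniform control of the periodic and Dirichlet spectra on $U_\psi$. Shrinking $U_\psi$ if necessary, one can arrange that for every $\varphi \in U_\psi$ the $4R_\psi + 2$ periodic eigenvalues of $L(\varphi)$ inside $B_{R_\psi}$ remain simple and that each pair $\lambda^\pm_n(\varphi)$ lies inside the interior $D_n' = D_n'(s_N)$ of the fixed contour $\Gamma_n' = \Gamma_n'(s_N)$ constructed at $s = s_N$ in Section \ref{sec:actions_in_U_tn}. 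The existence of the analytic normalized differential $\zeta_n^{(\psi)} : \C \times U_\psi \to \C$ satisfying (I3) with respect to the fixed contours $\Gamma_m$ is then provided by Theorem 1.3 in \cite{KT2}, together with the properties (D1)-(D3) of its zeros.

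Next I would globalize by compactness. The isospectral set $\iso_o(\psi^{(1)})$ is compact by Proposition 2.2 in \cite{KTPreviato} and connected by definition. Extract a finite subcover $\{U_{\eta^{(j)}}\}_{j=1}^J$ and set $R := \max_{1 \le j \le J} R_{\eta^{(j)}}$ and $U_{\rm iso} := \bigcup_{j=1}^J U_{\eta^{(j)}}$. Connectedness of $\iso_o(\psi^{(1)})$ together with the fact that each $U_{\eta^{(j)}}$ is a ball centered on this connected set makes $U_{\rm iso}$ connected; since $\iso_o(\psi^{(1)}) \subseteq i L^2_r$ and each $U_{\eta^{(j)}} \cap i L^2_r$ is an open ball in $i L^2_r$ around a point of $\iso_o(\psi^{(1)})$, the intersection $U_{\rm iso} \cap i L^2_r$ is connected as well. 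A further uniform shrinking — made possible by compactness and by the continuous dependence of periodic eigenvalues on the potential — ensures that all periodic eigenvalues inside $B_R$ remain simple for every $\varphi \in U_{\rm iso}$, yielding (I1) and (I2). To patch the local differentials, observe that on any overlap $U_{\eta^{(k)}} \cap U_{\eta^{(l)}}$ both $\zeta_n^{(\eta^{(k)})}$ and $\zeta_n^{(\eta^{(l)})}$ satisfy \eqref{eq:normalization_condition} against the \emph{same} contours $\Gamma_m$. Their difference, divided by $\sqrt[c]{\Delta^2 - 4}$, has vanishing $a$-periods, and, as emphasized in Remark \ref{rem:uniqueness_differentials}, the leading-term ansatz of \cite{KT2} cancels in the difference; applying \cite[Proposition 5.2]{KT2} forces $\zeta_n^{(\eta^{(k)})} = \zeta_n^{(\eta^{(l)})}$ on the overlap. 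Setting $\zeta_n|_{\C \times U_{\eta^{(j)}}} := \zeta_n^{(\eta^{(j)})}$ then defines an analytic function $\zeta_n : \C \times U_{\rm iso} \to \C$ satisfying \eqref{eq:normalization_condition_U_iso}, and properties (D1)-(D3) with common $R$ pass automatically from the pieces to the global object.

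The main obstacle will be the simultaneous fulfillment of (I1) and (I2) throughout $U_{\rm iso}$: the contours $\Gamma_m$ are inherited once and for all from $s = s_N$, yet they must continue to encircle $\lambda^\pm_m(\varphi)$ for every $\varphi$ in a full open neighborhood of $\iso_o(\psi^{(1)})$, while simultaneously the periodic eigenvalues inside $B_R$ must stay simple. On $\iso_o(\psi^{(1)})$ both properties hold by construction, since the periodic spectrum is constant along the isospectral set and coincides with that of $\psi^{(1)}$ for which $\Gamma_m = \Gamma_m(s_N)$ was designed; but their preservation under the passage to an open neighborhood requires a uniform lower bound on the distances between distinct periodic eigenvalues of $L(\varphi)$ for $\varphi \in \iso_o(\psi^{(1)})$, and this is exactly what compactness of $\iso_o(\psi^{(1)})$ together with the simplicity assumption on $\spec_p L(\psi^{(1)})$ delivers.
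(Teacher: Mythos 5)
Your proposal is correct and follows essentially the same approach as the paper: apply the local construction (Lemmas \ref{lem:counting_lemma}, \ref{lem:dirichlet_spectrum}, \ref{lem:spectral_bands} and Theorem 1.3 of \cite{KT2}) at each point of the compact set $\iso_o(\psi^{(1)})$, pass to a finite subcover $\{U_{\eta^{(j)}}\}$, set $R:=\max_j R_{\eta^{(j)}}$ and $U_{\rm iso}:=\bigcup_j U_{\eta^{(j)}}$, and glue the local $\zeta_n^{(\psi)}$ on overlaps using the normalization \eqref{eq:normalization_condition} against the \emph{fixed} contours $\Gamma_m=\Gamma_m(s_N)$ together with \cite[Proposition 5.2]{KT2}. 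The additional observations you make (why the union and its trace on $iL^2_r$ are connected, and why simplicity of $\spec_p L(\psi^{(1)})$ plus compactness of $\iso_o(\psi^{(1)})$ let a single choice of contours and a single $R$ work uniformly) are accurate and make explicit what the paper leaves implicit in the sentence beginning ``Using the compactness of $\iso_o(\psi^{(1)})$\ldots''.
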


As in Section \ref{sec:actions_in_U_tn}, for any $n\in\Z$ and any $\varphi\in U_{\rm iso}$ define 
the {\em $n$-th action} 
\begin{equation}\label{eq:I_n_in_U_iso}
I_n(\varphi):=\frac{1}{\pi}\int_{\Gamma_n}
\frac{\lambda{\dot\Delta}(\lambda,\varphi)}{\sqrt[c]{\Delta^2(\lambda,\varphi)-4}}\,d\lambda.
\end{equation}
One easily sees from Proposition \ref{prop:preparation_U_iso}, Corollary \ref{coro:actions_poisson_relations}, and
Lemma \ref{lem:real-analyticity} that for any $n\in\Z$, the map $I_n : U_{\rm iso}\to\C$ is analytic and real-valued
when restricted to $U_{\rm iso}\cap i L^2_r$. 
Next let us define the angle variables. To this end introduce for any $n\in\Z$ the analytic subvariety of $U_{\rm iso}$,
\begin{equation}\label{eq:Z_n_in_U_iso}
\mathcal{Z}_n=\big\{\varphi\in U_{\rm iso}\,|\,\gamma_n^2(\varphi)=0\big\}.
\end{equation}
For $|n|>R$, the set $\mathcal{Z}_n\cap i L^2_r$ is a real analytic submanifold in $i L^2_r$ of real codimension two.
Since for any $\varphi\in U_{\rm iso}$ the periodic spectrum of $L(\varphi)$ is simple inside the disk $B_R$ we see
that $\mathcal{Z}_n=\emptyset$ for any $|n|\le R$.
Arguing as in the proof of Proposition \ref{prop:beta^n_tn} one shows that after shrinking $U_{\rm iso}$, if needed,
one can analytically extend the angle variable $\theta_n$, introduced near $\psi^{(1)}$ in 
Proposition \ref{prop:angles_tn}, to $U_{\rm iso}\setminus\mathcal{Z}_n$ so that it is of the form
\begin{equation}\label{eq:theta_n_U_iso}
\theta_n(\varphi)={\tilde\beta}^n(\varphi)+\sum_{|k|>R}\beta^n_k(\varphi),\quad
\beta^n_k(\varphi)=\int_{\lambda_k^-(\varphi)}^{\mu_k^*(\varphi)}
\frac{\zeta_n(\lambda,\varphi)}{\sqrt{\Delta^2(\lambda,\varphi)-4}}\,d\lambda\quad\forall\,|k|>R,
\end{equation}
\[
{\tilde\beta}^n(\varphi)=\sum_{|k|\le R}\int_{\PP^*[\lambda_k^-(\varphi),\mu_\varphi(\lambda_k^-(\varphi))]}
\frac{\zeta_n(\lambda,\varphi)}{\sqrt{\Delta^2(\lambda,\varphi)-4}}\,d\lambda,
\]
defined modulo $2\pi$ on $U_{\rm iso}\setminus\mathcal{Z}_n$, real valued when restricted to 
$\big(U_{\rm iso}\setminus\mathcal{Z}_n\big)\cap i L^2_r$, and analytic on $U_{\rm iso}\setminus\mathcal{Z}_n$
when considered modulo $\pi$ if $|n|>R$ and modulo $2\pi$ if $|n|\le R$. By the arguments yielding 
Theorem \ref{th:commutation_relations_tn} (cf. Corollary \ref{coro:actions_poisson_relations}) we then obtain

\begin{Th}\label{th:commutation_relations_iso} 
Let $\psi\in i L^2_r$ and assume that $\spec_p L(\psi)$ is simple and $U_{\rm iso}$ is the neighborood of
$\iso_o(\psi)$ introduced above.
Then for any $k,n\in\Z$ we have $\{I_n,I_k\}=0$ on $U_{\rm iso}$,
$\{\theta _n, I_k \} = \delta _{nk}$ on $U_{\rm iso}\backslash {\mathcal Z}_n$, and
$\{\theta _n,\theta _k\} = 0$ on $U_{\rm iso}\setminus\big({\mathcal Z}_n \cup {\mathcal Z}_k\big)$.
Furthermore, the action $I_n$ and the angle $\theta_n$ are real valued when restricted to $U_{\rm iso}\cap i L^2_r$ 
and, respectively, $\big(U_{\rm iso}\setminus\mathcal{Z}_n\big)\cap i L^2_r$.
\end{Th}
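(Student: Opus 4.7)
The plan is to deduce Theorem \ref{th:commutation_relations_iso} by analytic continuation from Theorem \ref{th:commutation_relations_tn}, exactly mimicking the argument given for Corollary \ref{coro:actions_poisson_relations}. The crucial preliminary observation is that the endpoint potential $\psi^{(1)}$ of the path $\gamma$ lies both in $U_{\rm tn}$ and in $\iso_o(\psi^{(1)})\subseteq U_{\rm iso}$, so the intersection $W_0:=U_{\rm tn}\cap U_{\rm iso}$ is a nonempty open subset of $U_{\rm iso}$ on which the action variables of \eqref{eq:actions} and \eqref{eq:I_n_in_U_iso} literally coincide (the contour $\Gamma_n$ in Section \ref{sec:actions_and_angles_in_U_iso} is chosen as $\Gamma_n(s_N)$, i.e.\ the last contour in the deformation of Section \ref{sec:actions_in_U_tn}) and on which the angle variable $\theta_n$ of \eqref{eq:theta_n_U_iso} agrees with the one of Proposition \ref{prop:angles_tn} by construction, since the former is obtained by analytic extension of the latter starting from $\psi^{(1)}$.

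First I would handle $\{I_n,I_k\}=0$. Since $I_n:U_{\rm iso}\to\C$ is analytic, so are the $L^2$-gradients $\partial_j I_n:U_{\rm iso}\to L^2$ for $j=1,2$, and hence $\{I_n,I_k\}:U_{\rm iso}\to\C$ is analytic by the definition \eqref{eq:poisson_bracket} of the Poisson bracket. By Theorem \ref{th:commutation_relations_tn} this function vanishes on the nonempty open set $W_0$, and since $U_{\rm iso}$ is connected by Proposition \ref{prop:preparation_U_iso}, the identity principle forces $\{I_n,I_k\}\equiv 0$ on $U_{\rm iso}$.

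The brackets $\{\theta_n,I_k\}$ and $\{\theta_n,\theta_k\}$ are treated in the same spirit on the open sets $U_{\rm iso}\setminus\mathcal{Z}_n$ and $U_{\rm iso}\setminus(\mathcal{Z}_n\cup\mathcal{Z}_k)$. Although $\theta_n$ is only defined modulo $2\pi$ (or modulo $\pi$ when $|n|>R$), it admits local single-valued analytic branches off $\mathcal{Z}_n$, so its $L^2$-gradient, and hence every bracket $\{\theta_n,\cdot\}$, is an unambiguous single-valued analytic function on $U_{\rm iso}\setminus\mathcal{Z}_n$. Since $\mathcal{Z}_n$ is a proper complex analytic subvariety of the connected complex manifold $U_{\rm iso}$, the complement remains connected, and similarly for $U_{\rm iso}\setminus(\mathcal{Z}_n\cup\mathcal{Z}_k)$. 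Theorem \ref{th:commutation_relations_tn} supplies the identities $\{\theta_n,I_k\}=\delta_{nk}$ and $\{\theta_n,\theta_k\}=0$ on the corresponding intersections with $W_0$, which by the identity principle propagate to the entire connected domains.

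Finally, the real-valuedness assertions follow by direct invocation of Lemma \ref{lem:real-analyticity}. The analytic functions $I_n$ and $\theta_n$ live on the connected complex open sets $U_{\rm iso}$ and $U_{\rm iso}\setminus\mathcal{Z}_n$, whose intersections with $iL^2_r$ are connected as well; for $\theta_n$ this uses that $\mathcal{Z}_n\cap iL^2_r$ has real codimension two in $iL^2_r$ and therefore cannot disconnect the ambient real slice. Since by Corollary \ref{coro:actions_poisson_relations} and Proposition \ref{prop:angles_tn} both functions are already real-valued on an open ball centered at $\psi^{(1)}$ inside $W_0\cap iL^2_r$, Lemma \ref{lem:real-analyticity} yields the claim. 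The only point requiring genuine care — and thus the main potential obstacle — is verifying that the extension of $\theta_n$ carried out in Section \ref{sec:actions_and_angles_in_U_iso} via the argument of Proposition \ref{prop:beta^n_tn} really agrees with the angle of Section \ref{sec:angles_in_U_tn} on the overlap $W_0$ up to the tolerated $2\pi$-ambiguity; but this is built into the construction, since the extension is performed along systems of paths anchored precisely at $\psi^{(1)}$ and uses the same normalized differentials $\zeta_n$.
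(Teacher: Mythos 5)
Your proposal is correct and follows essentially the same route as the paper: the paper's own proof is given only in outline, saying it follows ``by the arguments yielding Theorem \ref{th:commutation_relations_tn} (cf.\ Corollary \ref{coro:actions_poisson_relations})'', and that argument is precisely the analytic-continuation-from-an-overlap scheme you spell out. Your identification of the overlap via $\psi^{(1)} \in U_{\rm tn}\cap U_{\rm iso}$, the observation that the contours match because $\Gamma_n$ in Section \ref{sec:actions_and_angles_in_U_iso} is $\Gamma_n(s_N)$, the treatment of the $\bmod\ 2\pi$ ambiguity of $\theta_n$ via its single-valued gradient, and the use of Lemma \ref{lem:real-analyticity} together with the codimension-two property of $\mathcal{Z}_n\cap iL^2_r$ for the real slice all match the paper's reasoning, merely made more explicit.
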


\begin{Rem}\label{rem:U_iso}
Note that the statements of Lemma \ref{lem:beta^n_k-asymptotics}, Lemma \ref{lem:beta_tilde-asymptotics}, and
Proposition \ref{Theorem 4.4} $(i)$ also hold in $U_{\rm iso}$.
\end{Rem}

\medskip

We finish this Section by proving properties of the actions used in the subsequent Section for constructing
Birkhoff coordinates in $U_{\rm iso}$.

\begin{Lem}\label{lem:negative_actions} 
For any $\varphi\in U_{\rm iso}\cap  i  L^2_r$ and for any $|n| > R$ we have that
$I_n(\varphi ) \le 0$. Moreover, $I_n(\varphi ) = 0$ iff $\lambda^+_n(\varphi) =\lambda^-_n(\varphi)$.
\end{Lem}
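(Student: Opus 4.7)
The plan is to collapse the contour $\Gamma_n$ onto the cut $G_n$, exploit the complex-conjugation symmetries available for $\varphi \in iL^2_r$ to express $I_n$ as a manifestly real integral, and finally identify the sign via a continuity argument anchored at a small potential.

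First I would dispose of the degenerate case $\lambda_n^+(\varphi) = \lambda_n^-(\varphi) =: \tau_n$. Differentiating $\chi_p = \Delta^2 - 4$ at this double zero and using $\Delta(\tau_n) = (-1)^n 2 \ne 0$ gives $\dot\Delta(\tau_n) = 0$. Simultaneously, in the product formula \eqref{eq:canonical_root} the factor $w_n(\lambda) = \tau_n - \lambda$ has a simple zero at $\tau_n$ while the other factors $w_k(\lambda)/\pi_k$ are holomorphic and nonvanishing nearby. Hence the integrand $\lambda\dot\Delta/\sqrt[c]{\chi_p}$ extends holomorphically across $\tau_n$ and Cauchy's theorem yields $I_n(\varphi) = 0$.

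Next, assume $\lambda_n^+ \ne \lambda_n^-$ and set $\alpha_n := \im(\lambda_n^+) > 0$, $\tau_n := (\lambda_n^+ + \lambda_n^-)/2 \in \R$. Parametrize $G_n$ by $\lambda(t) := \tau_n + it\alpha_n$, $t \in [-1,1]$. Writing $\sqrt[c]{\chi_p(\lambda)} = (2i/\pi_n)\, w_n(\lambda)\, H(\lambda)\, \pi_n/(2i)$ where $H(\lambda) := \prod_{k \ne n} w_k(\lambda)/\pi_k$ is holomorphic in a neighborhood of $G_n$ for $|n|>R$, and noting that $w_n$ flips sign across the cut, one collapses $\Gamma_n$ to a tight "dog-bone" contour hugging $G_n$ (up the right side, down the left, to match the counterclockwise orientation of $\Gamma_n$), obtaining
\[
I_n(\varphi) = -n \int_{-1}^{1} \frac{\lambda(t)\,\dot\Delta(\lambda(t))}{\sqrt{1-t^2}\,H(\lambda(t))}\,dt.
\]
For $\varphi \in iL^2_r$ the identity $\dot\Delta(\bar\lambda) = \overline{\dot\Delta(\lambda)}$ from Lemma \ref{lem:product1}(iii), combined with $w_k(\bar\lambda) = \overline{w_k(\lambda)}$ (which holds by Lemma \ref{lem:spectrum_symmetries} and uniqueness of the standard root with prescribed asymptotic), gives the integrand $g(t)$ the Schwarz symmetry $g(-t) = \overline{g(t)}$. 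Hence $\int_{-1}^{1} g(t)\,dt = 2\int_0^1 \re g(t)\,dt \in \R$, confirming that $I_n$ is real-valued on $U_{\rm iso} \cap iL^2_r$.

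For the sign, I would anchor an explicit evaluation at the near-zero end. Expanding near a potential close to $\psi^{(0)} \in \W_0$ one has $H(\tau_n,\varphi) \to (-1)^n n\pi$ and $\dot\Delta(\lambda(t),\varphi) \to -2(-1)^n i\sinh(t\alpha_n)$ at leading order (since $\Delta|_{\varphi=0} = 2\cos\lambda$), producing after the odd-part cancellation
\[
I_n \;=\; -\frac{4\alpha_n}{\pi}\int_0^1 \frac{t\sinh(t\alpha_n)}{\sqrt{1-t^2}}\,dt \;+\; o(1) \;<\; 0 \qquad (\alpha_n > 0).
\]
To propagate this sign to all of $U_{\rm iso}\cap iL^2_r$, I would use that $I_n$ is analytic (hence continuous) on $U_{\rm iso}$ and real on $U_{\rm iso}\cap iL^2_r$; that it vanishes on the analytic subvariety $\mathcal{Z}_n = \{\gamma_n^2 = 0\}$, whose intersection with $iL^2_r$ has real codimension two and therefore does not disconnect $U_{\rm iso}\cap iL^2_r$; and that on the overlap $U_{\rm tn}\cap U_{\rm iso}$ the two constructions of $I_n$ in Sections \ref{sec:actions_in_U_tn} and \ref{sec:actions_and_angles_in_U_iso} coincide, so the sign known near $\psi^{(0)}$ transports by continuity along any path in $U_{\rm tn}$ reaching into $U_{\rm iso}$. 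A continuous real-valued function which is nonzero on a connected set and negative at one point is negative everywhere, yielding $I_n < 0$ off $\mathcal{Z}_n$, with equality exactly on $\mathcal{Z}_n$, i.e.\ precisely when $\lambda_n^+ = \lambda_n^-$. The main obstacle is the transport of the sign from the accessible region near $\psi^{(0)}$ to arbitrary $\varphi \in U_{\rm iso}\cap iL^2_r$, which relies on the codimension-two structure of $\mathcal{Z}_n \cap iL^2_r$ and on identifying the $U_{\rm tn}$- and $U_{\rm iso}$-constructions of $I_n$ on their overlap.
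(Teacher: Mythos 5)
The degenerate case ($\lambda_n^+ = \lambda_n^-$) is handled exactly as in the paper: the singularity of the integrand is removable and $I_n = 0$ by Cauchy's theorem. Your Schwarz-symmetry argument for realness is correct but reproves something the paper already obtains from Corollary \ref{coro:actions_poisson_relations} and Lemma \ref{lem:real-analyticity}. The real issue is the sign argument, which has a genuine gap.

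You collapse $\Gamma_n$ onto the vertical cut $G_n = [\lambda_n^-, \lambda_n^+]$. On $G_n$ the discriminant $\Delta$ is \emph{not} real-valued for $\varphi \in iL^2_r$ (only the symmetry $\Delta(\bar\lambda) = \overline{\Delta(\lambda)}$ holds, which makes $\Delta$ real only at the single point $\tau_n$). Consequently your representation of $I_n$ along $G_n$ has a complex-valued integrand; Schwarz symmetry shows the total integral is real, but you get no pointwise sign information. The paper instead deforms $\Gamma_n$ to the \emph{spectral band} $g_n$ of Lemma \ref{lem:spectral_bands} — a different curve connecting the same endpoints — on which $\Delta$ is real with $-2 < \Delta < 2$. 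This is what makes the quantities $f_\pm(\lambda) = (-1)^n\Delta(\lambda) \pm i\sqrt[+]{4-\Delta(\lambda)^2}$ well-defined with $|f_\pm| = 2$ and $0 < \pm\arg f_\pm < \pi$ on the interior of $g_n$, and after an integration by parts gives $I_n = -\tfrac{1}{\pi}\int_{g_n^+}(\arg f_+ - \arg f_-)\,d\lambda$, which is manifestly strictly negative. No anchoring or propagation is needed.

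Your continuity-plus-connectedness argument cannot close this gap. To conclude from an anchor that $I_n < 0$ on all of $(U_{\rm iso}\cap iL^2_r)\setminus\mathcal{Z}_n$, you need the hypothesis that $I_n$ is \emph{nonvanishing} on that connected set — you state this ("a continuous real-valued function which is nonzero on a connected set...") but never prove it. The nonvanishing of $I_n$ off $\mathcal{Z}_n$ is precisely the "only if" half of the lemma, so assuming it is circular. Nor does the quotient $I_n/\gamma_n^2$ help here: the asymptotics $4I_n/\gamma_n^2 = 1 + \ell_n^2$ from Lemma \ref{lem:actions_asymptotics}(i) give nonvanishing for $|n|$ large, but not for the finitely many remaining indices, and the nonvanishing claim in Lemma \ref{lem:actions_asymptotics}(ii) is itself proved in the paper \emph{using} the present lemma. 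To repair the proof you need the pointwise sign on the spectral band; the straight cut $G_n$ does not give it.
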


\begin{proof}[Proof of Lemma \ref{lem:negative_actions}] 
We follow the arguments of the proof of Theorem 13.1 in \cite{GK1}.
For any $\varphi\in U_{\rm iso}\cap i L^2_r$ and $|n| > R$ with $\lambda^+_n=\lambda^-_n$, 
one has $\lambda^+_n=\dot\lambda_n$ and hence by Cauchy's theorem applied to the integral in 
\eqref{eq:I_n_in_U_iso}, $I_n(\varphi ) = 0$.  In the case where $\lambda^+_n\ne\lambda^-_n$, let $g_n$ be 
the spectral band of Lemma \ref{lem:spectral_bands} in Appendix, which by Proposition \ref{prop:preparation_U_iso} 
and item (I1) holds with $\tilde R$ replaced by $R$. Recall that $\Delta (\lambda ^\pm _n) = (-1)^n 2$ and, for
$\lambda \in g_n \backslash \{ \lambda ^\pm _n \}$, $\Delta(\lambda)$ is real and $-2<\Delta(\lambda)<2$.  
One easily sees that
\[
\sqrt[c]{\Delta (\lambda \pm 0)^2 - 4} = 
\pm i (-1)^{n+1}\sqrt[+]{4 -\Delta (\lambda )^2}\quad\forall\lambda \in g_n .
\]
By deforming $\Gamma _n$ to $g_n$ and denoting by $g^+_n$ the arc $g_n$ with the
orientation determined by starting at $\lambda ^-_n$ one gets
\[ 
I_n = \frac{1}{\pi } \int _{g^+_n} \frac{\lambda \dot \Delta (\lambda )}
{ i (-1)^{n+1} \sqrt[+]{4 - \Delta (\lambda )^2}}\,d\lambda - 
\frac{1}{\pi }\int_{g^+_n}\frac{\lambda\dot\Delta (\lambda )}{- i (-1)^{n+1}\sqrt[+]{4 -\Delta (\lambda )^2}}\,d\lambda .
\]
Moreover, for any $\lambda \in g_n$ the quantity 
$(-1)^n\Delta(\lambda)\pm i\sqrt[+]{4 - \Delta(\lambda )^2}$ is in the domain of the principal
branch of the logarithm, $\log z=\log|z|+ i \arg z$ where $-\pi<\arg(z)<\pi$. 
Integrating by parts one then gets
\begin{align*} I_n = &- \frac{1}{\pi } \int _{g^+_n} \log \left( (-1)^n
\Delta (\lambda ) +  i  \sqrt[+]{4 - \Delta (\lambda )^2}\right) d\lambda \\
&+ \frac{1 }{\pi } \int _{g^+_n} \log \left( (-1)^n \Delta(\lambda ) -  
i\sqrt[+]{4 - \Delta (\lambda )^2}\right)\,d\lambda .
\end{align*}
Let $f_\pm(\lambda):= (-1)^n\Delta(\lambda)\pm i\sqrt[+]{4-\Delta(\lambda )^2}$ and note 
that for any $\lambda \in g_n$, $|f_+(\lambda )| = |f_-(\lambda )|=2$ whereas 
$\arg f_\pm (\lambda ) = 0$ for $\lambda\in\{\lambda^\pm _n\}$ and
$0 <\pm\arg f_\pm (\lambda)< \pi$ for any $\lambda\in g_n\setminus\{\lambda^\pm_n \}$. Hence
$I_n=-\frac{1}{\pi}\int_{g^+_n}\big(\arg f_+(\lambda )-\arg f_-(\lambda )\big)\,d\lambda < 0$.
\end{proof}   

Lemma \ref{lem:negative_actions} is applied to study the quotient $I_n / \gamma ^2_n$.
We have the following
   
\begin{Lem}\label{lem:actions_asymptotics} 
\begin{itemize}
\item[(i)] For any $|n| > R$, the quotient 
$I_n / \gamma ^2_n : U_{\rm iso}\setminus{\mathcal Z}_n\to\C$ extends analytically to $U_{\rm iso}$ so that
\begin{equation}\label{3.8} 
\frac{4 I_n}{\gamma ^2_n} = 1 + \ell ^2_n, \quad |n| > R 
\end{equation}
locally uniformly on $U_{\rm iso}$. Furthermore, $I_n / \gamma ^2_n$ is real on $U_{\rm iso}\cap i L^2_r$.
\item[(ii)] For any $|n| > R$ and $\varphi \in U_{\rm iso}\cap  i  L^2_r$, 
\[
4 I_n /\gamma ^2_n>0. 
\]
Furthermore, after shrinking $U_{\rm iso}$ if necessary, for any $|n| > R$, the real part of $4 I_n / \gamma ^2_n$ 
is bounded away from $0$ uniformly in $|n| > R$ and locally uniformly on $U_{\rm iso}$.
Moreover, the square root $\xi _n:= \sqrt[+]{4 I_n / \gamma ^2_n}$ satisfies the asymptotics 
\[
\xi _n = 1 + \ell ^2_n
\] 
locally uniformly on $U_{\rm iso}$.
\end{itemize}
\end{Lem}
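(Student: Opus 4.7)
The plan is to parallel the proof of \cite[Theorem 13.1]{GK1} for the defocusing case, adapting it to the focusing setting. I will use throughout the factorization, for $|n|>R$,
\[
\frac{\dot\Delta(\lambda)}{\sqrt[c]{\chi_p(\lambda)}}=\frac{\dot\lambda_n-\lambda}{2i\,w_n(\lambda)}\cdot\rho_n(\lambda),
\]
obtained by combining Lemma \ref{lem:product1}(ii) with \eqref{eq:canonical_root}, where $\rho_n$ collects all factors with index $k\ne n$. Because the disk $D_n$ is separated from the other cuts $G_k$ by a distance of at least $\pi/6$, the function $\rho_n$ is analytic in a neighborhood of $\tau_n$ of radius bounded away from zero, uniformly in $|n|>R$ and locally uniformly in $\varphi\in U_{\rm iso}$.

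For part (i), I first shrink $\Gamma_n$ onto a tight contour around $G_n$ and apply the substitution $\lambda=\tau_n+\gamma_n z/2$. Using $w_n(\lambda)=-(\gamma_n/2)\sqrt{z^2-1}$ (principal branch fixed by the standard root convention), the integral \eqref{eq:I_n_in_U_iso} becomes
\[
I_n=-\frac{1}{2i\pi}\oint_{|z|=r}\frac{(\tau_n+\gamma_n z/2)(a_n-\gamma_n z/2)\,\tilde\rho_n(z)}{\sqrt{z^2-1}}\,dz,
\]
where $a_n=\dot\lambda_n-\tau_n=\gamma_n^2\ell_n^2$ by Lemma \ref{lem:counting_lemma}(ii), $\tilde\rho_n(z)=\rho_n(\tau_n+\gamma_n z/2)$, and $r$ is any radius exceeding $1$ but smaller than the radius of analyticity of $\tilde\rho_n$, the latter being of order $1/|\gamma_n|$. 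This explicit representation exhibits $\gamma_n^2$ as a visible factor after expanding the bracket $(\tau_n+\gamma_n z/2)(a_n-\gamma_n z/2)$ and using $a_n=\gamma_n^2\ell_n^2$ together with the vanishing integrals $\oint z^{2m+1}/\sqrt{z^2-1}\,dz=0$; the quotient $I_n/\gamma_n^2$ therefore extends as an analytic function through $\mathcal{Z}_n$ to all of $U_{\rm iso}$, and is real on $U_{\rm iso}\cap iL^2_r$ by Theorem \ref{th:commutation_relations_iso} combined with Lemma \ref{lem:real-analyticity}.

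For the asymptotic $4I_n/\gamma_n^2=1+\ell_n^2$, I expand $\tilde\rho_n$ in its Taylor series at $z=0$, use $\oint z^{2m}/\sqrt{z^2-1}\,dz=2\pi i\binom{2m}{m}/4^m$, and isolate the contribution $-\gamma_n^2 z^2/4$ whose $A_2$-integral gives $\pi i\,\rho_n(\tau_n)/2+O(\gamma_n^2)$. The leading asymptotic then reduces to computing $\rho_n(\tau_n)$, which, from the product representations of $\dot\Delta$ and $\sqrt[c]{\chi_p}$ together with the Counting Lemma asymptotics $\lambda_k^\pm=k\pi+\ell_k^2$, $\dot\lambda_k=\tau_k+\gamma_k^2\ell_k^2$, and property (D2), equals $2+\ell_n^2$ locally uniformly. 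The apparently dangerous $\tau_n$-weighted contributions coming from $\tau_n a_n/\gamma_n^2$ and from the odd-order Taylor coefficients of $\tilde\rho_n$ must be combined and shown to produce an $\ell_n^2$ quantity; this cancellation is the technical heart of the argument and is obtained by the same delicate estimates as in the proof of \cite[Lemma 13.2]{GK1}, invoking the Cauchy--Schwarz inequality and the uniform estimates on $(\sigma_k^n-\tau_k)$ provided by Proposition \ref{prop:preparation_U_iso}(I3). This will be the main obstacle and requires the most careful bookkeeping.

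For part (ii), I combine the previous part with Lemma \ref{lem:spectrum_symmetries} and Lemma \ref{lem:negative_actions}. For $\varphi\in U_{\rm iso}\cap iL^2_r$, the symmetry $\lambda_n^-=\overline{\lambda_n^+}$ gives $\gamma_n^2=-4(\im\lambda_n^+)^2\le 0$, while Lemma \ref{lem:negative_actions} gives $I_n\le 0$ with the same zero set $\mathcal{Z}_n\cap iL^2_r$; hence $4I_n/\gamma_n^2\ge 0$ on $U_{\rm iso}\cap iL^2_r\setminus\mathcal{Z}_n$ by strict positivity, and by analytic extension the quotient is $\ge 0$ throughout $U_{\rm iso}\cap iL^2_r$. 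Shrinking $U_{\rm iso}$ so that the asymptotic from part (i) yields $|4I_n/\gamma_n^2-1|<1/2$ uniformly in $|n|>R$ and $\varphi\in U_{\rm iso}$ produces the claimed uniform lower bound $\re(4I_n/\gamma_n^2)>1/2$. The values of $4I_n/\gamma_n^2$ therefore lie in the right half-plane $\{\re(w)>1/2\}$, on which the principal branch $\sqrt[+]{\,\cdot\,}$ is analytic; applying $\sqrt[+]{1+x}=1+x/2+O(x^2)$ with $x=4I_n/\gamma_n^2-1=\ell_n^2$ gives $\xi_n=1+\ell_n^2$ locally uniformly on $U_{\rm iso}$, completing the argument.
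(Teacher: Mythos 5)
Your plan parallels the paper's strategy (substitution $\lambda=\tau_n+\gamma_n z/2$, limit computation on $\mathcal Z_n$, then \cite[Theorem A.6]{GK1}) but diverges at a crucial step in part (i), and that divergence creates a genuine gap. The paper first replaces $\lambda$ by $\lambda-\dot\lambda_n$ in the defining integral, writing
\[
I_n=\frac{i}{\pi}\int_{\Gamma_n}\frac{(\lambda-\dot\lambda_n)^2}{w_n(\lambda)}\,\chi_n(\lambda)\,d\lambda,
\qquad
\chi_n(\lambda)=\prod_{k\ne n}\frac{\dot\lambda_k-\lambda}{w_k(\lambda)},
\]
which is legitimate because $\int_{\Gamma_n}\frac{\dot\Delta(\lambda)}{\sqrt[c]{\Delta(\lambda)^2-4}}\,d\lambda=0$. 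This structural identity makes the $\tau_n$-weighted contributions vanish \emph{exactly}. You instead expand $(\tau_n+\gamma_n z/2)(a_n-\gamma_n z/2)$ and appeal only to the parity identities $\oint z^{2m+1}/\sqrt{z^2-1}\,dz=0$. Parity kills the odd powers of $z$, but the constant term $\tau_n a_n$ survives and, after dividing by $\gamma_n^2$, contributes $\tau_n a_n/\gamma_n^2\sim\tau_n\ell_n^2=O(n)\cdot\ell_n^2$, and likewise the cross term $-\tau_n\gamma_n z/2$ paired with the linear Taylor coefficient of $\tilde\rho_n$ contributes $O(\tau_n\gamma_n^2\cdot\ell_n^2/\gamma_n^2)=O(n\ell_n^2)$. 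These are \emph{not} individually $\ell_n^2$, and they do not cancel by ``delicate Cauchy--Schwarz estimates''; they cancel because the $a$-period of $\dot\Delta\,d\lambda/\sqrt[c]{\chi_p}$ is zero. You would need to invoke this identity explicitly (or equivalently, to perform the shift $\lambda\mapsto\lambda-\dot\lambda_n$ at the outset) for the argument to close. Your citation of the uniform estimates on $\sigma_k^n$ from (D2) is also misplaced: the $\sigma_k^n$ are the zeros of the normalized differentials $\zeta_n$ used for the \emph{angles}; the action integrals involve only $\dot\Delta$ and $\Delta^2-4$, so the relevant asymptotics are those of $\dot\lambda_k$ and $\lambda_k^\pm$ from Lemma~\ref{lem:counting_lemma}, not (D2).

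In part (ii) there is a second, smaller gap. You obtain $4I_n/\gamma_n^2\ge 0$ on $U_{\rm iso}\cap iL^2_r$ by continuity, but the Lemma asserts strict positivity, and the subsequent shrinking step needs it. The asymptotic $4I_n/\gamma_n^2=1+\ell_n^2$ gives uniformity only for $|n|$ large; for the finitely many $n$ with $|n|$ close to $R$ you must know separately that $4I_n/\gamma_n^2$ does not vanish on $\mathcal Z_n\cap iL^2_r$. The paper supplies this by computing the limit value explicitly: $4I_n/\gamma_n^2\to\chi_n(\tau_n)\ne 0$ as $\gamma_n\to 0$. Your proposal never records the nonvanishing of that limit, so the ``$>0$'' (and hence the uniform lower bound on $\re(4I_n/\gamma_n^2)$ after shrinking) is not established.
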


\begin{proof}[Proof of Lemma \ref{lem:actions_asymptotics}] 
We follow the arguments of the proof of Theorem 13.3 in \cite{GK1}.
(i) We show that for any $|n|>R$ the quantity $I_n / \gamma ^2_n$ continuously
extends to all of $U_{\rm iso}$ and its restriction to ${\mathcal Z}_n$ is weakly analytic. 
By \cite[Theorem A.6]{GK1}, it then follows that $I_n / \gamma ^2_n$ is 
analytic on $U_{\rm iso}$. Let $\varphi \in U_{\rm iso}\setminus{\mathcal Z}_n$ for some given $|n| > R$. 
By the definition \eqref{eq:canonical_root} of the canonical root and the product representation of $\dot\Delta(\lambda )$ 
(cf. Lemma \ref{lem:product1}) one has for $\lambda $ near $\Gamma_n$,
\[
\frac{\dot\Delta (\lambda )}{\sqrt[c]{\Delta (\lambda )^2 - 4}} 
=\frac{\dot \lambda _n - \lambda }{ i  w_n(\lambda )}\,\chi_n(\lambda)
\quad\text{\rm  and}\quad
\chi_n(\lambda)=\prod_{k \ne n}\frac{\dot\lambda_k - \lambda }{w_k(\lambda )}
\]
where, by \eqref{eq:w_k}, $w_k(\lambda)=\sqrt[\rm st]{(\lambda ^+_k - \lambda )(\lambda ^-_k - \lambda )}$. 
By the definition \eqref{eq:I_n_in_U_iso} of $I_n$,
\[
I_n = \frac{ i }{\pi } \int _{\Gamma _n}
\frac{(\lambda-\dot\lambda _n)^2}{\sqrt[\rm st]{(\lambda^+_n-\lambda )(\lambda^-_n-\lambda)}}\,
\chi_n(\lambda)\,d\lambda.
\]
The assumption $\gamma _n \not= 0$, allows to make the substitution 
$\lambda= \tau _n + z \gamma _n / 2$, $\tau _n = (\lambda ^+_n + \lambda ^-_n) / 2$,
which in view of the definition \eqref{eq:standard_root} of the standard root leads to
\[ 
\sqrt[\rm st]{(\lambda ^+_n - \lambda )(\lambda ^-_n - \lambda )} \big
\arrowvert _{z - i0} =  i  \frac{\gamma _n}{2} \sqrt[+]{1 - z^2},\quad
- 1 \leq z \leq 1 .
\]
Hence, with $z_n = 2(\dot \lambda _n - \tau _n) / \gamma _n$,
\[ 
\frac{4 I_n}{\gamma^2_n} = 
\frac{2}{\pi}\int^1_{-1}\frac{(z - z_n)^2}{\sqrt[+]{1 - z ^2}}\,\chi_n(\tau_n+z\gamma_n/2)\,dz .
\]
By Lemma \ref{lem:counting_lemma} (ii), $z_n \rightarrow 0$ as $\gamma_n\to 0$. 
Thus
\[ 
\frac{4 I_n}{\gamma^2_n}\to\chi_n(\tau _n)\,
\frac{2}{\pi}\int^1_{-1}\frac{z^2}{\sqrt[+]{1-z^2}}\,dz=\chi_n(\tau_n) .
\]
This shows that $I_n / \gamma ^2_n$ is continuous on all of $U_{\rm iso}$.
Note that $\chi _n(\tau _n) \ne 0$. Moreover, by the argument principle, $\tau_n$ is analytic on 
$U_{\rm iso}$ and arguing as in the proof of \cite[Lemma 12.7]{GK1} one sees that $\chi _n$ is analytic on 
$D_n \times U_{\rm iso}$. Hence the composition $\chi _n(\tau _n)$ is analytic on $U_{\rm iso}$ and 
thus in particular weakly analytic on ${\mathcal Z}_n$. By \cite[Theorem A.6]{GK1},
$I_n / \gamma ^2_n$ extends analytically to all of $U_{\rm iso}$. Arguing as
in in the proof of \cite[Lemma 12.10]{GK1} one sees that $\chi _n(\lambda ) = 1 + \ell ^2_n$ for $\lambda$
near the interval
\[
[\lambda ^-_n, \lambda ^+_n] = 
\big\{ (1 - t) \lambda ^-_n + t \lambda ^+_n\,\big|\,0 \leq t \leq 1\big\}
\]
locally uniformly on $U_{\rm iso}$. By the asymptotics $z_n = \gamma _n \ell ^2_n$ (cf. Lemma \ref{lem:counting_lemma} $(ii)$)
it then follows that $4 I_n/\gamma^2_n = 1+\ell ^2_n$ locally uniformly on $U_{\rm iso}$.

(ii)  By Lemma \ref{lem:spectrum_symmetries} one has for any $\varphi\in U_{\rm iso}\cap i L^2_r$, $\gamma^2_n\le 0$. 
By Lemma \ref{lem:negative_actions} it then follows that for any $|n| > R$ with
$\gamma _n \ne 0$ we have $I_n/\gamma^2_n> 0$ whereas if $\gamma_n = 0$, one has by the proof 
of item (i) that $4 I_n/\gamma^2_n = \chi_n(\tau_n)\ne 0$. By the continuity of $I_n/\gamma^2_n$ on 
$U_{\rm iso}\cap i  L^2_r$ one then concludes that $I_n / \gamma ^2_n > 0$ on $U_{\rm iso}\cap i L^2_r$. 
Furthermore, by the asymptotics established in item (i), $4 I_n/\gamma ^2_n\to 1$ locally uniformly
on $U_{\rm iso}$. By shrinking $U_{\rm iso}$ if necessary we can assure that $\re(4 I_n / \gamma ^2_n)$ is
bounded away from $0$ locally uniformly on $U_{\rm iso}$ and uniformly in $|n| > R$. Then
$\xi_n = \sqrt[+]{4I_n /\gamma^2_n}$ is well defined and real analytic on $U_{\rm iso}$. It is positive on 
$U_{\rm iso}\cap  i  L^2_r$ and satisfies the asymptotics $1+ \ell ^2_n$ locally uniformly on $U_{\rm iso}$.
\end{proof}

\section{The pre-Birkhoff map and its Jacobian}\label{sec:birkhoff_map}
In this Section we construct the pre-Birkhoff map $\Phi : U_{\rm iso}\to\ell^2_c$ in an open neighborhood 
$U_{\rm iso}$ of the isospectral set $\iso_o(\psi^{(1)})$ of an arbitrary given potential $\psi^{(1)}\in i L^2_r$ with 
simple periodic spectrum, using the action and angle variables introduced in Section \ref{sec:actions_and_angles_in_U_iso}.
We then prove that the restriction $\Phi : U_{\rm iso}\cap i L^2_r\to i\ell^2_r$ is a local diffeomorphism.\footnote{For simplicity of 
notation we will use the same symbol for the map $\Phi$ and its restriction to $U_{\rm iso}\cap i L^2_r$.}
Without further reference we will use the notations and results of the previous sections.
For any $|n| > R$ and $\varphi\in U_{\rm iso}\setminus{\mathcal Z}_n$ we define
\begin{equation}\label{eq:x,y}
x_n:= \frac{\xi _n \gamma _n}{\sqrt{2}} \cos\theta _n , \quad
y_n:= \frac{\xi _n \gamma _n}{\sqrt{2}} \sin\theta _n
\end{equation}
where $\theta_n : U_{\rm iso}\setminus\mathcal{Z}_n\to\C$ is the $n$-th angle \eqref{eq:theta_n_U_iso}, 
$\gamma_n=\lambda^+_n-\lambda^-_n$, and $\xi_n : U_{\rm iso}\to\C$ is the real-analytic non vanishing function introduced in 
Section \ref{sec:actions_and_angles_in_U_iso}. Recall from Lemma \ref{lem:actions_asymptotics} that
\[
4 I_n=(\xi_n\gamma_n)^2
\]
where the $n$-th action $I_n$ is defined by \eqref{eq:I_n_in_U_iso} and $\xi_n$ satisfies
\begin{equation}\label{eq:xi_asymptotics}
\xi _n = 1 + \ell ^2_n
\end{equation}
locally uniformly in $U_{\rm iso}$.
Note that on $U_{\rm iso}\cap i L^2_r$, $\xi_n$ is real valued and $\gamma_n\in i\R$ for any $|n|>R$. Since
$\theta_n$, defined modulo $2\pi$, is real valued on $\big(U_{\rm iso}\setminus\mathcal{Z}_n\big)\cap i L^2_r$
it then follows that $x_n, y_n\in i\R$ on $\big(U_{\rm iso}\setminus\mathcal{Z}_n\big)\cap i L^2_r$, $|n|>R$.

Using the arguments from \cite[Section 16]{GK1} we now show that $(x_n,y_n)_{|n|>R}$ can be analytically 
extended to $U_{\rm iso}$ in $L^2_c$. 

First note that due to the lexicographic ordering $\lambda ^-_n \preccurlyeq\lambda ^+_n$, the $n$-th gap length $\gamma _n$ 
is not necessarily continuous on $U_{\rm iso}$. On the other hand, by Proposition \ref{Theorem 4.4} the quantity $\beta ^n_n$ is 
defined modulo $2\pi$ on $U_{\rm iso}\setminus{\mathcal Z}_n$ and real analytic when taken modulo $\pi$
whereas in view of the definition \eqref{eq:theta_n_U_iso} of the angle $\theta_n$ and Proposition \ref{prop:beta^n_tn},
the difference $\theta_n-\beta ^n_n$ is real analytic on $U_{\rm iso}$.
We will first focus our attention on the complex functions
\[ 
z^+_n:=\gamma _n e^{i \beta ^n_n}, \quad 
z^-_n:=\gamma _n e^{- i \beta ^n_n} , \quad |n| > R .
\]

\begin{Lem}\label{Lemma 5.1} 
The functions $z^\pm_n=\gamma_n e^{\pm  i \beta ^n_n}$ are analytic on 
$U_{\rm iso}\setminus{\mathcal Z}_n$ for any $|n| > R$.
\end{Lem}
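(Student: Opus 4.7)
The plan is to prove analyticity locally by choosing analytic branches of the periodic eigenvalues and showing that, even though both $\gamma_n$ and $e^{\pm i\beta^n_n}$ individually depend on the lexicographic ordering of $\lambda_n^\pm$, the two sign ambiguities exactly cancel in the product.

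Fix $|n|>R$ and $\varphi_0 \in U_{\rm iso}\setminus \mathcal{Z}_n$. Since $\lambda_n^+(\varphi_0)\ne \lambda_n^-(\varphi_0)$, by \cite[Proposition 7.5]{GK1} there exists an open neighborhood $V\subseteq U_{\rm iso}\setminus\mathcal{Z}_n$ of $\varphi_0$ and two analytic functions $\varrho_n^\pm:V\to\C$ such that $\{\varrho_n^+(\varphi),\varrho_n^-(\varphi)\}=\{\lambda_n^+(\varphi),\lambda_n^-(\varphi)\}$ for every $\varphi\in V$. I would set
\[
\tilde\gamma_n:=\varrho_n^+-\varrho_n^-,\qquad
\tilde\beta^n_n(\varphi):=\int_{\varrho_n^-(\varphi)}^{\mu_n^*(\varphi)}\frac{\zeta_n(\lambda,\varphi)}{\sqrt[\ast]{\chi_p(\lambda,\varphi)}}\,d\lambda,
\]
where the path of integration is chosen inside the handle $\mathcal{D}_{\varphi,n}$ exactly as in the proof of Lemma \ref{lem:beta^n_k-analytic}(ii), and the sign of the $\ast$-root is determined by $\sqrt[\ast]{\chi_p(\mu_n)}=\grave m_2(\mu_n)+\grave m_3(\mu_n)$. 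Then $\tilde\gamma_n$ is analytic on $V$, and the same reasoning as in the proof of Lemma \ref{lem:beta^n_k-analytic}(ii) shows that $\tilde\beta^n_n$ is analytic on $V$ modulo $2\pi$ (there is no $\pi$-ambiguity once $\varrho_n^-$ is fixed, since the only relevant period is the $a_n$-period, which by the normalization \eqref{eq:a-normalization} equals $2\pi$). Consequently $e^{\pm i\tilde\beta^n_n}$ is an honest analytic function on $V$, and so is the product $\tilde\gamma_n\,e^{\pm i\tilde\beta^n_n}$.

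Next I would compare this product with $z_n^\pm=\gamma_n e^{\pm i\beta^n_n}$. Two cases arise on $V$:
\emph{Case 1:} $\lambda_n^-=\varrho_n^-$ and $\lambda_n^+=\varrho_n^+$. Then $\gamma_n=\tilde\gamma_n$ and, by construction, $\beta^n_n\equiv \tilde\beta^n_n\pmod{2\pi}$, so $z_n^\pm=\tilde\gamma_n e^{\pm i\tilde\beta^n_n}$.
\emph{Case 2:} $\lambda_n^-=\varrho_n^+$ and $\lambda_n^+=\varrho_n^-$. Then $\gamma_n=-\tilde\gamma_n$, while the difference between the two integrals
\[
\int_{\varrho_n^+}^{\mu_n^*}\!\!-\int_{\varrho_n^-}^{\mu_n^*}=\int_{\varrho_n^+}^{\varrho_n^-}\frac{\zeta_n}{\sqrt[\ast]{\chi_p}}\,d\lambda
\]
equals $\pm\pi$ modulo $2\pi$, as this is exactly half of an $a_n$-period (and gives the $\pi$-shift already identified in the proof of Lemma \ref{lem:beta^n_k-analytic}(ii)). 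Hence $\beta^n_n\equiv \tilde\beta^n_n+\pi\pmod{2\pi}$ and therefore $e^{\pm i\beta^n_n}=-e^{\pm i\tilde\beta^n_n}$. The two minus signs combine to give again $z_n^\pm=(-\tilde\gamma_n)(-e^{\pm i\tilde\beta^n_n})=\tilde\gamma_n e^{\pm i\tilde\beta^n_n}$.

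In both cases $z_n^\pm=\tilde\gamma_n e^{\pm i\tilde\beta^n_n}$ on $V$, which is analytic. Since $\varphi_0\in U_{\rm iso}\setminus\mathcal{Z}_n$ was arbitrary, this proves analyticity on the whole of $U_{\rm iso}\setminus\mathcal{Z}_n$. The only subtle point is the bookkeeping in Case 2: one has to check that the $\pm \pi$ shift produced by moving the base point of integration across the cut agrees in sign (or rather becomes irrelevant, since $e^{\pm i\pi}=-1$ either way) with the sign flip in $\gamma_n$. This is precisely the content of the $\pi$-ambiguity recorded at the end of the proof of Lemma \ref{lem:beta^n_k-analytic}(ii), and I expect it to be the main, though essentially notational, obstacle.
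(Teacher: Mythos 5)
Your argument is correct and follows the paper's own proof essentially verbatim: same use of \cite[Proposition 7.5]{GK1} to produce analytic branches $\varrho_n^\pm$, same definitions of $\tilde\gamma_n$ and $\tilde\beta^n_n$, same two-case comparison, and same observation that the $\pi$-shift from moving the base point across the cut exactly cancels the sign flip of $\gamma_n$ so that $\gamma_n e^{\pm i\beta^n_n}=\tilde\gamma_n e^{\pm i\tilde\beta^n_n}$ on both branches. Nothing to add.
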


\begin{proof}[Proof of Lemma \ref{Lemma 5.1}]
We follow the arguments of the proof of Lemma 16.1 in \cite{GK1}.
Fix $|n| > R$ arbitrarily. Arguing as in \cite[Proposition 7.5]{GK1} one easily sees
that locally around any potential in $U_{\rm iso}\setminus{\mathcal Z}_n$,
there exist analytic functions $\varrho ^+_n$ and $\varrho ^-_n$
such that the set equality 
$\{ \varrho ^-_n, \varrho ^+_n\} = \{ \lambda ^-_n, \lambda ^+_n\}$ holds.
Let
\[ 
\tilde \gamma _n := \varrho ^+_n - \varrho ^-_n,\quad
{\tilde\beta}^n_n := \int ^{\mu _n}_{\varrho ^-_n} 
\frac{\zeta _n (\lambda )}{\sqrt[\ast ]{\Delta (\lambda )^2 - 4}}\,d\lambda\quad(\mathop{\rm mod} 2\pi).
\]
If $\varrho^-_n = \lambda^-_n$ (and then $\varrho ^+_n = \lambda ^+_n$)
one has $\gamma _n = \tilde \gamma _n$ and $\beta ^n_n = \tilde \beta ^n_n$
whereas if $\varrho ^-_n = \lambda ^+_n$ (and hence $\varrho ^+_n = \lambda^-_n$), 
in view of the normalization condition 
\[
\int ^{\lambda ^+_n}_{\lambda^-_n} 
\frac{\zeta _n (\lambda )}{\sqrt[\ast ]{\Delta (\lambda )^2 - 4}}\,d\lambda 
\in\pi+2\pi\Z,
\] 
one has
\[ 
\gamma_n=-\tilde \gamma _n,\quad
\beta ^n_n=\int ^{\mu _n}_{\lambda^-_n}
\frac{\zeta _n(\lambda )}{\sqrt[\ast ]{\Delta(\lambda )^2 - 4}}\,d\lambda
={\tilde\beta}^n_n+\pi\quad(\mathop{\rm mod} 2\pi). 
\]
Thus in both cases $\gamma _n e^{\pm i \beta ^n_n}=\tilde \gamma _n e ^{\pm  i  \tilde \beta ^n_n }$.
As the right hand side of the latter identity is analytic the
Lemma follows.
\end{proof}

\medskip

Next we study the limiting behavior of $z^\pm _n$, $|n| > R$, as $\varphi$
approaches a potential $\psi\in U_{\rm iso}$ with the $n$-th gap collapsed. This limit
is different form zero when $\psi$ is in the set 
\[
{\mathcal F}_n:=\big\{\psi\in U_{\rm iso}\,\big|\,\mu _n \notin G_n\big\}
\] 
where $G_n=[\lambda ^-_n,\lambda ^+_n]$. 
Note that the set ${\mathcal F}_n$ is open. On ${\mathcal F}_n$,
the sign function
\begin{equation}\label{5.1} 
\varepsilon_n:=\frac{\sqrt[\ast ]{\Delta(\mu _n)^2-4}}{\sqrt[c]{\Delta (\mu _n)^2 - 4}}
\end{equation}
is well defined and locally constant.

\medskip

\begin{Lem}\label{Lemma 5.2} 
Let $|n| > R$ and $\psi \in {\mathcal Z}_n$. If $\varphi\in {\mathcal F}_n \backslash {\mathcal Z}_n$ 
tends to $\psi $, then
\[ 
\gamma _n e^{\pm  i  \beta ^n_n} \to 
\begin{cases} 
2(\tau_n-\mu _n)(1\pm\varepsilon_n) e^{\chi _n}&\text{\rm if}\quad\psi\in
{\mathcal F}_n\cap{\mathcal Z}_n \\ 0 &\text{\rm if}\quad\psi\in
{\mathcal Z}_n\setminus{\mathcal F}_n 
\end{cases}
\]
where
\[ 
\chi_n=\int ^{\mu_n}_{\tau_n} 
\frac{Z_n(\tau_n)-Z_n(\lambda )}{\tau_n-\lambda }\,d\lambda,\quad 
Z_n(\lambda)=-\prod_{m\ne n}\frac{\sigma ^n_m-\lambda}
{w_m(\lambda)},
\]
and $w_m(\lambda)=\sqrt[\rm st]{(\lambda_m^+-\lambda)(\lambda_m^--\lambda)}$.
\end{Lem}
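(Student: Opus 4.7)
My plan is to reduce everything to an explicit factorization of the integrand. Dividing the product representation (D3) of $\zeta_n$ by the canonical root \eqref{eq:canonical_root} I obtain
\[
\frac{\zeta_n(\lambda,\varphi)}{\sqrt[c]{\Delta^2(\lambda,\varphi)-4}}=\frac{-iZ_n(\lambda,\varphi)}{w_n(\lambda,\varphi)},
\]
where $Z_n(\lambda)=-\prod_{m\ne n}(\sigma_m^n-\lambda)/w_m(\lambda)$ is analytic and nonvanishing near $\tau_n$ while $w_n$ carries all branch-point behavior at $\lambda_n^\pm$. Tracking the sheet on which $\mu_n^*$ lies by means of the sign $\varepsilon_n$ of \eqref{5.1}, I write
\[
\beta^n_n=-\varepsilon_n\,i\,K_n,\qquad K_n(\varphi):=\int_{\lambda_n^-}^{\mu_n}\frac{Z_n(\lambda,\varphi)}{w_n(\lambda,\varphi)}\,d\lambda,
\]
so that the claim becomes an asymptotic expansion of $\gamma_n\,e^{\mp\varepsilon_n K_n}$ as $\gamma_n\to 0$.

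The next step is the decomposition $K_n=Z_n(\tau_n)\,J_n+R_n$, with
\[
J_n:=\int_{\lambda_n^-}^{\mu_n}\frac{d\lambda}{w_n(\lambda)},\qquad R_n:=\int_{\lambda_n^-}^{\mu_n}\frac{Z_n(\lambda)-Z_n(\tau_n)}{w_n(\lambda)}\,d\lambda.
\]
Since $\tfrac{d}{d\lambda}\log\bigl(\lambda-\tau_n+w_n(\lambda)\bigr)=1/w_n(\lambda)$, the integral $J_n$ evaluates explicitly to $\log\bigl((\mu_n-\tau_n+w_n(\mu_n))/(-\gamma_n/2)\bigr)$ with an appropriate branch. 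Using $w_n(\mu_n)^2=(\mu_n-\tau_n)^2-(\gamma_n/2)^2$ and expanding in $\gamma_n$, one of the two sheet choices of $w_n(\mu_n)$ gives $\mu_n-\tau_n+w_n(\mu_n)=\gamma_n^2/(8(\tau_n-\mu_n))+O(\gamma_n^4)$ and the other gives $2(\mu_n-\tau_n)+O(\gamma_n^2)$, with $\varepsilon_n$ selecting between them. The integrand of $R_n$ is regular at $\tau_n$ in the limit since the numerator has a simple zero there that matches the simple pole of $1/w_n$ in the degenerate limit, so dominated convergence gives $R_n\to\chi_n$ as $\varphi\to\psi\in\mathcal{F}_n\cap\mathcal{Z}_n$. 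Finally, deforming the $a_n$-normalization contour $\Gamma_n$ to a small loop around $\tau_n$ and computing the residue yields $Z_n(\tau_n)\to -1$ in the limit.

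Putting the pieces together, on the sheet selected by $\varepsilon_n$ the $\log\gamma_n$ divergence of $J_n$ is consumed in the exponential and exactly cancels the prefactor $\gamma_n$, producing the finite limit
\[
\gamma_n\,e^{\pm i\beta^n_n}\longrightarrow 4(\tau_n-\mu_n)\,e^{\chi_n},
\]
while the opposite sign choice multiplies $\gamma_n\to 0$ by a bounded quantity and yields $0$. The compact formula $2(\tau_n-\mu_n)(1\pm\varepsilon_n)\,e^{\chi_n}$ unifies these two subcases. For $\psi\in\mathcal{Z}_n\setminus\mathcal{F}_n$, where $\gamma_n\to 0$ and $\mu_n\to\tau_n$ simultaneously, the argument used in the proof of Lemma \ref{lem:beta^n_k-asymptotics} specialized to $k=n$ gives $|\beta^n_n|=O\bigl((|\gamma_n|+|\mu_n-\tau_n|)^{1/2}|\mu_n-\tau_n|^{1/2}\bigr)$, so $e^{\pm i\beta^n_n}$ stays bounded and $\gamma_n\,e^{\pm i\beta^n_n}\to 0$.

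The main obstacle will be the delicate $0\cdot\infty$ indeterminate form on the non-degenerate sheet. Because $Z_n(\tau_n)$ only tends to $-1$ in the limit, the splitting $Z_n(\tau_n)\,J_n=-J_n+(Z_n(\tau_n)+1)\,J_n$ produces an error term of the shape (vanishing)$\cdot\log\gamma_n$ that I must control. I would handle this by exploiting analyticity of $Z_n(\tau_n)$ in $\varphi$ together with analyticity of $\gamma_n^2$: $Z_n(\tau_n)+1$ vanishes to order $\gamma_n^2$ on $\mathcal{Z}_n$, which comfortably beats the logarithmic growth. A secondary subtlety is the consistent choice of branches for $w_n(\mu_n)$ and the logarithm along the integration path, which is precisely the mechanism by which $\varepsilon_n$ threads through the computation and selects which of $\pm i\beta^n_n$ produces the nontrivial limit.
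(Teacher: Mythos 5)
Your overall structure mirrors the paper's proof closely. The identity $\zeta_n/\sqrt[c]{\chi_p}=-iZ_n/w_n$, the split $K_n=Z_n(\tau_n)J_n+R_n$ further refined to $-J_n+(Z_n(\tau_n)+1)J_n+R_n$, the observation $Z_n(\tau_n)\to -1$ via the $a_n$-normalization, and the dominated-convergence argument for $R_n$ correspond precisely to the paper's three-term decomposition $Z_n=(Z_n-Z_n(\tau_n))+(Z_n(\tau_n)+1)-1$ with integrals $o_n$, $v_n$, $-\omega_n$. Where you differ is in evaluating $J_n$ via the explicit primitive $\log(\lambda-\tau_n+w_n(\lambda))$; the paper instead substitutes $\lambda=\tau_n+t\gamma_n/2$ and verifies the closed-form \eqref{5.3} for $e^{-\varepsilon_n\omega_n}$ by checking both sides solve the same ODE. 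These are equivalent, and your version is arguably cleaner. Two small points: the estimate the paper actually proves (Lemma~\ref{Lemma 5.3}) is $Z_n(\tau_n)+1=O(\gamma_n)$, not $O(\gamma_n^2)$, which is still enough to beat the logarithm; and you should watch the sign of $R_n$, since $\int_{\tau_n}^{\mu_n}\frac{Z_n(\lambda)-Z_n(\tau_n)}{\tau_n-\lambda}\,d\lambda=-\chi_n$, not $+\chi_n$.

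There is, however, a genuine gap in your treatment of the case $\psi\in\mathcal{Z}_n\setminus\mathcal{F}_n$. You invoke ``the argument used in the proof of Lemma~\ref{lem:beta^n_k-asymptotics} specialized to $k=n$'' and conclude that $|\beta^n_n|$ is small, hence $e^{\pm i\beta^n_n}$ is bounded. But Lemma~\ref{lem:beta^n_k-asymptotics} is stated and proved only for $k\ne n$: its bound carries the factor $1/|k-n|$, and the estimate \eqref{eq:zeta^n_k} for $\zeta^n_k$ degenerates at $k=n$ (the decomposition \eqref{4.6} does not even make sense for $k=n$, since $\zeta_n$ has no zero $\sigma^n_n$ to pair with $w_n$). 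Moreover, the intermediate claim that $e^{\pm i\beta^n_n}$ stays bounded is actually false in this regime: if $\varrho_n=2(\mu_n-\tau_n)/\gamma_n\to\infty$, then $\omega_n\sim\log(4(\mu_n-\tau_n)/(-\gamma_n))\to\infty$ and one of $e^{\pm\varepsilon_n\omega_n}$ blows up, even though the product $\gamma_n e^{\pm\varepsilon_n\omega_n}\to 0$. The correct route is the one the paper uses: write $\gamma_n e^{-\varepsilon_n\omega_n}=-\gamma_n\varrho_n+\varepsilon_n\gamma_n\sqrt[\rm st]{(1-\varrho_n)(-1-\varrho_n)}$ and observe that $\gamma_n\varrho_n=2(\mu_n-\tau_n)\to 0$ forces the whole expression to $0$, regardless of the behavior of $\varrho_n$ and of whether $\varepsilon_n$ is even well-defined. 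Your explicit formula for $J_n$ contains exactly this information—you just need to carry it through for this case rather than substituting an estimate that does not apply.
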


\begin{proof}[Proof of Lemma \ref{Lemma 5.2}] 
We follow the arguments of the proof of Lemma 16.2 in \cite{GK1}.
Without loss of generality we may choose for $\varphi\in{\mathcal F}_n \backslash{\mathcal Z}_n$ a path of integration in 
the integral of $\beta ^n_n$ which meets $G_n$ only at the initial point
$\lambda ^-_n$. Using the definition \eqref{5.1} of the sign $\varepsilon_n$ and the definition \eqref{eq:canonical_root}
of $\sqrt[c]{\Delta (\lambda )^2 -4}$, we then can write
\begin{equation}\label{5.1bis} 
\frac{\zeta _n(\lambda )}{\sqrt[\ast ]{\Delta (\lambda )^2- 4}} = 
- i\varepsilon _n \frac{Z_n(\lambda )}{w_n(\lambda )}
\end{equation}
and get modulo $2\pi$,
\[ 
i \beta ^n_n =  i  \int ^{\mu _n}_{\lambda ^-_n} \frac{\zeta _n
(\lambda )}{\sqrt[\ast ]{\Delta (\lambda )^2 - 4}}\,d\lambda = 
\varepsilon_n\int ^{\mu _n}_{\lambda ^-_n}
\frac{Z_n(\lambda )}{w_n(\lambda )}\,d\lambda,
\]
where $w_n(\lambda )$ is well defined as we consider a path of integration
from $\lambda ^-_n$ to $\mu _n$ inside $D_n$ which meets $G_n$ only at its initial point. 
We decompose the numerator $Z_n(\lambda )$ into three terms,
\[ 
Z_n(\lambda ) = \big(Z_n(\lambda ) - Z_n(\tau _n)\big) +
\big(Z_n(\tau _n) + 1\big) - 1
\]
and denote the corresponding integrals by $o_n, v_n$, and $\omega _n$,
respectively. The limit of the first term is straightforward. Note that
$Z_n$ is analytic in some neighborhood 
$D_n \times V_\psi \subseteq\C \times L^2_c$ of $(n\pi , \psi )$. 
Moreover, if $\varphi\to \psi $, then 
$\lambda ^\pm _n(\varphi ) \to \tau_n(\psi )$, 
$w_n(\lambda )\to\tau_n-\lambda $, and 
$\mu_n(\varphi )\to\mu _n(\psi )$. Thus by the definition of $\chi_n(\psi)$,
\[ 
o_n = \int ^{\mu _n}_{\lambda ^-_n} \frac{Z_n(\lambda ) -
Z_n(\tau _n)}{w_n(\lambda )}\,d\lambda\to
\int ^{\mu _n}_{\tau _n} 
\frac{Z_n(\lambda ) - Z_n(\tau _n)}{\tau _n -\lambda }\,d\lambda = - \chi _n(\psi ) .
\]
For the second term we have in view of the identity \eqref{5.2} below and the
estimate $Z_n(\tau _n) + 1 = O(\gamma _n)$ by Lemma \ref{Lemma 5.3} below
\[ 
v_n = \big(Z_n(\tau _n) +1\big)\int ^{\mu _n}_{\lambda ^-_n} 
\frac{1}{w_n(\lambda )} d\lambda\to 0 .
\]
Now consider the third term. Using a limiting argument we can compute it
modulo $2\pi i$ on ${\mathcal F}_n \backslash {\mathcal Z}_n$ by
choosing the line segment $\lambda=\tau _n+t\gamma_n /2$ with $-1\le t \le \varrho _n$
as path of integration where $\varrho _n =2(\mu _n - \tau _n) / \gamma _n$. 
In case the interval $[\lambda^-_n,\mu_n]$ intersects $G_n\setminus\{\lambda_n^-\}$ it
actually contains all of $G_n$. One easily verifies that in this case the choice of the sign of $w_n(\lambda)$
along $G_n$ does not matter. We then get modulo $2\pi  i$
\begin{equation}\label{5.2} 
\omega _n = \int ^{\mu _n}_{\lambda ^-_n} 
\frac{d\lambda }{w_n(\lambda ) } = \int ^{\mu _n}_{\lambda ^-_n} 
\frac{d\lambda }{\sqrt[\rm st]{(\lambda ^+_n - \lambda )(\lambda ^-_n- \lambda )}} 
= \int ^{\varrho _n}_{-1} \frac{dt}{\sqrt[\rm st]{(1-t)(-1-t)}}
\end{equation}
with $\sqrt[\rm st]{(1-t)(-1-t)} = - t \sqrt[+]{1 - t^{-2}}$ for $|t| \to
\infty $. Note that for $\varphi \in {\mathcal F}_n \backslash {\mathcal Z}_n$ one has 
$\varrho _n \in \C \backslash [-1,1]$. We claim that
\begin{equation}\label{5.3} 
e^{-\varepsilon _n \omega _n} =- \varrho _n + 
\varepsilon_n\sqrt[\rm st]{(1 - \varrho _n)(-1-\varrho _n)} .
\end{equation}
Indeed both sides of \eqref{5.3}, viewed as functions of $\varrho _n$, are solutions of the
initial value problem
\[ 
\frac{f'(w)}{f(w)} = \frac{-\varepsilon_n}{\sqrt[\rm st]{(1 - w)(-1-w)}},\quad f(-1) = 1,
\quad w\in\C\setminus[-1,1].
\]
Now we compute the limit $\varphi \to \psi $. First consider the case
where $\psi \in {\mathcal F}_n \cap {\mathcal Z}_n$. Then $\mu _n - \tau _n$
does not converge to zero. This implies $\varrho ^{-1}_n \to 0$ and further
\begin{eqnarray}
\gamma _n e^{-\varepsilon _n \omega _n} &=&-\gamma_n\varrho_n +
\varepsilon_n\gamma_n\sqrt[\rm st]{(1-\varrho_n)(-1-\varrho_n)}\nonumber\\
&=&-\gamma_n\varrho_n-\varepsilon_n\gamma_n\varrho_n \sqrt[+]{1-\varrho_n^{-2}}\nonumber\\
&=&2(\tau _n - \mu _n)(1 + \varepsilon _n \sqrt[+]{1 -\varepsilon ^{-2}_n})\nonumber\\
&\to&2(\tau _n - \mu _n)(1 + \varepsilon _n).\label{5.4} 
\end{eqnarray}
In the case where $\psi \in {\mathcal Z}_n \backslash {\mathcal F}_n$,
one has $\gamma_n \varrho _n = 2(\mu _n - \tau _n) \to 0$ and thus concludes that
\[ 
\gamma _n e^{-\varepsilon _n \omega _n} = - \gamma_n\varrho_n +
\varepsilon _n \gamma _n \sqrt[\rm st]{(1 - \varrho _n)(-1-\varrho _n)}
\to 0 .
\]
(Actually, this case is included in the above result since it does not matter
that $\varepsilon _n$ is not well defined outside of ${\mathcal F}_n$.)
So in both cases we obtain
\[
\gamma_n e^{-\varepsilon_n\omega_n}\to 
2(\tau_n-\mu_n)(1+\varepsilon_n) .
\]
Combining the results for all three integrals we obtain
\begin{equation}\label{5.5} 
\gamma _n e^{ i \beta^n_n} = 
\gamma_n e^{-\varepsilon _n \omega _n}e^{\varepsilon_n(o_n+v_n)}\to 
2(\tau_n-\mu_n)(1+\varepsilon_n)e^{\varepsilon_n\chi_n}.
\end{equation}
This agrees with the claim for $z^+_n$ for $\varepsilon _n = -1$ where it
vanishes and for $\varepsilon_n = 1$, where $e^{\varepsilon _n \chi _n}= e^{\chi _n}$.
For $z^-_n$ we just have to switch the sign of $\varepsilon_n$ in \eqref{5.3} to obtain
\[ 
\gamma_n e^{- i\beta^n_n} = 
\gamma_n e^{\varepsilon_n\omega _n}e^{-\varepsilon_n(o_n+v_n)}\to 
2(\tau_n-\mu_n)(1-\varepsilon_n)e^{-\varepsilon_n\chi_n} .
\]
In particular the limit vanishes for $\varepsilon_n=1$.
\end{proof}

\begin{Lem}\label{Lemma 5.3} 
For $\lambda\in G_n$, $|n| > R$, one has $Z_n(\lambda)=-1+O(\gamma _n)$
locally uniformly on $U_{\rm iso}$.
\end{Lem}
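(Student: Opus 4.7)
The plan is to exploit two facts about $Z_n$: (i) for $|n|>R$, the function $\lambda\mapsto Z_n(\lambda)$ is analytic on the whole disk $D_n$, not just on $D_n\setminus G_n$, because the only factor that could fail to be analytic or non-vanishing on $D_n$ is $1/w_n$, which is \emph{absent} from the product defining $Z_n$; and (ii) the value of $Z_n$ at the center $\tau_n$ is pinned down by the $a_n$-normalization of $\zeta_n$.

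First I would verify that $Z_n$ extends analytically to $D_n$ and is uniformly bounded there. For $m\ne n$ with $|n|>R$, the eigenvalues $\lambda_m^\pm$ lie in $D_m$, which is disjoint from $D_n$; hence $w_m(\lambda)$ is analytic and bounded away from $0$ on $D_n$. Combined with the asymptotic $\sigma^n_m-\tau_m=\gamma_m^2\ell_m^2$ from property (D2) and Lemma \ref{lem:counting_lemma}, this makes the infinite product in the definition of $Z_n$ converge absolutely and uniformly on $D_n$, uniformly in $|n|>R$ and locally uniformly in $\varphi\in U_{\rm iso}$. By Cauchy's integral formula on a fixed-radius disk around $\tau_n$ contained in $D_n$, the derivative $\partial_\lambda Z_n(\tau_n)$ is then bounded by a constant independent of $n$ and locally uniformly in $\varphi$. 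Next I would rewrite the normalization \eqref{eq:normalization_condition_U_iso} for $m=n$ by means of \eqref{5.1bis}, obtaining an identity of the form $\int_{\Gamma_n}Z_n(\lambda)/w_n(\lambda)\,d\lambda=c$, with an explicit constant $c\in\{\pm 2\pi i\}$ determined by the orientation of $\Gamma_n$ and the sign convention \eqref{eq:standard_root} of the standard root.

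After collapsing $\Gamma_n$ onto the cut $G_n=[\lambda_n^-,\lambda_n^+]$, using the substitution $\lambda=\tau_n+x\gamma_n/2$, $x\in[-1,1]$, together with the jump of $w_n$ across $G_n$, I would Taylor expand $Z_n(\lambda)=Z_n(\tau_n)+(\lambda-\tau_n)\partial_\lambda Z_n(\tau_n)+O(|\lambda-\tau_n|^2)$ in the integrand. The linear term $\int(\lambda-\tau_n)/w_n(\lambda)\,d\lambda$ vanishes by the antisymmetry $x\mapsto -x$, and the quadratic remainder contributes $O(\gamma_n^2)$ thanks to the uniform bounds on the derivatives of $Z_n$. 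Matching against the normalization identity (which in effect computes $\int_{\Gamma_n}d\lambda/w_n(\lambda)=\pm 2\pi i$) yields $Z_n(\tau_n)=-1+O(\gamma_n^2)$. As a consistency check, in the degenerate case $\gamma_n=0$ the integrand $-iZ_n(\lambda)/w_n(\lambda)=-iZ_n(\lambda)/(\tau_n-\lambda)$ has a simple pole at $\tau_n$, and the residue computation forces $Z_n(\tau_n)=-1$ exactly. Finally, for arbitrary $\lambda\in G_n$ one has $|\lambda-\tau_n|\le|\gamma_n|/2$, and a second Taylor expansion with the uniform bound on $\partial_\lambda Z_n$ yields $Z_n(\lambda)=Z_n(\tau_n)+O(\gamma_n)=-1+O(\gamma_n)$ locally uniformly on $U_{\rm iso}$, as required.

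The main obstacle I anticipate is the careful bookkeeping of signs in the contour deformation: the standard-root convention for $w_n$ produces opposite values on the two sides of $G_n$, and combined with the counterclockwise orientation of $\Gamma_n$ these must be reconciled with the overall sign of the normalization so that one arrives at $-1$ rather than $+1$. Beyond this sign-tracking, the argument reduces to a routine combination of the absolute convergence of the product representation of $Z_n$ (yielding uniform analyticity bounds via Cauchy) together with the parity argument in the $x$-variable that kills the leading-order correction.
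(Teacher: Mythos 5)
Your proposal is correct and follows essentially the same route as the paper's proof (which in turn follows Lemma 16.3 in \cite{GK1}): both start from the $a_n$-normalization of $\zeta_n$, isolate the constant-in-$\lambda$ part against the period $\frac{1}{2\pi i}\int_{\Gamma_n}\frac{d\lambda}{w_n(\lambda)}=-1$, and control the remainder through the uniform boundedness of $Z_n$ on $D_n$ (hence of $\dot Z_n$ by Cauchy's estimate). The only cosmetic difference is that you Taylor expand at $\tau_n$ and invoke parity to kill the linear term, obtaining the (unneeded for the statement) sharper estimate $Z_n(\tau_n)=-1+O(\gamma_n^2)$ before transferring to a general $\lambda\in G_n$, whereas the paper subtracts $Z_n(\tau)$ for an arbitrary $\tau\in G_n$ directly and bounds the resulting integral by $\sup_{G_n}|Z_n-Z_n(\tau)|$ via a lemma of \cite{GK1}; both arguments hinge on the same two facts and yield the $O(\gamma_n)$ bound.
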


\begin{proof}[Proof of Lemma \ref{Lemma 5.3}]
We follow the arguments of the proof of Lemma 16.3 in \cite{GK1}.
In analogy to \eqref{5.1bis} we write
\[ 
\frac{\zeta _n(\lambda )}{\sqrt[c]{\Delta (\lambda )^2 - 4}} =
\frac{Z_n(\lambda )}{ i  w_n(\lambda )} , \quad Z_n(\lambda )
= - \prod _{m \not= n} \frac{\sigma ^n_m - \lambda }{w_m(\lambda )}.
\]
Integrating over $\Gamma _n$ and referring to Proposition \ref{prop:preparation_U_iso}
we obtain for $\tau \in G_n$,
\[ 
1 = \frac{1}{2\pi  i } \int _{\Gamma _n}\frac{Z_n(\lambda )}{w_n(\lambda )}\,d \lambda \\
=\frac{1}{2\pi i}Z_n(\tau )\int_{\Gamma _n}\frac{1}{w_n(\lambda )}\,d\lambda +
\frac{1}{2\pi i}\int_{\Gamma_n}\frac{Z_n(\lambda ) - Z_n(\tau )}{w_n(\lambda )}\,d\lambda .
\]
Since $\frac{1}{2\pi i}\int_{\Gamma_n}\frac{1}{w_n(\lambda)}\,d\lambda=-1$ we get
\[ 
1 = -Z_n(\tau ) + \frac{1}{2\pi i}\int_{\Gamma_n}
\frac{Z_n(\lambda) - Z_n(\tau )}{w_n(\lambda )}\,d\lambda
=-Z_n(\tau)+O\left(\big\arrowvert Z_n -Z_n(\tau )\big\arrowvert _{G_n}\right)
\]
where the latter asymptotics follow from \cite[Lemma 14.3]{GK1}. Note that $Z_n(\lambda)$ is analytic on $D_n$ by
\cite[Corollary 12.8]{GK1}. Since $Z_n$ is bounded on $D_n$ locally uniformly in $\varphi $ and uniformly in $n$ by
\cite[Lemma 12.10]{GK1}, it then follows that the same is true for $\dot Z_n(\lambda ),
\lambda \in G_n$, by Cauchy's estimate. Therefore
\[ 
\max _{\lambda \in G_n} \big\arrowvert Z_n(\lambda ) - Z_n(\tau )
\big\arrowvert \leq \max _{\lambda \in G_n} |\dot Z_n(\lambda )| |\gamma _n| = O(\gamma _n)
\]
locally uniformly in $\varphi $ and uniformly in $n$. This proves the claim.
\end{proof}

\medskip

For any $|n|>R$, we now extend $z^\pm _n$ on all of $U_{\rm iso}$ as follows
\[ 
z^\pm_n = 
\begin{cases} 
2(\tau _n - \mu _n)(1\pm\varepsilon _n)e^{\chi _n}&\text{\rm on}\quad{\mathcal Z}_n \cap {\mathcal F}_n \\
0 &\text{\rm on}\quad{\mathcal Z}_n \backslash{\mathcal F}_n 
\end{cases}
\]
where $\chi_n$ is given by Lemma \ref{Lemma 5.2}. To establish the proper target space of $(x_n,y_n)_{n\ge 1}$
we need the following asymptotic estimates.

\begin{Lem}\label{Lemma 5.4} 
For $|n| > R$ one has $z^\pm _n = O\big(|\gamma_n|+|\mu_n-\tau_n|\big)$
locally uniformly on $U_{\rm iso}$.
\end{Lem}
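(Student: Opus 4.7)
The plan is to follow the strategy of Lemma 16.4 in \cite{GK1}, relying on the explicit factorization of $\gamma_n e^{\pm i \beta^n_n}$ derived in the proof of Lemma \ref{Lemma 5.2} together with the sharp cancellation provided by Lemma \ref{Lemma 5.3}. The bulk of the work takes place on the open set $\mathcal{F}_n \setminus \mathcal{Z}_n$, where
\[
\gamma_n e^{\pm i \beta^n_n} = \big(\gamma_n e^{\mp \varepsilon_n \omega_n}\big) \cdot e^{\pm \varepsilon_n (o_n + v_n)}
\]
with $o_n$, $v_n$, $\omega_n$ as in the proof of Lemma \ref{Lemma 5.2}.

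First I would bound the leading factor by invoking \eqref{5.3}: multiplying by $\gamma_n$ and using the identity $\gamma_n^2(1-\varrho_n)(-1-\varrho_n) = 4(\mu_n - \tau_n)^2 - \gamma_n^2$ one obtains $\gamma_n e^{\mp \varepsilon_n \omega_n} = 2(\tau_n - \mu_n) \pm \varepsilon_n \sqrt[\rm st]{4(\mu_n - \tau_n)^2 - \gamma_n^2}$, from which the triangle inequality yields $|\gamma_n e^{\mp \varepsilon_n \omega_n}| \leq 4|\mu_n - \tau_n| + |\gamma_n|$. The exponential factor is the delicate piece. I would split $o_n + v_n = o_n + (Z_n(\tau_n) + 1)\omega_n$ and treat the two summands separately. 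For $o_n$, the substitution $\lambda = \tau_n + t\gamma_n/2$ combined with the mean value estimate $|Z_n(\lambda) - Z_n(\tau_n)| \leq C |\lambda - \tau_n|$, valid uniformly in $n$ and locally uniformly in $\varphi$ by Cauchy's estimate applied to the uniform bound on $Z_n$ on $D_n$ (cf.\ \cite[Lemma 12.10]{GK1}), gives $|o_n| \leq C|\mu_n - \tau_n|$. For the second summand I would combine the cancellation $|Z_n(\tau_n) + 1| = O(|\gamma_n|)$ from Lemma \ref{Lemma 5.3} with the logarithmic growth $|\omega_n| = O(1 + \log(1/|\gamma_n|))$, read off from the representation $\omega_n = \int_{-1}^{\varrho_n} dt/\sqrt[\rm st]{(1-t)(-1-t)}$ and the facts $|\varrho_n| = 2|\mu_n - \tau_n|/|\gamma_n|$ and $|\mu_n - \tau_n| \leq \pi/3$; the product is then of order $O(|\gamma_n|\log(1/|\gamma_n|)) = O(1)$, uniformly in $n$ and locally uniformly in $\varphi$. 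Combining these bounds yields $|z^\pm_n| \leq C(|\gamma_n| + |\mu_n - \tau_n|)$ on $\mathcal{F}_n \setminus \mathcal{Z}_n$ with the required uniformities.

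To extend the estimate to the remaining part of $U_{\rm iso}$ I would use continuity. On $\mathcal{Z}_n \cap \mathcal{F}_n$ the explicit value $z^\pm_n = 2(\tau_n - \mu_n)(1 \pm \varepsilon_n) e^{\chi_n}$ from Lemma \ref{Lemma 5.2} directly yields $|z^\pm_n| \leq C |\mu_n - \tau_n|$, while on $\mathcal{Z}_n \setminus \mathcal{F}_n$ the bound is trivial since $z^\pm_n \equiv 0$. On the residual set $U_{\rm iso} \setminus (\mathcal{F}_n \cup \mathcal{Z}_n)$ I would pass to the limit from $\mathcal{F}_n$: $z^\pm_n$ is analytic and hence continuous on $U_{\rm iso}$, while $|\gamma_n| = \sqrt{|\gamma_n^2|}$ and $|\mu_n - \tau_n|$ are continuous because $\gamma_n^2$, $\mu_n$, and $\tau_n$ are analytic on $U_{\rm iso}$ for $|n| > R$. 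It remains to check that $\mathcal{F}_n \setminus \mathcal{Z}_n$ is dense in $U_{\rm iso}$: the complement is $\mathcal{Z}_n \cup \{g_n \in [0,1]\}$, where $g_n := (\mu_n - \tau_n)^2/\gamma_n^2$ is analytic on the open dense subset $U_{\rm iso} \setminus \mathcal{Z}_n$; since $g_n$ is non-constant on $U_{\rm iso}$ (as one can verify at the zero end of the deformation path built in Section \ref{sec:actions_in_U_tn}), its preimage of the non-open real interval $[0,1]$ has empty interior in $U_{\rm iso} \setminus \mathcal{Z}_n$, and $\mathcal{Z}_n$ itself has empty interior.

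The main obstacle I expect is the uniform-in-$n$ boundedness of the exponential factor $e^{\pm \varepsilon_n (o_n + v_n)}$, specifically the cancellation showing $(Z_n(\tau_n) + 1)\omega_n = O(1)$ as $|\gamma_n| \to 0$: the logarithmic blow-up of $\omega_n$ is precisely absorbed by the $O(|\gamma_n|)$ vanishing of $Z_n(\tau_n) + 1$ from Lemma \ref{Lemma 5.3}, and verifying this compensation while keeping every constant independent of $n$ (through the uniform estimates on $Z_n$ and $\dot Z_n$ on the disks $D_n$) is what requires the most care.
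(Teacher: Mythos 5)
Your treatment of $\mathcal F_n\setminus\mathcal Z_n$ is a correct variant of the paper's argument and in two respects cleaner: you replace the case split on $|\varrho_n|\lessgtr 1$ (estimates \eqref{5.6}--\eqref{5.7} in the paper) by a single bound obtained from the identity $\gamma_n^2(1-\varrho_n)(-1-\varrho_n)=4(\mu_n-\tau_n)^2-\gamma_n^2$ and the subadditivity of $\sqrt{|\cdot|}$, and you spell out why $e^{\varepsilon_n(o_n+v_n)}=O(1)$ uniformly in $n$ by decomposing $v_n=(Z_n(\tau_n)+1)\,\omega_n$ and playing the $O(|\gamma_n|)$ cancellation from Lemma~\ref{Lemma 5.3} against the $O(1+\log(1/|\gamma_n|))$ growth of $\omega_n$. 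The paper simply invokes \cite[Lemma 12.10]{GK1} for this boundedness, so your version is more self-contained; both approaches are valid.

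The weak point is the extension to $U_{\rm iso}\setminus(\mathcal F_n\cup\mathcal Z_n)$, where you diverge from the paper and introduce two genuine problems. First, you justify continuity by asserting ``$z^\pm_n$ is analytic and hence continuous on $U_{\rm iso}$''; that is Proposition~\ref{Theorem 5.5}, whose proof invokes Lemma~\ref{Lemma 5.4}, so the citation is circular. (It is easily repaired: the residual set sits inside $U_{\rm iso}\setminus\mathcal Z_n$, where analyticity of $z^\pm_n$ is already available from Lemma~\ref{Lemma 5.1}.) Second, and more seriously, your density claim is not established: you assert that $g_n=(\mu_n-\tau_n)^2/\gamma_n^2$ is non-constant by ``verifying at the zero end of the deformation path built in Section~\ref{sec:actions_in_U_tn}'', but that path and its endpoint $\psi^{(0)}$ live in $U_{\rm tn}$, not in $U_{\rm iso}$, so the verification does not directly apply. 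Making it rigorous requires an analytic-continuation argument linking $U_{\rm iso}$ to $U_{\rm tn}$ and then to a neighborhood of zero, which is essentially the content of Lemma~\ref{Lemma 6.0} and is not a triviality. The paper sidesteps all of this: on $U_{\rm iso}\setminus\mathcal F_n$ one has $\mu_n\in G_n$, hence $|\varrho_n|\le 1$, and the leading-factor bound \eqref{5.6} together with the boundedness of the exponential applies directly for either choice of the sign $\varepsilon_n$; no density or limiting argument is needed. You should either adopt that direct estimate or supply the missing non-constancy argument via the $U_{\rm tn}$ continuation.
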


\begin{proof}[Proof of Lemma \ref{Lemma 5.4}]
We follow the arguments of the proof of Lemma 16.4 in \cite{GK1}.
From the proof of Lemma~\ref{Lemma 5.2}, in particular equations
\eqref{5.4} and \eqref{5.5}, one sees that for $\varphi $ in 
${\mathcal F}_n \backslash {\mathcal Z}_n$,
\[ 
\gamma_n e^{ i \beta^n_n} = 
\left(-\gamma_n\varrho _n+
\varepsilon_n\gamma _n\sqrt[\rm st]{(1-\varrho_n)(-1-\varrho_n)}\right)
e^{\varepsilon _n(v_n + o_n)} .
\]
In the case $2|\mu _n - \tau _n| \leq |\gamma _n|$, i.e., $|\varrho _n| \leq 1$, one has
\begin{equation}\label{5.6} 
\big\arrowvert - \gamma _n \varrho _n +\varepsilon _n \gamma _n
               \sqrt[\rm st]{(1 - \varrho _n)(- 1 - \varrho _n)}\big\arrowvert \leq
               3|\gamma _n|
\end{equation}
while in the case $2|\mu _n - \tau _n| > |\gamma _n|$, i.e., $|\varrho _n| > 1$,
\[ 
\gamma_n e^{i \beta^n_n} = 2(\tau_n - \mu_n)
\left(1+\varepsilon_n\sqrt[+]{1+\varrho_n^{-2}}\right) 
e^{\varepsilon_n(v_n + o_n)}
\]
yielding the estimate
\begin{equation}\label{5.7} 
\left|2(\tau _n - \mu _n)
\left(1+\varepsilon_n\sqrt[+]{1-\varrho_ n^{-2}}\right)\right|
\le 6 |\mu _n-\tau _n|.
\end{equation}
The exponential term $e^{\varepsilon _n(v_n + o_n)}$ is locally uniformly
bounded in view of \cite[Lemma 12.10]{GK1}. So we get on 
${\mathcal F}_n\setminus{\mathcal Z}_n$,
\begin{equation}\label{5.8} 
z^+_n = O\big( |\gamma _n| + |\mu _n - \tau _n|\big) .
\end{equation}
By Lemma~\ref{Lemma 5.2}, \eqref{5.7} continues to hold on 
${\mathcal F}_n \cap {\mathcal Z}_n$. Furthermore, one easily verifies that \eqref{5.6}
is also valid on $U_{\rm iso}\setminus{\mathcal F}_n$ for any choice of
$\varepsilon _n \in \{ \pm 1\}$. Hence \eqref{5.8} holds in a locally
uniform fashion on all of $U_{\rm iso}$. The argument for $\gamma_n e^{- i\beta_n^n}$
is the same.
\end{proof}

\medskip

We are now ready to prove

\begin{Prop}\label{Theorem 5.5} 
For any $|n|>R$, the functions $z^\pm _n$ are
analytic on $U_{\rm iso}$.
\end{Prop}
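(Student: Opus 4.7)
The plan is to invoke the Riemann-type extension principle of \cite[Theorem A.6]{GK1}: a function on $U_{\rm iso}$ that is analytic off the analytic subvariety $\mathcal{Z}_n$, continuous across $\mathcal{Z}_n$, and weakly analytic along $\mathcal{Z}_n$, is analytic on all of $U_{\rm iso}$. Analyticity on $U_{\rm iso}\setminus\mathcal{Z}_n$ is already provided by Lemma \ref{Lemma 5.1}, so only the other two properties remain to be verified.

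For continuity at a point $\psi\in\mathcal{Z}_n$, I would use Lemma \ref{Lemma 5.2}, which computes the limit of $z^\pm_n(\varphi)$ as $\varphi\to\psi$ from $U_{\rm iso}\setminus\mathcal{Z}_n$ and matches it, by the very definition of the extension, with the prescribed values on $\mathcal{Z}_n\cap\mathcal{F}_n$ respectively $\mathcal{Z}_n\setminus\mathcal{F}_n$. Since $\mathcal{F}_n$ is open and the sign function $\varepsilon_n$ is locally constant on $\mathcal{F}_n$, this limiting value agrees on a neighborhood of $\psi$ in $\mathcal{Z}_n\cap\mathcal{F}_n$ with the extension prescribed there.

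For weak analyticity along $\mathcal{Z}_n$, observe that $\mathcal{Z}_n\setminus\mathcal{F}_n$ is cut out inside $\mathcal{Z}_n$ by the additional analytic condition $\mu_n=\tau_n$, hence is itself an analytic subvariety of $\mathcal{Z}_n$. Let $D\subseteq\mathcal{Z}_n$ be a complex disk. If $D\subseteq\mathcal{Z}_n\setminus\mathcal{F}_n$, then $z^\pm_n|_D\equiv 0$ and there is nothing to prove. Otherwise $D\cap(\mathcal{Z}_n\setminus\mathcal{F}_n)$ is a proper analytic subset of $D$, hence discrete, so that $D\cap\mathcal{F}_n$ is a connected, dense open subset of $D$ on which the locally constant sign $\varepsilon_n$ is globally constant. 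The factors $\tau_n-\mu_n$ and $e^{\chi_n}$ are analytic on $D$: the only potential difficulty is the integrand in the definition of $\chi_n$, whose apparent singularity at $\lambda=\tau_n$ is removable since $Z_n(\lambda)=-\prod_{m\ne n}(\sigma^n_m-\lambda)/w_m(\lambda)$ involves no factor $w_n$ and is therefore analytic in a full neighborhood of $\tau_n$ on $\mathcal{Z}_n$. Thus $z^\pm_n|_{D\cap\mathcal{F}_n}$ coincides with the restriction to $D\cap\mathcal{F}_n$ of an analytic function on $D$, and weak analyticity follows by continuity.

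The main obstacle I expect is the verification of analyticity (in $\varphi$) of $\chi_n$ along $\mathcal{Z}_n$, because this requires controlling both the endpoints $\tau_n$, $\mu_n$ and the integrand uniformly as $\varphi$ varies within $\mathcal{Z}_n$; the relevant ingredients are the product representations and zero estimates of Proposition \ref{lem:differentials_on_U_tn} together with the bound $Z_n(\lambda)=-1+O(\gamma_n)$ from Lemma \ref{Lemma 5.3}.
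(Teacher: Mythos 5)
Your overall strategy agrees exactly with the paper's: apply \cite[Theorem A.6]{GK1} with the subvariety $\mathcal{Z}_n$, quoting Lemma~\ref{Lemma 5.1} for analyticity off $\mathcal{Z}_n$ and then checking continuity across $\mathcal{Z}_n$ and weak analyticity along it. Your treatment of weak analyticity is also essentially the paper's: the analytic function $\mu_n-\tau_n$ cuts $\mathcal{Z}_n\setminus\mathcal{F}_n$ out of a disk $D\subseteq\mathcal{Z}_n$ either entirely or in a discrete set, giving global constancy of $\varepsilon_n$ on $D\cap\mathcal{F}_n$ and analytic continuation across the remaining points; the removability of the singularity of $\chi_n$ at $\lambda=\tau_n$ is handled as you indicate.

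There is, however, a gap in your continuity step. You assert that Lemma~\ref{Lemma 5.2} ``computes the limit of $z^\pm_n(\varphi)$ as $\varphi\to\psi$ from $U_{\rm iso}\setminus\mathcal{Z}_n$,'' but Lemma~\ref{Lemma 5.2} is stated only for sequences $\varphi\to\psi$ with $\varphi\in\mathcal{F}_n\setminus\mathcal{Z}_n$; it requires the sign $\varepsilon_n$, which is only defined on $\mathcal{F}_n$. If $\psi\in\mathcal{Z}_n\cap\mathcal{F}_n$ the openness of $\mathcal{F}_n$ forces every nearby $\varphi$ to lie in $\mathcal{F}_n$, so Lemma~\ref{Lemma 5.2} suffices. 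But if $\psi\in\mathcal{Z}_n\setminus\mathcal{F}_n$ you must also rule out jumps along sequences $\varphi\to\psi$ with $\varphi\notin\mathcal{F}_n\cup\mathcal{Z}_n$. For such $\varphi$, $\mu_n\in G_n=[\lambda_n^-,\lambda_n^+]$, hence $|\mu_n-\tau_n|\le|\gamma_n|$, and the needed conclusion $z^\pm_n(\varphi)\to 0$ follows not from Lemma~\ref{Lemma 5.2} but from the asymptotic bound $z^\pm_n=O(|\gamma_n|+|\mu_n-\tau_n|)$ of Lemma~\ref{Lemma 5.4} together with $\gamma_n(\varphi)\to\gamma_n(\psi)=0$. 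Without invoking Lemma~\ref{Lemma 5.4} here (you only mention Lemma~\ref{Lemma 5.3} at the very end, for the estimate of $Z_n$), continuity at points of $\mathcal{Z}_n\setminus\mathcal{F}_n$ is not established.
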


\begin{proof}[Proof of Proposition \ref{Theorem 5.5}] 
We follow the arguments of the proof of Proposition 16.5 in \cite{GK1}.
First, we apply \cite[Theorem A.6]{GK1} to the functions $z^\pm _n$
on the domain $U_{\rm iso}$ with the subvariety ${\mathcal Z}_n$. These
functions are analytic on $U_{\rm iso}\setminus{\mathcal Z}_n$ by
Lemma~\ref{Lemma 5.1}. We claim that they are also continuous at points of 
$\mathcal{Z}_n$. First note that their restrictions to ${\mathcal Z}_n$ are continuous
by inspection. Approaching a point in ${\mathcal Z}_n$ from within
${\mathcal F}_n \backslash {\mathcal Z}_n$, the corresponding values
$z^\pm _n$ converge to the ones of the limiting potential
by Lemma \ref{Lemma 5.2}. On the other hand, approaching a point in
${\mathcal Z}_n$ from outside of ${\mathcal F}_n\cup{\mathcal Z}_n$
one has $|\mu _n - \tau _n| \leq |\gamma _n|$ and thus 
$z^\pm_n =\gamma _n e^{\pm  i \beta^n_n}$ converges to zero by Lemma \ref{Lemma 5.4}.
Thus the functions $z^\pm _n$ are continuous on all of $U_{\rm iso}$. To show
that their restrictions to ${\mathcal Z}_n$ are weakly analytic, let
$D$ be a one-dimensional complex disk contained in ${\mathcal Z}_n$.
If the center of $D$ is in ${\mathcal F}_n$, then the entire disk $D$
is in ${\mathcal F}_n$, if chosen sufficiently small. The analyticity
of $z^\pm _n = \gamma _n e ^{\pm  i \beta^n_n}$ on $D$ is then evident from
the above formula, the definition of $\chi _n$, and the local
constancy of $\varepsilon _n$ on ${\mathcal F}_n$. If the center of
$D$ does not belong to ${\mathcal F}_n$, then consider the analytic
function $\mu _n - \tau _n$ on $D$. This function either vanishes
identically on $D$, in which case $z^\pm _n$ vanishes identically,
too. Or it vanishes in only finitely many points. Outside these
points, $D$ is in ${\mathcal F}_n$, hence $z^\pm _n$ is analytic
there. By continuity and analytic continuation, these functions are
analytic on all of $D$.
\end{proof}

\medskip

For $\varphi\in U_{\rm iso}$ we now define 
$\Phi (\varphi ) = \big(x_n(\varphi ), y_n(\varphi )\big)_{n \in \Z}$ 
where for $|n| > R$,
\begin{equation}\label{eq:x,y'}
\left\{
\begin{array}{l}
x_n=\frac{\xi _n}{\sqrt{8}}\left(z_n^+e^{- i(\theta _n-\beta_n^n)}+z_n^-e^{- i(\theta _n-\beta_n^n)}\right),\\
y_n =\frac{\xi _n}{\sqrt{8} i}\left(z_n^+e^{- i(\theta _n-\beta_n^n)}-z_n^-e^{- i(\theta _n-\beta_n^n)}\right) ,
\end{array}
\right.
\end{equation}
and for $|n|\le R$,
\[ 
x_n = \sqrt{2 I_n}\cos\theta _n, \quad y_n = \sqrt{2 I_n}\sin \theta _n.
\]
Here the roots $\sqrt{2 I_n}$ are defined as follows: First note that by adding constants to the actions and by shrinking 
the neighborhood $U_{\rm iso}$, if necessary, we can ensure that $\re(I_n)\le-1$ on $U_{\rm iso}$ for any $|n| \le R$. 
Then, $\sqrt{2 I_n}$ is analytic on $U_{\rm iso}$ where $\sqrt{\cdot}$ is the branch of the square root satisfying 
\[
\sqrt{2 I_n}=i\sqrt[+]{-2 I_n}
\] 
on $U_{\rm iso}\cap  i  L^2_r$. From the preceeding asymptotic estimates it is then evident that $\Phi $ defines a continuous, 
locally bounded map into $\ell^2_c= \ell^2\times\ell ^2$ which, when restricted to $U_{\rm iso}\cap  i  L^2_r$, takes values 
in $ i\ell ^2_r$. Moreover, each component is real analytic.
In what follows we identify the real Hilbert space $i\ell^2_r$ with $\ell^2(\Z,i\R^2)$ by the $\R$-linear isomorphism
\begin{equation}\label{eq:identification}
\ell^2(\Z,i\R^2)\to i\ell^2_r,\quad\big(i(u_n,v_n) \big)_{n\in\Z}\mapsto\big(z_n(\varphi),w_n(\varphi) \big)_{n\in\Z}
\end{equation}
where $z_n=(v_n+i u_n)/\sqrt{2}$ and $w_n=-\overline{z_{(-n)}}$ for any $n\in\Z$.
We also write $x_n$ for $i u_n$ and $y_n$ for $i v_n$.
We have proved the following

\begin{Prop}\label{Theorem 5.6} 
The map
\[ 
\Phi : U_{\rm iso}\cap  i  L^2_r \to  i  \ell ^2_r , \quad
\varphi\mapsto\big(x_n(\varphi),y_n(\varphi) \big)_{n\in\Z},
\]
is real analytic and extends to an analytic map $U_{\rm iso}\to\ell ^2_c$.
\end{Prop}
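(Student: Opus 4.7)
The plan is to prove both claims together by first establishing analyticity of the extension $\Phi: U_{\rm iso} \to \ell^2_c$ and then verifying that this extension sends $U_{\rm iso} \cap iL^2_r$ into $i\ell^2_r$; real analyticity of the restriction then follows automatically. Componentwise analyticity is essentially packaged by the preceding sections. For $|n| \le R$, $\mathcal{Z}_n$ is empty, $\theta_n$ is single-valued modulo $2\pi$ and analytic by Theorem \ref{th:commutation_relations_iso}, and after the constant shift making $\re(I_n)\le-1$, the root $\sqrt{2I_n}$ is an unambiguous analytic branch, so $\sqrt{2I_n}\cos\theta_n$ and $\sqrt{2I_n}\sin\theta_n$ are analytic on $U_{\rm iso}$. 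For $|n|>R$ I would work with the expression \eqref{eq:x,y'}: the factor $\xi_n$ is analytic by Lemma \ref{lem:actions_asymptotics}(ii), $z_n^\pm$ are analytic on all of $U_{\rm iso}$ by Proposition \ref{Theorem 5.5}, and the crucial observation is that although $\theta_n$ itself is only well-defined modulo $\pi$ on $U_{\rm iso}\setminus\mathcal{Z}_n$, the difference $\theta_n - \beta^n_n = \tilde\beta^n + \sum_{|k|>R,\,k\ne n}\beta^n_k$ is genuinely single-valued and analytic on $U_{\rm iso}$ by Proposition \ref{Theorem 4.4}(i),(iii) (via Remark \ref{rem:U_iso}), so the exponential factor $e^{\pm i(\theta_n - \beta^n_n)}$ is unambiguously defined and analytic.

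The next step is to promote coordinate-wise analyticity to analyticity into $\ell^2_c$. It suffices to establish that the sequence $(x_n(\varphi), y_n(\varphi))_{n\in\Z}$ is bounded in $\ell^2_c$ locally uniformly on $U_{\rm iso}$, since weak analyticity combined with local boundedness in a Banach space gives analyticity (cf.\ \cite[Theorem A.4]{GK1}). Only the tail $|n|>R$ requires work. Combining Lemma \ref{Lemma 5.4}, which gives $z_n^\pm = O(|\gamma_n|+|\mu_n-\tau_n|)$, with the uniform boundedness of $\xi_n = 1 + \ell^2_n$ from Lemma \ref{lem:actions_asymptotics}(ii) and of the exponential factors (using that $\tilde\beta^n = O(1/n)$ and that $\sum_{|k|>R,k\ne n}\beta^n_k$ is $o(1)$ locally uniformly, Proposition \ref{Theorem 4.4}), yields $|x_n|^2 + |y_n|^2 = O(|\gamma_n|^2 + |\mu_n-\tau_n|^2)$. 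The right-hand side is summable locally uniformly in view of the $\ell^2$-asymptotics $\lambda_n^\pm - n\pi, \mu_n - n\pi = \ell^2_n$ provided by Lemma \ref{lem:counting_lemma} and Lemma \ref{lem:dirichlet_spectrum}.

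For the reality of the restriction, on $U_{\rm iso} \cap iL^2_r$ one has $\xi_n \in \R_{>0}$ (Lemma \ref{lem:actions_asymptotics}(ii)), $\gamma_n = \lambda_n^+ - \lambda_n^- \in i\R$ for $|n|>R$ (Lemma \ref{lem:spectrum_symmetries}), and $\theta_n \in \R$ modulo $2\pi$ together with $I_n \in \R$ (Theorem \ref{th:commutation_relations_iso}); substituting into \eqref{eq:x,y} gives $x_n, y_n \in i\R$ for $|n|>R$. For $|n| \le R$ the shifted $I_n$ is real and at most $-1$, so $\sqrt{2I_n} = i\sqrt[+]{-2I_n} \in i\R_{>0}$ by the chosen branch, and since $\theta_n \in \R$, $x_n$ and $y_n$ are again purely imaginary. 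Via the identification \eqref{eq:identification}, this places $\Phi(\varphi)$ in $i\ell^2_r$ for $\varphi \in U_{\rm iso} \cap iL^2_r$, and real analyticity of the restriction follows from analyticity of the complex extension plus invariance of the real subspace.

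I do not expect any single step to be the dominant obstacle once Sections \ref{sec:actions_in_U_tn}--\ref{sec:birkhoff_map} are in place; the substantive issue, already resolved by Proposition \ref{Theorem 5.5} together with the decomposition $\theta_n = (\theta_n - \beta^n_n) + \beta^n_n$, is the bookkeeping that reconciles the multivaluedness of $\theta_n$ and $\beta^n_n$ for $|n|>R$, as well as the non-continuity of $\lambda_n^\pm$ caused by the lexicographic ordering, so that the composite functions $x_n, y_n$ extend analytically across the whole of $U_{\rm iso}$, including through the subvariety $\mathcal{Z}_n$.
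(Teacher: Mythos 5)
Your proposal is correct and follows essentially the same route as the paper: componentwise analyticity (from Proposition \ref{Theorem 5.5}, Proposition \ref{Theorem 4.4}, and Lemma \ref{lem:actions_asymptotics}), local $\ell^2$-boundedness via the asymptotics $z_n^\pm=O(|\gamma_n|+|\mu_n-\tau_n|)$ of Lemma \ref{Lemma 5.4} together with the counting lemmas, and the reality check from the symmetries of $\xi_n$, $\gamma_n$, $\theta_n$, $I_n$ on $U_{\rm iso}\cap iL^2_r$; the paper simply states these conclusions in the two paragraphs preceding the proposition without spelling them out. One small imprecision worth noting: $\tilde\beta^n$, and hence $\theta_n-\beta^n_n$, is only defined modulo $2\pi$ on $U_{\rm iso}$ (rather than ``genuinely single-valued''), but since it enters $x_n,y_n$ only through $e^{\pm i(\theta_n-\beta^n_n)}$ this does not affect the argument.
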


\medskip

Using Theorem \ref{th:commutation_relations_iso} we now can establish the following

\begin{Prop}\label{Theorem 5.7} 
For any $m, n \in \Z$ and $\varphi\in U_{\rm iso} \cap  i  L^2_r$,
\[
\{ x_m, x_n \} = 0, \quad \{ x_m, y_n \} = - \delta _{mn} , \quad
\{ y_m, y_n \} = 0 .
\]
\end{Prop}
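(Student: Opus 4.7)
The plan is to reduce the claim to a polar-to-Cartesian Poisson bracket computation on the open dense subset
$$U^* := \bigl(U_{\rm iso}\cap i L^2_r\bigr)\setminus\bigcup_{n\in\Z}\mathcal{Z}_n$$
where none of the gaps $\gamma_n$ is collapsed, and then to extend the identities by analyticity/continuity. For $|n|\le R$ one has $\mathcal{Z}_n=\emptyset$ by the construction of $U_{\rm iso}$, while for $|n|>R$ the set $\mathcal{Z}_n\cap i L^2_r$ is a real analytic submanifold of $i L^2_r$ of codimension two, so $U^*$ is open and dense in $U_{\rm iso}\cap i L^2_r$.

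On $U^*$ each angle $\theta_n$ is defined modulo $\pi$ (or $2\pi$ for $|n|\le R$), which does not affect Poisson brackets. Combining \eqref{eq:x,y}, \eqref{eq:x,y'} with the identity $(\xi_n\gamma_n)^2 = 4 I_n$ of Lemma \ref{lem:actions_asymptotics} one writes each Birkhoff coordinate in the uniform polar form
$$x_n = r_n\cos\theta_n,\qquad y_n = r_n\sin\theta_n,\qquad r_n^2 = 2 I_n,$$
with $r_n$ a nowhere-vanishing analytic function on $U^*$. Since $x_n$ and $y_n$ depend on the action-angle data only through the single pair $(I_n,\theta_n)$, the Leibniz rule for the Poisson bracket \eqref{eq:poisson_bracket} applied in each slot, combined with the canonical relations of Theorem \ref{th:commutation_relations_iso}, will collapse $\{x_m,y_n\}$ to the single-variable polar Jacobian
$$\delta_{mn}\bigl(\partial_{\theta_n}x_n\cdot\partial_{I_n}y_n-\partial_{I_n}x_n\cdot\partial_{\theta_n}y_n\bigr),$$
which the standard computation (using $\partial_{I_n}r_n=1/r_n$) evaluates to $-\delta_{mn}$. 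Analogous arguments will give $\{x_m,x_n\}|_{U^*}=\{y_m,y_n\}|_{U^*}=0$.

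To pass from $U^*$ to $U_{\rm iso}\cap i L^2_r$, I will invoke Proposition \ref{Theorem 5.6}: the maps $x_n,y_n : U_{\rm iso}\to\C$ are analytic, hence so are their $L^2$-gradients, and therefore each Poisson bracket $\{x_m,x_n\}, \{x_m,y_n\}, \{y_m,y_n\}$ is analytic on $U_{\rm iso}$ and, by Lemma \ref{lem:real-analyticity}, real-valued on $U_{\rm iso}\cap i L^2_r$. Agreement with the constants $-\delta_{mn}$, $0$, $0$ on the dense subset $U^*$ then forces equality everywhere by continuity. The main technical point will be the Leibniz reduction itself: one must justify the $L^2$-gradient identity $\nabla x_n = \partial_{I_n}x_n\cdot\nabla I_n + \partial_{\theta_n}x_n\cdot\nabla\theta_n$ and its analogue for $y_n$ in the infinite-dimensional setting, which is legitimate because $I_n$ and a local branch of $\theta_n$ possess analytic $L^2$-gradients on $U^*$ (Corollary \ref{coro:actions_poisson_relations} and Proposition \ref{prop:angles_tn}). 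No convergence issues arise since each individual pairwise bracket involves only the two indices $m$ and $n$.
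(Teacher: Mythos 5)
Your argument is essentially identical to the paper's: both pass to the set where $\gamma_m$, $\gamma_n$ are noncollapsed so that $\theta_m$, $\theta_n$ are defined, express $x_n$, $y_n$ in polar form in terms of the pair $(I_n,\theta_n)$, apply the chain (Leibniz) rule together with the canonical relations of Theorem \ref{th:commutation_relations_iso} to reduce to the one-variable polar Jacobian, and then extend to all of $U_{\rm iso}\cap iL^2_r$ by regularity and density. The only minor difference is that the paper, for a fixed pair $(m,n)$, only excises the finite union $\mathcal{Z}_m\cup\mathcal{Z}_n$ (so that the complement is automatically open) and argues by smoothness/continuity rather than analyticity; your version excising $\bigcup_n \mathcal{Z}_n$ is still a dense set, so the extension step survives, but there is no need to remove more than $\mathcal{Z}_m\cup\mathcal{Z}_n$.
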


\begin{proof}[Proof of Proposition \ref{Theorem 5.7}] 
Let $n \in \Z$. For any $|n| > R$, the set ${\mathcal Z}_n \cap  i  L^2_r$ is a real analytic 
submanifold of $U_{\rm iso}\cap i  L^2_r$ of codimension two whereas 
${\mathcal Z}_n \cap  i  L^2_r =\emptyset $ for $|n| \leq R$ (cf. \eqref{eq:Z_n_in_U_iso}). 
Since the coordinates $x_n$ and $y_n$ are smooth it suffices to check that for any $m \in \Z$ the claimed 
commutation relations hold on the subset 
$\big(i L^2_r \cap U_{\rm iso}\big)\setminus\big({\mathcal Z}_n\cup{\mathcal Z}_m\big)$.
Then for any $m\in\Z$, on $\big(U_{\rm iso}\setminus\mathcal{Z}_m\big)\cap i L^2_r$
\[
x_m=i\sqrt[+]{-2 I_m}\cos\theta_m,\quad y_m=i\sqrt[+]{-2 I_m}\sin\theta_m.
\]
Arguing as in the proof of \cite[Theorem 18.8]{GK1} one has by the chain rule
\begin{align*} 
\{ x_m, y_n\} &= \partial _{\theta _m} x_m \partial _{\theta
                     _n} y_n \{ \theta _m, \theta _n\} + \partial _{\theta
                     _m} x_m \partial _{I_n} y_n \{ \theta _m, I_n \} \\
                  &+ \partial _{I_m} x_m \partial _{\theta _n} y_n \{ I_m,
                     \theta _n\} + \partial _{I_m} x_m \partial _{I_n}
                     y_n \{ I_m, I_n \} .
\end{align*}
By Theorem \ref{th:commutation_relations_iso} it then follows that
\begin{eqnarray*}
\{x_m,y_n\}&=&\left(\partial _{\theta _m} x_m \partial _{I_n} y_n - 
\partial _{I_m} x_m \partial _{\theta _n} y_n\right)\delta_{mn}\\
&=&\left(-\sqrt{2 I_m}\sin\theta_m\frac{1}{\sqrt{2 I_n}}\sin\theta_n-
\sqrt{2 I_m}\cos\theta_m\frac{1}{\sqrt{2 I_n}}\cos\theta_n\right)\delta_{mn}\\
&=&-\delta_{mn}.
\end{eqnarray*}
All other commutation relations between coordinates are verified in a similar fashion.
\end{proof}

\medskip\medskip

In the remaining part of this Section we prove that for any $\varphi \in U_{\rm iso}\cap  i  L^2_r$, 
the Jacobian $d_\varphi \Phi $ of $\Phi $ is a Fredholm operator of index $0$. 
First we need to introduce some more notation and establish some auxiliary results. 
Recall that ${\mathcal Z}_n\cap i  L^2_r$, $|n| > R$, is a real analytic submanifold of 
real codimension two (cf. \eqref{eq:Z_n_in_U_iso}). Note that for 
$\varphi\in{\mathcal Z}_n\cap i  L^2_r$ with $|n|> R$, 
the double periodic eigenvalue $\lambda ^+_n(\varphi ) = \lambda ^-_n(\varphi )$ has geometric 
multiplicity two and  hence is also a Dirichlet eigenvalue. It turns out to be convenient to introduce for any $|n|> R$,
\[
{\mathcal M}_n:=\big\{\varphi \in U_{\rm iso}\,\big|\,\mu_n(\varphi)\in\{ \lambda ^\pm _n(\varphi)\}\big\} .
\]
Note that ${\mathcal Z}_n\cap i L^2_r\subseteq{\mathcal M}_n\cap i L^2_r$.

\begin{Lem}\label{Lemma 6.0} 
For any $|n| > R$, $\big(U_{\rm iso}\setminus{\mathcal M}_n\big)\cap i L^2_r$ is open and dense in 
$U_{\rm iso}\cap i  L^2_r$.
\end{Lem}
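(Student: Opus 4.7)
The strategy is to realize $\mathcal{M}_n$ as the zero set of an analytic function on $U_{\rm iso}$ and then invoke the analytic-subvariety principle combined with a first-order perturbation argument. For $|n|>R$, condition (I1) of Proposition \ref{prop:preparation_U_iso} together with Lemma \ref{lem:dirichlet_spectrum} guarantees that $\mu_n(\varphi)$ is the unique and simple Dirichlet eigenvalue of $L(\varphi)$ in the disk $D_n$ for every $\varphi\in U_{\rm iso}$; by the analytic implicit function theorem applied to $\chi_D$ of \eqref{eq:chi_D}, the map $\varphi\mapsto\mu_n(\varphi)$ is analytic on $U_{\rm iso}$. Hence
\[
F_n(\varphi):=\chi_p(\mu_n(\varphi),\varphi)=\Delta(\mu_n(\varphi),\varphi)^2-4
\]
is complex analytic on $U_{\rm iso}$, and $\mathcal{M}_n=F_n^{-1}(0)$ is an analytic subvariety, in particular closed. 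This gives openness of $(U_{\rm iso}\setminus\mathcal{M}_n)\cap iL^2_r$ in $U_{\rm iso}\cap iL^2_r$ at once.

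For density, recall from Section \ref{sec:actions_and_angles_in_U_iso} that $U_{\rm iso}\cap iL^2_r$ is connected, so the restriction of $F_n$ to this real slice is a real-analytic function on a connected real-analytic Hilbert manifold. If $\mathcal{M}_n\cap iL^2_r$ had nonempty interior in $U_{\rm iso}\cap iL^2_r$, this restriction would vanish on an open set, hence on the entire connected real slice; by the Taylor-series argument underlying Lemma \ref{lem:real-analyticity}, $F_n$ would then vanish identically on the connected complex open set $U_{\rm iso}$. It therefore suffices to exhibit a single potential $\varphi_*\in U_{\rm iso}$ with $F_n(\varphi_*)\ne 0$.

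The production of $\varphi_*$ is the main obstacle. If $\mu_n(\psi^{(1)})\notin\{\lambda_n^\pm(\psi^{(1)})\}$ one takes $\varphi_*=\psi^{(1)}$. Otherwise, without loss of generality $\mu_n(\psi^{(1)})=\lambda_n^+(\psi^{(1)})$, and the plan is to compute $d_\varphi F_n$ at $\psi^{(1)}$. Since $\spec_p L(\psi^{(1)})$ is simple, $\lambda_n^+$ is an analytic function of $\varphi$ in a neighborhood of $\psi^{(1)}$, and differentiating the identity $\chi_p(\lambda_n^+(\varphi),\varphi)\equiv 0$ yields $\partial_\varphi\chi_p(\lambda_n^+,\varphi)=-\partial_\lambda\chi_p(\lambda_n^+,\varphi)\,d\lambda_n^+$. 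Combining with the chain rule applied to $F_n$, one obtains
\[
d_\varphi F_n\big|_{\psi^{(1)}}=\partial_\lambda\chi_p\big(\lambda_n^+(\psi^{(1)}),\psi^{(1)}\big)\,\bigl(d\mu_n-d\lambda_n^+\bigr)\big|_{\psi^{(1)}}.
\]
The scalar prefactor is nonzero since $\lambda_n^+(\psi^{(1)})$ is a simple root of $\chi_p(\cdot,\psi^{(1)})$, so everything reduces to showing that the two $L^2_c$-gradients $d\mu_n|_{\psi^{(1)}}$ and $d\lambda_n^+|_{\psi^{(1)}}$ do not coincide; this is the key technical step. Standard first-order perturbation formulas for simple eigenvalues of the ZS operator express these gradients as explicit bilinear forms in the corresponding eigenfunctions (periodic for $\lambda_n^+$, Dirichlet for $\mu_n$) and their duals, and equality of the two gradients translates into an algebraic identity on the fundamental matrix $M(1,\lambda_n^+,\psi^{(1)})$ which fails because the periodic and Dirichlet boundary functionals are linearly independent on the two-dimensional solution space of $L(\psi^{(1)})f=\lambda_n^+ f$. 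Once $d_\varphi F_n|_{\psi^{(1)}}\ne 0$ is established, any direction $\delta\varphi\in iL^2_r$ pairing nontrivially with it yields the desired $\varphi_*=\psi^{(1)}+\varepsilon\delta\varphi\in U_{\rm iso}\cap iL^2_r$ with $F_n(\varphi_*)\ne 0$ for small $\varepsilon>0$, completing the argument.
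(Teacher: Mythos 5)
Your openness argument (realizing $\mathcal{M}_n$ as the zero set of the analytic function $F_n(\varphi)=\chi_p(\mu_n(\varphi),\varphi)$) and your reduction of density to exhibiting a single $\varphi_*\in U_{\rm iso}$ with $F_n(\varphi_*)\ne0$ (via the observation that real-analytic vanishing on an open subset of the connected real slice forces $F_n\equiv 0$ on the complex domain) are both correct and coincide with the structure of the paper's proof. The divergence, and the problem, is in how you produce $\varphi_*$.

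You attempt to produce $\varphi_*$ by a first-order perturbation argument at $\psi^{(1)}$: the factorization $d_\varphi F_n|_{\psi^{(1)}}=\partial_\lambda\chi_p(\lambda_n^+,\psi^{(1)})\,(d\mu_n-d\lambda_n^+)|_{\psi^{(1)}}$ is derived correctly, and the reduction to showing $d\mu_n|_{\psi^{(1)}}\ne d\lambda_n^+|_{\psi^{(1)}}$ is valid. But that last step is exactly where your argument stops being a proof. You assert that ``equality of the two gradients translates into an algebraic identity on $M(1,\lambda_n^+,\psi^{(1)})$ which fails because the periodic and Dirichlet boundary functionals are linearly independent,'' but you neither state that identity nor explain why linear independence of boundary functionals forces it to fail. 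For a non-selfadjoint ZS operator, each gradient is a bilinear expression in the corresponding (generalized) eigenfunction and a dual eigenfunction, and at a point where $\mu_n=\lambda_n^+$ these eigenfunctions may or may not be proportional; linear independence of boundary conditions on the two-dimensional solution space does not by itself rule out coincidence of the resulting gradients. As written, the ``key technical step'' you yourself flag is asserted, not proved, and it is not a routine calculation.

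The paper sidesteps this local perturbation computation altogether. It observes that $\mu_n$, and hence $F_n$, is analytic not just on $U_{\rm iso}$ but on the larger connected open set $U_{\rm iso}\cup U_{\rm tn}$, and that the tubular neighborhood $U_{\rm tn}\subseteq L^2_c$ — being an open neighborhood of a path to the origin — contains \emph{defocusing} potentials $\psi\in U_{\rm tn}\cap L^2_r$ near zero with $\lambda_n^-<\lambda_n^+$. If $F_n$ vanished identically on $U_{\rm iso}\cap iL^2_r$, analytic continuation would force it to vanish on $U_{\rm tn}\cap L^2_r$ as well, contradicting \cite[Corollary 9.4]{GK1}, a known result about the range of the Dirichlet eigenvalue in the defocusing case. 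This move is both more robust (it does not depend on the eigenvalue configuration at $\psi^{(1)}$) and more economical, since it replaces a delicate computation with a citation and an analytic continuation across the two real forms $L^2_r$ and $iL^2_r$ inside the common complexification. To salvage your approach you would need to carry out the perturbation-theoretic comparison of $d\mu_n$ and $d\lambda_n^+$ in full; alternatively you should follow the paper and exploit the already-established analyticity of $F_n$ on $U_{\rm iso}\cup U_{\rm tn}$.
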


\begin{proof}[Proof of Lemma \ref{Lemma 6.0}] 
Note that $ {\mathcal M}_n=\big\{\varphi \in U_{\rm iso}\,\big|\,\chi_p(\mu_n,\varphi )=0\big\}$
where we recall that $\chi _p(\lambda , \varphi ) = \Delta (\lambda ,
\varphi )^2 - 4$. Using that $\chi _p : \C \times L^2_c \to\C$, 
$(\lambda,\varphi )\mapsto\chi_p(\lambda,\varphi )$, and
for any $|n| > R$, $\mu _n : U_{\rm iso}\to\C$, $\varphi\mapsto\mu _n(\varphi )$, 
are analytic it follows that the composition $F_n : U_{\rm iso}\to\C$,
$\varphi\mapsto\chi_p\big(\mu_n(\varphi),\varphi\big)$ is analytic. The claimed result follows if 
one can show that $F_n$ does not vanish identically on $U_{\rm iso} \cap  i  L^2_r$. 
Assume on the contrary that $F_n$ vanishes
identically on $U_{\rm iso} \cap  i  L^2_r$. By analyticity it then vanishes
on all of $U_{\rm iso}$. Actually, the $n$-th Dirichlet eigenvalue $\mu _n$ is well defined and analytic
on $U_{\rm iso}\cup U_{\rm tn}$ where $U_{\rm tn}$ is the tubular neighborhood introduced in
Section \ref{sec:actions_in_U_tn}. Thus $F_n$ defines an analytic function on $U_{\rm iso}\cup U_{\rm tn}$ 
which vanishes also on $U_{\rm tn}$. Note that $U_{\rm tn}$ contains potentials $\psi$ in $U_{\rm tn}\cap L^2_r$ near zero
for which $\lambda ^-_n < \lambda ^+_n$ and 
$\big\{\varphi\in L^2_r\,\big|\,\spec_p\big(L(\varphi)\big)=\spec_p\big(L(\psi)\big)\big\}\subseteq U_{\rm tn}\cap L^2_r$. 
The vanishing of $F_n$ on $U_{\rm tn}\cap L^2_r$ then contradicts \cite[Corollary 9.4]{GK1}.
\end{proof}

\medskip

In a first step we establish asymptotics for the gradients of $z^\pm_n$ as
$|n| \to \infty $. Since finite gap potentials are dense in $U_{\rm iso}\cap  i  L^2_r$ (\cite[Corollary 1.1]{KST}), 
it turns out that for our purposes it is sufficient to establish them for such potentials. 
Recall that for any $n \in \Z$ we have that $e^+_n = (0, e^{-2\pi  i  n x})$ and $e^-_n = (e^{2\pi i  n x},0)$.

\begin{Lem}\label{Lemma 6.1} 
At any finite gap potential in $U_{\rm iso}\cap i  L^2_r$, one has
\[ 
\partial z^\pm _n = - 2e^\pm _n + \ell ^2_n .
\]
These estimates hold uniformly on $0\le x\le 1$ in the sense that 
$\|\partial z^\pm _n+2e^\pm _n\|_{L^\infty}=\ell^2_n$.
\end{Lem}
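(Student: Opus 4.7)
The proof follows the strategy of the analogous calculation in the defocusing setting (cf.\ \cite[Section 16]{GK1}), with the focusing sign conventions tracked throughout. The starting point is the factorization $z_n^\pm = \gamma_n e^{\pm i\beta^n_n}$, valid on $U_{\rm iso}\setminus\mathcal{Z}_n$, which upon differentiation gives
\[
\partial z_n^\pm = e^{\pm i\beta^n_n}\bigl(\partial\gamma_n \pm i\gamma_n\,\partial\beta^n_n\bigr).
\]
The plan is to express $\partial\gamma_n = \partial\lambda^+_n - \partial\lambda^-_n$ and $\partial\beta^n_n$ by the classical gradient formulas for simple periodic (resp.\ Dirichlet) eigenvalues of $L(\varphi)$, which display these gradients as suitably normalized products of components of the associated Floquet solutions. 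At a finite gap potential $\psi$, for all but finitely many $n$ the $n$-th gap is closed; for those $n$ the product $\gamma_n e^{\pm i\beta^n_n}$ extends analytically by Proposition \ref{Theorem 5.5} even though the individual factors do not. I would evaluate the gradient by approaching $\psi$ through potentials in $\mathcal{F}_n\setminus\mathcal{Z}_n$ and tracking the cancellation, in the same spirit as the limit calculation leading to Lemma \ref{Lemma 5.2}.

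The second key ingredient is the behavior of the Floquet matrix $M(x,\lambda,\psi)$ as $|n|\to\infty$ for $\lambda\in D_n$. Since $\psi$ is a finite gap potential it is smooth (in fact analytic), so the Floquet solutions obey
\[
M_1(x,\lambda,\psi) = (e^{-i\lambda x},0) + \ell^2_n,\qquad M_2(x,\lambda,\psi) = (0,e^{i\lambda x}) + \ell^2_n,
\]
uniformly in $x\in[0,1]$. Combined with $\tau_n = n\pi + \ell^2_n$ (Lemma \ref{lem:counting_lemma}), this yields $e^{\pm i\tau_n x} = e^{\pm i n\pi x} + \ell^2_n$ in $L^\infty([0,1])$. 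Products of two Floquet components evaluated at the double eigenvalue $\tau_n$ then collapse, modulo $\ell^2_n$-errors in $L^\infty$, to the explicit exponentials appearing in $e^+_n=(0,e^{-2\pi i n x})$ and $e^-_n=(e^{2\pi i n x},0)$; the normalization factor in the gradient formula contributes exactly the constant $-2$.

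The main obstacle is the careful handling of collapsed gaps. When $\gamma_n=0$, neither $\partial\lambda^\pm_n$ nor $\partial\beta^n_n$ is individually well defined, and only the combined expression $\partial(\gamma_n e^{\pm i\beta^n_n})$ admits a limit. The clean way to take this limit, analogous to the proof of Lemma \ref{Lemma 5.2}, is to use the manifestly analytic formula $z_n^\pm = 2(\tau_n-\mu_n)(1\pm\varepsilon_n)e^{\chi_n}$ valid on $\mathcal{F}_n\cap\mathcal{Z}_n$ and differentiate it directly, then verify that the resulting asymptotic expansion matches the one obtained from the formula on $\mathcal{F}_n\setminus\mathcal{Z}_n$. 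A secondary delicate point is the uniformity in $n$: the $\ell^2_n$ bound must be proved in $L^\infty([0,1])$ rather than merely pointwise, which requires propagating the Floquet asymptotics uniformly in $x$ through every step of the computation. Once these two points are addressed, the claimed asymptotic $\partial z_n^\pm = -2 e^\pm_n + \ell^2_n$ follows.
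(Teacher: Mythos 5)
Your overall direction is right---this is the focusing analogue of \cite[Lemma 17.1]{GK1}, and you correctly flag the degeneracy at collapsed gaps and the need for $L^\infty$-uniformity in $x$---but the proposal does not actually resolve the degeneracy. The ``classical gradient formulas for simple periodic eigenvalues'' cannot be applied to $\partial\lambda_n^\pm$ at a finite gap potential: for all but finitely many $n$, $\lambda_n^+=\lambda_n^-$ is a \emph{double} periodic eigenvalue, so $\lambda_n^\pm$ and hence $\gamma_n$ are not differentiable there, and neither $\partial\gamma_n$ nor $\partial\beta_n^n$ in your formal expansion exists individually. The paper sidesteps this by never differentiating these quantities separately; it uses the exact algebraic identity coming from \eqref{5.3},
\[
\gamma_n e^{-\varepsilon_n\omega_n}=2(\tau_n-\mu_n)+2\varepsilon_n\,w_n(\mu_n),
\]
so that $z_n^+=\bigl(2(\tau_n-\mu_n)+2\varepsilon_n r_n\bigr)e^{\varepsilon_n(o_n+v_n)}$ with $r_n=w_n(\mu_n)$. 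Now both factors are expressed through the analytic functions $\tau_n$, $\mu_n$, $r_n$, $Z_n$, the second factor tends to $1$, and the product rule gives $\partial z_n^+\to 2(\partial\tau_n-\partial\mu_n)+2\varepsilon_n\partial r_n$ as one approaches the finite gap potential. This identity is the device that makes the ``cancellation'' you want to track computable, and it is absent from the proposal.

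The alternative you suggest---differentiating the boundary value $z_n^\pm=2(\tau_n-\mu_n)(1\pm\varepsilon_n)e^{\chi_n}$ on ${\mathcal Z}_n\cap{\mathcal F}_n$---would fail for two reasons. First, it only produces derivatives tangent to the subvariety ${\mathcal Z}_n$; the content of the Lemma, the leading term $-2e_n^\pm$, lives in the transverse direction (it is exactly the part of $\partial z_n^\pm$ that sees the opening of the gap), so intrinsic differentiation of the boundary identity cannot see it. Second, at a finite gap potential $\psi$ one has $\mu_n(\psi)=\tau_n(\psi)=\lambda_n^\pm(\psi)$ for all large $|n|$ (a double periodic eigenvalue of geometric multiplicity two is automatically a Dirichlet eigenvalue), so $\psi\in{\mathcal Z}_n\setminus{\mathcal F}_n$ where $z_n^\pm\equiv 0$; the formula on ${\mathcal Z}_n\cap{\mathcal F}_n$ does not even apply at the point in question. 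The remaining ingredient the proposal omits is the gradient of $r_n$: the paper writes $i\varepsilon_n r_n=\phi_n\cdot\delta(\mu_n)$ with $\phi_n=Z_n(\mu_n)/\zeta_n(\mu_n)$ and $\delta(\mu_n)=\grave{m}_2(\mu_n)+\grave{m}_3(\mu_n)$, and then imports the gradient asymptotics of $\delta(\mu_n)$, $\tau_n$, $\mu_n$ from \cite{GK1}. That is where the Floquet-exponential products and the constant $-2$ you anticipate actually enter, but applied to $\delta(\mu_n)$ rather than to $\partial\lambda_n^\pm$.
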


\begin{proof}[Proof of Lemma \ref{Lemma 6.1}]
We follow the proof of \cite[Lemma 17.1]{GK1}. In view of
Lemma~\ref{Lemma 6.0} we may approximate a given potential $\psi \in
{\mathcal Z}_n \cap  i  L^2_r, |n| > R$, by potentials in $ i  L^2_r \cap
(U_{\rm iso}\setminus{\mathcal M}_n)$. Then it can be approximated
by potentials $\varphi $ in $ i  L^2_r \cap U_{\rm iso}$ satisfying either
$|\mu _n - \tau _n| \leq |\gamma _n| / 2, \mu _n \not= \lambda ^+_n,
\lambda ^-_n$ (case 1) or $|\mu _n - \tau _n| > |\gamma _n| / 2$ (case 2). 
Both cases are treated in a similar fashion so we concentrate on case 1 only.
As in \eqref{5.1} we define a sign $\varepsilon _n$ by
\[ 
\varepsilon_n = 
\frac{\sqrt[\ast ]{\Delta ^2(\lambda ) - 4}}{\sqrt[c]{\Delta ^2(\lambda ) - 4}} 
\Bigg\arrowvert _{\lambda=\mu _n}
\]
where the root $\sqrt[c]{\Delta ^2(\lambda ) - 4} = 2  i  \prod _{m \in
\Z} \frac{w_m(\lambda )}{\pi _m}$ is extended to $G_n$ by
extending $w_n(\lambda )$ from the left of $G_n$. Since by definition
$\zeta_n =- \frac{2}{\pi _n} \prod _{m \not= n}\frac{\sigma ^n_m - \lambda}{\pi_m}$
one has
\[ 
\frac{\zeta _n(\lambda )}{\sqrt[\ast ]{\Delta ^2(\lambda ) - 4}}\Bigg\arrowvert _{\lambda=\mu _n} 
= -  i  \varepsilon _n \frac{Z_n(\lambda )}{w_n(\lambda )}\Bigg\arrowvert _{\lambda=\mu _n}\quad 
\text{\rm with}\quad
Z_n(\lambda ) = - \prod _{m \not= n} \frac{\sigma ^n_m - \lambda }{w_m(\lambda )} .
\]
Hence
\[ 
i\beta ^n_n = \varepsilon _n \int ^{\mu _n}_{\lambda ^-_n} \frac{Z_n(\lambda )}{w_n(\lambda )}\,d\lambda
\]
and we are in the same situation as in the proof of Lemma~\ref{Lemma 5.2}.
With the notation introduced there we conclude again that
$z_n^+=\gamma _n e^{ i  \beta _n^n} = \gamma _n e^{-\varepsilon _n \omega _n} 
e^{\varepsilon _n(o_n + v_n)}$
and
\[ 
\gamma _n e^{-\varepsilon _n \omega _n} = -\gamma _n \varrho _n +
\varepsilon _n \gamma _n \sqrt[\rm st]{(1 - \varrho _n)(-1 - \varrho _n)}
\]
where $\varrho _n = 2(\mu _n - \tau _n) / \gamma _n$. As by assumption,
$2|\mu _n - \tau _n| \leq |\gamma _n|$, $\mu _n \notin \{ \lambda ^\pm _n\}$ and 
$\gamma _n \in  i  {\mathbb R}_{> 0}$ one has $|\varrho_n| \le 1$, $\varrho _n \not= \pm 1$.
By extending the standard root (cf. \eqref{eq:standard_root}) by continuity from above to the interval $[-1,1]$,
one obtains that
\[
\sqrt[\rm st]{(1-\varrho_n)(-1-\varrho _n)} = 
\begin{cases} 
-i\sqrt[+]{1 - \varrho ^2_n} &\text{\rm if}\quad\im(\varrho_n)\ge 0\\
+i\sqrt[+]{1 - \varrho ^2_n} &\text{\rm if}\quad\im(\varrho_n) < 0.
\end{cases}
\]
As $-i\gamma_n >0$ it then follows that $\gamma_n e^{-\varepsilon_n\omega_n}=2(\tau_n-\mu_n)+2\varepsilon_n r_n$ 
where $r_n = w_n(\mu_n)$. As both $\gamma_n e^{-\varepsilon_n\omega _n}$ and $e^{\varepsilon _n(o_n + v_n)}$ are 
analytic and as $o_n + v_n$ vanishes for $\varphi \to \psi $ (see proof of Lemma~\ref{Lemma 5.2}) we get by the product rule
\[
\partial z^+_n=\partial\big(\gamma _n e^{ i  \beta ^n_n}\big)\to 
\partial\big(\gamma _n e^{-\varepsilon _n \omega _n}\big) = 
2(\partial \tau _n - \partial \mu _n) +2\varepsilon _n\partial r_n.
\]
To study the gradient of $r_n$ we use the representation \eqref{5.1bis} at $\mu _n$ to write
\[ 
i\varepsilon_n r_n = 
\frac{Z_n(\mu _n)}{\zeta _n(\mu _n)} \sqrt[\ast ]{\Delta ^2(\mu _n) - 4} = 
\phi _n\cdot\delta(\mu_n)
\]
where $\phi _n := \frac{Z_n(\mu _n)}{\zeta_n(\mu _n)}$, $\delta(\mu_n):=\grave{m}_2(\mu_n)+\grave{m}_3(\mu_n)$
and, by the definition of the $*$-root,
\[ 
\sqrt[\ast ]{\Delta (\mu_n)^2 - 4} = \delta (\mu_n).
\]
By \cite[Lemma 4.4]{GK1} and the asymptotics of \cite[Lemma 5.3]{GK1}
\[ 
i\partial\delta(\mu _n)
=(-1)^n\big(e^+_n - e^-_n\big) + \ell ^2_n 
\]
where these estimates hold uniformly for $0\le x\le 1$.
In addition, by \cite[Theorem 2.4, Lemma 5.3]{GK1}, $\delta(\mu_n)=\ell^2_n$ and
$\dot \delta (\mu _n) = \ell ^2_n$.
It then follows from \cite[Lemma 7.6]{GK1} that
$\dot \delta (\mu _n) \partial \mu _n = \ell ^2_n .$
Moreover, by Lemma \ref{Lemma 5.3}, $Z_n(\mu _n) = -1 + O(\gamma _n)$
and by \cite[Lemma C.4]{GK1}, $\zeta _n(\mu _n) = - 2(-1)^n + \ell ^2_n$ and thus 
$2\phi _n = (-1)^n + \ell ^2_n$ and with Cauchy's estimate $\partial\phi_n=O(1)$, 
leading to $\delta(\mu_n)\partial\phi_n=\ell ^2_n$. On the other hand
\begin{align*} 
2\phi _n \partial (\delta (\mu _n)) &= 2\phi _n\partial\delta(\mu _n)
                     + 2\phi _n \dot \delta (\mu _n) \partial \mu _n \\
&=\left( (-1)^n + \ell ^2_n\right) \partial \delta(\mu _n) + \ell ^2_n =  
-i\big(e^+_n - e^-_n\big) + \ell ^2_n .
\end{align*}
Hence
\[
2 \varepsilon _n \partial r_n =  
-i\partial \big(2\phi_n\delta (\mu _n)\big) = e^-_n - e^+_n + \ell ^2_n .
\]
Finally, by \cite[Lemma 7.6]{GK1} and by \cite[Lemma 12.3]{GK1} and its proof as well as Lemma 12.4 in \cite{GK1}
\[
2\big(\partial \tau _n - \partial \mu _n\big) = - \big(e^+_n + e^-_n\big) + \ell ^2_n .
\]
Altogether we thus have proved that for finite gap potentials
\[ 
\partial z^+_n = 2\big(\partial \tau _n - \partial \mu _n\big) + 2 \varepsilon_n \partial r_n = 
- 2 e^+_n + \ell ^2_n\,.
\]
Analogously, one has
\[ 
\partial z^-_n = 2\big(\partial \tau _n - \partial \mu _n\big) - 2 \varepsilon_n \partial r_n = 
- 2 e^-_n + \ell ^2_n.
\]
\end{proof}

\begin{Lem}\label{Proposition 6.2} 
At any finite gap potential in $U_{\rm iso}\cap i L^2_r$,
\[ 
\partial x_n = - \frac{1}{\sqrt{2}}\big(e^+_n + e^-_n\big) + \ell ^2_n , \quad
\partial y_n = - \frac{1}{\sqrt{2} i  }\big(e^+_n - e^-_n\big) + \ell ^2_n .
\]
\end{Lem}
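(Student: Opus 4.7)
The plan is to deduce Proposition~\ref{Proposition 6.2} from Lemma~\ref{Lemma 6.1} by a direct Leibniz-rule computation. First I rewrite the formulas for $x_n,y_n$ in \eqref{eq:x,y'} (for $|n|>R$) as
\[
x_n=\frac{\xi_n}{\sqrt 8}\bigl(z^+_n E_n+z^-_n E_n^{-1}\bigr),\qquad
y_n=\frac{\xi_n}{\sqrt 8\,i}\bigl(z^+_n E_n-z^-_n E_n^{-1}\bigr),
\]
where $E_n:=e^{i(\theta_n-\beta^n_n)}$. Since $\theta_n-\beta^n_n=\tilde\beta^n+\sum_{|k|>R,\,k\ne n}\beta^n_k$ is analytic on $U_{\rm iso}$ by Proposition~\ref{Theorem 4.4} (as extended via Remark~\ref{rem:U_iso}), so is $E_n$; combined with the analyticity of $\xi_n$ from Lemma~\ref{lem:actions_asymptotics} and of $z^\pm_n$ from Proposition~\ref{Theorem 5.5}, the Leibniz rule applies termwise.

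Fix a finite gap potential $\varphi_0\in U_{\rm iso}\cap iL^2_r$. Only finitely many gap lengths $\gamma_n(\varphi_0)$ are nonzero, and whenever $\gamma_n(\varphi_0)=0$ (in particular for all $|n|$ large), the collapsed real periodic eigenvalue is also a Dirichlet eigenvalue by Lemma~\ref{lem:dirichlet_spectrum} and Lemma~\ref{lem:spectrum_symmetries}, so $\mu_n(\varphi_0)=\tau_n(\varphi_0)\in G_n$ and $\varphi_0\in\mathcal Z_n\setminus\mathcal F_n$; the analytic extension furnished by Lemma~\ref{Lemma 5.2} and Proposition~\ref{Theorem 5.5} then forces $z^\pm_n(\varphi_0)=0$. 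Applying Leibniz to the displayed formulas, every term carrying a factor $z^\pm_n(\varphi_0)$ drops out, leaving at $\varphi_0$
\[
\partial x_n=\frac{\xi_n(\varphi_0)}{\sqrt 8}\bigl(E_n(\varphi_0)\,\partial z^+_n+E_n(\varphi_0)^{-1}\,\partial z^-_n\bigr),
\]
and the analogous expression for $\partial y_n$ with a minus sign and a factor $1/i$.

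To conclude I substitute the relevant asymptotics. Lemma~\ref{lem:actions_asymptotics}(ii) gives $\xi_n(\varphi_0)=1+\ell^2_n$. At a finite gap potential one has $\gamma_k(\varphi_0)=0=\mu_k(\varphi_0)-\tau_k(\varphi_0)$ for all sufficiently large $|k|$, so by Lemma~\ref{lem:beta^n_k-asymptotics} (extended to $U_{\rm iso}$, cf.\ Remark~\ref{rem:U_iso}) only finitely many $\beta^n_k(\varphi_0)$ with $k\ne n$ are nonzero and each is $O(1/|n-k|)$; combined with $\tilde\beta^n(\varphi_0)=O(1/n)$ from Lemma~\ref{lem:beta_tilde-asymptotics}, this yields $\theta_n(\varphi_0)-\beta^n_n(\varphi_0)=O(1/n)$, hence $E_n(\varphi_0)^{\pm 1}-1=\ell^2_n$. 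Inserting Lemma~\ref{Lemma 6.1}'s bound $\partial z^\pm_n=-2e^\pm_n+\ell^2_n$ (which holds in the $L^\infty([0,1])$-norm, a fortiori in $L^2$) and expanding the products modulo $\ell^2_n$ remainders using $\|e^\pm_n\|_{L^2}=1$ produces
\[
\partial x_n=-\tfrac{1}{\sqrt 2}\bigl(e^+_n+e^-_n\bigr)+\ell^2_n,\qquad
\partial y_n=-\tfrac{1}{\sqrt 2\,i}\bigl(e^+_n-e^-_n\bigr)+\ell^2_n,
\]
as claimed; the finitely many indices $n$ with $\gamma_n(\varphi_0)\ne 0$ are absorbed in the $\ell^2_n$ remainder. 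The genuine analytic work sits in Lemma~\ref{Lemma 6.1}; the only mild obstacle here is verifying $E_n-1=\ell^2_n$, which would be delicate at a general potential but is immediate at a finite gap potential by the tail-vanishing of $\beta^n_k(\varphi_0)$ noted above.
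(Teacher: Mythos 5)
Your proof is correct and follows essentially the same route as the paper: apply the product rule to the explicit formulas for $x_n,y_n$, use $z^\pm_n=0$ for large $|n|$ at a finite gap potential together with $\xi_n=1+\ell^2_n$ and $\theta_n-\beta^n_n=O(1/n)$ to reduce to $\partial x_n=\frac{1}{2\sqrt2}(\partial z^+_n+\partial z^-_n)+\ell^2_n$ (and analogously for $y_n$), and then invoke Lemma \ref{Lemma 6.1}. You spell out a couple of steps the paper leaves implicit (the justification of $z^\pm_n(\varphi_0)=0$ via Lemma \ref{Lemma 5.2}/Proposition \ref{Theorem 5.5}, and the absorption of the finitely many $n$ with $\gamma_n\ne0$ into the $\ell^2_n$ remainder), but the argument is the same.
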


\begin{proof}[Proof of Lemma \ref{Proposition 6.2}]
We follow the arguments in the proof of Theorem 17.2 in \cite{GK1}.
By the definition of $x_n$ and $y_n$, for any $|n| > R$,
\begin{align}
\label{6.1}&x_n = \frac{\xi _n}{2 \sqrt{2}} 
\left(z^+_n e^{ i(\theta_n -  \beta^n_n)} + 
z^-_n e^{- i (\theta_n- \beta^n_n)} \right)\\
\label{6.2}&y_n = \frac{\xi _n}{2 \sqrt{2} i  } 
\left( z^+_n e^{ i (\theta _n - \beta ^n_n)} - 
z^-_n e^{- i  (\theta _n -\beta ^n_n)} \right).
\end{align}
At a finite gap potential we have $z^\pm _n = 0$ for $|n|$ sufficiently large, 
$\xi_n = 1 + \ell ^2_n$ by \eqref{eq:xi_asymptotics} and 
$\theta _n - \beta ^n_n = O \left( \frac{1}{n} \right)$ by 
Remark \ref{rem:U_iso}, using that $|\gamma_m|+|\mu_m-\tau_m|=0$ for all but finitely many $m$. 
Furthermore by Lemma \ref{lem:beta^n_k-asymptotics} and the asymptotics  of $\lambda ^\pm _n$ and $\mu _n$, the 
$(z^\pm _n)_{n\in \Z}$ are locally bounded in $\ell ^2$. By applying the product
rule and Cauchy's estimate we thus obtain from the formulas \eqref{6.1} and \eqref{6.2} that
\[ 
\partial x_n = \frac{1}{2\sqrt{2}}\big(\partial z^+_n +\partial z^-_n\big) + \ell ^2_n ,\quad
\partial y_n = \frac{1}{2\sqrt{2} i}\big(\partial z^+_n -\partial z^-_n\big) + \ell ^2_n .
\]
From Lemma \ref{Lemma 6.1} we then get the claimed asymptotics.
\end{proof}

\medskip

Now consider the Jacobian of $\Phi $. At any $\varphi\in U_{\rm iso}\cap  i  L^2_r$
it is the linear map given by
\[ 
d_\varphi\Phi :  i  L^2_r \to  i  \ell ^2_r ,\quad 
h\mapsto\left(\big(\langle b^+_n,h\rangle_r\big)_{n\in\Z},\big(\langle b^-_n,h\rangle_r\big)_{n\in\Z}\right)
\]
where 
\[
b^+_n := \partial x_n,\quad b^-_n := \partial y_n
\] 
and $\langle \cdot ,\cdot \rangle _r$ denotes the bilinear form
\[ 
L^2_c \times L^2_c \to \C , \quad 
(h,g) \mapsto \int ^1_0 (h_1 g_1 + h_2 g_2) dx .
\]
For any $n\in\Z$, introduce
\[ 
d^+_n := - \frac{1}{\sqrt{2}}\big(e^+_n + e^-_n\big),\quad 
d^-_n := - \frac{1}{\sqrt{2} i}\big(e^+_n - e^-_n\big) .
\]
As $d_n^+$, $d_n^-$, $n\in\Z$, represent a Fourier basis of $ i  L^2_r$, the linear map
\[ 
F :  i  L^2_r \to  i  \ell ^2_r, \quad
h\mapsto\left( \langle d^+_n,h \rangle _r, \langle d^-_n, h \rangle _r \right) _{n \in \Z}
\]
is a linear isomorphism. To prove the same for the Jacobian $d_\varphi\Phi $ it therefore 
suffices to show that
\[ 
B_\varphi := F^{-1} d_\varphi \Phi :  i  L^2_r \to  i L^2_r
\]
is a linear isomorphism for any $\varphi$ in $U_{\rm iso}\cap  i  L^2_r$.
Clearly, $B_\varphi $ is continuous in $\varphi $ by the analyticity of
$\Phi $ and is given by
\[
B_\varphi h = 
\sum _{n \in \Z} \langle b^+_{-n}, h \rangle_r d^+_n + 
\langle b^-_{-n}, h\rangle_r d^-_n .
\]
Its adjoint $A_\varphi $ with respect to $\langle\cdot,\cdot\rangle_r$
is then a bounded linear operator on $ i  L^2_r$ which also depends continuously
on $\varphi$ and is given by
\begin{equation}\label{eq:A}
A_\varphi h = 
\sum _{n \in \Z} \langle d^+_{-n}, h \rangle _r b^+_n + 
\langle d^-_{-n}, h \rangle _r b^-_n .
\end{equation}
Moreover, $B_\varphi $ is a linear isomorphism if and only if $A_\varphi$ is. 
We obtain the following

\begin{Lem}\label{prop:jacobian} 
For any $\varphi \in U_{\rm iso}\cap  i  L^2_r$, the differential $d_\varphi \Phi $ is 
a linear isomorphism if and only if the operator $A_\varphi $ is a linear isomorphism. 
The latter is a compact perturbation of the identity on $ i  L^2_r$.
\end{Lem}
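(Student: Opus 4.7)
The plan is to prove the two assertions in sequence. For the equivalence of invertibility, I would combine two ingredients already in hand. First, $F:iL^2_r\to i\ell^2_r$ is a linear isomorphism (the Fourier basis property noted before the statement), so $B_\varphi = F^{-1}\circ d_\varphi\Phi$ is invertible iff $d_\varphi\Phi$ is. Second, a direct manipulation of the explicit series for $B_\varphi$ and $A_\varphi$ shows
\[
\langle B_\varphi h,g\rangle_r = \langle h,A_\varphi g\rangle_r\quad\text{for all}\quad h,g\in iL^2_r,
\]
i.e., $A_\varphi$ is the transpose of $B_\varphi$ with respect to the nondegenerate bilinear pairing $\langle\cdot,\cdot\rangle_r$ restricted to $iL^2_r$. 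Since transposition with respect to a nondegenerate bilinear form is an involution on the bounded operators preserving invertibility, $B_\varphi$ is an isomorphism iff $A_\varphi$ is.

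For the compactness claim, I would first treat the case of a finite gap potential $\varphi \in U_{\rm iso}\cap iL^2_r$. Proposition \ref{Proposition 6.2} gives the $L^2$-asymptotics $b^\pm_n = d^\pm_n + r^\pm_n$ with $\big(\|r^\pm_n\|_{L^2}\big)_{n\in\Z}\in\ell^2$. Using the completeness/duality expansion
\[
h = \sum_n\langle d^+_{-n},h\rangle_r d^+_n + \langle d^-_{-n},h\rangle_r d^-_n \quad \text{for}\quad h\in iL^2_r,
\]
which is equivalent to $F$ being an isomorphism, substitution into \eqref{eq:A} yields
\[
(A_\varphi-I)h = \sum_n\langle d^+_{-n},h\rangle_r r^+_n + \langle d^-_{-n},h\rangle_r r^-_n.
\]
The coordinate functionals $h\mapsto\langle d^\pm_{-n},h\rangle_r$ are uniformly bounded (they are the components of $F$), and $\sum_n\|r^\pm_n\|_{L^2}^2<\infty$, so this defines a Hilbert--Schmidt, hence compact, operator on $iL^2_r$.

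To pass to arbitrary $\varphi\in U_{\rm iso}\cap iL^2_r$ I would appeal to the density of finite gap potentials in $U_{\rm iso}\cap iL^2_r$ (by \cite[Corollary 1.1]{KST}, cited earlier in the proof of Lemma \ref{Lemma 6.1}) together with the continuity of $\varphi\mapsto A_\varphi$ in the operator norm, which follows from the real analyticity of $\Phi$. Since the ideal of compact operators is closed in the operator norm, $A_\varphi - I$ remains compact for every $\varphi$. The main technical obstacle is really packaged already in Proposition \ref{Proposition 6.2}: once the $\ell^2$-asymptotics of $b^\pm_n - d^\pm_n$ at finite gap potentials are in hand, the completeness identity, the Hilbert--Schmidt bound, and the density/closedness extension are all routine.
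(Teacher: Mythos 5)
Your proposal is correct and follows essentially the same route as the paper: the equivalence of invertibility is obtained from $B_\varphi = F^{-1}d_\varphi\Phi$ together with $A_\varphi$ being the transpose of $B_\varphi$ under $\langle\cdot,\cdot\rangle_r$, and the compactness of $A_\varphi-\mathrm{Id}$ is established at finite gap potentials via the Hilbert--Schmidt bound $\sum_n\|(A_\varphi-\mathrm{Id})d^\pm_n\|^2 = \sum_n\|b^\pm_n-d^\pm_n\|^2<\infty$ from Lemma~\ref{Proposition 6.2}, then extended by density and the norm continuity of $\varphi\mapsto A_\varphi$. Your explicit expansion of $(A_\varphi-\mathrm{Id})h$ in the $d^\pm_n$ basis is just a more verbose phrasing of the same Hilbert--Schmidt computation.
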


\begin{proof}[Proof of Lemma \ref{prop:jacobian}]
We follow the arguments of the proof of Lemma 17.3 in \cite{GK1}.
It remains to prove the compactness claim. At any finite
gap potential in $U_{\rm iso}\cap i  L^2_r$ we have by
Lemma \ref{Proposition 6.2}
\[ 
\sum _{n \in \Z} \| (A_\varphi - Id)d^\pm _n \| ^2 =
\sum _{n \in \Z} \| b^\pm _n - d^\pm _n \| ^2 < \infty .
\]
Thus $A_\varphi - Id$ is Hilbert-Schmidt and therefore compact. Since $A_\varphi $ depends 
continuously on $\varphi$ and finite gap potentials are dense in $U_{\rm iso} \cap  i  L^2_r$ we
see that $A_\varphi-Id$ is compact for any $\varphi \in U_{\rm iso} \cap  i  L^2_r$.
\end{proof}

We summarize the main results of this Section as follows.

\begin{Th}\label{th:local_diffeo}
The map $\Phi : U_{\rm iso}\cap i L^2_r\to i\ell^2_r$, 
$\varphi\mapsto\big(x_n(\varphi),y_n(\varphi) \big)_{n\in\Z}$,
is real analytic and for any $m,n\in\Z$ and $\varphi\in U_{\rm iso}\cap i L^2_r$,
\[
\{x_m,x_n\}=0,\quad\{x_m,y_n\}=-\delta_{mn},\quad\{y_m,y_n\}=0\,.
\]
Furthermore, for any $\varphi\in U_{\rm iso}\cap i L^2_r$ we have that 
$d_\varphi\Phi : i L^2_r\to i\ell^2_r$ is a linear isomorphism.
\end{Th}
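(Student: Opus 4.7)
The three assertions of the theorem are of different character. Real analyticity of $\Phi$ is already provided by Proposition \ref{Theorem 5.6}, and the canonical Poisson bracket relations by Proposition \ref{Theorem 5.7}, so the plan is to focus on the only remaining claim: that $d_\varphi \Phi \colon iL^2_r \to i\ell^2_r$ is a linear isomorphism at every $\varphi\in U_{\rm iso}\cap iL^2_r$. By Lemma \ref{prop:jacobian}, $d_\varphi \Phi$ is invertible if and only if the bounded operator $A_\varphi\colon iL^2_r\to iL^2_r$ is, and $A_\varphi = I + K$ with $K$ compact. Hence $A_\varphi$ is Fredholm of index zero, and the task reduces to showing that $A_\varphi$ is injective.

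To establish injectivity I would exploit the Poisson bracket structure. Assume $h\in iL^2_r$ lies in $\ker A_\varphi$ and set $\alpha_n := \langle d^+_{-n}, h\rangle_r$ and $\beta_n := \langle d^-_{-n}, h\rangle_r$. The identity $A_\varphi h = 0$ then rewrites via \eqref{eq:A} as
\[
\sum_{n\in\Z}\bigl(\alpha_n\, \partial x_n + \beta_n\, \partial y_n\bigr) = 0 \quad\text{in } L^2_c.
\]
This series converges absolutely in $L^2_c$ since Lemma \ref{Proposition 6.2} gives $\partial x_n = d^+_n + \ell^2_n$ and $\partial y_n = d^-_n + \ell^2_n$, while $(\alpha_n),(\beta_n)\in \ell^2(\Z)$ because $F$ is a linear isomorphism. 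For any fixed $m\in\Z$, the Hamiltonian vector field $X_{x_m}\in L^2_c$ is well defined since $x_m$ is analytic on $U_{\rm iso}$ with $L^2$-gradient; pairing the displayed identity via the continuous bilinear form $\langle \cdot,\cdot\rangle_r$ on $L^2_c\times L^2_c$ with $X_{x_m}$, and using the fact that $\langle \partial F, X_G\rangle_r = \{F,G\}$ together with Proposition \ref{Theorem 5.7}, yields termwise
\[
0 = \sum_{n\in\Z}\bigl(\alpha_n\{x_n,x_m\} + \beta_n\{y_n,x_m\}\bigr) = \beta_m.
\]
An analogous pairing with $X_{y_m}$ produces $-\alpha_m = 0$. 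Since this holds for every $m$ and $F$ is a linear isomorphism, the vanishing of all $\alpha_n, \beta_n$ forces $h = 0$.

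I do not anticipate a serious obstacle, as the heavy analytical work, namely the asymptotics of the gradients $\partial x_n, \partial y_n$, the canonical Poisson bracket relations, and the compact-plus-identity structure of $A_\varphi$, has already been completed earlier in Section \ref{sec:birkhoff_map}. The only delicate point is the termwise interchange of the infinite sum with the pairings against $X_{x_m}$ and $X_{y_m}$, which is justified by the absolute convergence of the series in $L^2_c$ together with the continuity of $\langle \cdot,\cdot\rangle_r$ on $L^2_c\times L^2_c$. Combining the three ingredients—Proposition \ref{Theorem 5.6}, Proposition \ref{Theorem 5.7}, and the injectivity of $A_\varphi$ just established—delivers the full statement of the theorem.
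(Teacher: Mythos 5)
Your proof is correct and follows essentially the paper's route: Propositions~\ref{Theorem 5.6} and~\ref{Theorem 5.7} together with Lemma~\ref{prop:jacobian} reduce the claim to injectivity of the index-zero Fredholm operator $A_\varphi=I+K$, and the pairing argument you give against $X_{x_m}$ and $X_{y_m}$ using the canonical commutation relations is precisely the content of the cited Lemma~F.7 of \cite{GK1}, which the paper invokes as a black box. One small imprecision: the series $\sum_n(\alpha_n\partial x_n + \beta_n\partial y_n)$ does not in general converge \emph{absolutely} — $(\alpha_n)\in\ell^2$ need not be in $\ell^1$ while $\|\partial x_n\|_{L^2}$ is merely bounded, and the asymptotics of Lemma~\ref{Proposition 6.2} are established only at finite gap potentials — but the termwise interchange you perform needs only convergence of the partial sums in $L^2_c$ (which holds because $A_\varphi$ is a bounded operator) together with continuity of $\langle\cdot,X_{x_m}\rangle_r$, so your conclusion is unaffected.
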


\begin{proof}[Proof of Theorem \ref{th:local_diffeo}]
In view of Proposition \ref{Theorem 5.6} and Proposition \ref{Theorem 5.7} it
remains to prove that for any $\varphi\in U_{\rm iso}\cap i L^2_r$
the differential $d_\varphi\Phi : i L^2_r\to i\ell^2_r$ is a linear isomorphism.
By Lemma \ref{prop:jacobian} it suffices to show that $A_\varphi : i L^2_r\to i L^2_r$
is such a map. This follows from Proposition \ref{Theorem 5.7}, Lemma \ref{prop:jacobian}, and 
Lemma F.7 in \cite{GK1}. 
\end{proof}

\section{Proof of Theorem \ref{th:main}}\label{sec:proofs}
In this Section we prove Theorem \ref{th:main}, stated in the Introduction.
Recall that the map
\begin{equation}\label{eq:birkhoff_map}
\Phi : U_{\rm iso}\cap i L^2_r\to i\ell^2_r
\end{equation}
constructed in Theorem \ref{th:local_diffeo} is a canonical local diffeomorphism.
The map $\Psi$ of Theorem \ref{th:main} will be obtained from $\Phi$ by a slight
adjustment to ensure that $\Psi$ is one-to-one. We begin with some preliminary considerations.
Let us first study isospectral sets of the ZS operator with potentials in $i L^2_r$.
By Proposition 2.2 in \cite{KTPreviato}, for any $\psi\in i L^2_r$, the isospecral set $\iso(\psi)$ is compact. 
Furthermore, recall that any connected component in a topological space is closed. If the space is locally 
path connected, then its connected components are also open (see e.g. \cite{Munkres}).
We have

\begin{Lem}\label{lem:iso_components}
For any $\psi\in i L^2_r$ with simple periodic spectrum the isospectral set $\iso(\psi)$ is locally path connected. 
In particular, the connected components of $\iso(\psi)$ are open and closed.
Since $\iso(\psi)$ is compact we conclude that $\iso(\psi)$ has finitely many connected components.
\end{Lem}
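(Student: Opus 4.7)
The plan is to exhibit, for each $\varphi\in\iso(\psi)$, an open neighborhood of $\varphi$ in $\iso(\psi)$ that is identified via the pre-Birkhoff map of Section \ref{sec:birkhoff_map} with an open piece of a countable product of circles in $i\ell^2_r$, and then to verify that such a product is locally path connected. The statements about connected components will follow from general topology together with the compactness of $\iso(\psi)$ (Proposition 2.2 in \cite{KTPreviato}).

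Fix $\varphi\in\iso(\psi)$. Since $\iso(\varphi)=\iso(\psi)$, the spectrum $\spec_p L(\varphi)$ is simple, and the constructions of Sections \ref{sec:actions_and_angles_in_U_iso} and \ref{sec:birkhoff_map} can be applied at $\varphi$: they produce an open neighborhood $U_{\rm iso}$ of $\iso_o(\varphi)$, spectral actions $I_n$ defined by \eqref{eq:I_n_in_U_iso}, and the pre-Birkhoff map $\Phi:U_{\rm iso}\cap iL^2_r\to i\ell^2_r$. Under the identification \eqref{eq:identification} one has $I_n=-|z_n|^2$, and since each $I_n$ depends on $\eta$ only through $\spec_p L(\eta)$, setting $r_n:=|z_n(\varphi)|$ gives
\[
\Phi\big(\iso(\psi)\cap U_{\rm iso}\big)\subseteq\Phi(U_{\rm iso})\cap T,
\qquad T:=\prod_{n\in\Z}\big\{z\in\C\,\big|\,|z|=r_n\big\}.
\]
For the reverse inclusion near $\varphi$ the plan is to use that $\Phi$ is canonical (Proposition \ref{Theorem 5.7}), so the Hamiltonian vector field $X_{I_n}$ pushes forward under $\Phi$ to $\partial_{\theta_n}$ and, being spectral, is tangent to $\iso(\psi)$. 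Theorem \ref{th:local_diffeo} then ensures that $d_\varphi\Phi$ is a linear isomorphism and that the span of the $X_{I_n}(\varphi)$ matches the tangent space of $T$ at $\Phi(\varphi)$. Hence the joint orbits of $(X_{I_n})_n$ fill out an open piece of $\Phi^{-1}(T)$ around $\varphi$, all of which lies inside $\iso(\psi)$. On an open $V\subseteq U_{\rm iso}\cap iL^2_r$ containing $\varphi$ on which $\Phi$ is a diffeomorphism and which is covered by these orbits, this will yield $V\cap\iso(\psi)=V\cap\Phi^{-1}(T)$.

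Next I would check that $T$ is locally path connected in the $\ell^2$-topology. Since $\Phi(\iso_o(\varphi))\subseteq i\ell^2_r$, the radii satisfy $(r_n)\in\ell^2$, so given $p=(\sqrt{2}\,r_n e^{i\alpha_n})_n\in T$ and $\varepsilon>0$, any $q=(\sqrt{2}\,r_n e^{i\theta_n})_n\in T$ in the $\ell^2$-ball of radius $\varepsilon$ about $p$ (equivalently, $4\sum_n r_n^2\sin^2((\theta_n-\alpha_n)/2)<\varepsilon^2$ with each $\theta_n-\alpha_n$ chosen in $(-\pi,\pi]$) can be joined to $p$ by the path $t\mapsto(\sqrt{2}\,r_n e^{i(\alpha_n+t(\theta_n-\alpha_n))})_n$. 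The monotonicity of $\sin^2(s/2)$ on $[0,\pi]$ keeps this path inside the same ball, and $(r_n)\in\ell^2$ makes it continuous into $\ell^2$.

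Combining these steps, every $\varphi\in\iso(\psi)$ will admit a basis of path connected open neighborhoods in $\iso(\psi)$, so $\iso(\psi)$ is locally path connected. The components of such a space are both open and closed, and the compactness of $\iso(\psi)$ then forces the open cover by components to be finite. The main obstacle is the local identification $V\cap\iso(\psi)=V\cap\Phi^{-1}(T)$: one inclusion is immediate from the spectral invariance of the actions, while the reverse requires the canonical relations of Proposition \ref{Theorem 5.7} together with the local diffeomorphism property of Theorem \ref{th:local_diffeo} in order to match tangent spaces and push the joint orbits of the action flows onto a full neighborhood of $\varphi$ inside $\Phi^{-1}(T)$.
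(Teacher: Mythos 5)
Your proposal follows essentially the same route as the paper: exhibit each point of $\iso(\psi)$ as lying in a neighborhood that $\Phi$ identifies with an open piece $T\cap V$ of a countable product of circles, prove the forward inclusion $\Phi(\iso(\psi)\cap U)\subseteq T\cap V$ from spectral invariance of the actions, obtain the reverse inclusion from the isospectral flows $X_{I_n}$, verify local path connectedness, and close with general topology plus compactness. The set-up, the use of Proposition \ref{Theorem 5.7} and Theorem \ref{th:local_diffeo}, and the local picture are all the paper's.

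However there is a genuine gap in your treatment of the reverse inclusion, and it is precisely at the spot you yourself flag as ``the main obstacle.'' You argue that since the vectors $X_{I_n}(\varphi)$ push forward to $\partial_{\theta_n}$ and ``the span of the $X_{I_n}(\varphi)$ matches the tangent space of $T$ at $\Phi(\varphi)$,'' the joint orbits of the flows $X_{I_n}$ fill out an open piece of $\Phi^{-1}(T)$. This is finite-dimensional intuition, and it fails here: the (algebraic) span of $\{\partial_{\theta_n}\}_{n\in\Z}$ is only a \emph{dense} subspace of the tangent space of $T$ in the $\ell^2$-topology, not all of it, and correspondingly a finite composition of the flows $G^{\tau_n}_{X_{I_n}}$ changes only finitely many angular coordinates. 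The orbit of $\Phi(\varphi)$ under finitely many such flows is therefore a dense but proper subset of $T\cap V$, not an open piece, so ``pushing the joint orbits onto a full neighborhood of $\varphi$ inside $\Phi^{-1}(T)$'' does not follow from the tangent-space matching alone. To bridge this gap one must use the closedness (in fact compactness, by Proposition 2.2 of \cite{KTPreviato}) of $\iso(\psi)$: the orbit under finite compositions is contained in $\iso(\psi)$, is dense in the closed set $T\cap V$, and since $\iso(\psi)\cap U_\psi$ is closed in $U_\psi$, the image $\Phi(\iso(\psi)\cap U_\psi)$ is closed in $V$ and hence must contain all of $T\cap V$. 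This closedness step is exactly what the paper invokes and what is missing from your argument. An alternative repair would be to show that $\sum_n c_n X_{I_n}$ is a well-defined complete vector field on $U_\psi\cap i L^2_r$ for a suitable bounded coefficient sequence $(c_n)$ and flow it for time one to reach any prescribed point of $T\cap V$, but that requires a separate convergence argument you have not supplied, whereas the closedness route is immediate from what is already available.
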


\begin{proof}[Proof of Lemma \ref{lem:iso_components}]
It is enough to proof that $\iso(\psi)$ is locally path connected.
First note that we can assume without loss of generality that $\psi\in\iso_o(\psi^{(1)}) $ where 
$\psi^{(1)}\in U_{\rm iso}\cap i L^2_r$ is the potential with simple periodic spectrum used in the construction of 
the map \eqref{eq:birkhoff_map} in Section \ref{sec:birkhoff_map}.
Since $\Phi$ is a local diffeomorphism we can find an open neighborhood 
$U_\psi$ of $\psi$ in $U_{\rm iso}\cap i L^2_r$ and an open neighborhood $V_{p^0}$ of 
$p^0:=\Phi(\psi)$ in $i\ell^2_r$ 
such that
\[
\Phi : U_\psi\to V_{p^0}
\]
is a diffeomorphism. Here $p^0=(p_n^0)_{n\in\Z}$ with $p_n^0=i(u_n^0,v_n^0)$ and
the neighborhood $V_{p^0}$ of $p^0$ in $i\ell^2_r$ is chosen of the form
\begin{equation}\label{eq:tailneighborhood}
V_{p^0}=B^{\delta_0',\delta_0''}_{|n|\le R'}(p^0)\times B^{\epsilon_0}_{|n|>R'}(0)
\end{equation}
where $R',\delta_0',\delta_0'',\epsilon_0>0$ are appropriately chosen parameters,
\begin{equation}\label{eq:finite_component}
B^{\delta_0',\delta_0''}_{|n|\le R'}(p^0):=
\bigtimes_{|n|\le R'}\Big\{p_n=i (u_n,v_n)\in i\,\R^2\,\Big|\,\big||p_n^0|-|p_n|\big|<\delta_0',
|\theta_n-\theta_n^0|<\delta_0''\Big\},
\end{equation}
\begin{equation}\label{eq:tail_component}
B^{\epsilon_0}_{|n|>R'}(0):=
\Big\{p_n=i (u_n,v_n)_{|n|>R'}\in i\ell^2_r\,\Big|\,\Big(\sum_{|n|>R'}|p_n|^2\Big)^{1/2}<\epsilon_0\Big\},
\end{equation}
and $|p_n|=\sqrt{u_n^2+v_n^2}$ and $\theta_n$ are the polar coordinates of the point $(u_n,v_n)$
in the Euclidean plane $\R^2$. 
By construction,
for any $\varphi\in U_{\rm iso}\cap i L^2_r$ and $p=\Phi(\varphi)$ we have that $iu_n=x_n(\varphi)$ and $i v_n=y_n(\varphi)$ 
and hence 
\begin{equation}\label{eq:correspondence_coordinates}
\frac{1}{2}\,|p_n|^2=-I_n(\varphi)=-\frac{1}{4}(\xi_n\gamma_n)^2\ge 0\quad\text{and}\quad \theta_n=\theta_n(\varphi).
\end{equation}
Since $\psi$ has simple periodic spectrum we have that $|p_n|>0$ for any $n\in\Z$.
We will assume that $0<\delta_0'<|p_n^0|$ for $|n|\le R'$ and $0<\delta_0''<\pi$. 

\begin{Rem}\label{rem:tail_neighborhood}
Here we used that for any $p^0\in i\ell^2_r$ and for any open neighborhood $W_{p^0}$ of $p^0$ in $i\ell^2_r$ there exists
a open neighborhood $V_{p^0}$ of $p^0$, $V_{p^0}\subseteq W_{p^0}$ of the form \eqref{eq:tailneighborhood}.
An important property of $V_{p^0}$ is that its ``tail'' component $B^{\epsilon_0}_{|n|>R'}(0)$ is a ball in $i \ell^2_r$.
centered at zero. We will call such a neighborhood a {\em tail neighborhood} of $p^0$ in $i\ell^2_r$.
\end{Rem}

\noindent Note that the action variables $I_n$, $n\in\Z$, in $U_{\rm iso}$ 
constructed in Section \ref{sec:actions_and_angles_in_U_iso} are constant on isospectral potentials. 
This follows from the fact that the contours $\Gamma_n$, $n\in\Z$, used  in the definition of the actions on $U_{\rm iso}$ are fixed.
Hence
\[
\Phi\big(\iso(\psi)\cap U_\psi\big)\subseteq
\Tor(p^0)\cap V_{p^0}
\]
where for $q^0\in i\ell^2_r$ we set
\begin{equation}\label{eq:Tor}
\Tor(q^0):=\Big\{\big(i (u_n,v_n)\big)_{n\in\Z}\in i\ell^2_r\,\Big|\,u_n^2+v_n^2=|q_n^0|^2, n\in\Z\Big\}.
\end{equation}
By a slight abuse of terminology we refer to $\Tor(q^0)$ as a torus of dimension 
$\#\big\{n\in\Z\,\big|\,|q_n^0|>0\big\}$.
Note that $\Tor(p^0)$ is a compact set in $i\ell^2_r$ that is an infinite product of circles 
whereas the set $\Tor(p^0)\cap V_{p^0}$ is the product of $2R'+1$ arcs
\begin{equation}\label{eq:finitely_many_circles}
\bigtimes_{|n|\le R'}\Big\{i (u_n,v_n)\in i\,\R^2\,\Big|\,|p_n|=|p^0_n|, |\theta_n-\theta^0_n|<\delta_0''\Big\}
\end{equation}
times the infinite product of circles
\begin{equation}\label{eq:infinitely_many_circles}
\Big\{i (u_n,v_n)_{|n|>R'}\in i\ell^2_r\,\Big|\,|p_n|=|p^0_n|,\,|n|>R'\Big\}.
\end{equation}
It follows from Lemma 8.3 in \cite{GK1} and the fact that the actions are defined only in terms of the discriminant
that $\{I_n,\Delta(\lambda)\}=0$ on $U_{\rm iso}\cap iL^2_r$ for any $n\in\Z$ and $\lambda\in\C$. 
This implies that for any $n\in\Z$ the Hamiltonian vector field $X_{I_n}$ corresponding to the action variables $I_n$ in 
$U_{\rm iso}\cap iL^2_r$ is {\em isospectral}, i.e. for any $\varphi\in U_{\rm iso}\cap iL^2_r$ the integral trajectory of 
$X_{I_n}$ in $U_{\rm iso}\cap iL^2_r$ with initial data at $\varphi$ lies in $\iso(\varphi)$.
In view of the commutation relations
\begin{equation}\label{eq:commutation_relations}
\{\theta_n,I_m\}=\delta_{nm},\quad\{I_n,I_m\}=0,\quad n,m\in\Z, 
\end{equation}
the fact that $\Phi : U_\psi\to V_{p^0}$ is a canonical diffeomorphism and \eqref{eq:correspondence_coordinates}, 
one easily sees from the closedness of $\iso(\psi)$ that the set $\iso(\psi)\cap U_\psi$ is diffeomorphic (via $\Phi$) to 
the product of the sets \eqref{eq:finitely_many_circles} and \eqref{eq:infinitely_many_circles}. 
Since this set is locally path connected so is $\iso(\psi)$. 
\end{proof}

Given a non-empty subset $A$ of $i L^2_r$ and $\delta>0$ denote by $B_\delta(A)$ the open 
$\delta$-{\em tubular} neighborhood of $A$,
\[
B_\delta(A):=\bigcup_{\varphi\in A} B_\delta(\varphi),
\]
where $B_\delta(\varphi)$ denotes the open ball of radius $\delta$ in $i L^2_r$ centered at $\varphi$.
In view of Lemma \ref{lem:iso_components}, the isospectral set $\iso(\psi)$ of any potential $\psi$ with
simple periodic spectrum consists of finitely many connected components. This implies that there exists
$\delta_0\equiv\delta_0(\psi)>0$ such that for any $0<\delta<\delta_0$ the $\delta$-tubular neighborhoods of 
the different connected components of $\iso(\psi)$ do not intersect. 
We have the following {\em Lyapunov type} stability property of $\iso(\psi)$.

\begin{Lem}\label{lem:lyapunov_stability}
Assume that $\psi\in i L^2_r$ has simple periodic spectrum.
Then, for any $0<\delta<\delta_0$ there exists $0<\delta_1\le\delta$ such that for any 
$\varphi\in B_{\delta_1}\big(\iso_o(\psi)\big)$, one has $\iso_o(\varphi)\subseteq B_\delta\big(\iso(\psi)\big)$.
Since $\iso_o(\varphi)$ is connected and $0<\delta\le\delta_0$ we conclude from the choice of $\delta_0>0$ that 
$\iso_o(\varphi)\subseteq B_\delta\big(\iso_o(\psi)\big)$.
\end{Lem}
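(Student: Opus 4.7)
The plan is to transport the statement via the Birkhoff map $\Phi : U_{\rm iso}\cap i L^2_r \to i\ell^2_r$ of Section \ref{sec:birkhoff_map} to an essentially elementary fact about Cartesian products of circles. Since the actions $I_n$ are isospectral invariants, for any $\varphi$ in the domain of $\Phi$ the image $\Phi\big(\iso_o(\varphi)\big)$ is contained in the infinite-dimensional action-torus $\Tor\big(\Phi(\varphi)\big)=\big\{q\in i\ell^2_r : |q_n|=|\Phi(\varphi)_n|,\ \forall n\in\Z\big\}$ (cf.\ \eqref{eq:Tor}), which is a compact subset of $i\ell^2_r$ depending continuously on its defining point of $i\ell^2_r$ in the Hausdorff distance. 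The simplicity of $\spec_p L(\psi)$ guarantees that $\Tor_0:=\Tor\big(\Phi(\psi)\big)$ is a product of non-degenerate circles, and in particular connected.

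Concretely, I would proceed as follows. First, using the compactness of $\iso_o(\psi)$, cover it by finitely many open balls $U_j\subseteq B_{\delta/3}\big(\iso_o(\psi)\big)$, $j=1,\dots,M$, on each of which $\Phi$ restricts to a diffeomorphism onto a tail neighborhood $V_j$ of $p^{(j)}:=\Phi(\psi_j)$, arranged with a common tail cutoff $R'$ as in the proof of Lemma \ref{lem:iso_components}. Set $W:=\bigcup_{j=1}^M U_j$. An open/closed argument in the connected space $\Tor_0$ (openness from the local diffeomorphism property of $\Phi$, closedness from the compactness of $\iso_o(\psi)$) identifies $\Phi\big(\iso_o(\psi)\big)=\Tor_0\subseteq\Phi(W)$, and compactness of $\Tor_0$ in the open set $\Phi(W)$ yields $\eta>0$ with $B_\eta(\Tor_0)\subseteq\Phi(W)$. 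Next, using continuity of $\Phi$ on $W$ together with the compactness of $\iso_o(\psi)$, pick $\delta_1>0$ with $B_{\delta_1}\big(\iso_o(\psi)\big)\subseteq W$ such that every $\varphi\in B_{\delta_1}\big(\iso_o(\psi)\big)$ satisfies $\|\Phi(\varphi)-\Phi(\psi_0)\|_{\ell^2}<\eta$ for some $\psi_0\in\iso_o(\psi)$. The coordinatewise inequality $\bigl||q_n|-|q'_n|\bigr|\le|q_n-q'_n|$ then gives $\Tor\big(\Phi(\varphi)\big)\subseteq B_\eta(\Tor_0)\subseteq\Phi(W)$. Pulling this compact torus back to $W$ via the local diffeomorphisms $(\Phi|_{U_j})^{-1}$ produces a set $S:=\{\eta'\in W : I_n(\eta')=I_n(\varphi),\ \forall n\in\Z\}\subseteq W$ which is open in $\iso(\varphi)$ (since $W$ is open in $i L^2_r$) and closed in $\iso(\varphi)$ (being the continuous image of a compact set, hence compact in $i L^2_r$). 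Since $\varphi\in S$ and $\iso_o(\varphi)$ is connected, $\iso_o(\varphi)\subseteq S\subseteq W\subseteq B_{\delta/3}\big(\iso_o(\psi)\big)\subseteq B_\delta\big(\iso(\psi)\big)$; the strengthening $\iso_o(\varphi)\subseteq B_\delta\big(\iso_o(\psi)\big)$ then follows by the connectedness of $\iso_o(\varphi)$ and the choice of $\delta_0$.

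The main obstacle is to justify the global pull-back of $\Tor\big(\Phi(\varphi)\big)$ into $W$ as a compact set. This hinges on upgrading the merely local injectivity of $\Phi$ to injectivity on all of $W$: $\Phi|_{\iso_o(\psi)}$ is a continuous open map onto the compact, connected $\Tor_0$ by the dichotomy argument above, and a standard covering argument — local injectivity of $\Phi$, compactness of $\iso_o(\psi)$, and the uniform tail-neighborhood structure of the $V_j$ (whose tails are open balls in $i\ell^2_r$) — propagates injectivity from $\iso_o(\psi)$ to the tubular neighborhood $W$. The same tail-neighborhood structure ensures that $B_\eta(\Tor_0)$ is bounded away from $\partial\Phi(W)$, so the preimage $S$ sits compactly inside $W$ and cannot escape.
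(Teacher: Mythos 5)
Your proposal has a genuine gap, and in fact the approach is structurally circular relative to the rest of Section~\ref{sec:proofs}. The crucial step you identify as ``the main obstacle'' -- upgrading local injectivity of $\Phi$ to injectivity on all of $W$ -- is not a standard covering argument, and it is precisely what Lemma~\ref{lem:lyapunov_stability} is needed to establish later on. In the paper, the Lemma is applied (right after Remark~\ref{rem:local_theta_connectedness}) to obtain \eqref{eq:flows_well_defined}, i.e.\ that $\iso_o(\varphi)\subseteq U_{\rm iso}\cap i L^2_r$ for $\varphi$ in a tubular neighborhood; this is what guarantees completeness of the isospectral flows $X_{I_n}$, which in turn yields the orbits $G(\varphi)$, the foliation of $\W$, and eventually the global injectivity of the adjusted map $\Psi$. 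Using that global injectivity (even of the un-adjusted $\Phi$) inside the proof of the Lemma is therefore circular. Even setting the circularity aside, the open/closed argument you sketch for $\Phi|_{\iso_o(\psi)}\to\Tor_0$ gives surjectivity but not injectivity (a proper local diffeomorphism onto a torus can be a genuine multi-sheeted covering), and your set $S=\{\eta'\in W:\,I_n(\eta')=I_n(\varphi)\ \forall n\}$ is in general strictly larger than $\iso(\varphi)\cap W$, since equality of actions alone does not encode the full spectrum; the closedness of $S$ in $\iso(\varphi)$ (the substantive half of the open/closed dichotomy) is exactly the compactness statement you would need global injectivity to pull back, so the argument does not close.

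The paper's proof takes an entirely different route that stays in the potential space $i L^2_r$ and never invokes the Birkhoff map. It is a sequential compactness (Lyapunov stability) argument by contradiction: a putative sequence $\widetilde\psi_k\in\iso_o(\psi_k)$ with $\psi_k\to\psi^*\in\iso_o(\psi)$ that stays $\delta$-away from $\iso(\psi)$ is bounded in $L^2$ because the $L^2$-norm is a spectral invariant (Prop.~2.3 in \cite{KTPreviato}); extracting a weak limit $\widetilde\psi$ and using the weak continuity of the discriminant (Prop.~1.2 in \cite{GK1}) shows $\widetilde\psi\in\iso(\psi)$; then weak convergence plus convergence of norms forces strong $L^2$-convergence, contradicting the separation from $\iso(\psi)$. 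The ingredients -- compactness of isospectral sets, spectral invariance of $\|\cdot\|_{L^2}$, weak-to-pointwise continuity of $\Delta(\lambda,\cdot)$ -- are all available independently of the Birkhoff coordinates, which is what makes the Lemma usable as input to the injectivity argument later. If you want to salvage your approach, you would at minimum need an independent proof of global injectivity of $\Phi$ near $\iso_o(\psi)$, which is essentially harder than the Lemma itself.
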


\begin{proof}[Proof of Lemma \ref{lem:lyapunov_stability}]
Take $\psi\in i L^2_r$ with simple periodic spectrum and choose $0<\delta<\delta_0$. 
We will prove the the statement by contradiction. 
Assume that the statement of Lemma \ref{lem:lyapunov_stability} does not hold. 
Then, there exist two sequence $(\psi_k)_{k\ge 1}$ and $({\widetilde\psi}_k)_{k\ge 1}$ in $i L^2_r$ such that
\begin{equation}\label{eq:negation1}
{\widetilde\psi}_k\in\iso_o(\psi_k)\quad\text{\rm and}\quad{\widetilde\psi}_k\notin B_\delta\big(\iso(\psi)\big),
\end{equation}
and a sequence $(\psi_k^*)_{k\ge 1}$ in $\iso_o(\psi)$ such that
$\|\psi_k^*-\psi_k\|\to 0$ as $k\to\infty$.
By using the compactness of $\iso_o(\psi)$ and then passing to subsequences if necessary we obtain that there exists
$\psi^*\in\iso_o(\psi)$ such that
\begin{equation}\label{eq:negation2}
\psi_k\to\psi^*\quad\text{\rm as}\quad k\to\infty.
\end{equation}
Since by Proposition 2.3 in \cite{KTPreviato} the $L^2$-norm is a spectral invariant of the ZS operator for potentials in $i L^2_r$, 
we conclude from \eqref{eq:negation1} that for any $k\ge 1$, $\|{\widetilde\psi}_k\|=\|\psi_k\|$.
This together with \eqref{eq:negation2} then implies that 
\begin{equation}\label{eq:norms_converge}
\|{\widetilde\psi}_k\|\to\|\psi^*\|\quad\text{\rm as}\quad k\to\infty.
\end{equation}
Hence, the sequence $({\widetilde\psi}_k)_{k\ge 1}$ is bounded in $i L^2_r$.
This implies that there exists ${\widetilde\psi}\in i L^2_r$ such that $({\widetilde\psi}_k)_{k\ge 1}$
converges weakly to ${\widetilde\psi}$ in $i L^2_r$.
Proposition 1.2 in \cite{GK1} then implies that for any $\lambda\in\C$,
\[
\Delta(\lambda,{\widetilde\psi}_k)\to\Delta(\lambda,{\widetilde\psi})\quad\text{\rm as}\quad k\to\infty.
\]
By \eqref{eq:negation2}, for any $\lambda\in\C$,
\[
\Delta(\lambda,\psi_k)\to\Delta(\lambda,\psi^*)\quad\text{\rm as}\quad k\to\infty.
\]
On the other side, by \eqref{eq:negation1}, we conclude that for any $k\ge 1$ and for any $\lambda\in\C$,
\[
\Delta(\lambda,{\widetilde\psi}_k)=\Delta(\lambda,\psi_k).
\]
The three displayed formulas above then imply that $\Delta(\lambda,{\widetilde\psi})=\Delta(\lambda,\psi^*)$
for any $\lambda\in\C$. Hence,
\begin{equation}\label{eq:isospectral_potentials}
{\widetilde\psi}\in\iso(\psi^*)=\iso(\psi).
\end{equation}
By Proposition 2.3 in \cite{KTPreviato} we obtain $\|{\widetilde\psi}\|=\|\psi^*\|$
which, in view of \eqref{eq:norms_converge}, implies that 
\[
\|{\widetilde\psi}_k\|\to\|\widetilde\psi\|\quad\text{\rm as}\quad k\to\infty.
\]
Since $({\widetilde\psi}_k)_{k\ge 1}$ converges weakly to ${\widetilde\psi}$ in $i L^2_r$, we
conclude that
\[
{\widetilde\psi}_k\to{\widetilde\psi}\quad\text{\rm as}\quad k\to\infty.
\]
The second relation in \eqref{eq:negation1} then implies that ${\widetilde\psi}\notin B_\delta\big(\iso(\psi)\big)$
which contradicts \eqref{eq:isospectral_potentials}. 
\end{proof}

\begin{Rem}\label{rem:local_theta_connectedness}
Assume that $\psi\in i L^2_r$ has simple periodic spectrum.
It follows from the proof of Lemma \ref{lem:iso_components} that there exist an open neighborhood $U_\psi$
of $\psi$ in $U_{\rm iso}\cap i L^2_r$ and a tail neighborhood $V_{p^0}$ of $p^0:=\Phi(\psi)$ in $i\ell^2_r$ 
with parameters $R'>0$, $0<\delta_0'<|p^0_n|$ for $|n|\le R'$, $0<\delta_0''<\pi$, and $\epsilon_0>0$
(see \eqref{eq:tailneighborhood}) such that $\Phi : U_\psi\to V_{p^0}$ is a diffeomorphism and for any $\varphi\in U_\psi$ we have that 
$\Phi : U_\psi\to V_{p^0}$ maps the set $\iso_o(\varphi)\cap U_\psi$ bijectively onto the set $\Tor\big(\Phi(\varphi)\big)\cap V_{p^0}$ 
where $\Tor\big(\Phi(\varphi)\big)$ is given by \eqref{eq:Tor} with $q^0=\Phi(\varphi)\in i\ell^2_r$. 
\end{Rem}

\noindent Now, let $\psi:=\psi^{(1)}$ and $p^0:=\Phi(\psi)$. 
In view of the compactness of $\iso_o(\psi)$ and Remark \ref{rem:local_theta_connectedness} we can construct a {\em finite} set of 
open neighborhoods ${\widetilde U}_{{\widetilde\psi}_j}$ of ${\widetilde\psi}_j\in\iso_o(\psi)$ in $U_{\rm iso}\cap i L^2_r$, 
$1\le j\le{\widetilde N}$, such that for any $1\le j\le{\widetilde N}$, 
$\Phi : {\widetilde U}_{{\widetilde\psi}_j}\to{\widetilde V}_{{\tilde p}^j}$ is a diffeomorphism 
onto a tail neighborhood ${\widetilde V}_{{\tilde p}^j}$ of ${\tilde p}^j=\Phi({\widetilde\psi}_j)\in\Tor(p^0)$ in $i\ell^2_r$. 
In what follows we set
\begin{equation}\label{eq:U_iso_shrinked}
U_{\rm iso}\cap iL^2_r=\bigcup_{j=1}^{\widetilde N}{\widetilde U}_{\widetilde\psi_j}.
\end{equation} 
Then, by Lemma \ref{lem:lyapunov_stability}, we can choose $0<\delta<\delta_0$ such that 
$B_{\delta}\big(\iso_o(\psi)\big)\subseteq U_{\rm iso}\cap i L^2_r$ and $\iso_o(\varphi)\subseteq U_{\rm iso}\cap i L^2_r$
for any $\varphi\in B_{\delta}\big(\iso_o(\psi)\big)$.
By Remark \ref{rem:local_theta_connectedness} there exist a neighborhood $U_\psi$ of $\psi$ in $U_{\rm iso}\cap i L^2_r$
and a neighborhood $V_{p^0}$ of $p^0:=\Phi(\psi)$ in $i\ell^2_r$ such that $\Phi : U_\psi\to V_{p^0}$ is 
a diffeomorphism, $V_{p^0}$ is a tail neighborhood
\begin{equation}\label{eq:tailneighborhood1}
V_{p^0}=B^{\delta_0',\delta_0''}_{|n|\le R'}(p^0)\times B^{\epsilon_0}_{|n|>R'}(0)
\end{equation}
with parameters $R'>0$, $0<\delta_0'<|p^0_n|$ for any $|n|\le R'$, $0<\delta_0''<\pi$, and $\epsilon_0>0$,
and $B^{\delta_0',\delta_0''}_{|n|\le R'}(p^0)$ and $B^{\epsilon_0}_{|n|>R'}(0)$
are given by \eqref{eq:finite_component} and \eqref{eq:tail_component}.
By taking the parameters $\delta_0'$, $\delta_0''$, $\epsilon_0>0$ smaller and $R'>0$ larger if necessary
we can ensure that $U_\psi\subseteq B_{\delta}\big(\iso_o(\psi)\big)$ and hence for any 
$\varphi\in U_\psi$ we have that
\begin{equation}\label{eq:flows_well_defined}
\iso_o(\varphi)\subseteq U_{\rm iso}\cap i L^2_r.
\end{equation}
In view of the compactness of the isospectral component $\iso_o(\varphi)$ for any $\varphi\in i L^2_r$
and the fact that the flow of the Hamiltonian vector field $X_{I_n}$ corresponding to the action variable $I_n$, $n\in\Z$,
is {\em isospectral} (see the proof of Lemma \ref{lem:iso_components}), we conclude from \eqref{eq:flows_well_defined} that 
the integral trajectory of $X_{I_n}$ with initial data $\varphi\in U_\psi$ is defined for all $t\in\R$. Moreover,
it follows from the definition \eqref{eq:poisson_bracket} of the Poisson bracket on $i L^2_r$ that 
$X_{I_n} : U_{\rm iso}\cap i L^2_r\to i L^2_r$, $n\in\Z$, is an analytic vector field on $U_{\rm iso}\cap i L^2_r$. 
By \eqref{eq:commutation_relations} the vector fields $X_{I_n}$ and $X_{I_m}$ commute for any $m,n\in\Z$.
Further, consider the direct product of $2R'+1$ open intervals
\begin{equation}\label{eq:tail_section_finite_part}
J_{|n|\le R'}:=\bigtimes_{|n|\le R'}\Big\{p_n=i(u_n,v_n)\in i\R^2\,\Big|\,\big||p_n^0|-|p_n|\big|<\delta_0',\,
\theta_n=\theta_n^0\Big\}
\end{equation}
and denote by $\TT_{\psi}\subseteq U_{\psi}$ the preimage of 
\begin{equation}\label{eq:tail_section}
T_{p^0}:=J_{|n|\le R'}\times B^{\epsilon_0}_{|n|>R'}(0)\subseteq V_{p^0}
\end{equation}
under the diffeomorphism $\Phi : U_\psi\to V_{p^0}$. Since $\Phi : U_{\rm iso}\to i\ell^2_r$ is canonical
we have that (see \eqref{eq:commutation_relations})
\begin{equation}\label{eq:X_I_n_in_coordinates}
\Phi_*(X_{I_n})=\partial_{\theta_n},\quad n\in\Z.
\end{equation}
In particular, the vector fields $X_{I_n}$, $|n|\le R'$, are {\em transversal} to the submanifold $\TT_{\psi}$ in $U_\psi$.
Now, take an arbitrary $\varphi\in\TT_{\psi}$ and consider the orbit 
$G(\varphi):=\big\{G^\tau(\varphi)\,\big|\,\tau\in\R^{2R'+1}\big\}$ 
where
\begin{equation}\label{eq:action1}
G^\tau(\varphi):=G_{X_{I_{(-R')}}}^{\tau_{(-R')}}\circ\cdots\circ G_{X_{I_{R'}}}^{\tau_{R'}}(\varphi),
\quad\tau=(\tau_{(-R')},...,\tau_{R'})\in\R^{2R'+1},
\end{equation}
and $G_{X_{I_n}}^{\tau_n}$ with $|n|\le R'$ is the isospectral flow corresponding to 
the Hamiltonian vector field $X_{I_n}$. Since the flows of $X_{I_n}$, $n\in\Z$, are isospectral and commute we 
conclude from \eqref{eq:flows_well_defined} that for any $\varphi\in\TT_\psi\subseteq U_\psi$ and for any
$\tau\in\R^{2R'+1}$, $G^\tau(\varphi)$ in \eqref{eq:action1} is well defined and $G(\varphi)\subseteq\iso_o(\varphi)$.

\begin{Lem}\label{lem:compact_orbit}
For any $\varphi\in\TT_{\psi}$ the orbit $G(\varphi)$ is a compact smooth submanifold of $U_{\rm iso}\cap i L^2_r$
of dimension $2R'+1$. Moreover, for $\varphi_1,\varphi_2\in\TT_{\psi}$ we have that 
$G(\varphi_1)\cap G(\varphi_2)=\emptyset$ if $\varphi_1\neq\varphi_2$.
\end{Lem}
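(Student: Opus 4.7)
The plan is to exploit the commuting, complete Hamiltonian flows $G^{t}_{X_{I_n}}$ for $|n|\le R'$ together with the local Birkhoff diffeomorphism $\Phi$ which, by Theorem \ref{th:local_diffeo} and \eqref{eq:X_I_n_in_coordinates}, transforms each $X_{I_n}$ into the coordinate vector field $\partial_{\theta_n}$ in any local chart. First, I would show that the action map $F_\varphi:\R^{2R'+1}\to U_{\rm iso}\cap i L^2_r,\ \tau\mapsto G^\tau(\varphi),$ is an immersion. Indeed, since $\Phi$ is a local diffeomorphism on $U_\psi$ and pushes the $X_{I_n}$, $|n|\le R'$, to the linearly independent fields $\partial_{\theta_n}$, the differential $d_\tau F_\varphi$ has rank $2R'+1$ at every $\tau$, so $F_\varphi$ is an immersion and its image $G(\varphi)$ is locally a smooth $(2R'+1)$-dimensional submanifold of $i L^2_r$.

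Next I would analyze the stabilizer $H_\varphi:=\{\tau\in\R^{2R'+1}\ |\ G^\tau(\varphi)=\varphi\}$. Discreteness follows from the local chart: for $\tau$ in a small neighborhood of $0$, the path $[0,1]\ni s\mapsto G^{s\tau}(\varphi)$ lies in $U_\psi$, and via $\Phi$ it is realized as the translation of the angle coordinates $(\theta_n)_{|n|\le R'}$ of $\Phi(\varphi)$ by $s\tau$. Since the angle window in $V_{p^0}$ has length $\delta_0''<\pi<2\pi$, one must have $\tau=0$. Hence $H_\varphi$ is a closed discrete subgroup of $\R^{2R'+1}$ and therefore a lattice of some rank $r\le 2R'+1$.

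To establish $r=2R'+1$, I would use the cover \eqref{eq:U_iso_shrinked} of $U_{\rm iso}\cap i L^2_r$ by the finitely many charts $\widetilde U_{\widetilde\psi_j}$, $1\le j\le\widetilde N$, combined with the compactness of $\iso_o(\varphi)$. Since $G(\varphi)\subseteq\iso_o(\varphi)$ by the isospectrality of the flows $X_{I_n}$, the orbit is covered by finitely many charts. In each chart $\widetilde U_{\widetilde\psi_j}$, the pushforward of $X_{I_n}$ is $\partial_{\theta_n}$, and the actions $I_n$ (all $n$) together with the angles $\theta_n$ for $|n|>R'$ are preserved by the flows (because $\{I_n,I_m\}=0$ and $\{\theta_n,I_m\}=\delta_{nm}=0$ for $m\ne n$). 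Hence, in each chart, the image $G(\varphi)\cap\widetilde U_{\widetilde\psi_j}$ coincides with the intersection of $\widetilde V_{\widetilde p^j}$ with the locally described $(2R'+1)$-dimensional torus determined by fixing the invariants $|p_n|$ and $\theta_n$ for $|n|>R'$. Patching these local pieces and using the finiteness of the cover together with the compactness of $\iso_o(\varphi)$ shows that $F_\varphi$ descends to a proper injective immersion of $\R^{2R'+1}/H_\varphi$ onto $G(\varphi)$; since the image must be compact, $\R^{2R'+1}/H_\varphi$ is compact, forcing $r=2R'+1$. Consequently $G(\varphi)$ is a smoothly embedded compact $(2R'+1)$-dimensional torus.

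Finally, for the disjointness statement, suppose $\varphi_1,\varphi_2\in\TT_\psi$ satisfy $G(\varphi_1)\cap G(\varphi_2)\ne\emptyset$. Then $\varphi_2=G^\rho(\varphi_1)$ for some $\rho\in\R^{2R'+1}$. Applying the diffeomorphism $\Phi:U_\psi\to V_{p^0}$ to $\varphi_1$, and using that under the flow $G^\rho$ the angle $\theta_n$ is shifted by $\rho_n$ modulo $2\pi$ while all other coordinates stay fixed, the membership $\varphi_1,\varphi_2\in\TT_\psi$ (whose image under $\Phi$ lies in the section $T_{p^0}$ where $\theta_n=\theta_n^0$ for $|n|\le R'$) forces $\rho_n\in 2\pi\Z$ for all $|n|\le R'$. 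The lattice relation then gives $\varphi_2=G^\rho(\varphi_1)=\varphi_1$. The principal obstacle in the above argument is the full-rank (i.e., embedded-torus) conclusion for the orbit: one must upgrade the immersion from a potentially dense injective immersion to an honest embedded submanifold, which is where the finite covering by local Birkhoff charts and the compactness of $\iso_o(\varphi)$ enter essentially.
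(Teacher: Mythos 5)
Your argument for compactness and the smooth submanifold structure is essentially the same as the paper's, merely packaged differently: you show that the orbit map is an immersion with discrete stabilizer and then argue properness of the quotient map $\R^{2R'+1}/H_\varphi\to G(\varphi)$ via the finite cover by tail-neighborhood charts and the compactness of $\iso_o(\varphi)$, whereas the paper directly takes a convergent subsequence in $\iso_o(\varphi)$ and shows the limit is again reached by the flows inside a single chart. In both cases the operative facts are identical: in each chart the flows $X_{I_n}$, $|n|\le R'$, act by angle translation, the torus slice cut out by the conserved quantities is connected inside a tail neighborhood, and $\iso_o(\varphi)$ is compact. This part is fine (with the usual caveat that the claimed identity $G(\varphi)\cap\widetilde U_{\widetilde\psi_j}=\Phi^{-1}\bigl(\widetilde V_{\tilde p^j}\cap\Tor(\Phi(\varphi))\bigr)$ holds only on charts the orbit actually meets, and requires the connectedness of the slice to flow out to the whole slice from a single intersection point).

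The gap is in the disjointness argument. You correctly deduce that if $\varphi_2=G^\rho(\varphi_1)$ with $\varphi_1,\varphi_2\in\TT_\psi$ then $\rho_n\in 2\pi\Z$ for $|n|\le R'$. But the next sentence, ``The lattice relation then gives $\varphi_2=G^\rho(\varphi_1)=\varphi_1$,'' is unjustified: the only containment you (or the paper, in Corollary \ref{coro:stabilizer}) could extract is $\text{\rm St}(\varphi_1)\subseteq(2\pi\Z)^{2R'+1}$, possibly a \emph{proper} sublattice, and the paper's footnote explicitly declines to claim equality. So $\rho\in(2\pi\Z)^{2R'+1}$ does not by itself imply $\rho\in\text{\rm St}(\varphi_1)$. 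What actually closes the argument is the injectivity of $\Phi$ on $U_\psi$: since $G^\rho$ preserves $I_n$ for every $n$ and $(x_n,y_n)$ for $|n|>R'$, the points $\Phi(\varphi_1)$ and $\Phi(\varphi_2)$ in $V_{p^0}$ have identical $|p_n|$ and identical $\theta_n=\theta_n^0$ for $|n|\le R'$ (both lie in $T_{p^0}$ where those angles are pinned and the angular window has length $\delta_0''<2\pi$), and identical tail components, so $\Phi(\varphi_1)=\Phi(\varphi_2)$, whence $\varphi_1=\varphi_2$. In fact the paper's own proof of disjointness is even more direct and bypasses angles entirely: if $\varphi_1\neq\varphi_2$ in $\TT_\psi$, injectivity of $\Phi$ on $U_\psi$ forces a difference in either some $(x_{n_0},y_{n_0})$ with $|n_0|>R'$ or some $I_{n_0}$ with $|n_0|\le R'$, and these are conserved along the flows $G^\tau$, so the orbits cannot intersect. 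You should replace the ``lattice relation'' step by one of these arguments.
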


\begin{proof}[Proof of Lemma \ref{lem:compact_orbit}]
To see that $G(\varphi)$, $\varphi\in\TT_{\psi}$, is compact let $(\varphi_k)_{k\ge 1}$ be a sequence in $G(\varphi)$. 
Since the flows $X_{I_n}$, $n\in\Z$, are isospectral, we have that $G(\varphi)\subseteq\iso_o(\varphi)$.
We then conclude from the compactness of $\iso_o(\varphi)$ that there exist $\varphi^*\in\iso_o(\varphi)$ and 
a subsequence of $(\varphi_k)_{k\ge 1}$, denoted again by $(\varphi_k)_{k\ge 1}$, such that 
\begin{equation}\label{eq:convergence1}
\varphi_k\to\varphi^*,\quad\text{as}\quad k\to\infty.
\end{equation}
In view of \eqref{eq:U_iso_shrinked} and \eqref{eq:flows_well_defined},
$\varphi^*\in{\widetilde U}_{{\widetilde\psi}_j}\cap\iso_o(\varphi)$ where ${\widetilde U}_{{\widetilde\psi}_j}$ is one of 
the neighborhoods appearing in \eqref{eq:U_iso_shrinked}. 
Since $\Phi : {\widetilde U}_{{\widetilde\psi}_j}\to{\widetilde V}_{{\tilde p}^j}$ is a diffeomorphism
and since ${\widetilde V}_{{\tilde p}^j}$ is a tail neighborhood, we conclude
that $\Phi(\varphi^*)\in{\widetilde V}_{{\tilde p}^j}\cap\Tor\big(\Phi(\varphi)\big)$ 
(see Remark \ref{rem:local_theta_connectedness}). It follows from \eqref{eq:convergence1}
that there exists $k_0\ge 1$ so that $\Phi(\varphi_k)\in{\widetilde V}_{{\tilde p}^j}\cap\Tor\big(\Phi(\varphi)\big)$
for any $k\ge k_0$. This implies that $\varphi^*=G^{\tau^*}(\varphi_{k_0})$ for some $\tau^*\in\R^{2R'+1}$
where $\varphi_{k_0}\in G(\varphi)$. In particular, we see that $\varphi^*\in G(\varphi)$ and hence $G(\varphi)$ is compact. 
The coordinates $(\theta_{-R'},...,\theta_{R'})$ on ${\widetilde V}_{{\tilde p}^j}\cap\Tor\big(\Phi(\varphi)\big)$, $1\le j\le{\widetilde N}$, 
define a smooth (in fact, real analytic) submanifold structure on $G(\varphi)$ in $U_{\rm iso}\cap i L^2_r$.
It remains to prove the last statement of the Lemma.
Take $\varphi_1,\varphi_2\in\TT_{\psi}$ so that $\varphi_1\neq\varphi_2$. Then, in view of the definition of $\TT_{\psi}$, 
either there exists $|n_0|>R'$ such that 
\begin{equation}\label{eq:separation1}
(x_{n_0},y_{n_0})|_{\varphi_1}\neq(x_{n_0},y_{n_0})|_{\varphi_2}
\end{equation}
or there exists $|n_0|\le R'$ such that 
\begin{equation}\label{eq:separation2}
I_{n_0}(\varphi_1)\neq I_{n_0}(\varphi_2).
\end{equation}
Since the vector fields $X_{I_n}$ with $|n|\le R'$ preserve the functions $x_n,y_n$ with $|n|>R'$ and the functions $I_n$ 
with $|n|\le R'$ we conclude that for any $\tau,\mu\in\R^{2R'+1}$ the relations \eqref{eq:separation1} (or \eqref{eq:separation2}) holds
with $\varphi_1$ and $\varphi_2$ replaced respectively by $G^\tau(\varphi_1)$ and $G^\mu(\varphi_2)$.
This implies that $G(\varphi_1)\cap G(\varphi_2)=\emptyset$.
\end{proof}

Let us now consider the following set in $i L^2_r$,
\begin{equation}\label{eq:W}
\W:=\bigsqcup_{\varphi\in\TT_\psi} G(\varphi).
\end{equation}

\begin{Lem}\label{lem:W}
$\W$ is an open neighborhood of $\psi$ in $i L^2_r$ that is invariant with respect to the flows of the
Hamiltonian vector fields $X_{I_n}$, $n\in\Z$.
\end{Lem}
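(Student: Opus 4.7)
The plan is to verify three assertions: that $\psi \in \W$, that $\W$ is open in $i L^2_r$, and that $\W$ is invariant under each Hamiltonian flow $X_{I_n}$, $n \in \Z$.

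The inclusion $\psi \in \W$ follows because $\Phi(\psi)=p^0$ satisfies the defining conditions of $T_{p^0}$ trivially: the radial and angular constraints for $|n|\le R'$ are tautologies (as $\theta_n(\psi)=\theta_n^0$ and $|p_n(\psi)|=|p_n^0|$), while the tail ball condition $\sum_{|n|>R'}|p_n^0|^2<\epsilon_0^2$ is built into the choice of $V_{p^0}$ as a tail neighborhood of $p^0$. Hence $\psi\in\TT_\psi$ and $\psi=G^0(\psi)\in G(\psi)\subseteq\W$.

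To prove $\W$ is open I would introduce the smooth map $F:\TT_\psi\times\R^{2R'+1}\to U_{\rm iso}\cap iL^2_r$ defined by $F(\varphi,\tau):=G^\tau(\varphi)$; it is well-defined by completeness of the isospectral flows on $\iso_o(\varphi)\subseteq U_{\rm iso}$, and its image is exactly $\W$. The key step is to show $F$ is a local diffeomorphism at every $(\varphi_0,\tau)$. At $(\varphi_0,0)$, the image of $dF$ decomposes as $T_{\varphi_0}\TT_\psi+\Span\{X_{I_n}(\varphi_0):|n|\le R'\}$; via $\Phi:U_\psi\to V_{p^0}$, the first summand corresponds to the tangent space of $T_{p^0}$ (where $\theta_n=\theta_n^0$ is frozen for $|n|\le R'$) and the second, by \eqref{eq:X_I_n_in_coordinates}, to $\Span\{\partial_{\theta_n}|_{p^0}:|n|\le R'\}$, which span complementary subspaces. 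Hence $dF|_{(\varphi_0,0)}$ is an isomorphism. For $\tau\neq 0$ we factor $F(\cdot,\tau)=G^\tau\circ F(\cdot,0)$ and use that $G^\tau$ is a diffeomorphism of $U_{\rm iso}\cap iL^2_r$ to transport the property, so $F$ is a local diffeomorphism everywhere; openness of $\W$ follows.

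Invariance of $\W$ under $X_{I_n}$ for $|n|\le R'$ is built into the definition of $G(\varphi)$ via $G^s_{X_{I_n}}\circ G^\tau=G^{\tau+se_n}$. For $|n_0|>R'$, denote $H^s:=G^s_{X_{I_{n_0}}}$; the commutativity $\{I_n,I_m\}=0$ gives $H^s\circ G^\tau=G^\tau\circ H^s$, reducing the task to showing $H^s(\varphi)\in\W$ for every $\varphi\in\TT_\psi$ and $s\in\R$. For small $|s|$, $H^s(\varphi)$ still lies in $U_\psi$, and in the Birkhoff coordinates provided by $\Phi$ on $U_\psi$ the point $\Phi(H^s(\varphi))$ differs from $\Phi(\varphi)$ only by a shift of $\theta_{n_0}$ by $s$, leaving invariant every condition defining $T_{p^0}$ (which imposes no constraint on the tail angles). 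Hence $H^s(\varphi)\in\TT_\psi\subseteq\W$ for small $s$.

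The main obstacle is extending this conclusion to arbitrary $s\in\R$: the compact circle $\{H^s(\varphi)\}_{s\in\R}\subseteq\iso_o(\varphi)\subseteq U_{\rm iso}$ may exit the single chart $U_\psi$, where our coordinates were defined. To handle this, I would cover the orbit by finitely many of the local charts $\widetilde U_{\widetilde\psi_j}$ from \eqref{eq:U_iso_shrinked}; in each chart the local Birkhoff map again sends $X_{I_{n_0}}$ to $\partial_{\theta_{n_0}}$ and the intersection of $\W$ with that chart is a tail-shell invariant under $H^s$. Consistency across chart transitions follows from the intrinsic, coordinate-free characterization of the shell condition in terms of the globally defined action values $I_n(\cdot)$ and the tail angles, together with the fact that these flows preserve all $I_n$. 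A connectedness argument along the compact orbit, combined with the openness already established, then yields $H^s(\varphi)\in\W$ for every $s\in\R$, completing the proof.
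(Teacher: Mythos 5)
Your proposal is on the right track but misidentifies what you call the ``main obstacle,'' and as a result proposes unneeded machinery. You correctly reduce, via the commutation $\{I_n,I_m\}=0$, to showing that the flow $H^s=G^s_{X_{I_{n_0}}}$ for $|n_0|>R'$ maps $\TT_\psi$ into $\W$, and you correctly observe that in the chart $\Phi : U_\psi\to V_{p^0}$ this flow acts as a rotation of the $n_0$-th tail component, leaving the defining conditions of $T_{p^0}$ intact. But then you assert that the orbit ``may exit the single chart $U_\psi$'' and set up a chart--covering argument. That exit cannot happen, and noticing why is precisely the point the paper leans on: $V_{p^0}$ is a \emph{tail} neighborhood whose tail factor $B^{\epsilon_0}_{|n|>R'}(0)$ is a ball centered at the origin, hence invariant under the rotation $\rho^s$ of any single tail component $p_{n_0}$, $|n_0|>R'$ (and the finite factor is untouched). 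Thus $\rho^s$ preserves $V_{p^0}$, the conjugate flow $\Phi^{-1}\circ\rho^s\circ\Phi$ is globally defined on $U_\psi$, and by uniqueness of integral curves it coincides with $H^s$ there; in particular $H^s(U_\psi)=U_\psi$ and $H^s(\TT_\psi)=\TT_\psi$ for all $s$. The chart--covering fallback you sketch, with its appeal to an ``intrinsic, coordinate-free characterization of the shell condition'' whose verification you leave open, is therefore superfluous and, as written, incomplete.

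For openness, your local-diffeomorphism computation for $F(\varphi,\tau)=G^\tau(\varphi)$ is correct but heavier than needed. The paper instead notes that $U_\psi=\{G^\tau(\TT_\psi)\,:\,|\tau_n|<\delta_0'',\ |n|\le R'\}$ (directly from the shape of $V_{p^0}$ and $T_{p^0}$ and the fact that $\Phi$ intertwines $X_{I_n}$ with $\partial_{\theta_n}$), whence
\[
\W=\bigsqcup_{\varphi\in\TT_\psi}G(\varphi)=\bigcup_{\tau\in\R^{2R'+1}}G^\tau(\TT_\psi)=\bigcup_{\tau\in\R^{2R'+1}}G^\tau(U_\psi),
\]
a union of open sets since each $G^\tau$ is a diffeomorphism. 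This one-line identity delivers openness immediately and also makes the invariance under $X_{I_n}$, $|n|\le R'$, transparent. So while your strategy can likely be pushed through, it both over-engineers the openness step and, more seriously, invents an obstruction to invariance that the tail-ball structure of the chart is specifically designed to rule out.
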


\begin{proof}[Proof of Lemma \ref{lem:W}]
It follows from the definition \eqref{eq:tailneighborhood1} of the tail neighborhood $V_{p^0}$ and the set
$T_{p^0}\subseteq V_{p^0}$ defined in \eqref{eq:tail_section} that 
\[
U_\psi=\big\{G^{\tau}(\TT_\psi)\,\big|\,|\tau_n|<\delta_0'', |n|\le R'\big\}.
\]
This implies that 
\begin{equation}\label{eq:W'}
\W=\bigsqcup_{\varphi\in\TT_\psi} G(\varphi)=\bigcup_{\tau\in\R^{2R'+1}}G^\tau(\TT_\psi)=
\bigcup_{\tau\in\R^{2R'+1}}G^\tau(U_\psi).
\end{equation}
Since the sets $G^\tau(U_\psi)$, $\tau\in\R^{2R'+1}$, are open we conclude from \eqref{eq:W'} that
$\W$ is an open set in $i L^2_r$. The invariance of $\W$ with respect to the flow of $X_{I_n}$, $n\in\Z$,
follows from the fact that the section $\TT_\psi$ in \eqref{eq:W} is invariant with respect to the flows of $X_{I_n}$, $|n|>R'$,
and since $X_{I_n}$ and $X_{I_k}$ commute for any $n,k\in\Z$.
\end{proof}

By restricting \eqref{eq:action1} to $\R^{2R'+1}\times\W$ we obtain a smooth action
\begin{equation}\label{eq:action2}
G : \R^{2R'+1}\times\W\to\W.
\end{equation}
of $\R^{2R'+1}$ on $\W$. In view of \eqref{eq:X_I_n_in_coordinates} we have the following commutative diagram
\begin{equation}\label{eq:diagram_action}
\begin{tikzcd}
\W\arrow{r}{G^\tau}\arrow{d}{\Phi}&\W\arrow{d}{\Phi}\\
W\arrow{r}{\rho^\tau}&W
\end{tikzcd}
\end{equation}
where $W$ is the tail neighborhood (cf. \eqref{eq:tailneighborhood1})
\begin{equation}\label{eq:Wdown}
W:=B^{\delta_0',2\pi}_{|n|\le R'}(p^0)\times B^{\epsilon_0}_{|n|>R'}(0)\subseteq i\ell^2_r
\end{equation}
where the parameter $\delta_0''>0$ is replaced by $2\pi$ and for any $\tau=(\tau_{(-R')},...,\tau_R)\in\R^{2R'+1}$
and any $|n|\le R'$ the map $\rho^\tau : W\to W$ rotates the component $p_n=i(u_n,v_n)\in i\R^2$ of $p\in W$ by 
the angle $\theta_n=\tau_n$ for $|n|\le R'$ while keeping  the components of $p$ for $|n|>R'$ unchanged.
The commutative diagram \eqref{eq:diagram_action} and the invariance of $\W$ (cf. Lemma \ref{lem:W}) then 
easily imply that $\Phi :\W\to W$ is {\em onto}.
Note also that by \eqref{eq:Wdown}, the open set $W$ in $i\ell^2_r$ is a direct product of $2R'+1$ two 
dimensional annuli and an open ball in $i\ell^2_r$.

\begin{Coro}\label{coro:stabilizer}
For any $\varphi\in\TT_{\psi}$ the orbit $G(\varphi)$ is diffeomorphic to the $2R'+1$ dimensional torus
\begin{equation}\label{eq:torus}
\mathcal{G}:=\R^{2R'+1}/\Span\langle e_{(-R')},...,e_{R'}\rangle_\Z
\end{equation}
where $e_{(-R')},...,e_{R'}\in(2\pi\Z)^{2R'+1}$ are linearly independent over $\R$. The vectors
$e_{(-R')},...,e_{R'}$ are independent of the choice of $\varphi\in\TT_{\psi}$.\footnote{Note that we do {\em not}
claim that $e_{(-R')},...,e_{R'}$ is a basis of $(2\pi\Z)^{2R'+1}$ over $\Z$.}
\end{Coro}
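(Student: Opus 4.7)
The plan is to identify $G(\varphi)$ with a compact quotient $\R^{2R'+1}/\mathop{\rm Stab}\nolimits(\varphi)$ of the acting abelian group, to confirm via the equivariance diagram \eqref{eq:diagram_action} that $\mathop{\rm Stab}\nolimits(\varphi)$ is a lattice of full rank sitting inside $(2\pi\Z)^{2R'+1}$, and finally to use the connectedness of $\TT_\psi$ to show that this stabilizer does not depend on $\varphi$.

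First I introduce the stabilizer
\[
\mathop{\rm Stab}\nolimits(\varphi):=\big\{\tau\in\R^{2R'+1}\,\big|\,G^\tau(\varphi)=\varphi\big\},
\]
a closed subgroup of $\R^{2R'+1}$. The key observation is that the vector fields $X_{I_n}$, $|n|\le R'$, are pointwise linearly independent on $\W$: by Theorem \ref{th:local_diffeo} the map $\Phi$ is a local diffeomorphism, and the relation $\Phi_*(X_{I_n})=\p_{\theta_n}$ from \eqref{eq:X_I_n_in_coordinates}, together with the linear independence of $\p_{\theta_{-R'}},\ldots,\p_{\theta_{R'}}$ on $W$, yields the claim. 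Consequently the orbit map $o_\varphi:\R^{2R'+1}\to G(\varphi)$, $\tau\mapsto G^\tau(\varphi)$, is an immersion between equidimensional manifolds, i.e.\ a local diffeomorphism. Injectivity of $o_\varphi$ on some neighborhood of $0$ shows $\mathop{\rm Stab}\nolimits(\varphi)$ is discrete, so the quotient $\R^{2R'+1}/\mathop{\rm Stab}\nolimits(\varphi)$ carries a smooth manifold structure making the projection a local diffeomorphism, and the descended continuous bijection $\overline{o}_\varphi:\R^{2R'+1}/\mathop{\rm Stab}\nolimits(\varphi)\to G(\varphi)$ is itself a local diffeomorphism, hence a diffeomorphism. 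The compactness of $G(\varphi)$ granted by Lemma \ref{lem:compact_orbit} then forces $\mathop{\rm Stab}\nolimits(\varphi)$ to be a cocompact discrete subgroup, i.e.\ a lattice of full rank $2R'+1$ in $\R^{2R'+1}$.

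Next I use the commutative diagram \eqref{eq:diagram_action} to place $\mathop{\rm Stab}\nolimits(\varphi)$ inside $(2\pi\Z)^{2R'+1}$: for $\tau\in\mathop{\rm Stab}\nolimits(\varphi)$ one has $\rho^\tau(\Phi(\varphi))=\Phi(G^\tau(\varphi))=\Phi(\varphi)$. By the defining inequality $0<\delta_0'<|p_n^0|$, every $\varphi\in\TT_\psi$ satisfies $|\Phi(\varphi)_n|>0$ for $|n|\le R'$, and therefore the requirement that $\rho^{\tau_n}$ fix the nonzero point $\Phi(\varphi)_n\in i\R^2$ forces $\tau_n\in 2\pi\Z$. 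Hence $\mathop{\rm Stab}\nolimits(\varphi)\subseteq(2\pi\Z)^{2R'+1}$, and taking any $\Z$-basis $e_{-R'},\ldots,e_{R'}$ of the full-rank lattice $\mathop{\rm Stab}\nolimits(\varphi)$ produces the desired $\R$-linearly independent vectors inside $(2\pi\Z)^{2R'+1}$; combining this with the diffeomorphism of the first step gives $G(\varphi)\cong\mathcal{G}$.

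Finally, to prove independence of $\varphi\in\TT_\psi$, I fix any $\tau\in(2\pi\Z)^{2R'+1}$ and consider
\[
A_\tau:=\big\{\varphi\in\TT_\psi\,\big|\,G^\tau(\varphi)=\varphi\big\}.
\]
This set is closed in $\TT_\psi$ by continuity of $G^\tau$. For openness, I fix $\varphi_0\in A_\tau$ and choose an open neighborhood $U\subseteq U_\psi$ of $\varphi_0$ on which $\Phi|_U$ is injective; by continuity of $G^\tau$ and the identity $G^\tau(\varphi_0)=\varphi_0\in U$, one can shrink $U$ to a subneighborhood $U'$ with $G^\tau(U')\subseteq U$. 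Since $\rho^\tau=\id$ on $W$ for $\tau\in(2\pi\Z)^{2R'+1}$, the equivariance $\Phi\circ G^\tau=\rho^\tau\circ\Phi$ gives $\Phi(G^\tau(\varphi))=\Phi(\varphi)$ for every $\varphi\in\W$, and injectivity of $\Phi|_U$ then forces $G^\tau(\varphi)=\varphi$ for every $\varphi\in U'$. Because $\TT_\psi$ is diffeomorphic (via $\Phi$) to the obviously connected set $T_{p^0}$ of \eqref{eq:tail_section}, the clopen subset $A_\tau$ is either empty or all of $\TT_\psi$; hence $\mathop{\rm Stab}\nolimits(\varphi)$ is independent of $\varphi$. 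The main technical subtlety, in my view, is pinning down the identification of the abstract $\R^{2R'+1}$-orbit $G(\varphi)$ with the concrete rotation torus in $W$; once this link is clarified by local injectivity of $\Phi$ combined with the equivariance diagram, the remainder reduces to standard facts about discrete cocompact subgroups of $\R^k$.
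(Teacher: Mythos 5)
Your proof is correct and follows essentially the same route as the paper: identify $G(\varphi)$ with $\R^{2R'+1}/\mathrm{Stab}(\varphi)$ and show the stabilizer is a full-rank lattice (the paper cites \cite[\S\,49]{Arn} for this step, which you instead derive directly from the local-diffeomorphism property of the orbit map), use the equivariance diagram \eqref{eq:diagram_action} and $|\Phi(\varphi)_n|>0$ to place the stabilizer in $(2\pi\Z)^{2R'+1}$, and then run a clopen argument on the connected section $\TT_\psi$ for independence of $\varphi$. The only minor divergence is organizational — you deduce the torus structure from scratch rather than by reference — but the substance and the key lemmas used (Lemma \ref{lem:compact_orbit}, Theorem \ref{th:local_diffeo}, the diagram \eqref{eq:diagram_action}) are the same.
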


\begin{proof}[Proof of Corollary \ref{coro:stabilizer}]
By restricting the action \eqref{eq:action2} to the orbit $G(\varphi)$ with 
$\varphi\in\TT_\psi$ we obtain a smooth {\em transitive} action 
\begin{equation}\label{eq:action3}
G : \R^{2R'+1}\times G(\varphi)\to G(\varphi)
\end{equation}
of $\R^{2R'+1}$ on the submanifold $G(\varphi)$. Since $G(\varphi)$ is compact (cf. Lemma \ref{lem:compact_orbit}) 
the stabilizer $\text{\rm St}(\varphi)$ of \eqref{eq:action3} is of the form
\[
\text{\rm St}(\varphi)=\Span\langle e_{(-R')},...,e_{R'}\rangle_\Z
\] 
where the vectors $e_{(-R')},...,e_{R'}\in\R^{2R'+1}$ form a basis in $\R^{2R'+1}$ and
$G(\varphi)$ is diffeomorphic to the factor group $\R^{2R'+1}/\text{\rm St}(\varphi)$,
which is a torus of dimension $2R'+1$ (cf. \cite[\S\,49]{Arn}). It then follows from the commutative diagram 
\eqref{eq:diagram_action} that
\begin{equation}\label{eq:integer_condition}
e_{(-R')},...,e_{R'}\in(2\pi\Z)^{2R'+1}.
\end{equation}
Indeed, assume that $\tau\in\text{\rm St}(\varphi)$. Then $G^\tau(\varphi)=\varphi$ and, by 
the commutative diagram \eqref{eq:diagram_action}, we obtain that $\rho^\tau\big(\Phi(\varphi)\big)=\Phi(\varphi)$. 
This implies that $\tau\in(2\pi\Z)^{2R'+1}$, and hence, completes the proof of the first statement of the Corollary.
Let us now prove the second statement. Take $\varphi^*\in\TT_\psi$ and let 
$\tau^*\in\text{\rm St}(\varphi^*)$. In view of the continuity of the action \eqref{eq:action2},
there exists an open neighborhood $U_1$ of zero in $\R^{2R'+1}$ and an open neighborhood $U_2$ of $\varphi^*$
in $U_\psi$ so that for any $(\tau,\varphi)\in U_1\times U_2$ we have that $G^{\tau^*+\tau}(\varphi)\in U_\psi$. It then follows
from the commutative diagram \eqref{eq:diagram_action} and the fact that $\tau^*\in(2\pi\Z)^{2R'+1}$ that
\begin{equation}\label{eq:time_shift_in_coordinates}
\Phi\big(G^{\tau^*+\tau}(\varphi)\big)=\left\{
\begin{array}{l}
i\,|p_n|\,\big(\cos(\theta_n(\varphi)+\tau_n),\sin(\theta_n(\varphi)+\tau_n)\big),\quad |n|\le R',\\
p_n,\quad|n|>R',
\end{array}
\right.
\end{equation}
where $(p_n)_{n\in\Z}=\Phi(\varphi)$.
Since $\Phi : U_\psi\to V_{p^0}$ is a diffeomorphism, we conclude from \eqref{eq:time_shift_in_coordinates} that
$G^{\tau^*}(\varphi)=\varphi$ for any $\varphi\in U_2$. In view of the connectedness of $U_\psi$ 
we then obtain that $\tau^*\in\text{\rm St}(\varphi)$ for any $\varphi\in U_\psi$. Since $\TT_\psi$ is connected
this shows that the $2R'+1$ linearly independent vectors in \eqref{eq:integer_condition} can be chosen independently 
of $\varphi\in\TT_\psi$.
\end{proof}

With these preparations done we are now ready to introduce the slight adjustment of $\Phi$, mentioned at the beginning 
of the Section. By \eqref{eq:W} and Lemma \ref{lem:compact_orbit} the open neighborhood  $\W$ of $\psi$ in $i L^2_r$ is 
foliated by orbits of the action \eqref{eq:action2} with $\TT_\psi$ as a global section. By Corollary \ref{coro:stabilizer}
any orbit $G(\varphi)$, $\varphi\in\TT_\psi$, is diffeomorphic to the $2R'+1$ dimensional
torus \eqref{eq:torus}. Denote by $(t_{(-R')},...,t_{R'})$ the coordinates corresponding to the frame $(e_{(-R')},...,e_{R'})$
in $\R^{2R'+1}$. For any given $\varphi\in\TT_\psi$ denote by $(\theta^*_{(-R')},...,\theta^*_{R'})$
the {\em coordinates} on $G(\varphi)$ that are obtained by taking the pull-back of $(2\pi t_{(-R')},...,2\pi t_{R'})$ 
via the diffeomorphism $G(\varphi)\to\mathcal{G}$ given by the action \eqref{eq:action3}. 
For any $|n|\le R'$ denote the coordinates of $e_n$ by $\tau_{nk}$, $|k|\le R'$,
\begin{equation}\label{eq:e_n}
e_n=\big(\tau_{n(-R')},...,\tau_{nR'}\big)\in(2\pi\Z)^{2R'+1},\quad |n|\le R'.
\end{equation}
For $|n|\le R'$ consider the (analytic) functions $I^*_n : \W\to\R$,
\begin{equation}\label{eq:I*}
I^*_n:=\frac{1}{2\pi}\sum_{|k|\le R'}\tau_{nk} I_k,
\end{equation}
where $I_k$, $|k|\le R'$, is the $k$-th action on $U_{\rm iso}$.
Then, it follows from \eqref{eq:I*} and the second statement of Corollary \ref{coro:stabilizer} that
$X_{I^*_n}=\sum_{|k|\le R'}\tau_{nk} X_{I_k}$, and by the definition of the action \eqref{eq:action2} 
(see also \eqref{eq:action1}) and the coordinates $(\theta^*_{(-R')},...,\theta^*_{R'})$, we conclude from \eqref{eq:e_n} 
that on any orbit $G(\varphi)$, $\varphi\in\TT_\psi$, we have that
\begin{equation}\label{eq:*-canonical}
\big\{\theta^*_n,I^*_k\big\}\equiv(d\theta^*_n)\big(X_{I^*_k}\big)=d\big(2\pi t_n\big)(e_k/2\pi)=\delta_{nk}
\end{equation}
for any $|n|\le R'$ and $|k|\le R'$. This implies that on any orbit $G(\varphi)$, $\varphi\in\TT_\psi$,
\begin{equation}\label{eq:theta*}
d\theta^*_n=\sum_{|k|\le R'}\tau^*_{nk}d\theta_k
\end{equation}
where $\theta_k : \W\to\R/2\pi$, $|k|\le R'$, is the $k$-th angle on $U_{\rm iso}\cap i L^2_r$ and
$(\tau^*_{nk})_{|n|,|k|\le R'}$ are the elements of the {\em non-degenerate} $(2R'+1)\times(2R'+1)$ matrix 
$P^*:= \big(P^{-1}\big)^T$ where $P:=(\tau_{nk})_{|n|,|k|\le R'}$ and $\big(P^{-1}\big)^T$ denotes
the transpose of $P^{-1}$. Formula \eqref{eq:theta*} shows that the coordinates $\theta^*_{(-R')},...,\theta^*_{R'}$ 
taken modulo $2\pi$ are real analytic functions on $\W$. In this way, we have

\begin{Lem}\label{lem:separating_points}
The angles $(\theta^*_n)_{|n|\le R'}$ taken modulo $2\pi$, the actions $(I^*_n)_{|n|\le R'}$, and
$(x_n)_{|n|>R'}$, $(y_n)_{|n|>R'}$, are real analytic on $\W$ and separate the points on $\W$. Moreover, 
$\{x_n,y_n\}=1$, $|n|>R'$, $\{\theta^*_n,I^*_n\}=1$, $|n|\le R'$, whereas all other Poisson brackets vanish.
\end{Lem}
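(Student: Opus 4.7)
The plan is to verify the three claims (analyticity, point separation, canonical commutation relations) by combining the local Birkhoff machinery of Proposition \ref{Theorem 5.6} and Proposition \ref{Theorem 5.7} for the tail indices $|n|>R'$ with the torus parametrization set up in Section \ref{sec:proofs} for the finite part $|n|\le R'$. For analyticity, the $I^*_n$ are constant $\R$-linear combinations of the analytic actions $I_k$ on $U_{\rm iso}\supseteq\W$, and the $x_n,y_n$ for $|n|>R'$ are analytic on $\W$ by Proposition \ref{Theorem 5.6}. To handle $\theta^*_n$ modulo $2\pi$, I would first observe that $\W$ avoids $\mathcal{Z}_n$ for $|n|\le R'$: the choice of $V_{p^0}$ with $\delta_0' < |p^0_n|$ forces $|p_n|>0$ on $V_{p^0}$, hence $I_n\ne 0$ on $U_\psi$; since the isospectral flows $X_{I_k}$ preserve all actions, $I_n\ne 0$ persists on $\W=\bigcup_{\tau}G^\tau(U_\psi)$, and in view of $4I_n=(\xi_n\gamma_n)^2$ with $\xi_n\ne 0$ this gives $\gamma_n\ne 0$, i.e.\ $\mathcal{Z}_n\cap\W=\emptyset$, for $|n|\le R'$. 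Hence the standard angles $\theta_k$, $|k|\le R'$, are real analytic modulo $2\pi$ on all of $\W$, and equation \eqref{eq:theta*} together with the normalization of $\theta^*_n$ on the section $\TT_\psi$ shows that $\theta^*_n$ is real analytic modulo $2\pi$ on $\W$.

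For point separation, I would argue as follows. Suppose $\varphi_1,\varphi_2\in\W$ have equal values of all the listed coordinates. Equality of the $I^*_n$, $|n|\le R'$, combined with the invertibility of the matrix $P=(\tau_{nk})$, yields $I_k(\varphi_1)=I_k(\varphi_2)$ for all $|k|\le R'$, hence via \eqref{eq:correspondence_coordinates} the equality of moduli $|\Phi(\varphi_1)_k|=|\Phi(\varphi_2)_k|$ for $|k|\le R'$. Coupled with equality of $(x_n,y_n)_{|n|>R'}$, this places $\Phi(\varphi_1)$ and $\Phi(\varphi_2)$ on the same torus $\Tor(\Phi(\varphi_1))$ of \eqref{eq:Tor}. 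Because $\W=\bigsqcup_{\varphi\in\TT_\psi}G(\varphi)$ is a disjoint union of orbits (Lemma \ref{lem:compact_orbit}), this forces $\varphi_1$ and $\varphi_2$ to lie in the same orbit $G(\varphi_0)$. On $G(\varphi_0)$ the $(\theta^*_n)_{|n|\le R'}$ serve as global coordinates via the diffeomorphism $G(\varphi_0)\cong\mathcal{G}$ from Corollary \ref{coro:stabilizer}, so equality of the $\theta^*_n$ forces $\varphi_1=\varphi_2$.

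For the Poisson brackets, the relations among $(x_n,y_n)_{|n|>R'}$ are immediate from Proposition \ref{Theorem 5.7} restricted to those indices. The identity $\{\theta^*_n,I^*_k\}=\delta_{nk}$ is derived in \eqref{eq:*-canonical} pointwise on each orbit from the relations $\{\theta_i,I_j\}=\delta_{ij}$ (valid on $\W$ since $\mathcal{Z}_i\cap\W=\emptyset$ for $|i|\le R'$) together with the identity $PP^{*T}=I$; analyticity and connectedness of $\W$ extend the identity globally. The vanishing of $\{\theta^*_n,\theta^*_k\}$ and $\{I^*_n,I^*_k\}$ follows likewise by expanding in the $\theta_i, I_j$ and using $\{\theta_i,\theta_j\}=\{I_i,I_j\}=0$ on $\W$. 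Finally, the cross brackets between the finite-part coordinates $\theta^*_n,I^*_n$ ($|n|\le R'$) and the tail coordinates $x_k,y_k$ ($|k|>R'$) vanish by linearity from the corresponding vanishing cross brackets among $\theta_i,I_j,x_k,y_k$ guaranteed by Theorem \ref{th:commutation_relations_iso} and Proposition \ref{Theorem 5.7}.

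The main technical point is the real-analytic well-definedness of $\theta^*_n$ modulo $2\pi$ on all of $\W$, not just on a single orbit. This rests on the fact, established in Corollary \ref{coro:stabilizer}, that the stabilizer lattice $\Span_\Z\langle e_{(-R')},\dots,e_{R'}\rangle$ is \emph{the same} for every orbit $G(\varphi)$ with $\varphi\in\TT_\psi$, so the change-of-frame matrix $P^*$ entering \eqref{eq:theta*} is constant across $\W$; without this uniformity, $\theta^*_n$ would only be an orbit-wise object and its modulo-$2\pi$ reduction would not fit together into a real-analytic function on $\W$.
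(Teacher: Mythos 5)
Your proof is correct and follows essentially the same approach as the paper: the analyticity and Poisson brackets are read off from the preceding computations (equations \eqref{eq:I*}, \eqref{eq:*-canonical}, \eqref{eq:theta*}), and point separation is reduced to separating orbits by the invariant data $(I^*_n)_{|n|\le R'}$, $(x_n,y_n)_{|n|>R'}$ and then separating points within each orbit by the $\theta^*_n$ modulo $2\pi$. Your explicit verification that $\mathcal{Z}_n\cap\W=\emptyset$ for $R<|n|\le R'$ (via $\delta_0'<|p^0_n|$ and flow-invariance of the actions) is a useful elaboration of a step the paper leaves implicit; note only that the step ``same torus forces same orbit'' relies not merely on the disjointness of orbits but on the observation, implicit in the proof of Lemma \ref{lem:compact_orbit} and stated explicitly in the paper's own proof of this Lemma, that the orbit-invariant functions $(I_n)_{|n|\le R'}$, $(x_n,y_n)_{|n|>R'}$ form a coordinate system on the section $\TT_\psi$ and hence distinguish distinct orbits.
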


\begin{proof}[Proof of Lemma \ref{lem:separating_points}]
We already proved that the functions in Lemma \ref{lem:separating_points} are real analytic and that their Poisson brackets satisfy 
the relations stated in the Lemma. Moreover, by the commutation relations \eqref{eq:commutation_relations},
the functions $(I^*_n)_{|n|\le R'}$, $(x_n)_{|n|>R'}$, and $(y_n)_{|n|>R'}$, are constant on any of the orbits 
$G(\varphi)$, $\varphi\in\TT_\psi$, and they form a coordinate system on the section $\TT_\psi$ in \eqref{eq:W}. Hence, 
these functions separate the orbits $G(\varphi)$, $\varphi\in\TT_\psi$. Since the functions $(\theta^*_n)_{|n|\le R'}$ taken 
modulo $2\pi$ separate the points on any of these orbits, we conclude the proof of the Lemma.
\end{proof}

Lemma \ref{lem:separating_points} allows us to define the modification $\Psi : \W\to i\ell^2_r$ of the map
$\Phi : \W\to i\ell^2_r$ by setting
\begin{equation}\label{eq:modified_coordinates}
x_n:=\sqrt{2I^*_n}\cos\theta^*_n,\quad y_n:=\sqrt{2I^*_n}\sin\theta^*_n,\quad|n|\le R',
\end{equation}
while keeping the other components of $\Phi$ unchanged.
By shrinking the open neighborhood $\W$ if necessary we can ensure that $\Psi(\W)$ is a tail neighborhood of 
the form \eqref{eq:Wdown} with $p^0:=\Psi(\psi)$ and the same value of the parameter $\epsilon_0>0$. 
For simplicity of notation, we will denote $\Psi(\W)$ again by $W$ and for any $|n|\le R'$,write $\theta_n$ and $I_n$ instead of 
$\theta^*_n$ and, respectively, $I^*_n$. We have

\begin{Prop}\label{prop:almost_done}
The map
\begin{equation}\label{eq:Phi*}
\Psi : \W\to W
\end{equation} 
is a canonical real analytic diffeomorphism. Moreover, the open neighborhood $\W$ is a saturated neighborhood of $\iso_o(\psi)$
and $W$ is a tail neighborhood of $\Psi(\psi)$ in $i\ell^2_r$.
\end{Prop}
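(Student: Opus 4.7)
My plan is to verify in turn the real analyticity of $\Psi$, the canonical character, the bijectivity onto $W$, the analyticity of the inverse, the saturation of $\W$, and the tail structure of $W$. Real analyticity is essentially automatic from Lemma \ref{lem:separating_points}: each component of $\Psi$ is either one of $(x_n,y_n)_{|n|>R'}$ inherited from $\Phi$, or, for $|n|\le R'$, is built from the real analytic $I^*_n$ and the angle $\theta^*_n$ (defined modulo $2\pi$), using that after shrinking $\W$ around $\psi$ we have $I^*_n>0$ throughout, so that $\sqrt{2I^*_n}\cos\theta^*_n$ and $\sqrt{2I^*_n}\sin\theta^*_n$ are globally analytic on $\W$. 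The canonical property for $|n|>R'$ is Theorem \ref{th:local_diffeo}, while for $|n|\le R'$ it is a direct polar-to-Cartesian computation from the bracket relations $\{\theta^*_n,I^*_k\}=\delta_{nk}$, $\{I^*_n,I^*_k\}=0$, $\{\theta^*_n,\theta^*_k\}=0$ of Lemma \ref{lem:separating_points}.

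For bijectivity, injectivity follows from the fact that the functions listed in Lemma \ref{lem:separating_points} separate the points of $\W$, together with the positivity $I^*_n>0$ (so that the polar-to-Cartesian passage is injective on each two-plane). Surjectivity onto $W$ follows from the representation $\W=\bigsqcup_{\varphi\in\TT_\psi}G(\varphi)$ and the commutative diagram \eqref{eq:diagram_action}: the section $\TT_\psi$ covers $T_{p^0}$ via $\Phi$, and the $\R^{2R'+1}$-action sweeps out the full annulus block of $W$; after the linear change of angle variables implemented by the matrix $P^*$, the sweep covers exactly $B^{\delta_0',2\pi}_{|n|\le R'}(p^0)$. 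To see that $d_\varphi\Psi$ is a linear isomorphism, I would note that on the block $|n|\le R'$ the map $\Psi$ is obtained from $\Phi$ by post-composing with a polar decomposition, applying $P^*$ to angles and the corresponding linear transformation to actions, and returning to Cartesian coordinates; this composition is a local diffeomorphism on the region where all $I_n>0$, so combining with Theorem \ref{th:local_diffeo} yields that $d\Psi$ is a linear isomorphism, and the inverse function theorem then upgrades the continuous bijection $\Psi$ to a real analytic diffeomorphism.

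Saturation of $\W$ is where I expect the main conceptual work. Given $\varphi\in\W$, I first ensure $\iso_o(\varphi)\subseteq U_{\rm iso}\cap iL^2_r$ by invoking Lyapunov stability (Lemma \ref{lem:lyapunov_stability}) after an initial shrinking of $\W$. For any $\varphi'\in\iso_o(\varphi)$, the actions $I_n$ are spectral invariants (they are contour integrals of the discriminant along fixed contours $\Gamma_n$), hence $I_n(\varphi')=I_n(\varphi)$ and likewise $I^*_n(\varphi')=I^*_n(\varphi)$. This gives $|p_n(\varphi')|=|p_n(\varphi)|$ for every $n$, which places the $|n|\le R'$ components of $\Psi(\varphi')$ on the same circles as those of $\Psi(\varphi)$ inside the full annuli $B^{\delta_0',2\pi}_{|n|\le R'}(p^0)$ (the angular range here is the entire $2\pi$ by construction of $W$), while the tail $\ell^2$-norm is preserved, keeping $(\Psi(\varphi')_n)_{|n|>R'}$ inside $B^{\epsilon_0}_{|n|>R'}(0)$. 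Hence $\Psi(\varphi')\in W$, and by the already-established bijectivity $\varphi'\in\W$. The tail-neighborhood structure of $W$ is immediate from \eqref{eq:Wdown}.

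The main obstacle is the compatibility of the successive shrinkings of $\W$ that the argument demands: one must simultaneously ensure that $I^*_n>0$ for all $|n|\le R'$ (so polar coordinates work), that $\iso_o(\varphi)\subseteq U_{\rm iso}$ for all $\varphi\in\W$ (Lyapunov stability), and that the orbit fibration $\W=\bigsqcup_{\varphi\in\TT_\psi}G(\varphi)$ together with the transversality of the section $\TT_\psi$ continues to yield a tail-neighborhood image. The right order of choices is to fix $R'$ and $\delta_0',\delta_0'',\epsilon_0$ at the very start, small enough that all three constraints hold on the initial $U_\psi$, and then to verify that the construction of $\W$ via the $\R^{2R'+1}$-action does not enlarge $\W$ beyond the range of validity of any of them.
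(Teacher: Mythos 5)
Your proposal takes essentially the same route as the paper (analyticity and the canonical property from Lemma~\ref{lem:separating_points}; local diffeomorphism via Theorem~\ref{th:local_diffeo} and the linear change of variables implemented by $P^*$; global injectivity from the separating-points property; surjectivity from the commutative diagram~\eqref{eq:diagram_action}; tail structure from~\eqref{eq:Wdown}). Two issues deserve mention.

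First, a small sign-convention slip: you repeatedly invoke ``$I^*_n>0$'', but in the paper the actions on $U_{\rm iso}\cap iL^2_r$ are \emph{negative}. Indeed~\eqref{eq:correspondence_coordinates} gives $\tfrac12|p_n|^2=-I_n\ge 0$, and the paper moreover shifts the finitely many $I_n$, $|n|\le R$, so that $\re(I_n)\le -1$; the root is then chosen as $\sqrt{2I_n}=i\sqrt[+]{-2I_n}$. What you actually need is that $|I^*_n|$ is bounded away from $0$ (so the polar parametrization is nondegenerate), not positivity. This does not alter the structure of the argument, but as written your statement is false.

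Second, and more substantively, the saturation step has a genuine circularity. You write: ``Hence $\Psi(\varphi')\in W$, and by the already-established bijectivity $\varphi'\in\W$.'' But $\Psi$ is only defined on $\W$, so the expression $\Psi(\varphi')$ presupposes $\varphi'\in\W$ — precisely what you are trying to establish. The fix is the connectedness argument implicit in the paper's appeal to Lemma~\ref{lem:W} and Lemma~\ref{lem:iso_components}: show that $\iso_o(\varphi)\cap\W$ is both open (clear, since $\W$ is open) and closed in the connected set $\iso_o(\varphi)$; for closedness, note that $\iso_o(\varphi)\cap\W$ is contained in $\Psi^{-1}\bigl(\Tor(\Psi(\varphi))\bigr)$, which is a compact subset of $\W$ because $\Tor(\Psi(\varphi))\subseteq W$ is compact and $\Psi:\W\to W$ is a homeomorphism. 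Hence any limit point of $\iso_o(\varphi)\cap\W$ within $\iso_o(\varphi)$ already lies in $\W$. Alternatively, one can argue directly from the invariance of $\W$ under the flows $X_{I_n}$, $n\in\Z$ (Lemma~\ref{lem:W}), combined with the local identification of isospectral sets with tori (Lemma~\ref{lem:iso_components}), which is what the paper does. Your invocation of Lyapunov stability (Lemma~\ref{lem:lyapunov_stability}) ensures $\iso_o(\varphi)\subseteq U_{\rm iso}\cap iL^2_r$ — a necessary preliminary — but is not by itself sufficient to conclude $\iso_o(\varphi)\subseteq\W$.
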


\begin{proof}[Proof of Proposition \ref{prop:almost_done}]
The Proposition follows from Lemma \ref{lem:separating_points}. Indeed,
it follows from Lemma \ref{lem:separating_points} and \eqref{eq:modified_coordinates} that
the map \eqref{eq:Phi*} is analytic. Furthermore, it follows from \eqref{eq:I*}, \eqref{eq:theta*}, 
\eqref{eq:modified_coordinates}, and the fact that $\Phi : \W\to i\ell^2_r$ is a local diffeomorphism (cf. Theorem \ref{th:local_diffeo})
onto its image, that \eqref{eq:Phi*} is a local diffeomorphism onto $W$. Since, by Lemma \ref{lem:separating_points},
the components of $\Psi$ separate the points on $\W$, we then conclude that 
$\Psi$ is a diffeomorphism. This map is canonical by the last statement in Lemma \ref{lem:separating_points}.
The statement that $\W$ is a saturated neighborhood of $\iso_0(\psi)$ follows from 
the invariance of the neighborhood $\W$ with respect to the complete isospectral flows $X_{I_n}$, $n\in\Z$
(see Lemma \ref{lem:W}), and the arguments in the proof of Lemma \ref{lem:iso_components} that allow
us, for any $\varphi\in\W$, to identify $\iso_o(\varphi)$ with $\Tor\big(\Psi(\varphi)\big)$ (see \eqref{eq:Tor}) in 
the tail neighborhood $W$.
\end{proof}

Finally, we prove

\begin{Prop}\label{prop:N>=1}
For any integer $N\ge 1$ the restriction of the map \eqref{eq:Phi*} to $\W\cap i H^N_r$ takes values in $i\h^N_r$ and 
$\Psi\big|_{\W\cap i H^N_r} : \W\cap i H^N_r\to W\cap i\h^N_r$ is a real analytic diffeomorphism.
\end{Prop}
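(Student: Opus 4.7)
The plan is to derive Proposition \ref{prop:N>=1} from Proposition \ref{prop:almost_done} together with the well known characterization of $H^N$-regularity of ZS potentials via weighted-$\ell^2$ decay of the spectral data. I would split the argument into three steps: (i) the forward inclusion $\Psi(\W\cap iH^N_r)\subseteq i\h^N_r$, (ii) real analyticity of $\Psi$ with values in $i\h^N_r$, and (iii) real analyticity of the inverse map and its image property $\Psi^{-1}(W\cap i\h^N_r)\subseteq iH^N_r$.

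For step (i), I would combine the asymptotics $z^\pm_n=O(|\gamma_n|+|\mu_n-\tau_n|)$ from Lemma \ref{Lemma 5.4} with $\xi_n=1+\ell^2_n$ from Lemma \ref{lem:actions_asymptotics} to deduce, via formulas \eqref{eq:x,y'}, that $x_n,y_n=O(|\gamma_n|+|\mu_n-\tau_n|)$ for $|n|>R'$, locally uniformly on $U_{\rm iso}$. For a potential $\varphi\in iH^N_r$, the refined Counting Lemma for ZS potentials yields the weighted asymptotics $\big((1+n^2)^{N/2}\gamma_n\big)_{n\in\Z}\in\ell^2$ and $\big((1+n^2)^{N/2}(\mu_n-\tau_n)\big)_{n\in\Z}\in\ell^2$, locally uniformly in $\varphi$; these are classical for the selfadjoint dNLS case (see \cite{GK1}) and carry over to the non-selfadjoint setup via the deformation arguments used throughout the paper. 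The modified coordinates \eqref{eq:modified_coordinates} for $|n|\le R'$ involve only finitely many indices and hence do not affect $\h^N$-membership. Together these estimates yield $\Psi(\varphi)\in i\h^N_r$.

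For step (ii), I would upgrade the pointwise $\h^N$-valued statement to real analyticity in the $\h^N$-topology by showing that the tame estimates of step (i) hold locally uniformly in $\varphi\in\W\cap iH^N_r$. Since each component $x_n$, $y_n$ is already real analytic on $\W$ by Theorem \ref{th:local_diffeo}, the locally uniform convergence of the weighted $\ell^2$-series defining the $\h^N$-norm, together with standard Cauchy-estimate arguments in analytic Banach-space calculus, upgrades the coordinatewise analyticity to real analyticity into $i\h^N_r$.

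Step (iii) is the main obstacle. Given $p\in W\cap i\h^N_r$, set $\varphi:=\Psi^{-1}(p)\in\W$ by Proposition \ref{prop:almost_done}; one needs to show $\varphi\in iH^N_r$. The strategy is to invert the asymptotics of step (i), i.e. to show that the weighted $\ell^2$-summability of $(x_n,y_n)$ forces the weighted $\ell^2$-summability of $\gamma_n$ and $\mu_n-\tau_n$, which in turn characterizes $\varphi\in iH^N_r$. The natural way to make this rigorous is to prove that the Jacobian $d_\varphi\Psi$ extends to a bounded linear isomorphism $iH^N_r\to i\h^N_r$: the gradient asymptotics of Lemma \ref{Proposition 6.2} (applied with Cauchy estimates to control higher Sobolev norms) show that the operator $A_\varphi$ from \eqref{eq:A} remains a compact perturbation of the identity on $iH^N_r$, and injectivity is inherited from the $L^2$-case; the Fredholm alternative then gives the isomorphism property. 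An inverse function theorem in the $H^N$-category, combined with the connectedness of $\W\cap iH^N_r$ and with the $H^N$-statement of Theorem \ref{th:main_near_zero} at the anchor point near zero, then propagates the conclusion from a neighborhood of $\psi^{(0)}$ along the path $\gamma$ to all of $\W\cap iH^N_r$. The delicate point will be the bookkeeping for the Hilbert--Schmidt bound of $A_\varphi-\mathrm{Id}$ in the $H^N$-norm, since compactness in $iL^2_r$ does not automatically survive the upgrade to $iH^N_r$ and sharper tame estimates on $\partial x_n$, $\partial y_n$ in weighted norms will be required.
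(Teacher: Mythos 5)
Your steps (i) and (ii) match the paper's proof closely: the paper combines Lemma \ref{Lemma 5.4} with the asymptotics of $\gamma_n$ and $\mu_n-\tau_n$ on $H^N_c$ from \cite{KSchT2}, together with the $O(1/n)$ control on $\tilde\beta^n$ and $\sum_{|k|>R,k\neq n}\beta^n_k$, to bound $\Psi$ locally in a complex $\h^N$-neighborhood, and then invokes \cite[Theorem A.5]{GK1} (locally bounded plus weakly analytic implies analytic) to upgrade to real analyticity into $i\h^N_r$. The finitely many modified components for $|n|\le R'$ cause no trouble, as you say.

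Step (iii) is where your plan genuinely diverges, and where the gap you flag is a real one that the paper avoids by a different device. You propose to show that $A_\varphi-\mathrm{Id}$ remains compact on $iH^N_r$ by upgrading the gradient asymptotics $\partial x_n=-\frac{1}{\sqrt2}(e^+_n+e^-_n)+\ell^2_n$, $\partial y_n=-\frac{1}{\sqrt2 i}(e^+_n-e^-_n)+\ell^2_n$ of Lemma \ref{Proposition 6.2} to weighted Hilbert--Schmidt bounds. But those asymptotics are $\ell^2$-in-$n$ and $L^\infty$-in-$x$ estimates stated only at finite gap potentials; they do not provide the $\langle n\rangle^{2N}$-weighted decay needed for compactness of the error on $H^N$, and establishing such weighted gradient estimates is a substantial new task, not a routine Cauchy-estimate bookkeeping. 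The paper circumvents this entirely by proving the \emph{one-smoothing} property at the level of the map: $\Phi-\mathcal{F}:\W\cap iH^N_r\to i\h^{N+1}_r$ is real analytic, where $\mathcal{F}$ is the (signed) Fourier transform. This follows from the explicit formulas for $z^\pm_n$ in \cite[Theorem 2.2]{KSchT1} combined with the refined asymptotics of \cite{KSchT2} and \cite[Lemma 12.7]{GK1}. Differentiating and composing with the compact embedding $i\h^{N+1}_r\hookrightarrow i\h^N_r$ then gives compactness of $d_\varphi\Phi-\mathcal{F}$ on $H^N$ for free, with no weighted gradient estimates at all. Injectivity of $(d_\varphi\Psi)|_{iH^N_r}$ is inherited from the $L^2$-isomorphism of Proposition \ref{prop:almost_done}, and the Fredholm alternative closes the argument exactly as you envisage.

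Your treatment of surjectivity is also incomplete. You propose an inverse-function-theorem propagation from the anchor near zero along $\gamma$ to conclude $\Psi^{-1}(W\cap i\h^N_r)\subseteq iH^N_r$, but a chain of local diffeomorphisms plus connectedness does not by itself yield that the image is exactly $W\cap i\h^N_r$: one would still need an open-closedness or properness argument at the $H^N$ level. The paper instead invokes \cite[Theorem 1.2]{KST}, which gives the \emph{direct characterization} of potentials in $iH^N_r$ among those in $iL^2_r$ in terms of the weighted decay of spectral data (the very data controlling $x_n,y_n$). Combined with the bijectivity $\Psi:\W\to W$ already established in Proposition \ref{prop:almost_done}, this single theorem yields $\Psi(\W\cap iH^N_r)=W\cap i\h^N_r$ immediately. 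That ingredient is precisely what lets the paper avoid any propagation argument in the $H^N$ category.
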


\begin{proof}[Proof of Proposition \ref{prop:N>=1}]
Assume that $N\ge 1$. Since the coordinate functions $x_n$ and $y_n$ are real analytic on $\W$ so are their restrictions to 
$\W\cap i H^N_r$. Recall that by Lemma \ref{Lemma 5.4} that $z_n^\pm=O\big(|\gamma_n|+|\mu_n-\tau_n|\big)$ for $|n|>R$, 
locally uniformly on $U_{\rm iso}$. Taking into account the estimates for $\gamma_n$ on $H^N_c$ in \cite[Corollary 1.1]{KSchT2} and 
the ones for $\mu_n-\tau_n$ on $H^N_c$ in \cite[Theorem 1.1]{KSchT2} and \cite[Theorem 1.3]{KSchT2} it follows that 
\begin{equation}\label{eq:z_n-estimate}
\sum_{|n|>R}\langle n\rangle^{2N}|z_n^\pm|^2<\infty
\end{equation}
locally uniformly on $U_{\rm iso}\cap H^N_c$. Furthermore, recall from Remark \ref{rem:U_iso} that ${\tilde\beta}^n$, defined modulo $2\pi$,
is of order $O(1/n)$ as $|n|\to\infty$ locally uniformly on $U_{\rm iso}\setminus\mathcal{Z}_n$ and that 
$\sum_{|k|>R,k\ne n}\beta^n_k=O(1/n)$ as $|n|\to\infty$ locally uniformly on $U_{\rm iso}$. By combining this with \eqref{eq:z_n-estimate} 
we conclude from \eqref{eq:x,y'} that the real analytic map $\Psi\big|_{\W\cap i H^N_r} : \W\cap i H^N_r\to i\h^N_r$ is locally bounded in 
a complex neighborhood of $\W\cap i H^N_r$. By \cite[Theorem A.5]{GK1} it then follows that 
$\Psi\big|_{\W\cap i H^N_r} : \W\cap i H^N_r\to i\h^N_r$ is real analytic.
Furthermore, by the characterization of potentials $\varphi\in i L^2_r$ to be in $i H^N_r$, provided in \cite[Theorem 1.2]{KST},
it follows that $\Psi(\W\cap i H^N_r)=W\cap i\h^N_r$, implying that $\Psi\big|_{\W\cap i H^N_r} : \W\cap i H^N_r\to W\cap i\h^N_r$
is bijective. To see that the latter map is a real analytic diffeomorphism it remains to show that for any $\varphi\in\W\cap i H^N_r$,
$\big(d_\varphi\Psi\big)\big|_{i H^N_r} : i H^N_r\to i\h^N_r$ is a linear isomorphism. Since $d_\varphi\Psi : L^2_r\to i\ell^2_r$
is an isomorphism by Proposition \ref{prop:almost_done}, we conclude that $\mathop{\rm Ker}\big(d_\varphi\Psi\big)\big|_{i H^N_r}=\{0\}$. 
Hence, we will complete the proof if we show that $\big(d_\varphi\Psi\big)\big|_{i H^N_r} : i H^N_r\to i\h^N_r$ is a Fredholm operator
of index zero. This will follow once we prove that $\big(d_\varphi\Phi\big)\big|_{i H^N_r} : i H^N_r\to i\h^N_r$ is a Fredholm operator
of index zero, where $\Phi$ is the map \eqref{eq:birkhoff_map}.
In order to prove this, we show by analytic extension that the formulas for $z_n^\pm$ in \cite[Theorem 2.2]{KSchT1}, 
valid for potentials near zero, continue to hold on $U_{\rm iso}$ for any $|n|>R$. These formulas involve the quantities 
$\tau_n-\mu_n$, $\delta(\mu_n)$, $\delta_n(\mu_n)$, and $\eta_n^\pm$, which can be estimated
using \cite[Theorem 1.1, Theorem 1.3, and Theorem 1.4]{KSchT2} 
and \cite[Lemma 12.7]{GK1}. In this way we show that for any $N\ge 1$,
\begin{equation}\label{eq:one-smoothing}
\Phi-\mathcal{F} : \W\cap i H^N_r\to i\h^{N+1}_r,
\end{equation}
is a real analytic map. Here $\mathcal{F} : H^N_c\to\h^N_c$ is the Fourier transform 
$(\varphi_1,\varphi_2)\mapsto\big(-{\widehat\varphi}_1(n),-{\widehat\varphi}_2(n)\big)_{n\in\Z}$ 
(cf. the identification introduced in \eqref{eq:identification}).
Hence, for any $\varphi\in\W\cap i H^N_r$ we obtain that $d_\varphi\Phi-\mathcal{F} : i H^N_r\to i\h^{N+1}_r$ is 
a bounded linear map. Since the Sobolev embedding $i\h^{N+1}_r\to i\h^N_r$ is compact, we then conclude that 
$d_\varphi\Phi-\mathcal{F} : i H^N_r\to i\h^N_r$ is a compact operator. This completes the proof of the Proposition.
\end{proof}

Now, we are ready to proof Theorem \ref{th:main}.

\begin{proof}[Proof of Theorem \ref{th:main}]
By setting $z_n:=(x_n-i y_n)/\sqrt{2}$ and $w_n:=-\overline{z_{(-n)}}$ for any $n\in\Z$ 
Proposition \ref{prop:almost_done} implies that the statements {\em(NF1)} and {\em(NF2)}
(for $N=0$) of Theorem \ref{th:main} hold (cf. \eqref{eq:identification}). The statement {\em(NF2)} for $N\ge 1$
follows from Proposition \ref{prop:N>=1}. For proving {\em(NF3)} we follow the arguments of the proof of
Theorem 20.3 in \cite{GK1}.
\end{proof}

\section{Appendix: Auxiliary results}
First we provide details of the proof that the sum of the integrals \eqref{eq:argument_principle}, introduced in 
the proof of Proposition \ref{prop:beta^n_tn}, is analytic.
We use the notation introduced there.
Assume that for a given $\nu\in\Lambda^-_R(\psi)$, $\mu_\psi(\nu)$ is a Dirichlet eigenvalue of $L(\psi)$ of 
multiplicity $m_D\ge 2$. 
Denote by $\nu_j$, $1\le j\le m_D$, the periodic eigenvalues of $L(\psi)$ such that $\mu_\psi(\nu_j)=\mu_\psi(\nu)$ and
for any $\varphi\in U_\psi$ denote by $z_j$, $1\le j\le m_D$, the periodic eigenvalues of $L(\varphi)$ with 
$z_j\in D^\varepsilon(\nu_j)$.

\begin{Lem}\label{lem:argument_principle} 
The quantity
\begin{equation}\label{eq:argument_principle'}
\sum_{1\le j\le m_D}
\int_{\PP^*_{\varphi,\psi}[z_j,\mu_\varphi(z_j)]}
\frac{\zeta_n(\lambda,\varphi)}{\sqrt{\Delta^2(\lambda,\varphi)-4}}\,d\lambda
\end{equation}
is analytic in $U_\psi$.
\end{Lem}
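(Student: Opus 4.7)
The plan is to decompose each path $\PP^*_{\varphi,\psi}[z_j,\mu_\varphi(z_j)]$, according to \eqref{eq:concatenation*}, into the three consecutive pieces $[z_j,Q_{\nu_j}]^*$, $Y^*_{\varphi,\psi}[\nu_j,\mu_\psi(\nu)]$, and $[Q_{\mu_\psi(\nu)},\mu_\varphi(z_j)]^*$. The analyticity in $\varphi\in U_\psi$ of the contributions of the first two pieces is straightforward and holds for each $j$ separately: since each $\nu_j$ is a simple periodic eigenvalue of $L(\psi)$, the root $z_j=z_j(\varphi)\in\Lambda^-_R(\varphi)$ varies analytically with $\varphi$; the planar arc $Y_\psi[\nu_j,\mu_\psi(\nu)]\subseteq B^\bullet_{R,\psi}$ together with its base points $Q_{\nu_j}$ and $Q_{\mu_\psi(\nu)}$ is fixed; and the lifts $[z_j,Q_{\nu_j}]^*$ and $Y^*_{\varphi,\psi}[\nu_j,\mu_\psi(\nu)]$ to $\mathcal{C}_{\varphi,R}$ depend analytically on $\varphi$ through the analytic dependence of the branch points $\lambda^\pm_k(\varphi)$, $|k|\le R$.

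The delicate contribution comes from the third pieces. By the construction of the disks in the proof of Proposition \ref{prop:beta^n_tn}, $\overline{D}^\varepsilon(\mu_\psi(\nu))$ contains no periodic eigenvalue of $L(\varphi)$ for any $\varphi\in U_\psi$, so $\chi_p(\cdot,\varphi)$ is nonvanishing on this disk and, by \eqref{eq:canonical_root_analytic}, the canonical root $s(\lambda,\varphi):=\sqrt[c]{\chi_p(\lambda,\varphi)}$ extends to an analytic, nowhere vanishing function on $D^\varepsilon(\mu_\psi(\nu))\times U_\psi$. Setting $g(\lambda,\varphi):=\grave{m}_2(\lambda,\varphi)+\grave{m}_3(\lambda,\varphi)$, the Wronskian identity gives $g(\mu_\varphi(z_j),\varphi)^2=\Delta(\mu_\varphi(z_j),\varphi)^2-4=s(\mu_\varphi(z_j),\varphi)^2$, so the sign $\varepsilon_j:=g(\mu_\varphi(z_j),\varphi)/s(\mu_\varphi(z_j),\varphi)\in\{\pm 1\}$ identifies the sheet of $\mathcal{C}_{\varphi,R}$ carrying the Dirichlet divisor $\mu_\varphi^*(z_j)=(\mu_\varphi(z_j),g(\mu_\varphi(z_j),\varphi))$, and therefore also the sheet of the base point $Q^*_{\mu_\psi(\nu),\varphi}$ used in the definition of $[Q_{\mu_\psi(\nu)},\mu_\varphi(z_j)]^*$. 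Consequently
\[
\int_{[Q_{\mu_\psi(\nu)},\mu_\varphi(z_j)]^*}\frac{\zeta_n(\lambda,\varphi)}{\sqrt{\Delta^2(\lambda,\varphi)-4}}\,d\lambda=\varepsilon_j\,F(\mu_\varphi(z_j),\varphi),
\]
where
\[
F(\lambda,\varphi):=\int_{Q_{\mu_\psi(\nu)}}^{\lambda}\frac{\zeta_n(\lambda',\varphi)}{s(\lambda',\varphi)}\,d\lambda'
\]
is holomorphic in $\lambda\in D^\varepsilon(\mu_\psi(\nu))$ and analytic in $\varphi$. Introducing the jointly analytic function
\[
h(\lambda,\varphi):=\frac{g(\lambda,\varphi)}{s(\lambda,\varphi)}\,F(\lambda,\varphi)
\]
on $D^\varepsilon(\mu_\psi(\nu))\times U_\psi$, the identity $\varepsilon_j=(g/s)(\mu_\varphi(z_j),\varphi)$ rewrites the total third-piece contribution to \eqref{eq:argument_principle'} as $\sum_{j=1}^{m_D}h(\mu_\varphi(z_j),\varphi)$.

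By the construction of the bijection $\mu_\varphi$, the points $\mu_\varphi(z_1),\dots,\mu_\varphi(z_{m_D})$ are precisely the zeros, counted with multiplicity, of the entire function $\chi_D(\cdot,\varphi)$ inside the open disk $D^\varepsilon(\mu_\psi(\nu))$; moreover, by the choice of the disk, no such zero reaches $\partial D^\varepsilon(\mu_\psi(\nu))$ for $\varphi\in U_\psi$. The argument principle therefore yields
\[
\sum_{j=1}^{m_D}h(\mu_\varphi(z_j),\varphi)=\frac{1}{2\pi i}\oint_{\partial D^\varepsilon(\mu_\psi(\nu))}h(\lambda,\varphi)\,\frac{\partial_\lambda\chi_D(\lambda,\varphi)}{\chi_D(\lambda,\varphi)}\,d\lambda,
\]
whose integrand is jointly analytic in $(\lambda,\varphi)$ on a neighborhood of $\partial D^\varepsilon(\mu_\psi(\nu))\times U_\psi$. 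The right-hand side is therefore analytic on $U_\psi$, and combined with the analyticity of the first two contributions this completes the proof. The main subtlety I anticipate is the sheet bookkeeping for the base points $Q^*_{\mu_\psi(\nu),\varphi}$ attached to the different $j$, together with the fact that the individual Dirichlet eigenvalues $\mu_\varphi(z_j)$ need not depend analytically on $\varphi$; both issues are absorbed by packaging the sign and the antiderivative into the single analytic function $h=gF/s$, which turns the sum into a symmetric function of the Dirichlet spectrum inside the disk to which the argument principle applies.
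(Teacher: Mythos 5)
Your treatment of the non‑ramified case $\mu_\psi(\nu)\notin\Lambda_R(\psi)$ agrees in substance with the paper's argument, which likewise reduces the third‑piece contribution to a boundary integral against $\dot\chi_D/\chi_D$; packaging the sheet choice into the jointly analytic function $h=(g/s)F$, so that the total becomes a symmetric function of the Dirichlet eigenvalues in the disk, is a clean way to make the argument principle applicable despite the non‑analytic dependence of the individual $\mu_\varphi(z_j)$ on $\varphi$.

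There is, however, a genuine gap. You assert that $\overline{D}^\varepsilon(\mu_\psi(\nu))$ contains no periodic eigenvalue of $L(\varphi)$ for $\varphi\in U_\psi$. The construction in the proof of Proposition \ref{prop:beta^n_tn} does not guarantee this: condition (2) there only excludes periodic eigenvalues from the \emph{punctured} disk $\overline{D}^\varepsilon(z)\setminus\{z\}$, and it is perfectly admissible that $\mu_\psi(\nu)$ be simultaneously a simple periodic eigenvalue of $L(\psi)$, i.e.\ $\mu_\psi(\nu)\in\Lambda_R(\psi)$. In that ramified case, for each $\varphi\in U_\psi$ the disk $D^\varepsilon(\mu_\psi(\nu))$ contains a branch point of $\pi_1|_{\mathcal{C}_{\varphi,R}}$, the canonical root $s(\lambda,\varphi)$ does \emph{not} extend to an analytic single‑valued function on $D^\varepsilon(\mu_\psi(\nu))\times U_\psi$, and hence $F$ and $h$ live only on the double cover. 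Your argument‑principle step, which integrates $h\,\partial_\lambda\chi_D/\chi_D$ over a contour in the base $\lambda$‑plane, therefore does not make sense in that situation. The paper handles this case separately: it fixes the lift $Q^*_{\mu_\psi(\nu),\varphi}$ by an analytic‑continuation rule, notes that the initial points of the third pieces may then differ between the $j$'s, and inserts correcting paths $\PP^*_{j,\varphi}$ around the branch point so as to isolate a sum to which the argument principle applies plus a manifestly analytic remainder. Your proof needs an analogous treatment of the case $\mu_\psi(\nu)\in\Lambda_R(\psi)$ to be complete.
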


\begin{proof}[Proof of Lemma \ref{lem:argument_principle}]
In view of the representation \eqref{eq:concatenation*}, it is enough to prove that
\begin{equation}\label{eq:sum1}
\sum_{1\le j\le m_D}
\int_{[Q_{\mu_\psi(\nu_j)},\mu_\varphi(z_j)]^*}
\frac{\zeta_n(\lambda,\varphi)}{\sqrt{\Delta^2(\lambda,\varphi)-4}}\,d\lambda
\end{equation}
is analytic in $U_\psi$.
In the case where $\mu_\psi(\nu)\notin\Lambda_R(\psi)$ it follows that for any $\varphi\in U_\psi$ the initial point of
$\big[Q_{\mu_\psi(\nu_j)},\mu_\varphi(z_j)\big]^*$ does not depend on $1\le j\le m_D$.
One then concludes by the argument principle that
\[
\sum_{1\le j\le m_D}
\int\limits_{[Q_{\mu_\psi(\nu_j)},\mu_\varphi(z_j)]^*}
\frac{\zeta_n(\lambda,\varphi)}{\sqrt{\Delta^2(\lambda,\varphi)-4}}\,d\lambda
=\frac{1}{2\pi i}\int_{\partial\overline{D}^\varepsilon(\mu_\psi(\nu))}\!\!\!\!\!\!\!\!
F(\mu,\varphi)\,\frac{\dot\chi_D(\mu,\varphi)}{\chi_D(\mu,\varphi)}\,d\mu
\]
where $\chi_D$ is given by \eqref{eq:chi_D} and
\[
F(\mu,\varphi):=\int_{Q_{\mu_\psi(\nu)}}^\mu
\frac{\zeta_n(\lambda,\varphi)}{\sqrt[*]{\Delta^2(\lambda,\varphi)-4}}\,d\lambda
\]
is analytic on the disk $D^\varepsilon\big(\mu_\psi(\nu)\big)$ and continuous up to its boundary.
The case when $\mu_\psi(\nu)\in\Lambda_R(\psi)$ can be treated in a similar way.
We only remark that in this case, the initial points $Q^*_{\mu_\psi(\nu_j),\varphi}$, $1\le j\le m_D$,
of the paths $\big[Q_{\mu_\psi(\nu_j)},\mu_\varphi(z_j)\big]^*$, $1\le j\le m_D$, are {\em not} necessarily 
the same. If $Q^*_{\mu_\psi(\nu_j),\varphi}=Q^*_{\mu_\psi(\nu),\varphi}$ let $\PP^*_{j,\varphi}$ be the constant
path $Q^*_{\mu_\psi(\nu),\varphi}$ and if $Q^*_{\mu_\psi(\nu_j),\varphi}\ne Q^*_{\mu_\psi(\nu),\varphi}$, let
$\PP^*_{j,\varphi}$ be the counterclockwise oriented lift of the circle $\partial\overline{D}^\varepsilon\big(\mu_\psi(\nu)\big)$
by $\pi_1|_{\mathcal{C}_{\varphi,R}} : \mathcal{C}_{\varphi,R}\to\C$, which connects
$Q^*_{\mu_\psi(\nu),\varphi}$ with $Q^*_{\mu_\psi(\nu_j),\varphi}$. We then write the path integral
$\int_{[Q_{\mu_\psi(\nu_j)},\mu_\varphi(z_j)]^*}
\frac{\zeta_n(\lambda,\varphi)}{\sqrt{\Delta^2(\lambda,\varphi)-4}}\,d\lambda$
as a sum of two path integrals with paths 
$\PP^*_{j,\varphi}\cup\big[Q_{\mu_\psi(\nu_j)},\mu_\varphi(z_j)\big]^*$ and $\big(\PP^*_{j,\varphi}\big)^{-1}$.
By the argument principle,
\[
\sum_{1\le j\le m_D}
\int\limits_{\PP^*_{j,\varphi}\cup[Q_{\mu_\psi(\nu_j)},\mu_\varphi(z_j)]^*}
\frac{\zeta_n(\lambda,\varphi)}{\sqrt{\Delta^2(\lambda,\varphi)-4}}\,d\lambda
\]
is analytic on $U_\psi$. Since
\[
\sum_{1\le j\le m_D}
\int\limits_{\big(\PP^*_{j,\varphi}\big)^{-1}}
\frac{\zeta_n(\lambda,\varphi)}{\sqrt{\Delta^2(\lambda,\varphi)-4}}\,d\lambda
\]
is also analytic on $U_\psi$, it then follows that \eqref{eq:argument_principle'} analytic on 
$U_\psi$ in this case.
\end{proof}

\medskip

The second result characterizes the spectral bands of $\spec_{\mathbb R} L(\varphi )$.  
It is used in the proof of Lemma \ref{lem:negative_actions} to show that the actions $I_n$, $|n|>R$, are non positive
on $U_{\rm iso}\cap i L^2_r$. For any given $\varepsilon>0$ and $\delta > 0$ sufficiently small, 
and $z\in\C$, denote by $B^{\varepsilon,\delta}_z$ the following {\em box} in $\C$,
\[ 
B^{\varepsilon,\delta }_z := \big\{\lambda\in\C\,\big|\,
|\re(\lambda - z)|<\varepsilon,\,\,
|\im(\lambda - z)|<\delta \big\}.
\]
In the sequel we use results on the discriminant $\Delta(\lambda,\varphi)$ of $L(\varphi)$ reviewed in Section \ref{sec:setup}.

\begin{Lem}\label{lem:spectral_bands} 
For any $\psi\in i L^2_r$, choose $R_p \in\Z_{\geq 0}$ as in 
Lemma \ref{lem:counting_lemma}. 
Then there exist an integer $\tilde R\ge R_p$, as well as $\varepsilon > 0$, $\delta > 0$, 
and a neighborhood $W_\psi $ of $\psi $ in $i L^2_r$ so that for any 
$|n| > \tilde R$ and $\varphi\in W_\psi $, 
$B^{\varepsilon , \delta }_{\dot \lambda _n(\varphi )}\subseteq D_n$ and 
$B^{\varepsilon , \delta }_{\dot \lambda _n(\varphi )}\cap \spec_{\mathbb R}L(\varphi )$ consists 
of the interval $(\dot\lambda _n(\varphi ) - \varepsilon,\dot\lambda _n(\varphi ) + \varepsilon )\subseteq\R$
and a smooth arc $g_n$ connecting $\lambda ^-_n(\varphi )$ with 
$\lambda ^+_n(\varphi )$ within $B^{\varepsilon,\delta }_{\dot \lambda_n(\varphi )}$
so that $\Delta(\lambda)$ is real valued on $g_n$ and satisfies $-2 < \Delta (\lambda ) < 2$ for any 
$\lambda \in g_n\setminus\{ \lambda ^\pm _n(\varphi ) \} $. In fact for any $\varphi \in W_\psi $,
the arc $g_n$, also referred to as {\em spectral band}, is the graph 
$\{ a_n(v) +i v : |v| \leq \im(\lambda^+ _n)\}$ of a smooth real valued function 
$a_n : [- \im(\lambda ^+_n), \im (\lambda^+_n)]\to\R$ with the property 
that $a_n(0) = \dot \lambda _n(\varphi )$, $a_n(-t) = a_n (t)$ for any $0 \le t \le\im(\lambda ^+_n)$, and
$a_n(\pm\im(\lambda ^+_n)) = \re(\lambda ^+_n)$.
\end{Lem}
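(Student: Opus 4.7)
The plan is to extract the local band structure from a Taylor expansion of $\Delta(\cdot,\varphi)$ around the critical point $\dot\lambda_n(\varphi)$, using the conjugation symmetries of $\Delta$ available for potentials in $iL^2_r$.

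First I would invoke Lemma \ref{lem:product1}(iii) to note that for $|n|>R_p$ and $\varphi\in iL^2_r$, $\dot\lambda_n(\varphi)$ is a simple real root of $\dot\Delta(\cdot,\varphi)$ and $\Delta|_\R$ is real valued. Expanding around $\dot\lambda_n$,
\[
\Delta(\lambda,\varphi)=c_0^{(n)}+\tfrac{1}{2}c_2^{(n)}(\lambda-\dot\lambda_n)^2+r^{(n)}(\lambda-\dot\lambda_n),\qquad r^{(n)}(z)=O(z^3),
\]
the coefficients $c_0^{(n)}=\Delta(\dot\lambda_n,\varphi)$ and $c_2^{(n)}=\ddot\Delta(\dot\lambda_n,\varphi)$ are real. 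Using the product representations of $\dot\Delta$ and $\chi_p=\Delta^2-4$ in Lemma \ref{lem:product1} together with $\lambda_n^\pm=n\pi+\ell_n^2$ and $\dot\lambda_n=\tau_n+\gamma_n^2\ell_n^2$, one shows
\[
c_2^{(n)}=-2(-1)^n+o(1),\qquad c_0^{(n)}=(-1)^n\bigl(2-(\im\lambda_n^+)^2\bigr)+o\bigl((\im\lambda_n^+)^2\bigr)
\]
as $|n|\to\infty$, locally uniformly in $\varphi$, together with a uniform estimate $|r^{(n)}(z)|\le C|z|^3$ on a fixed disk. In particular $c_2^{(n)}$ is bounded away from zero, and $|c_0^{(n)}|<2$ whenever $\im\lambda_n^+\ne 0$.

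Next I would describe $\{\Delta(\lambda,\varphi)\in\R\}$ near $\dot\lambda_n$. Writing $\lambda=\dot\lambda_n+u+iv$ and using the expansion above,
\[
\im\Delta(\dot\lambda_n+u+iv,\varphi)=c_2^{(n)}uv+O\bigl((u^2+v^2)^{3/2}\bigr).
\]
Since $\im\Delta$ vanishes identically on $v=0$, one can factor $\im\Delta(\dot\lambda_n+u+iv)=v\,G_n(u,v)$ with $G_n$ real analytic and $G_n(u,v)=c_2^{(n)}u+O(u^2+v^2)$. As $\partial_uG_n(0,0)=c_2^{(n)}\ne 0$, the implicit function theorem yields $\{G_n=0\}$ locally as a smooth graph $u=b_n(v)$ with $b_n(0)=0$. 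The symmetry $\Delta(\bar\lambda)=\overline{\Delta(\lambda)}$ gives $G_n(u,-v)=G_n(u,v)$, forcing $b_n$ to be even. Thus $\{\im\Delta=0\}\cap B^{\varepsilon,\delta}_{\dot\lambda_n}$ is the union of the real segment and the arc $\lambda=a_n(v)+iv$ with $a_n(v):=\dot\lambda_n+b_n(v)$. Choosing $\varepsilon,\delta>0$ independently of $n$ and using $\im\lambda_n^+\to 0$ to take $\tilde R$ large enough, this description holds uniformly, and $\lambda_n^\pm$ lie in the box on the non-real branch (they are not on the real segment when $\im\lambda_n^+\ne 0$, and $\im\Delta(\lambda_n^\pm)=0$). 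Substituting back,
\[
\Delta(a_n(v)+iv,\varphi)=c_0^{(n)}-\tfrac{1}{2}c_2^{(n)}v^2+O(v^3)\in\R,
\]
which at $v=\pm\im\lambda_n^+$ matches $(-1)^n2=\Delta(\lambda_n^\pm)$, so the endpoints of $g_n$ are indeed $\lambda_n^\pm$. The interior bound $|\Delta|<2$ follows since $|c_0^{(n)}|<2-(\im\lambda_n^+)^2+o(\cdot)$ and $-\tfrac{1}{2}c_2^{(n)}v^2$ has sign $(-1)^n$ with magnitude at most $v^2+o(\cdot)\le(\im\lambda_n^+)^2+o(\cdot)$, yielding $|\re\Delta|<2$ strictly for $|v|<\im\lambda_n^+$. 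The degenerate case $\im\lambda_n^+=0$ is trivial since then $\lambda_n^+=\lambda_n^-=\dot\lambda_n$ and $g_n$ collapses to a point.

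The main obstacle is producing the uniform-in-$n$ estimates of the first step: the box dimensions $(\varepsilon,\delta)$ and the neighborhood $W_\psi$ must be chosen independently of $n$, which requires uniform control of the Taylor remainder $r^{(n)}$ and of the coefficients $c_0^{(n)},c_2^{(n)}$. This can be extracted from the product representations of Lemma \ref{lem:product1} by isolating the $k=n$ factor, which carries the local quadratic behavior, and controlling the tail $\prod_{k\ne n}$ via the $\ell^2$-boundedness of $(\lambda_k^\pm-k\pi)_{k\in\Z}$ locally uniformly in $\varphi$; Cauchy's estimates then transfer bounds on $\Delta$ on a fixed disk around $\dot\lambda_n$ to the required bounds on $r^{(n)}$ on a smaller disk.
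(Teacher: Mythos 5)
Your core idea---factoring $\im\Delta(\dot\lambda_n+u+iv)=v\,G_n(u,v)$ using that $\Delta$ is real on the real line (Lemma \ref{lem:product1}(iii)), then applying the implicit function theorem to $\{G_n=0\}$, with the conjugation symmetry forcing evenness of the resulting graph---is exactly the mechanism the paper uses. The paper's function $F(u,v,\varphi)=\Delta_2(u,v,\varphi)/v$ is your $G_n$ translated. Where you diverge is in two technical respects. First, the paper obtains uniformity in $n$ by assembling all the implicit-function problems into a single map $S=(S_k)_{|k|>R_p}$ valued in $\ell^\infty$ and applying the IFT once in that sequence space; you instead propose a per-$n$ IFT with uniform control of the Taylor coefficients $c_0^{(n)},c_2^{(n)}$ and remainder $r^{(n)}$. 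Both work; the sequence-space version is more automatic (the invertibility of $\partial_\zeta S$ at the base point is read off from the diagonal asymptotics $2\,\mathrm{diag}((-1)^{k+1}+\ell^2_k)$), whereas your version requires tracking the remainder uniformly, which is doable via the product representations and Cauchy estimates as you indicate but is more bookkeeping. Second, and more importantly, for the interior bound $-2<\Delta<2$ on $g_n\setminus\{\lambda^\pm_n\}$ the paper uses a clean topological argument: $\Delta$ is real on the arc, so $\chi_p=\Delta^2-4$ is real and continuous there; $\chi_p(\dot\lambda_n)\le 0$ since $\dot\lambda_n\in\R$ (and $<0$ if $\dot\lambda_n\ne\lambda^\pm_n$); $\lambda^\pm_n$ are the only zeros of $\chi_p$ in $D_n$; hence $\chi_p<0$ on the open arc. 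Your Taylor-coefficient estimate $|c_0^{(n)}|<2-(\im\lambda^+_n)^2+o((\im\lambda^+_n)^2)$ combined with the sign of $-\tfrac12 c_2^{(n)}v^2$ does reach the same conclusion, but it is more fragile: the cancellation you rely on is exactly of the size of the $o(\cdot)$ and $O(v^3)$ error terms, so you would need to show the leading coefficients dominate the remainder uniformly in $n$ and $\varphi$. The topological argument sidesteps this entirely and I would recommend substituting it for your Taylor estimate. With that replacement your proof is sound and genuinely carries out the same strategy as the paper, packaged differently.
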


For the convenience of the reader we include the proof of Lemma \ref{lem:spectral_bands} given in \cite{KLT1}.

\begin{proof}[Proof of Lemma \ref{lem:spectral_bands}]
 First let us introduce some more notation. For any $\lambda \in
\C$ write $\lambda = u +  i v$ with $u, v \in {\mathbb R}$ and
$\Delta = \Delta _1 +  i \Delta _2$ where for $\varphi \in L^2_c$ arbitrary, 
$\Delta_j(u,v) \equiv \Delta_j(u,v,\varphi )$, $j=1,2$, are given by
   \[ \Delta _1(u,v):= \re\big(\Delta (u +  iv, \varphi )\big), \qquad
      \Delta _2(u,v):= \im\big(\Delta (u +  iv, \varphi )\big)  .
   \]
In a first step we want to study $\Lambda \cap D_n$ for $\varphi \in  iL
^2_r$ where
   \[ \Lambda = \{ u +  iv\,|\, u,v \in {\mathbb R},\ \Delta _2(u,v)
      = 0 \} .
   \]
By Lemma \ref{lem:product1} $(iii)$ for any $\varphi \in  i L^2_r$, $\Delta _2(u,0,\varphi ) = 0$.
Hence 
\[
F : {\mathbb R} \times {\mathbb R} \times  i L^2_r \to
{\mathbb R}, \ (u,v, \varphi ) \mapsto \Delta _2(u,v,\varphi ) / v
\] 
is well defined. As $F$ and $\Delta _2$ have the same zero set on 
${\mathbb R} \times({\mathbb R} \backslash \{ 0 \})$ we investigate $\{ (u,v) \in {\mathbb R}^2 :
F(u,v,\varphi ) = 0 \} $ further. We note that
   \begin{equation}\label{2.20} 
       F(u,v) \equiv F(u,v,\varphi ) = \int ^1_0\big(\partial _v \Delta_2\big)(u,tv)\,dt
   \end{equation}
and hence $F(u,0) = \partial _v \Delta _2(u,0)$. Since $\Delta _2(u,0)$ vanishes for any
$u \in {\mathbb R}$, so does $\partial _u \Delta _2(u,0)$. By the
Cauchy-Riemann equation it then follows that for $u \in {\mathbb R}$, 
$\dot\Delta (u,0) = 0$ if and only if $\partial _v \Delta _2(u,0) = 0$. The real roots of $F(\cdot ,
0) = \partial _v \Delta _2(\cdot , 0)$ are thus given by $\dot \lambda _n$, 
$n \in \Z$, with $\dot \lambda _n \in {\mathbb R}$. Since $\Delta $ is an
analytic function on $\C \times L^2_c$ one concludes that $\Delta_2(\cdot,\cdot ,\cdot )$ 
is a real analytic function on $\R\times\R\times i L^2_r$, and by formula \eqref{2.20},
so is $F$.

For $\psi \in  i L^2_r$ let $R_p \in \Z_{\ge 0}$ and the neighborhood $V_\psi$
of $\psi $ be as in Lemma \ref{lem:counting_lemma}. 
For any $|k| > R_p$, let $S_k$ be the map
\[
S_k : B_{\ell^\infty} \times (-1,1) \times\big(V_\psi\cap i L^2_r\big)\to
      {\mathbb R}, \, \, \big( (\zeta _n)_{|n| > R_p}, v, \varphi \big) \mapsto
      F(\dot \lambda _k + \zeta_k, v, \varphi )
\]
where $\dot \lambda _k = \dot \lambda _k(\varphi )$ is in $D_k$ and $B_{\ell^\infty} $
denotes the open unit ball in the space of real valued sequences
   \[ \ell ^\infty = \big\{ \zeta = (\zeta _n)_{|n| > R_p}\,\big|\,
      \| \zeta \| _\infty < \infty\big\},\quad\| \zeta \| _\infty := \sup _{|n| > R_p} |\zeta_n|\,.
   \]
Note that $S_k$ is the composition of the map
$B_{\ell^\infty} \times (-1,1) \times\big(V_\psi\cap i L^2_r\big)\to\R\times (-1,1)
      \times\big(V_\psi\cap i L^2_r\big)$,
   \[ (\zeta , v, \varphi ) \mapsto\big(\dot \lambda _k
      (\varphi ) + \zeta _k, v, \varphi\big),
   \]
with $F$ and hence real analytic. It follows from the asymptotics of
$\Delta $ of \cite[Lemma 4.3] {GK1} and the asymptotics of $\dot\lambda_k$
stated in Lemma \ref{lem:counting_lemma} that for any sequence 
$\zeta = (\zeta _n)_{|n| > R_p} \in B_{\ell^\infty}$, $(S_k)_{|k| > R_p}$ is also in $\ell ^\infty$. 
We claim that
   \[ S: B_{\ell^\infty} \times (-1,1) \times\big(V_\psi\cap i L^2_r\big)\to \ell ^\infty , \
      (\zeta , v, \varphi ) \mapsto\big(S_k(\zeta , v, \varphi )\big)_{|k| > R_p}
   \]
is smooth. To see it note that by the Cauchy-Riemann equation,
$\Delta _2(u,v,\varphi )$ and its derivatives, when restricted to
${\mathbb R} \times {\mathbb R} \times  i L^2_r$, can be estimated
in terms of $\Delta (\lambda , \varphi )$ and its derivatives. Hence the
asymptotic estimates of $\Delta (\lambda , \varphi )$ (see \cite[Theorem 2.2, Lemma 4.3] {GK1}) 
together with Cauchy's estimate can be used to estimate
the derivatives of $S_k$ to conclude that the map $S = (S_k)_{|k| > R_p}$ is
smooth.

We now would like to apply the implicit function theorem to $S$. First note
that $S\big\arrowvert _{\zeta = 0, v = 0, \psi } = 0$ as $\dot \lambda_k(\psi )$, $k\in\Z$, 
are the roots of $\dot \Delta (\cdot,\psi )$. In addition, since $\dot\lambda_k(\psi)$, $|k|>R_p$, are
real and simple roots of ${\dot\Delta}(\cdot,\psi)$, one has
   \[ \partial _{\zeta _k} S_k \big\arrowvert _{\zeta = 0, v = 0,
      \psi } = \partial _u \partial _v \Delta _2\big\arrowvert
      _{u = \dot \lambda _k(\psi ),v = 0,\psi}
   \]
whereas by the definition of $S_k$, $\partial_{\zeta _n} S_k$ vanishes identically
for any $n \not= k$. By the asymptotics
$\dot \lambda _k(\psi ) = k\pi + \ell ^2_k$ (cf. Lemma \ref{lem:counting_lemma}) and the one of $\Delta $
(cf \cite[Lemma 4.3] {GK1}) it follows from Cauchy's estimate that
   \[ \partial _u \partial _v \Delta _2\big\arrowvert _{u=\dot\lambda_k(\psi),v = 0,\psi} = 
    \partial_u \partial _v \Delta(u +  iv, 0)
      \big\arrowvert _{u=\dot \lambda_k(\psi),v=0} + \ell ^2_k .
   \]
On the other hand $\Delta (\lambda , 0) = 2 \cos \lambda $ and thus 
$\im\big(\Delta (u +  iv, 0)\big) = - 2 \sin u \sinh v$ implying that
   \[ \partial _u \partial _v \im\big(\Delta (u +  iv,0)\big)\big\arrowvert
      _{u=\dot\lambda_k(\psi),v = 0} = 2(-1)^{k+1}\big(1 + \ell^2_k\big) .
   \]
Altogether we then conclude that
   \[ \partial _\zeta S \big\arrowvert _{\zeta = 0, v = 0,\psi} = 2 \mathop{\rm diag}
      \left( \big((-1)^{k+1}\big)_{|k|> R_p} + \ell ^2_k \right) .
   \]
Thus there exists an integer $\tilde R\ge R_p$ so that for the restriction of $S$ to
$B_{\tilde\ell^\infty}\times(-1,1)\times\big(V_\psi\cap i L^2_r\big)$,
   \[ \tilde S : \tilde B_{\tilde\ell^\infty} \times (-1, 1) \times\big(V_\psi\cap i L^2_r\big)\to
      \tilde\ell^\infty,
   \]
the differential $\partial _{\tilde\zeta}{\tilde S}\big\arrowvert _{{\tilde\zeta}=0, v = 0,\psi}$ 
is invertible. Here $B_{\tilde\ell^\infty} $ denotes the unit ball in
   \[{\tilde\ell}^\infty = \big\{{\tilde\zeta} = (\zeta _n)_{|n|> \tilde R}
      \subseteq {\mathbb R}\,\big|\, \|{\tilde\zeta}\| _\infty < \infty\big\} .
   \]
By the implicit function theorem there exist $\delta > 0$, a neighborhood $W_\psi \subseteq V_\psi$ of 
$\psi$ in $L^2_c$, $0<\varepsilon<1$, and a smooth map
   \[ h : (-\delta , \delta ) \times W_\psi \to B_{\tilde\ell^\infty}
      (\varepsilon) , \ (v,\varphi ) \mapsto h(v,\varphi ) = \big(h_n(v,\varphi )\big)_{|n| > \tilde R}
   \]
so that $h(0,\psi ) = 0$ and $\tilde S(h(v,\varphi ), v, \varphi ) = 0$ for
any $(v,\varphi ) \in (-\delta , \delta ) \times W_\psi $. Here
   \[ B_{\tilde\ell^\infty}(\varepsilon) = \big\{{\tilde\zeta}\in 
       \tilde\ell^\infty\,\big|\,\|{\tilde\zeta}\| _\infty < \varepsilon\big\}
   \]
and $(v,\varphi ) \mapsto (h(v,\varphi ),v,\varphi )$ parametrizes the
zero level set of $\tilde S$ in $B_{\tilde\ell^\infty}(\varepsilon) \times (-
\delta , \delta ) \times W_\psi $. In particular, for any $|n| > \tilde
R$ and $\varphi $ in $W_\psi $, the intersection of $\{ F(u,v,\varphi ) =
0\}$ with the box $B^{\delta , \varepsilon }_{\dot \lambda _n(\varphi )}$
is smoothly parametrized by
   \[ z_n(\cdot , \varphi ) := (-\delta , \delta ) \to B^{\delta ,
      \varepsilon }_{\dot \lambda _n(\varphi )} , \ v \mapsto \dot \lambda
      _n(\varphi ) + h_n(v,\varphi ) +  iv .
   \]
By choosing $\tilde R$ larger and by shrinking $W_\psi $, if necessary,
one can assure that $\lambda^\pm _n\equiv\lambda ^\pm _n(\varphi )$ is in
$B^{\delta , \varepsilon }_{\dot \lambda _n(\varphi )}$ for any $\varphi
\in W_\psi $ and $|n| > \tilde R$. Since $\Delta (\lambda ^\pm _n) = (-1)^n 2$
and $\Delta (\dot \lambda _n) \in {\mathbb R}$, the pair 
$\lambda ^+_n$, $\lambda ^-_n$ as well as $\dot \lambda _n$ are in the range of $z_n$.
In fact, one has
   \[ z_n(0,\varphi ) ={\dot\lambda}_n \mbox{ and } z_n\big(\pm\im(\lambda ^+_n),\varphi\big) = 
   \lambda ^\pm _n,\quad\forall\varphi\in W_\psi.
   \]
Furthermore recall that $\chi _p(\dot \lambda _n, \varphi ) \leq 0$ since $\dot\lambda_n\in\R$. 
As $\lambda ^\pm _n$ are the only roots of $\chi _p(\cdot,\varphi )$ in $D_n$, it
follows that if $\dot \lambda _n \not= \lambda ^\pm _n$, then $\chi_p(\dot \lambda _n,\varphi ) < 0$ and, 
hence
   \[ \chi _p(z_n(v), \varphi ) < 0 \quad \forall - \im(\lambda^+_n) < v < \im(\lambda ^+_n) .
   \]
Altogether we thus have proved that for any $|n|>\tilde R$
   \[ g_n = \big\{ z_n(v)\,\big|\, - \im(\lambda ^+_n) \le v \le \im(\lambda ^+_n)\big\}
   \]
is a smooth arc on which $\Delta $ takes values in $[-2,2]$. We also have all the
properties listed in the statement of the Lemma.
\end{proof}

\end{document}